\title{Martingale central limit theorem for random multiplicative functions}
\author{Ofir Gorodetsky, Mo Dick Wong}
\date{}
\theoremstyle{plain}
\newtheorem{thm}{Theorem}[section]
\newtheorem{lem}[thm]{Lemma}  
\newtheorem{proposition}[thm]{Proposition}
\newtheorem{cor}[thm]{Corollary}
\newtheorem{definition}[thm]{Definition}
\newtheorem{conj}[thm]{Conjecture}
\theoremstyle{remark}
\newtheorem{rem}{Remark}[section]
\newcommand{\QQ}{\mathbb{Q}}
\newcommand{\PP}{\mathbb{P}}
\newcommand{\RR}{\mathbb{R}}
\newcommand{\CC}{\mathbb{C}}
\newcommand{\ZZ}{\mathbb{Z}}
\newcommand{\EE}{\mathbb{E}}
\newcommand{\NN}{\mathbb{N}}
\newcommand{\Da}{\mathcal{D}}
\newcommand{\Fa}{\mathcal{F}}
\newcommand{\Ga}{\mathcal{G}}
\newcommand{\Na}{\mathcal{N}}
\newcommand{\Ma}{\mathcal{M}}
\newcommand{\Ea}{\mathcal{E}}
\newcommand{\Oa}{\mathcal{O}}
\newcommand{\Aa}{\mathcal{A}}
\newcommand{\fname}{H}
\newcommand{\Lname}{D}
\newcommand{\Lsum}{R}
\newcommand{\Sf}{\mathrm{sf}}
\newcommand{\RadSym}{\beta}
\newcommand{\Rad}{\beta}
\newcommand{\OurEpsilon}{\varepsilon}
\newcommand{\asymptoticO}{O}
\numberwithin{equation}{section}
\begin{document}

\maketitle
\begin{abstract}
Let $\alpha$ be a Steinhaus or a Rademacher random multiplicative function. For a wide class of multiplicative functions $f$ we show that the sum $\sum_{n \le x}\alpha(n) f(n)$, normalised to have mean square $1$, has a non-Gaussian limiting distribution. More precisely, we establish a generalised central limit theorem with random variance determined by the total mass of a random measure associated with $\alpha f$.

Our result applies to $d_z$, the $z$-th divisor function, as long as $z$ is strictly between $0$ and $\tfrac{1}{\sqrt{2}}$. Other examples of  admissible $f$-s  include any multiplicative indicator function with the property that $f(p)=1$ holds for a set of primes of density strictly between $0$ and $\tfrac{1}{2}$.
\end{abstract}

\section{Introduction}
We denote by $\Ma$ the set of multiplicative functions:
\[ \Ma := \{ f \colon \NN \to \CC : (n,m)=1 \implies f(nm)=f(n)f(m)\}.\]
Let $\alpha\colon \NN \to \CC$ be a Steinhaus random multiplicative function. This is a multiplicative function such that $(\alpha(p))_{p}$ are i.i.d.~random variables uniformly distributed on $\{ z \in \CC: |z|=1\}$, and $\alpha(n) := \prod_{p}\alpha(p)^{a_p}$ if $n$ factorizes into primes as $n=\prod_{p} p^{a_p}$. It is easy to see that
\begin{equation}\label{eq:orth}
\EE [\alpha(n)\overline{\alpha}(m)]=\delta_{n,m}
\end{equation}
for all $n,m \in \NN$. Given $f \in \Ma$ we consider the random sum
\[ S_x =S_{x,f}:=\frac{1}{\sqrt{\sum_{n\le x} |f(n)|^2}}\sum_{n\le x} \alpha(n) f(n).\]
By \eqref{eq:orth}, $\EE [ |S_x|^2]=1$. 
We describe the limiting distribution of $S_x$ as $x\to \infty$ for a wide class of $f$-s. 

Given $\theta \in \CC$ we denote by $\mathbf{P}_\theta$ the class of functions
$g\colon \NN \to \CC$ that satisfy the following: the sum $\pi_{g}(t):=\sum_{p\le t} g(p)$ can be written as
\begin{align} \label{eq:condition-P}
\pi_{g}(t) = \theta \frac{t}{\log t} + \Ea_g(t),
\end{align}

\noindent where $\Ea_g(t) = o(t / \log t)$ as $t \to \infty$ and $\int_2^\infty t^{-2} |\Ea_g(t)|dt < \infty$.\footnote{
Let $\mathrm{Li}(t)=\int_{2}^{t} \tfrac{dt}{\log t}=\tfrac{t}{ \log t}  + \asymptoticO(\tfrac{t}{\log^2 t})$. Abusing notation, we may write \eqref{eq:condition-P} equivalently as
$\pi_{g}(t) = \theta \mathrm{Li}(t) + \Ea_g(t)$ for $t\ge 2$, where $\Ea_g(t) = o(t / \log t)$ is such that $\int_2^\infty t^{-2} |\Ea_g(t)|dt < \infty$. We will use both formulations interchangeably.}

\begin{thm}\label{thm:main}
Let $f \in \Ma$. Suppose the following conditions are satisfied.
\begin{itemize}
\item[(a)] There is $\theta \in (0,\tfrac{1}{2})$ such that $|f|^2 \in \mathbf{P}_{\theta}$.
\item[(b)] There exists $c>0$ such that $\sum_{p} |f(p)|^3/p^{3/2-c}<\infty$,  and $|f(p^k)|^2= O(2^{k(1-c)})$ holds for all $k\ge 2$ and primes $p$.
\end{itemize}
Then we have
\begin{align}\label{eq:dist}
    S_x \xrightarrow[x \to \infty]{d} \sqrt{V_\infty} \ G
\end{align}
\noindent where $\displaystyle V_\infty:= \frac{1}{2\pi}\int_{\RR} \frac{m_\infty(ds)}{|\frac{1}{2} + is|^2}$ is almost surely finite and strictly positive (see \Cref{thm:mc-convergence} for the definition of $m_\infty$), and is independent of $G \sim \Na_\CC(0, 1)$. 
Moreover, the convergence in law is stable in the sense of \Cref{def:stable}.
\end{thm}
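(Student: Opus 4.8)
The plan is to realise $S_x$ (minus a negligible deterministic term) as the terminal value of a martingale whose increments are indexed by the primes, and then to apply a central limit theorem for triangular arrays of martingale differences, for which the decisive input is the convergence of the conditional variance to $V_\infty$; that last step is where the multiplicative chaos of \Cref{thm:mc-convergence} enters. Write $L:=\sqrt{\sum_{n\le x}|f(n)|^2}$, order the primes as $2=q_1<q_2<\cdots$, and set $\Fa_j:=\sigma(\alpha(q_1),\dots,\alpha(q_j))$. Grouping $n$ by its largest prime factor $P(n)$ one has $\EE[\sum_{n\le x}\alpha(n)f(n)\mid\Fa_j]=\sum_{P(n)\le q_j}\alpha(n)f(n)$, so that, up to the deterministic $O(1/L)$ term coming from $n=1$,
\[
S_x\;=\;\sum_{j\ge1}D_{j,x},\qquad D_{j,x}\;:=\;\frac1L\sum_{\substack{n\le x\\ P(n)=q_j}}\alpha(n)f(n)\;=\;\frac1L\sum_{k\ge1}\alpha(q_j)^k f(q_j^k)\,M_{j,k},
\]
where $M_{j,k}:=\sum_{m\le x/q_j^k,\ P(m)<q_j}\alpha(m)f(m)$ is $\Fa_{j-1}$-measurable and only $q_j\le x$ contribute, so this is a triangular array of martingale differences whose length grows with $x$. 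Since $\alpha(q_j)$ is uniform on the unit circle and independent of $\Fa_{j-1}$, the $D_{j,x}$ are martingale differences with $\EE[D_{j,x}^2\mid\Fa_{j-1}]=0$ (every monomial $\alpha(q_j)^{k+k'}$ has positive exponent), which is precisely what will force the limit to be a rotationally invariant complex Gaussian, while $\EE[|D_{j,x}|^2\mid\Fa_{j-1}]=L^{-2}\sum_{k\ge1}|f(q_j^k)|^2|M_{j,k}|^2$. The Rademacher case runs along the same lines, with only $k=1$ surviving and a real Gaussian limit.

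I would then appeal to a stable central limit theorem for triangular arrays of complex-valued martingale differences, in the form found in, e.g., Hall--Heyde or McLeish: it yields $S_x\xrightarrow{d}\sqrt{V_\infty}\,G$ stably once one checks (i) a conditional Lindeberg condition $\sum_j\EE[|D_{j,x}|^2\mathbf{1}_{|D_{j,x}|>\OurEpsilon}\mid\Fa_{j-1}]\xrightarrow{p}0$ for every $\OurEpsilon>0$, (ii) $\sum_j\EE[D_{j,x}^2\mid\Fa_{j-1}]\xrightarrow{p}0$, and (iii) $V_x:=\sum_j\EE[|D_{j,x}|^2\mid\Fa_{j-1}]\xrightarrow{p}V_\infty$. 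Condition (ii) holds identically by the previous paragraph. Condition (i) is routine: since $\sum_j\EE|D_{j,x}|^2=1-L^{-2}$ is bounded, it reduces to $\max_j\EE|D_{j,x}|^2\to0$, which follows from (a) and (b) by a short estimate on $\EE|M_{j,k}|^2\le\sum_{m\le x/q_j^k,\,P(m)<q_j}|f(m)|^2$ (here $\sum_p|f(p)|^3/p^{3/2-c}<\infty$ forces $|f(p)|^2/p\to0$, while $|f(p^k)|^2=O(2^{k(1-c)})$ tames the prime powers); the same second- and fourth-moment bounds also let one replace the predictable quadratic variation $V_x$ by the genuine sum of squares $\sum_j|D_{j,x}|^2$ required to invoke the theorem. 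Stability is then automatic, because the limiting variance $V_\infty$ is $\sigma(\alpha)$-measurable and the Gaussian factor is independent fresh randomness --- which is exactly the content of \Cref{def:stable}.

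The substance is (iii), the convergence $V_x\xrightarrow{p}V_\infty$. Using (b) to bound the contribution of the prime powers $q_j^k$ with $k\ge2$, one first reduces to the main term $L^{-2}\sum_{p\le x}|f(p)|^2|M_{p,1}|^2$, with $M_{p,1}=\sum_{m\le x/p,\,P(m)<p}\alpha(m)f(m)$, up to a lower-order piece of the same shape that is either negligible or absorbed into the measure below. Next, a smoothed Perron formula combined with a Plancherel identity for Dirichlet polynomials approximates $|M_{p,1}|^2$, in mean square, by $\tfrac{x/p}{2\pi}\int_\RR|\tfrac12+it|^{-2}|F_{<p}(\tfrac12+it)|^2\,dt$, where $F_{<p}(s):=\prod_{q<p}\bigl(\sum_{k\ge0}\alpha(q)^k f(q^k)q^{-ks}\bigr)$ is the truncated random Euler product; the off-diagonal ($t\ne t'$) terms in the Plancherel step and the error from the sharp cutoff at $x/p$ must be controlled uniformly enough to survive summation over $p$. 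Summing over $p\le x$ with weight $|f(p)|^2/p$ and using (a) to replace $\sum_{p\le x}|f(p)|^2p^{-1}(\cdots)$ by the smooth average $\theta\int_2^x(\cdots)\tfrac{dt}{t\log t}$, one recognises the outcome as $\tfrac1{2\pi}\int_\RR|\tfrac12+is|^{-2}\,m_x(ds)$, where $m_x$ is precisely the sequence of truncated random measures of \Cref{thm:mc-convergence}. That theorem supplies $m_x\to m_\infty$ almost surely and in $L^1$ --- and here the hypothesis $\theta<\tfrac12$ is essential, since it places the associated (Gaussian-type) multiplicative chaos in its $L^2$ regime, in which $\EE[m_x(\RR)^2]$ and the relevant second moments stay bounded and $m_\infty$ is nontrivial --- so that $V_x\xrightarrow{p}V_\infty$ and $V_\infty\in(0,\infty)$ almost surely.

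I expect the main obstacle to be the mean-square comparison underlying the previous paragraph: controlling, uniformly across the sum over $p\le x$, both the off-diagonal contributions in the Plancherel step and the discrepancy between the arithmetic sum $\sum_{p\le x}|f(p)|^2p^{-1}|F_{<p}(\tfrac12+it)|^2$ and its chaos counterpart, so as to upgrade these approximations to convergence in probability. This is exactly where the subcritical ($L^2$) nature of the chaos --- the constraint $\theta<\tfrac12$ --- is used in full strength, and where the quantitative estimates behind \Cref{thm:mc-convergence} are needed; the strict positivity $V_\infty>0$ a.s.\ is likewise inherited from there.
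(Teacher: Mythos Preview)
Your high-level architecture matches the paper's: decompose $S_x$ as a martingale indexed by primes, verify the hypotheses of a stable martingale CLT, and identify the limiting conditional variance with the chaos integral. There are, however, two genuine gaps.

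\emph{Lindeberg.} The reduction ``since $\sum_j\EE|D_{j,x}|^2$ is bounded, it reduces to $\max_j\EE|D_{j,x}|^2\to 0$'' is false: these two conditions together do not imply Lindeberg (take $D_j=\sqrt{n}$ with probability $n^{-2}$ for $j=1,\dots,n$). The paper instead proves a Lyapunov condition $\sum_{x^\varepsilon\le p\le x}\EE|Z_{x,p}|^4\to 0$ by a combinatorial fourth-moment bound (parametrising $ab=cd$ with $P(a)=\cdots=p$), and this is carried out only after truncating to $p\ge x^\varepsilon$ and $P(n)^2\nmid n$.

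\emph{Conditional variance.} Your route to (iii) is where the approaches diverge, and where the plan breaks. You claim that smoothed Perron plus Plancherel yields $|M_{p,1}|^2\approx\tfrac{x/p}{2\pi}\int_\RR|\tfrac12+it|^{-2}|F_{<p}(\tfrac12+it)|^2\,dt$ in mean square. But Plancherel relates the Euler-product integral to $\int_0^\infty|M_{p,1}(y)|^2y^{-2}\,dy$, an integral over the cutoff, not to the point value at $y=x/p$; even the expectations do not match, since $\EE|M_{p,1}|^2=\sum_{m\le x/p,\,P(m)<p}|f(m)|^2$ is a smooth-number sum depending on both $x/p$ and $p$, whereas your right-hand side has expectation $\tfrac{x}{p}\sum_{P(m)<p}|f(m)|^2/m$ depending only on $p$. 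Summing over $p$ with weight $L^{-2}|f(p)|^2$, your expression has expectation of order $\log x$ rather than $1$. The paper avoids this entirely: it introduces the intermediate
\[
U_x:=\Bigl(\sum_{n\le x}|f(n)|^2\bigl(\tfrac1n-\tfrac1x\bigr)\Bigr)^{-1}\int_1^x\frac{|s_t|^2}{t}\,dt,\qquad s_t:=t^{-1/2}\sum_{n\le t}\alpha(n)f(n),
\]
with $s_t$ the \emph{full} normalised partial sum (no restriction on $P(n)$), and proves $\EE|T_{x,\varepsilon}-C_\varepsilon U_x|^2\to 0$ by computing each of $\EE[U_x^2]$, $\EE[T_{x,\varepsilon}U_x]$, $\EE[T_{x,\varepsilon}^2]$ directly via number-theoretic identities (parametrising $n_1m_1=n_2m_2$, then applying mean-value asymptotics for $|f|^2$ with and without smooth-number constraints). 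Plancherel and a Tauberian theorem are applied only to $U_x$, where the integral over $t$ is already built in. The $\varepsilon$-truncation you omit is essential here: it produces an explicit constant $C_\varepsilon$ with $C_\varepsilon\to 1$ as $\varepsilon\to 0^+$, and the proof is closed by combining $S_{x,\varepsilon}\xrightarrow{d}\sqrt{C_\varepsilon V_\infty}\,G$ with $\limsup_{\varepsilon\to 0^+}\limsup_{x\to\infty}\EE|S_x-S_{x,\varepsilon}|^2=0$.
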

\begin{rem}
Without getting into the technical details here, let us mention that the random measure $m_\infty(ds)$ in the definition of $V_\infty$ may be constructed as the large-$x$ limit of
\begin{align}\label{eq:mxinf}
    m_{x, \infty}(ds) := \prod_{p \le x} \frac{\left|1 + \sum_{k \ge 1} \frac{\alpha(p)^k f(p^k)}{p^{k(1/2+is)}}\right|^2}{\EE\left[\left|1 + \sum_{k \ge 1} \frac{\alpha(p)^k f(p^k)}{p^{k(1/2+is)}}\right|^2\right]} ds
\end{align}

\noindent in probability (see \Cref{sec:convergence_measure} for the precise meaning), i.e., the random variance $V_\infty$ is closely related to the infinite Euler product associated to $\alpha(n) f(n)$. Even though such an infinite product does not make sense pointwise on the critical line, a weak interpretation can be taken such that operations like integration against nice test functions could be justified. While not established in the current article, it is not hard to believe that such limiting measure cannot be absolutely continuous with respect to the Lebesgue measure (or else it would contradict our intuition that the infinite Euler product could not exist almost everywhere as a density function on the critical line). Later in \Cref{rem:support-ppt}, we shall sketch the proof that $m_\infty$ does not contain any Dirac mass. In other words, $m_\infty(ds)$ is a singular continuous measure on $\RR$ and its support property is closely related to (multi-)fractal analysis.
\end{rem}
\begin{rem}
Under the assumptions of \Cref{thm:main}, a classical result of Wirsing \cite{Wirsing} implies that 
\[ \sum_{n \le x} |f(n)|^2 \sim \frac{e^{-\gamma\theta}}{\Gamma(\theta)} \frac{x}{\log x} \prod_{p\le x} \left(\sum_{i=0}^{\infty} \frac{|f(p^i)|^2}{p^{i}}\right) \sim \frac{x (\log x)^{\theta-1}}{\Gamma(\theta)} \prod_{p} \left(\sum_{i=0}^{\infty} \frac{|f(p^i)|^2}{p^{i}}\right)\left(1-\frac{1}{p}\right)^{\theta}\]
holds as $x \to \infty$, where $\gamma$ is the Euler--Mascheroni constant. 
\end{rem}
Using \Cref{thm:main} we also obtain the convergence of moments.
\begin{cor}\label{cor:mom-convergence}
Under the same setting as \Cref{thm:main}, for any fixed $0 \le q \le 1$, 
\begin{align} \label{eq:mom-convergence}
    \lim_{x \to \infty} \EE\left[|S_x|^{2q}\right] = \Gamma(1+q) \EE\left[V_\infty^q\right].
\end{align}
\end{cor}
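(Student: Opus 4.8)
The plan is to deduce \Cref{cor:mom-convergence} from the distributional convergence \eqref{eq:dist} by upgrading it to convergence of the first moments of $|S_x|^{2q}$ through a uniform integrability argument; I expect the only point requiring input beyond soft probability to be the endpoint $q=1$.

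First, since $t\mapsto|t|^{2q}$ is continuous on $\CC$, the continuous mapping theorem applied to \eqref{eq:dist} gives
\[
|S_x|^{2q}\xrightarrow[x\to\infty]{d}V_\infty^{q}\,|G|^{2q},\qquad G\sim\Na_\CC(0,1)\ \text{independent of}\ V_\infty.
\]
As $|G|^2$ is a standard exponential variable, $\EE[|G|^{2q}]=\int_0^\infty t^{q}e^{-t}\,dt=\Gamma(1+q)$, so by independence the limit has mean $\Gamma(1+q)\,\EE[V_\infty^{q}]$, and this is finite because $V_\infty^{q}\le 1+V_\infty$ while, by the portmanteau theorem applied to $|S_x|^2\xrightarrow{d}V_\infty|G|^2$, $\EE[V_\infty]=\EE[V_\infty|G|^2]\le\liminf_{x\to\infty}\EE[|S_x|^2]=1$. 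Thus \eqref{eq:mom-convergence} will follow once $\{|S_x|^{2q}\}_{x}$ is shown to be uniformly integrable. For $q\in[0,1)$ I would note that, by Jensen's inequality, $\EE[(|S_x|^{2q})^{1/q}]=\EE[|S_x|^2]=1$ for every $x$, so the family is bounded in $L^{1/q}$ with exponent $1/q>1$, hence uniformly integrable; combined with the convergence in law this gives $\EE[|S_x|^{2q}]\to\Gamma(1+q)\EE[V_\infty^{q}]$. (The case $q=0$ is trivial.)

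The remaining case $q=1$ is the crux, since there the $L^{1/q}$ trick degenerates and no $L^{1+\delta}$ bound comes for free from $\EE[|S_x|^2]=1$. Here $\EE[|S_x|^2]=1$ exactly, so \eqref{eq:mom-convergence} is equivalent to the identity $\EE[V_\infty]=1$, and I would prove this as follows. Write $V_{x,\infty}:=\tfrac{1}{2\pi}\int_{\RR}\tfrac{m_{x,\infty}(ds)}{|\frac12+is|^2}$ for the finite-$x$ version of $V_\infty$ built from \eqref{eq:mxinf}. Each factor in \eqref{eq:mxinf} has expectation $1$ and the factors are independent over primes, so $\EE[m_{x,\infty}(ds)]=ds$ and hence $\EE[V_{x,\infty}]=\tfrac{1}{2\pi}\int_{\RR}\tfrac{ds}{\frac14+s^2}=1$ for all $x$. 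Moreover $(V_{x,\infty})_x$ is a non-negative martingale for the filtration generated by $(\alpha(p))_{p\le x}$; in the range $\theta\in(0,\tfrac12)$ it is bounded in $L^2$, and this $L^2$-boundedness is precisely what drives the convergence-of-measures statement behind \Cref{thm:mc-convergence}, so $(V_{x,\infty})_x$ converges almost surely and in $L^1$ to the limit defining $V_\infty$. Therefore $\EE[V_\infty]=\lim_{x\to\infty}\EE[V_{x,\infty}]=1$, as needed. The main obstacle is thus the boundary case: one cannot avoid invoking the $L^1$-convergence of the random Euler-product measure --- equivalently, the $L^2$-boundedness that holds exactly because $\theta<\tfrac12$ --- whereas for each fixed $q<1$ the corollary reduces to the continuous mapping theorem together with boundedness in $L^{1/q}$.
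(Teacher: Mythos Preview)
Your argument is correct and follows essentially the same route as the paper: for $q<1$ you use uniform integrability coming from the exact bound $\EE[|S_x|^2]=1$ (the paper writes this as $\EE[|S_x|^{2q}\mathbf 1_{|S_x|^{2q}>K}]\le K^{1-1/q}$, which is the same $L^{1/q}$ mechanism you invoke), and for $q=1$ you reduce to $\EE[V_\infty]=1$, which the paper also does, deducing it from the $L^2$ convergence underlying \Cref{thm:mc-convergence} together with $\int_\RR |\tfrac12+is|^{-2}\,ds=2\pi$. One small remark: your assertion that the martingale $(V_{x,\infty})_x$ is bounded in $L^2$ over the \emph{whole} line is slightly stronger than what the paper proves directly (it establishes $L^2$ control on compacts and then patches things together in the proof of \Cref{lem:limit-cond-var}), but the conclusion $\EE[V_\infty]=1$ is exactly what is needed and exactly what the paper uses.
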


\begin{rem}\label{rem:mom-how}
Note that $\Gamma(1+q)$ corresponds to the $2q$-th absolute moment of a standard complex Gaussian random variable. The convergence \eqref{eq:mom-convergence} is expected to hold for all fixed $0 \le q < 1/\theta$, and the uniform integrability of $|S_x|^{2q}$ is the only ingredient needed (apart from \Cref{thm:main}) in order to establish such claim. With a refinement of certain estimates in the present article, one could obtain \eqref{eq:mom-convergence} for all $q \le 2$ almost immediately. We shall discuss this extension in \Cref{subsec:mom-convergence}, where we shall also explain why uniform integrability (and hence convergence of moments) is expected for the entire range $0 \le q < 1/\theta$.
\end{rem}
 A positive integer is called squarefree if it is indivisible by a perfect square other than $1$. Let $\mu^2$ be the indicator of squarefree integers and let 
 \[ \Ma^{\Sf} \subseteq \Ma\]
 be the set of \textit{real-valued} multiplicative function vanishing on non-squarefrees.  Let $\Rad \colon \NN \to \RR$ be a Rademacher random multiplicative function. This is a multiplicative function, supported on squarefree integers, such that $(\Rad(p))_p$ are i.i.d.~random variables taking the values $+1$ and $-1$ with probability $\frac{1}{2}$, and $\Rad(n) := \prod_{p \mid n} \Rad(p)$ if $\mu^2(n)=1$. It is easy to see that
\begin{equation}\label{eq:orthRad} 
\EE [ \Rad(n) \Rad(m)] = \delta_{n,m}\mu^2(n)
\end{equation}
for all $n,m \in \NN$. Given $f \in \Ma^{\Sf}$ we study
\[ S^{\RadSym}_x =S^{\RadSym}_{x,f}:=\frac{1}{\sqrt{\sum_{n\le x} f(n)^2}}\sum_{n\le x} \Rad(n) f(n).\]
By \eqref{eq:orthRad}, $\EE [ (S^{\RadSym}_x)^2]=1$. 
We describe the limiting distribution of $S^{\RadSym}_x$ as $x\to \infty$ for a wide class of $f$-s. 
\begin{thm}\label{thm:mainRad}
Let $f \in \Ma^{\Sf}$. Suppose the following conditions are satisfied.
\begin{itemize}
\item[(a)] There is $\theta \in (0,\frac{1}{2})$ such that $f^2 \in \mathbf{P}_{\theta}$.
\item[(b)] There exists $c>0$ such that $\sum_{p} |f(p)|^3/p^{3/2-c}<\infty$.
\end{itemize}
Then we have
\begin{align}\label{eq:distRad}
    S^{\RadSym}_x \xrightarrow[x \to \infty]{d} \sqrt{V_\infty^\RadSym} \ G
\end{align}
\noindent where $\displaystyle V_\infty^\RadSym:= \frac{1}{2\pi}\int_{\RR} \frac{m_\infty^\RadSym(ds)}{|\frac{1}{2} + is|^2}$ is almost surely finite and strictly positive (see \Cref{thm:mc-convergence-rad} for the definition of $m^{\RadSym}_\infty$), and is independent of $G \sim \Na(0, 1)$. Moreover, the convergence in law is stable in the sense of \Cref{def:stable}.
\end{thm}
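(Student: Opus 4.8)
The plan is to run the same argument as in the proof of \Cref{thm:main}, and I will only indicate the structural steps and the points where the Rademacher setting differs. The first observation is a simplification: since $\Rad$ is supported on squarefree integers and $f\in\Ma^{\Sf}$ also vanishes on non-squarefrees, the relevant random Dirichlet series is the genuine degree-one Euler product $F(s)=\sum_n\Rad(n)f(n)n^{-s}=\prod_p(1+\Rad(p)f(p)p^{-s})$, whose local factor at each prime is a single linear polynomial rather than the infinite series appearing in \eqref{eq:mxinf}. This is exactly why hypothesis (b) here carries no condition on $f(p^k)$ for $k\ge2$, and why $m_\infty^\RadSym$ is the large-$x$ limit of $m_{x,\infty}^\RadSym(ds)=\prod_{p\le x}\frac{|1+\Rad(p)f(p)p^{-(1/2+is)}|^2}{\EE|1+\Rad(p)f(p)p^{-(1/2+is)}|^2}\,ds$, which will be supplied by \Cref{thm:mc-convergence-rad}.

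First I would reduce the sharp sum $\sum_{n\le x}\Rad(n)f(n)$ to a smoothed analogue $\sum_n\Rad(n)f(n)\Phi(n/x)$ for a fixed smooth, rapidly decaying $\Phi$, and simultaneously truncate the Euler product to primes $p\le x^{1-\OurEpsilon}$, checking through second-moment estimates — which use $\EE[(S_x^\RadSym)^2]=1$, Wirsing's asymptotic \cite{Wirsing} for $\sum_{n\le x}f(n)^2$ (available by (a)), and the tail bound afforded by $\sum_p|f(p)|^3/p^{3/2-c}<\infty$ from (b) — that none of these replacements changes the limiting law. Shifting the contour of the Mellin integral for the smoothed sum down to the critical line then expresses it, up to a negligible error, in terms of the partial Euler product $F_{\le x}(\tfrac12+it)=\prod_{p\le x}(1+\Rad(p)f(p)p^{-(1/2+it)})$ twisted by $x^{it}$; and the normalising constant $\sum_{n\le x}f(n)^2$ would be rewritten via the Parseval identity $\int_\RR|F_{\le x}(\tfrac12+it)|^2|\tfrac12+it|^{-2}\,dt=2\pi\int_0^\infty y^{-2}(\sum_{m\le y,\,P^+(m)\le x}\Rad(m)f(m))^2\,dy$, which is exactly where the weight $|\tfrac12+is|^{-2}$ in $V_\infty^\RadSym$ originates.

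Next I would exhibit the martingale structure. Enumerating the primes $p_1<p_2<\cdots$ and setting $\Fa_k:=\sigma(\Rad(p):p\le p_k)$, one has $\sum_{n\le x}\Rad(n)f(n)=1+\sum_k\Rad(p_k)f(p_k)T_k$, where $T_k:=\sum_{m\le x/p_k,\,P^+(m)<p_k}\Rad(m)f(m)$ is $\Fa_{k-1}$-measurable; this is a martingale difference sequence because conditioning on $\Fa_k$ annihilates every $n$ having a prime factor larger than $p_k$. After normalising by $\sigma_x:=(\sum_{n\le x}f(n)^2)^{1/2}$ I would invoke a martingale central limit theorem with a random ($\Fa_\infty$-measurable) limiting variance — in the stable form due to McLeish and Hall--Heyde — whose hypotheses here become: (i) the conditional variances $\sigma_x^{-2}\sum_k f(p_k)^2T_k^2$ converge in probability to $V_\infty^\RadSym$; and (ii) a conditional Lyapunov bound $\sigma_x^{-4}\sum_k f(p_k)^4\EE[T_k^4\mid\Fa_{k-1}]\to0$, which holds because $|\Rad(p)|=1$ and $\sum_p|f(p)|^3/p^{3/2-c}<\infty$ together make all increments $\Rad(p_k)f(p_k)T_k$ uniformly negligible relative to $\sigma_x$. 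Stability (in the sense of \Cref{def:stable}) is then automatic, since the limiting variance is measurable for the terminal $\sigma$-algebra and the theorem yields joint convergence of (partial sum, quadratic variation); this gives \eqref{eq:distRad}, with a real Gaussian $G$ because $\Rad$ and $f$ are real-valued.

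The main obstacle is verifying (i), that the quadratic variation converges to exactly $V_\infty^\RadSym$ — a \emph{random} limit, so this is not a law of large numbers. Using the Parseval identity above (applied to the truncations $F_{\le p_{k-1}}$ and summed against the weights $f(p_k)^2$) one rewrites $\sigma_x^{-2}\sum_k f(p_k)^2T_k^2$, up to lower-order terms, as $\frac1{2\pi}\int_\RR\frac{|F_{\le x}(\tfrac12+it)|^2}{\EE|F_{\le x}(\tfrac12+it)|^2}\frac{dt}{|\tfrac12+it|^2}$ and must then pass to the limit $x\to\infty$. This step is delicate: the random Euler product has no pointwise limit on the critical line, so one works with the truncations $F_{\le x}$ and controls the error as $x\to\infty$; one invokes the convergence $m_{x,\infty}^\RadSym\to m_\infty^\RadSym$ from \Cref{thm:mc-convergence-rad} in a topology strong enough to integrate against the bounded kernel $|\tfrac12+it|^{-2}$, which requires uniform tail control in $|t|$ as well as control near $t=0$; and one must check that the smoothing and truncation errors introduced in the first step do not corrupt the limit. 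The restriction $\theta<\tfrac12$ enters precisely here: it keeps the relevant multiplicative chaos subcritical, so that $V_\infty^\RadSym\in(0,\infty)$ almost surely — at $\theta=\tfrac12$ one would sit at the critical point, where both the normalisation and the nature of the limit change.
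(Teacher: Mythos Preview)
Your overall architecture --- martingale CLT with a random, $\Fa_\infty$-measurable limiting variance identified as a chaos integral --- is right, and matches the paper's. But the execution diverges from the paper in ways that leave the hardest step unjustified.

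The paper does \emph{not} smooth the sum or use Mellin inversion on $S_x^\RadSym$. It works with the sharp sum throughout, truncating to $P(n)\ge x^{\OurEpsilon}$ (the opposite direction from your ``$p\le x^{1-\OurEpsilon}$'') and writes $S_{x,\OurEpsilon}^\RadSym=\sum_{x^\OurEpsilon\le p\le x}Z_{x,p}^\RadSym$. Your Lyapunov check is also too quick: $\sum_p|f(p)|^3/p^{3/2-c}<\infty$ controls $f(p)$ but says nothing about $\EE[T_k^4]$. The paper bounds $\sum_p\EE[(Z_{x,p}^\RadSym)^4]$ by parametrising squarefree solutions to $abcd=\square$ (\Cref{lem:param}): every such quadruple factors as $a=g_1g_3g_4$, $b=g_1g_5g_6$, $c=g_2g_3g_5$, $d=g_2g_4g_6$ with $g_3,\dots,g_6$ pairwise coprime, whence $f(a)f(b)f(c)f(d)=\prod_if(g_i)^2\ge 0$ is dominated by a sixfold convolution $(f^2)^{*6}$.

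The real gap is your identification of the conditional variance. The sum $T_{x,\OurEpsilon}^\RadSym=\sum_{x^\OurEpsilon\le p\le x}\EE[(Z_{x,p}^\RadSym)^2\mid\Fa_{p^-}^\RadSym]$ is \emph{not}, even up to lower-order terms, a single Parseval integral of $|F_{\le x}(\tfrac12+it)|^2$: each summand carries the smooth-number constraint $P(m)<p$ with a varying $p$, and summing these against $f(p)^2$ does not telescope into a clean Plancherel formula. The paper inserts an intermediary $U_x^\RadSym:=\big(\sum_nf(n)^2(1/n-1/x)\big)^{-1}\int_1^x(s_t^\RadSym)^2t^{-1}dt$ built from the \emph{untruncated} partial sums, and proves $\EE[(T_{x,\OurEpsilon}^\RadSym-C_\OurEpsilon U_x^\RadSym)^2]\to 0$ by computing $\EE[(U_x^\RadSym)^2]$, $\EE[T_{x,\OurEpsilon}^\RadSym U_x^\RadSym]$, $\EE[(T_{x,\OurEpsilon}^\RadSym)^2]$ separately via the same $abcd=\square$ parametrisation (\Cref{lem:idenRad}), showing they converge to $\Lsum^\RadSym$, $C_\OurEpsilon\Lsum^\RadSym$, $C_\OurEpsilon^2\Lsum^\RadSym$. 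Only then is $U_x^\RadSym$ linked to $V_\infty^\RadSym$, and not by a contour shift: Plancherel is applied at abscissa $\sigma_x=\tfrac12+\tfrac1{2\log x}$ (so the full Euler product converges absolutely), \Cref{thm:mc-convergence-rad} gives $m_{\infty,x}^\RadSym\to m_\infty^\RadSym$, and a \emph{random} Tauberian theorem (\Cref{theo:tauberian}) removes the regularisation. One Rademacher-specific subtlety you do not mention: because $\Rad(p)^2=1$, the two-point function of the Euler product picks up $2\cos((s_1{+}s_2)\log p)$ in addition to $2\cos((s_1{-}s_2)\log p)$, so the $L^2$ bound on the chaos measure is $\ll|s_1-s_2|^{-2\theta}|s_1+s_2|^{-2\theta}$, and integrability near the anti-diagonal is an extra check absent in the Steinhaus case.
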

\begin{cor}\label{cor:mom-convergenceRad}
Under the same setting as \Cref{thm:mainRad}, for any fixed $0 \le q \le 1$, 
\begin{align} \label{eq:mom-convergenceRad}
    \lim_{x \to \infty} \EE\left[|S^{\RadSym}_x|^{2q}\right] = \frac{2^q}{\sqrt{\pi}}\Gamma\left(\frac{1}{2}+q\right) \EE\left[(V_\infty^\RadSym)^q\right].
\end{align}
\end{cor}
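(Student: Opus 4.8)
The plan is to upgrade the convergence in law of \Cref{thm:mainRad} to convergence of $2q$-th absolute moments by establishing uniform integrability; this runs in exact parallel with the deduction of \Cref{cor:mom-convergence} from \Cref{thm:main}, the only change being that the Gaussian factor $G$ is real rather than complex. First I would apply the continuous mapping theorem to $t\mapsto|t|^{2q}$ to get $|S^{\RadSym}_x|^{2q}\xrightarrow{d}(V^{\RadSym}_\infty)^q|G|^{2q}$, with $G\sim\Na(0,1)$ independent of $V^{\RadSym}_\infty$. If the family $\{|S^{\RadSym}_x|^{2q}:x\ge2\}$ is uniformly integrable, convergence in law forces $\EE[|S^{\RadSym}_x|^{2q}]\to\EE[(V^{\RadSym}_\infty)^q|G|^{2q}]=\EE[(V^{\RadSym}_\infty)^q]\,\EE[|G|^{2q}]$, where the factorisation uses independence. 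The computation $\EE[|G|^{2q}]=\tfrac{2^q}{\sqrt\pi}\Gamma(q+\tfrac12)$ for a standard real Gaussian (to be contrasted with $\EE[|G|^{2q}]=\Gamma(1+q)$ in the complex case, where $|G|^2$ is a unit exponential) then yields precisely the constant in \eqref{eq:mom-convergenceRad}, and finiteness of $\EE[(V^{\RadSym}_\infty)^q]$ comes for free, being the finite limit of the bounded sequence $\EE[|S^{\RadSym}_x|^{2q}]$.

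Everything therefore reduces to uniform integrability of $\{|S^{\RadSym}_x|^{2q}:x\ge2\}$, and the restriction $q\le1$ makes this nearly automatic. For $0\le q<1$ the normalisation $\EE[(S^{\RadSym}_x)^2]=1$ from \eqref{eq:orthRad} gives $\sup_{x\ge2}\EE[(|S^{\RadSym}_x|^{2q})^{1/q}]=1$, i.e.\ boundedness in $L^{1/q}$ with $1/q>1$, which already yields uniform integrability with no input beyond \Cref{thm:mainRad}. The borderline value $q=1$ is the only one calling for a genuine estimate: it is enough to find $\varepsilon>0$ with $\sup_{x\ge2}\EE[|S^{\RadSym}_x|^{2+\varepsilon}]<\infty$, which follows from the moment bounds in \Cref{subsec:mom-convergence} — concretely, from an upper bound of the expected size $\bigl(\sum_{n\le x}f(n)^2\bigr)^2$ for the weighted count of solutions of $n_1n_2n_3n_4=\square$ with $n_i\le x$ squarefree, which is where the hypothesis $\theta<\tfrac12$ (rather than just $\theta<1$) is used. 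Alternatively one can bypass new estimates at $q=1$: the construction of $m^{\RadSym}_\infty$ in \Cref{thm:mc-convergence-rad} should give $\EE[m^{\RadSym}_\infty(ds)]=ds$, hence $\EE[V^{\RadSym}_\infty]=\tfrac1{2\pi}\int_\RR|\tfrac12+is|^{-2}\,ds=1=\lim_x\EE[(S^{\RadSym}_x)^2]$, and the standard fact that nonnegative random variables converging in law with convergent means are uniformly integrable finishes the case.

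The hard part is thus the uniform moment control underlying $q=1$ — an arithmetic rather than a probabilistic difficulty, amounting to a careful count of solutions of $n_1\cdots n_{2k}=\square$ with $k=2$; everything else in the argument is soft measure theory. This also indicates how to reach the wider ranges mentioned in \Cref{rem:mom-how}: a uniform fourth-moment bound $\sup_x\EE[|S^{\RadSym}_x|^4]<\infty$ (valid because $\theta<\tfrac12$), combined with the log-convexity of $p\mapsto\EE[|S^{\RadSym}_x|^p]$ on $[2,4]$, gives $\sup_x\EE[|S^{\RadSym}_x|^p]<\infty$ for every $p\in[2,4]$ and hence uniform integrability of $|S^{\RadSym}_x|^{2q}$ for all $q<2$, so that \eqref{eq:mom-convergenceRad} extends to that range verbatim; pushing the solution count to $2k$ variables for $k<1/\theta$ would reach the full conjectural range $0\le q<1/\theta$.
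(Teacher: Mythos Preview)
Your proposal is essentially correct and matches the paper's own argument. For $q\in[0,1)$ the uniform integrability via the second-moment bound is exactly what the paper does (in the Steinhaus proof of \Cref{cor:mom-convergence}, which it says carries over verbatim). One small inaccuracy: your first route at $q=1$, appealing to a $(2+\varepsilon)$-moment bound ``from \Cref{subsec:mom-convergence}'', is not actually available --- that subsection only \emph{discusses} fourth-moment bounds heuristically (``it may even be possible to check\dots'') and does not prove them. The paper's actual argument at $q=1$ is precisely your ``alternative'': one checks $\EE[V^{\RadSym}_\infty]=1$ directly from the $L^2$-construction in \Cref{thm:mc-convergence-rad}, and since $\EE[(S^{\RadSym}_x)^2]\equiv 1$ identically, both sides of \eqref{eq:mom-convergenceRad} equal $1$ with no uniform integrability step needed at all. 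Your Scheff\'e-type detour through UI is correct but unnecessary here.
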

The proofs of \Cref{thm:mainRad} and \Cref{cor:mom-convergenceRad} are very similar to those of \Cref{thm:main} and \Cref{cor:mom-convergenceRad}, and the necessary changes are described in \Cref{sec:rad}.
\subsection{Examples and previous works}
We single out some $f$-s to which our result applies.
\begin{itemize}
    \item $d_z$, the $z$-th divisor function, if $z \in (0,\tfrac{1}{\sqrt{2}})$. It is defined via $\sum_{n} d_z(n)/n^s=\zeta(s)^z$, or more explicitly as $d_z(p^i) = \binom{z+i-1}{i}$ on prime powers. Condition (a) of \Cref{thm:main} holds as a consequence of the prime number theorem with (weak) error term. The same is true for $z^{\omega}$ and $z^{\Omega}$ if  $z \in (0,\tfrac{1}{\sqrt{2}})$, where $\omega(n)$ is the number of distinct prime factors of $n$ and $\Omega(n)$ is the number of prime factors with multiplicity.
    \item Given a modulus $q$ and a subset $S \subseteq (\ZZ/q\ZZ)^{\times}$ let $P_{q,S}$ be the set of primes congruent to $a\bmod q$ for some $a \in S$. The indicator function of integers which are products of primes from $P_{q,S}$ satisfies the conditions of \Cref{thm:main} as long as $|S|<\frac{1}{2}|(\ZZ/q\ZZ)^{\times}|$. This is a consequence of the prime number theorem in arithmetic progressions with (weak) error term.
    \item Let $K/\QQ$ be a number field of degree greater than $2$. The indicator function of integers $n$ such that the ideal $(n)$ is representable as $\mathrm{Nm}(I)$ for some ideal in the ring of integers $\Oa_K$ satisfies the conditions of \Cref{thm:main}. This is a consequence of Chebotarev's density theorem with (weak) error term. 
    \item Let $b$ be the indicator of sums of two squares. For every $t\in (0,1)$, $b \cdot t^{\omega}$ satisfies the conditions  of \Cref{thm:main}.
    \item Any $f \in \Ma$ that takes values in $[-1,1]$ and such that $\sum_{p \le t}f(p)^2 = \theta t/\log t +O(t/\log^{1+\OurEpsilon}t)$ holds for some $\OurEpsilon>0$ and $\theta \in (0,\tfrac{1}{2})$ satisfies the conditions  of \Cref{thm:main}.
\end{itemize}
Various authors established central limit theorems for \begin{equation}\label{eq:2sums}\frac{\sum_{n \le x} \alpha(n)f(n)}{\sqrt{\sum_{n\le x}|f(n)|^2}}, \qquad \frac{\sum_{n \le x} \Rad(n)f(n)}{\sqrt{\sum_{n\le x}f(n)^2}}
\end{equation}
for particular $f$-s, but \Cref{thm:main} is the first to exhibit a nontrivial non-Gaussian limiting distribution. Previous works studied the cases where $f$ is the indicator of integers with $k$ prime factors \cite{Hough,HLD,Aggarwal}, indicator of short intervals \cite{Chatterjee,Pandey,SX} and indicator of polynomial values \cite{Najnudel,KSX,Paucity}. The recent work of Soundararajan and Xu \cite[Corollary 3.2 and Theorem 9.1]{SX} supplies a sufficient criterion for $\sum_{n \in \Aa} \alpha(n)/\sqrt{|\Aa|}$ and $\sum_{n \in \Aa} \Rad(n)/\sqrt{|\Aa|}$ to have a limiting Gaussian distribution, where $\Aa=\Aa_x \subseteq \NN$ are finite subsets varying as $x\to \infty$.

One of the approaches previously used to studying the sums  \eqref{eq:2sums} is McLeish's martingale central limit theorem \cite{McLeish}. Its use in number theory was pioneered by Harper \cite{HLD} and since then has been used in \cite{Aggarwal,KSX,SX}.
Despite being more flexible than the Feller--Lindeberg central limit theorem (which requires independence), McLeish's 1974 result only produces Gaussian limiting distribution and cannot deduce the type of results presented in the current paper. Inspired by the work of Najnudel, Paquette and Simm \cite{NPS}, we employ a more general martingale central limit theorem (see \Cref{lem:mCLT}) which has the following crucial distinctions. For one thing, the class of limiting distributions is given by the family of Gaussian mixture models: the limiting random variable can be written in the form $\sqrt{V_{\infty}}G$ for some general nonnegative random variable $V_{\infty}$ (independent of $G$, where $G$ is Gaussian) which may be interpreted as a random variance. For another, the distributional convergence can be upgraded to the stronger `stable' sense (see \Cref{def:stable}), and $V_\infty$ is measurable with respect to the $\sigma$-algebra in the original probability space. In layman's terms, the random variance can be determined from the value of the random multiplicative function, and indeed much of the difficulty in the present work lies in identifying and constructing $V_{\infty}$. The interested readers may consult \cite[Chapter 3.1--3.2]{HH1980} for a historical account and further discussions of martingale central limit theorems.

In possibly the most interesting case, $f \equiv 1$, Harper \cite{Har2020} showed that  $\sum_{n \le x}\alpha(n)/\sqrt{x}$ and $\sum_{n \le x}\Rad(n)/\sqrt{x}$ tend in distribution to $0$. Harper's arguments were adapted  to a model problem and simplified by Soundararajan and Zaman \cite{SZ}. A vast generalization of the bounds in \cite{SZ} was established recently by Gu and Zhang \cite{GZ}. See also Najnudel, Paquette and Simm \cite{NPS} for a different derivation of some of the bounds in \cite{SZ}.

If $f=f_k$ is the indicator function of integers with $k$ prime factors and $k \asymp \log \log x$ then Harper showed in \cite[Theorem 2]{HLD} that $\sum_{n \le x}\Rad(n)f(n)/\sqrt{\sum_{n \le x}f(n)^2}$ cannot have a Gaussian limiting distribution.
\subsection{Conjectures}
We expect that \Cref{thm:main} holds, with possibly slight changes in our assumption for the twist $f$, for any fixed $\theta$ arbitrary close to $1$. 
Indeed, many results in the current article may be extended to larger values of $\theta$ with more refined analysis, but the main obstacle to achieving full extension with our approach is a crucial $L^2$ approximation (see \Cref{prop:l2} in \Cref{subsec:overview}) which leads to the technical condition that $\theta \in (0, \tfrac{1}{2})$. We make the following modest conjecture.
\begin{conj}\label{conj:l1}
Let $f \in \Ma$. Suppose the following conditions are satisfied.
\begin{itemize}
\item[(a)] There is $\theta \in (0,1)$ such that $\sum_{p \le t} |f(p)|^2 = \theta \frac{t}{\log t}+ O\left(\frac{t}{\log^2 t}\right)$. 
\item[(b)] There exists $c>0$ such that $|f(p^k)|^2 = O(2^{k(1-c)})$ and $|f(p)|<1/c$ for all $k\ge 2$ and primes $p$.
\end{itemize}
Then the stable convergence \eqref{eq:dist} holds and we have \eqref{eq:mom-convergence} for any fixed $q \in [0, 1/\theta).$
\end{conj}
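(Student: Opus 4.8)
\medskip
\noindent\textit{Proof strategy for \Cref{conj:l1}.}
The plan is to keep the entire architecture of the proof of \Cref{thm:main} and to replace exactly one ingredient, the $L^2$ approximation \Cref{prop:l2}, which is the only place where the restriction $\theta<\tfrac12$ enters. Recall that, after the martingale decomposition along primes, $S_x$ is compared with a contour integral of the partial Euler product $F_x(\tfrac12+it)=\prod_{p\le x}\bigl(1+\sum_{k\ge1}\alpha(p)^kf(p^k)p^{-k(1/2+it)}\bigr)$ against a fixed kernel, and that $|F_x(\tfrac12+it)|^2$, suitably renormalised, is a discrete model of a multiplicative chaos on $\RR$ whose $L^2$ phase is exactly $\theta<\tfrac12$ and whose full subcritical phase is $\theta<1$ (the latter being critical in Harper's case $f\equiv1$, $\theta=1$). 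For $\theta\in[\tfrac12,1)$ the total mass of this chaos still has all moments of order $<1/\theta$ but no longer a second moment, so the variance computation underpinning \Cref{prop:l2} collapses. By contrast, the construction, a.s.\ finiteness and strict positivity of $m_\infty$ and $V_\infty$ via the (sub)critical chaos machinery of \Cref{sec:convergence_measure} should go through unchanged for all $\theta<1$; the whole difficulty is in the approximation step.

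The first and main step is to upgrade \Cref{prop:l2} to an \emph{$L^1$ (in-probability) approximation}, which is all that the martingale central limit theorem \Cref{lem:mCLT} actually uses. I would do this by the barrier/good-event technique of Harper \cite{Har2020,HLD}: fix an intermediate scale $y=y(x)$, condition on $(\alpha(p))_{p\le y}$, and restrict to the event $\mathcal G$ that the relevant log-correlated increment field built from $(\alpha(p))_{y<p\le x}$ stays below a slowly growing logarithmic barrier. On $\mathcal G$ the truncated chaos has a bounded conditional second moment, yielding (i) a conditional $L^2$ estimate that the distance between $S_x$ and its Euler-product approximant is $o(1)$ on $\mathcal G$; one then needs the complementary (ii) $\PP(\mathcal G^c)\to0$ together with an $L^1$ bound showing that the $\mathcal G^c$-contribution to both $S_x$ and the approximant is negligible in probability. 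Together (i)--(ii) give convergence in probability, hence the stable convergence \eqref{eq:dist} for all $\theta\in(0,1)$ exactly as in \Cref{thm:main}.

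The second step is the convergence of moments \eqref{eq:mom-convergence} for $q\in[0,1/\theta)$: by \Cref{rem:mom-how} this reduces to uniform integrability of $\{|S_x|^{2q}\}_x$, i.e.\ to a bound $\EE[|S_x|^{2q}]=O_{q,\theta}(1)$ for each fixed $q<1/\theta$. For even integers $2q$ this is a weighted count of solutions of $n_1\cdots n_q=m_1\cdots m_q$; for general $q$ one interpolates with Hölder and the independence of the Euler factors, in the style of $L^p$-bounds for subcritical multiplicative chaos, which are finite precisely up to the threshold $1/\theta$. The threshold $1/\theta$ is sharp --- matching the restriction in the conjecture --- since $\EE[|S_x|^{2q}]$ grows unboundedly once $q>1/\theta$.

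The hard part is unquestionably step (i)--(ii). One must choose the barrier with the correct profile (a ballot-type restriction on a Gaussian random walk with a margin growing like a small power of its length), verify that the fixed kernel in the Euler-product representation is compatible with the conditioning on $(\alpha(p))_{p\le y}$, and control the martingale increments of the truncated object uniformly --- this is where the technical weight of \cite{Har2020} lies, and carrying it over both to a general twist $f$ and to the measure-valued, stable-convergence framework of the present paper is the principal obstacle.
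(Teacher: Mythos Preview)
The statement you are addressing is \emph{Conjecture}~\ref{conj:l1}: the paper does not prove it and does not claim to. There is therefore no ``paper's own proof'' to compare against; what you have written is a research programme rather than a proof, and should be clearly flagged as such. That said, your outline is broadly consonant with the paper's own diagnosis (see the paragraph preceding the conjecture and \Cref{rem:mom-how}, \Cref{subsec:mom-convergence}): the authors identify the $L^2$ approximation in \Cref{prop:l2} as the main obstruction and point to Harper-type barrier methods and Gaussian approximation for the moment bounds.

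There is, however, a concrete inaccuracy in your diagnosis. You assert that \Cref{prop:l2} is ``the only place where the restriction $\theta<\tfrac12$ enters'' and that the construction of $m_\infty$ and $V_\infty$ ``should go through unchanged for all $\theta<1$''. This is not correct for the paper as written. \Cref{thm:mc-convergence} is stated and proved only for $\theta\in(0,\tfrac12)$: Step~1 in \Cref{subsubsec:pf-mc-stein} uses the $L^2$ martingale convergence theorem, and the required second-moment bound reduces to the finiteness of $\iint_{I\times I}|s_1-s_2|^{-2\theta}\,ds_1\,ds_2$, which fails once $\theta\ge\tfrac12$. The same $L^2$ machinery underlies \Cref{lem:mcL2truncate}, \Cref{lem:mcL2compare}, the support argument in Step~4, and the deduction of \Cref{lem:limit-cond-var}. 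So for $\theta\in[\tfrac12,1)$ you would have to \emph{reprove} the existence, nontriviality and full support of $m_\infty$ by genuinely different means (e.g.\ $L^1$/uniformly integrable martingale convergence in the spirit of Kahane's subcritical chaos theory), and then redo the Tauberian step without the $L^2$ input. None of this is in the paper; it is additional work on top of, not instead of, the replacement of \Cref{prop:l2}.

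In short: your high-level plan (barrier/good-event conditioning to recover an $L^1$ version of the approximation, plus uniform integrability of $|S_x|^{2q}$ for $q<1/\theta$) is a reasonable blueprint and matches the paper's own hints, but it is not a proof, and your claim that the limiting-measure side needs no modification understates the work required.
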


We believe that \Cref{conj:l1} will continue to hold when $\theta = 1$. Given the now-resolved Helson's conjecture, this claim may appear to contradict the better-than-squareroot cancellation phenomenon that was first established in the work of Harper \cite{Har2020}, where uniform two-sided estimates were obtained for low moments of 
\begin{equation}\label{eq:norm} 
(\log \log x)^{\frac{1}{4}} S_{x,f}
\end{equation}

\noindent when $f \equiv 1$. Those who are familiar with the theory of multiplicative chaos, however, may be able to quickly point out that $\theta = 1$ corresponds to the so-called critical regime, where one would expect $V_\infty:= \frac{1}{2\pi} \int_{\RR} |\tfrac{1}{2} + is|^{-2} m_\infty(ds)$ to be equal to $0$ almost surely. As such, the conclusion \eqref{eq:dist} will be merely reduced to e.g., $x^{-1/2} \sum_{n \le x} \alpha(n) \xrightarrow[x \to \infty]{p} 0$.

Given the uniform moment estimates, Harper asked in \cite[p.~12]{Har2020} whether one could establish a nontrivial distributional limit for \eqref{eq:norm} as $x \to \infty$. With his method of proof, Harper suggested that the limiting distribution might be closely related to the asymptotic behaviour of $\int_{-1/2}^{1/2} m_{x,\infty}(ds)$ where $m_{x,\infty}$ is defined in \eqref{eq:mxinf}. Here we refine Harper's question, and provide the following precise conjecture.
\begin{conj}\label{conj:critical}
If $f \in \{\mathbf{1}, \mu\}$, then
\begin{align}\label{eq:dist2}
  \frac{(\log \log x)^{\frac{1}{4}}}{\sqrt{x}}  \sum_{n \le x} \alpha(n)f(n) \xrightarrow[x \to \infty]{d} \sqrt{V_\infty^{\mathrm{critical}}} \ G.
\end{align}

\noindent Here, the random variable $V_\infty^{\mathrm{critical}}$ is almost surely finite, strictly positive, independent of $G\sim \Na_{\CC}(0,1)$, and is given by
\begin{align}\label{eq:Vcritical}
V_{\infty}^{\mathrm{critical}}
:= \lim_{x \to \infty} V_x^{\mathrm{critical}}
:= \lim_{x \to \infty} \frac{1}{2\pi}\int_{\RR} \frac{\sqrt{\log \log x}~ m_{x, \infty}(ds)}{|\frac{1}{2} + is|^2}
= \frac{1}{2\pi}\int_{\RR} \frac{m_{\infty}^{\mathrm{critical}}(ds)}{|\frac{1}{2} + is|^2}
\end{align}

\noindent where $m_{\infty}^{\mathrm{critical}}(ds) := \lim_{x \to \infty} \sqrt{\log \log x} ~ m_{x, \infty}(ds)$, and all the limits in \eqref{eq:Vcritical} are interpreted in the sense of convergence in probability (see \Cref{sec:convergence_measure} for details of convergence of random measures). Moreover, the convergence \eqref{eq:dist2} is stable, and we have
\begin{align}\label{eq:conjmom}
    \lim_{x \to \infty} \EE\left[ \left|\frac{(\log \log x)^{\frac{1}{4}}}{\sqrt{x}}  \sum_{n \le x} \alpha(n)f(n)\right|^{2q} \right] = \Gamma(1+q) \EE\left[ \left(V_\infty^{\mathrm{critical}} \right)^q\right]
\end{align}

\noindent for any fixed $q \in [0, 1)$.

\end{conj}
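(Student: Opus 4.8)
\medskip
\noindent\emph{A roadmap for \Cref{conj:critical}.}
The plan is to run the martingale central limit theorem machinery underlying \Cref{thm:main} at the critical exponent $\theta=1$, replacing its subcritical inputs by critical-chaos analogues. One filters $\sum_{n\le x}\alpha(n)f(n)$ according to consecutive blocks of primes $p\le x$, writes $\tfrac{(\log\log x)^{1/4}}{\sqrt x}\sum_{n\le x}\alpha(n)f(n)$ as a martingale transform, and applies the general martingale CLT of \Cref{lem:mCLT}. Identifying the limit requires two inputs: that the conditional variance of this martingale --- which now carries an extra factor $\sqrt{\log\log x}$ compared with the subcritical computation --- concentrates in probability around $V_\infty^{\mathrm{critical}}$, and that the increments satisfy the Lindeberg-type negligibility demanded by \Cref{lem:mCLT}. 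Granting these, the stable convergence \eqref{eq:dist2} and the measurability of $V_\infty^{\mathrm{critical}}$ in the original $\sigma$-algebra come out exactly as in the proof of \Cref{thm:main}.

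The first genuinely new ingredient is the convergence of the random Euler-product measure at criticality: one must show $\sqrt{\log\log x}\,m_{x,\infty}(ds)\to m_\infty^{\mathrm{critical}}(ds)$ as random measures (in the sense of \eqref{eq:Vcritical}), the arithmetic counterpart of the Seneta--Heyde normalisation for critical Gaussian multiplicative chaos. Since $x\mapsto m_{x,\infty}(I)$ is a martingale tending to $0$ almost surely for each interval $I$, the natural route is via the associated \emph{derivative martingale}: differentiating the subcritical chaos in $\theta$ at $\theta=1$ produces a signed martingale that nevertheless converges almost surely to a strictly positive limit, and one then proves that $\sqrt{\log\log x}\,m_{x,\infty}$ converges to a fixed multiple of that measure, which is $m_\infty^{\mathrm{critical}}$. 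This would give in particular that $V_\infty^{\mathrm{critical}}=\tfrac{1}{2\pi}\int_{\RR}|\tfrac12+is|^{-2}\,m_\infty^{\mathrm{critical}}(ds)$ is almost surely finite and strictly positive, the positivity being that of the total mass of the critical chaos. As in \Cref{thm:main}, the weight $|\tfrac12+is|^{-2}$ and the localisation of the ``mass'' near the line $\Re s=\tfrac12$ are produced by a Parseval/Perron-type identity linking the partial sum to the random Euler product $\prod_{p\le x}\bigl(1+\sum_{k\ge 1}\alpha(p)^k f(p^k)p^{-k(1/2+is)}\bigr)$ on the critical line.

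The main obstacle --- and the reason this remains a conjecture --- is that \Cref{prop:l2} has no analogue at $\theta=1$. For $\theta<\tfrac12$ one approximates $S_x$ in $L^2$ by a chaos-weighted Gaussian; at criticality this is hopeless, since $\EE[|S_{x,f}|^2]=1$ while $S_{x,f}$ is typically of size $(\log\log x)^{-1/4}$, so the second moment is carried almost entirely by an event of probability $\asymp(\log\log x)^{-1/2}$. One must therefore replace the unconditional $L^2$ method by the low-moment technology of Harper \cite{Har2020} (see also \cite{SZ,NPS}): split the primes into $\asymp\log\log x$ consecutive blocks on each of which $\sum_p 1/p\asymp 1$, condition on the small-prime blocks (which build up the random measure), and verify the two hypotheses of \Cref{lem:mCLT} through a \emph{conditional} second-moment estimate for the remaining short Euler products, together with a Girsanov/tilting step that excises the rare atypically large values of the partial products. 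Driving \Cref{lem:mCLT} with only the $L^{2q}$ control available for $q<1$, in place of the $L^2$ control of the subcritical regime, is precisely the gap separating \Cref{conj:critical} from \Cref{thm:main}.

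Finally, \eqref{eq:conjmom} for fixed $q\in[0,1)$ follows from the distributional limit together with uniform integrability of $\bigl|(\log\log x)^{1/4}x^{-1/2}\sum_{n\le x}\alpha(n)f(n)\bigr|^{2q}$, which in turn follows from Harper's sharp bound $\EE[|S_{x,f}|^{2q'}]\ll(\log\log x)^{-q'/2}$, valid for every $q'<1$: choosing $q'\in(q,1)$ keeps $\EE\bigl[\bigl|(\log\log x)^{1/4}x^{-1/2}\sum_{n\le x}\alpha(n)f(n)\bigr|^{2q'}\bigr]$ bounded, which is more than enough. The case $f=\mu$ is handled by the identical argument after restricting all sums and products to squarefree integers, mirroring the way \Cref{thm:mainRad} is deduced from \Cref{thm:main}.
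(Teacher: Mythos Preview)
This statement is \emph{Conjecture}~\ref{conj:critical}; the paper offers no proof, only the motivating remark that $\theta=1$ is the critical chaos regime (so the subcritical $V_\infty$ vanishes and a Seneta--Heyde renormalisation is needed). Your ``roadmap'' is therefore not competing against a paper proof, and you yourself flag it as a heuristic. That said, your discussion lines up well with the paper's own diagnosis: you correctly identify the collapse of the $L^2$ approximation in \Cref{prop:l2} as the essential obstruction, you correctly name the derivative-martingale / Seneta--Heyde route to constructing $m_\infty^{\mathrm{critical}}$, and you correctly point to Harper's low-moment technology as the intended substitute for the second-moment method. The paper gives no further detail beyond this.

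Your final paragraph --- deducing \eqref{eq:conjmom} for fixed $q\in[0,1)$ from \eqref{eq:dist2} plus uniform integrability via Harper's bound $\EE[|S_{x,f}|^{2q'}]\ll(\log\log x)^{-q'/2}$ for a fixed $q'\in(q,1)$ --- is the one piece that is essentially rigorous (conditional on the distributional convergence), and it works by the same mechanism as \Cref{cor:mom-convergence}. One small inaccuracy: the case $f=\mu$ in the conjecture is still a \emph{Steinhaus} statement, not a Rademacher one, so the analogy with ``the way \Cref{thm:mainRad} is deduced from \Cref{thm:main}'' is misplaced; $f=\mu$ simply truncates each local Euler factor to $1+\alpha(p)p^{-s}$, which simplifies but does not change the structure of the argument.
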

In a model problem, Aggarwal et al.~\cite{AggarwalConjectural} conjectured that $\lim_{N \to \infty}\EE |(\log N)^{1/4} A(N) |$ exists, where $A(N)$ is a model for $S_{x,1}$ defined originally in \cite{NPS,SZ}. This is related to the $q=\tfrac{1}{2}$ case of \eqref{eq:conjmom}, and our conjecture supports \cite[Conjecture 1.1]{Aggarwal}.

We complement our conjectures with some numerical evidence. The simulation experiment is based on $f(n) := \theta^{\Omega(n)/2}$, and in both \Cref{fig:sim1} and \Cref{fig:sim2} the subplots correspond to $\theta = 0.49$ (top-left), $\theta = 0.64$ (top-right), $\theta = 0.81$ (bottom-left) and $\theta = 1$ (bottom-right) respectively. 

\Cref{fig:sim1} shows the empirical distribution of $\Re \sum_{n \le x} \alpha(n) \theta^{\Omega(n)/2}$ (blue). The black curve corresponds to the Gaussian density with matching 2nd moment, whereas the red curve corresponds to the (empirical) distribution of $\sqrt{\frac{1}{2\pi} \int_{\RR} \frac{m_{x, \infty}(ds)}{|\frac{1}{2} + is|^2}} \Re(G)$ where $\Re(G) \sim \Na(0, 1/2)$ is independent. Despite the modest size of $x = 10000$, the red curve produces a surprisingly good fit for all values of $\theta$ considered.

\Cref{fig:sim2} shows the empirical distribution of the random variance $\sqrt{\frac{1}{2\pi} \int_{\RR} \frac{m_{x, \infty}(ds)}{|\frac{1}{2} + is|^2}}$. We note that the plots may not truly reflect the statistical properties of the limiting random variable $V_\infty$ given the choice of $x = 10000$, as the rate of convergence may be polylogarithmic in $x$. We anticipate light tail near $0$ and heavy tail at infinity to emerge in the empirical distribution as $x \to \infty$.

\begin{figure}
\begin{center}
\includegraphics[width=0.4\textwidth]{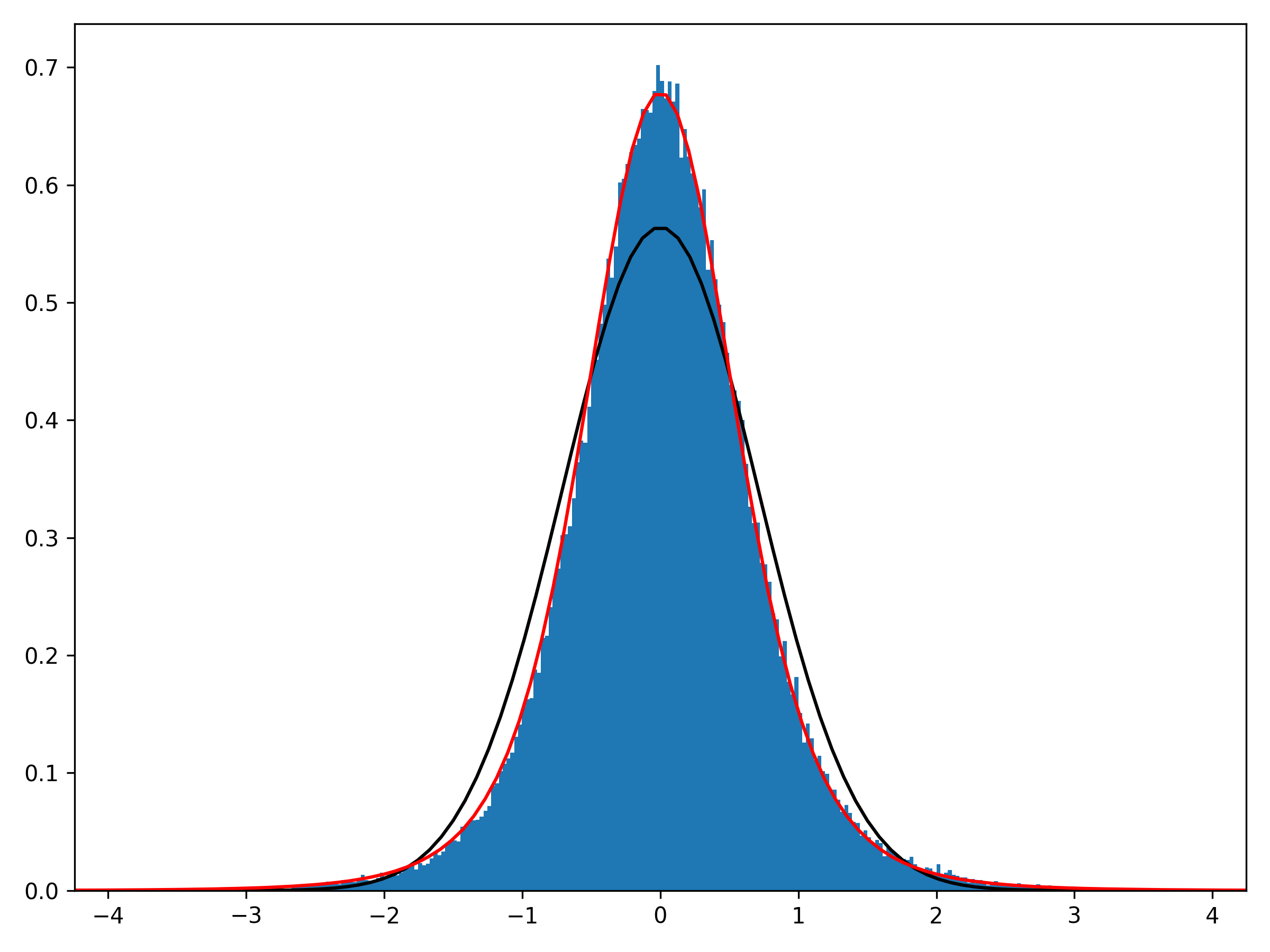}\hspace{1cm}
\includegraphics[width=0.4\textwidth]{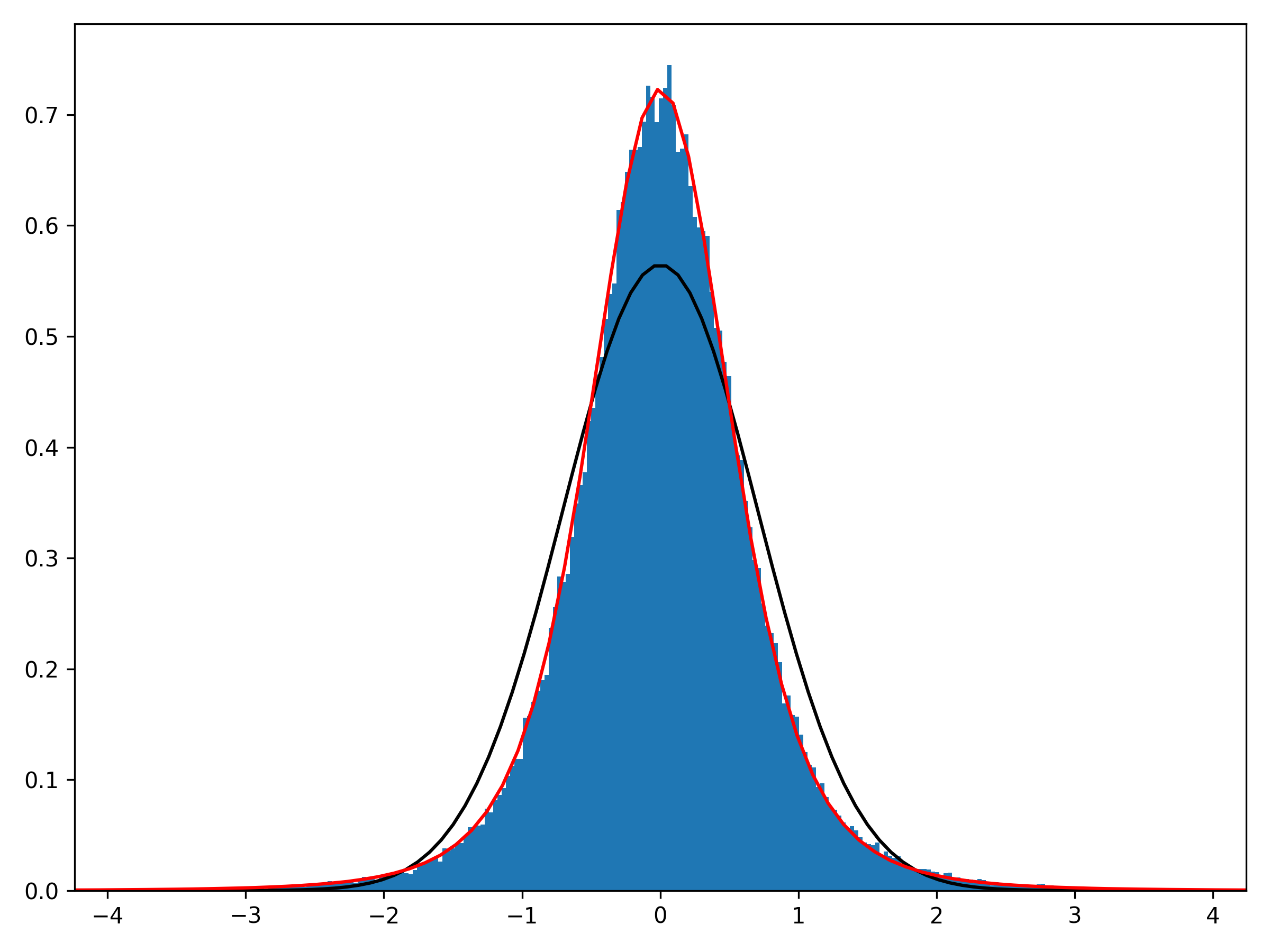}\\
\includegraphics[width=0.4\textwidth]{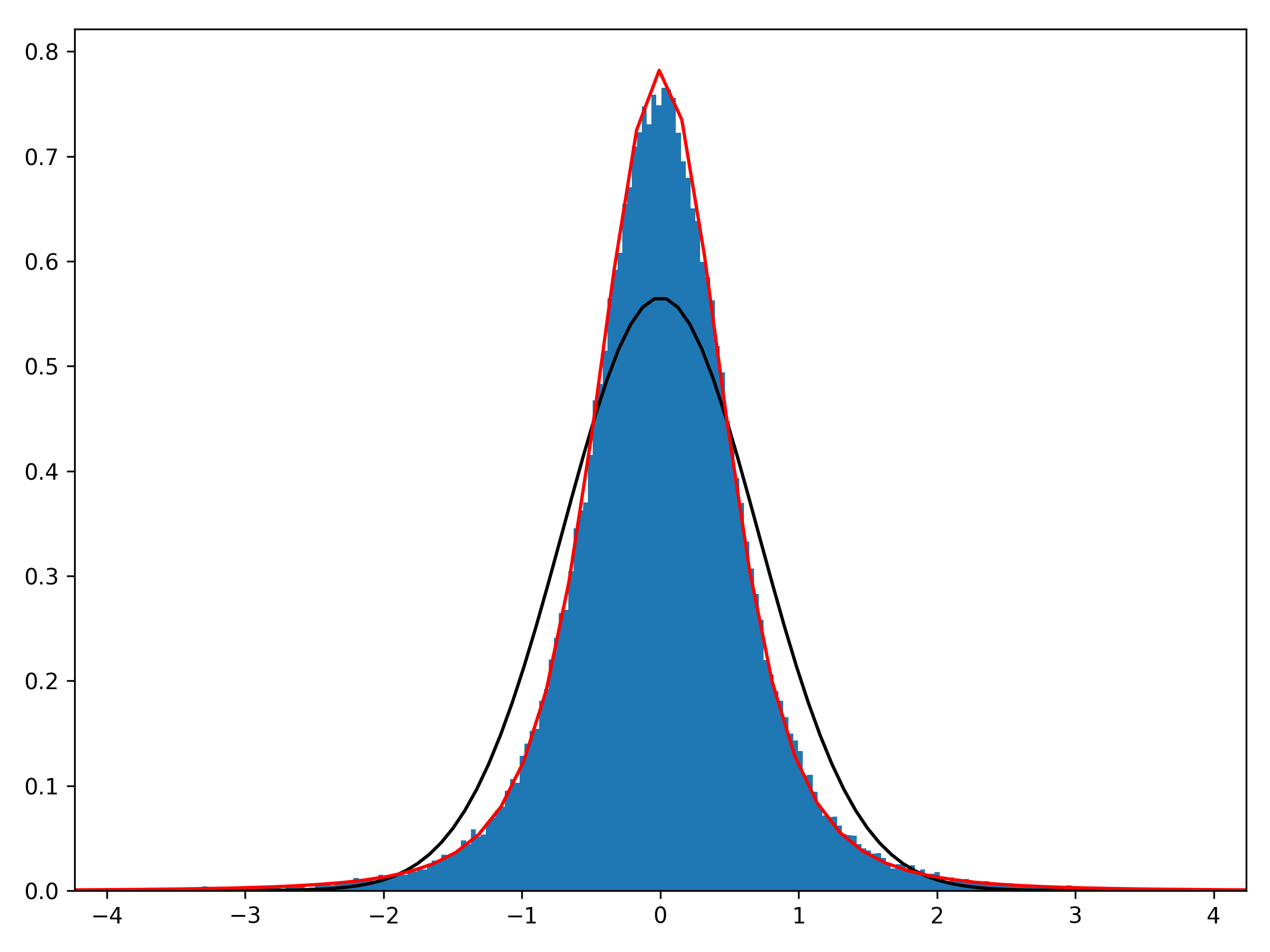}\hspace{1cm}
\includegraphics[width=0.4\textwidth]{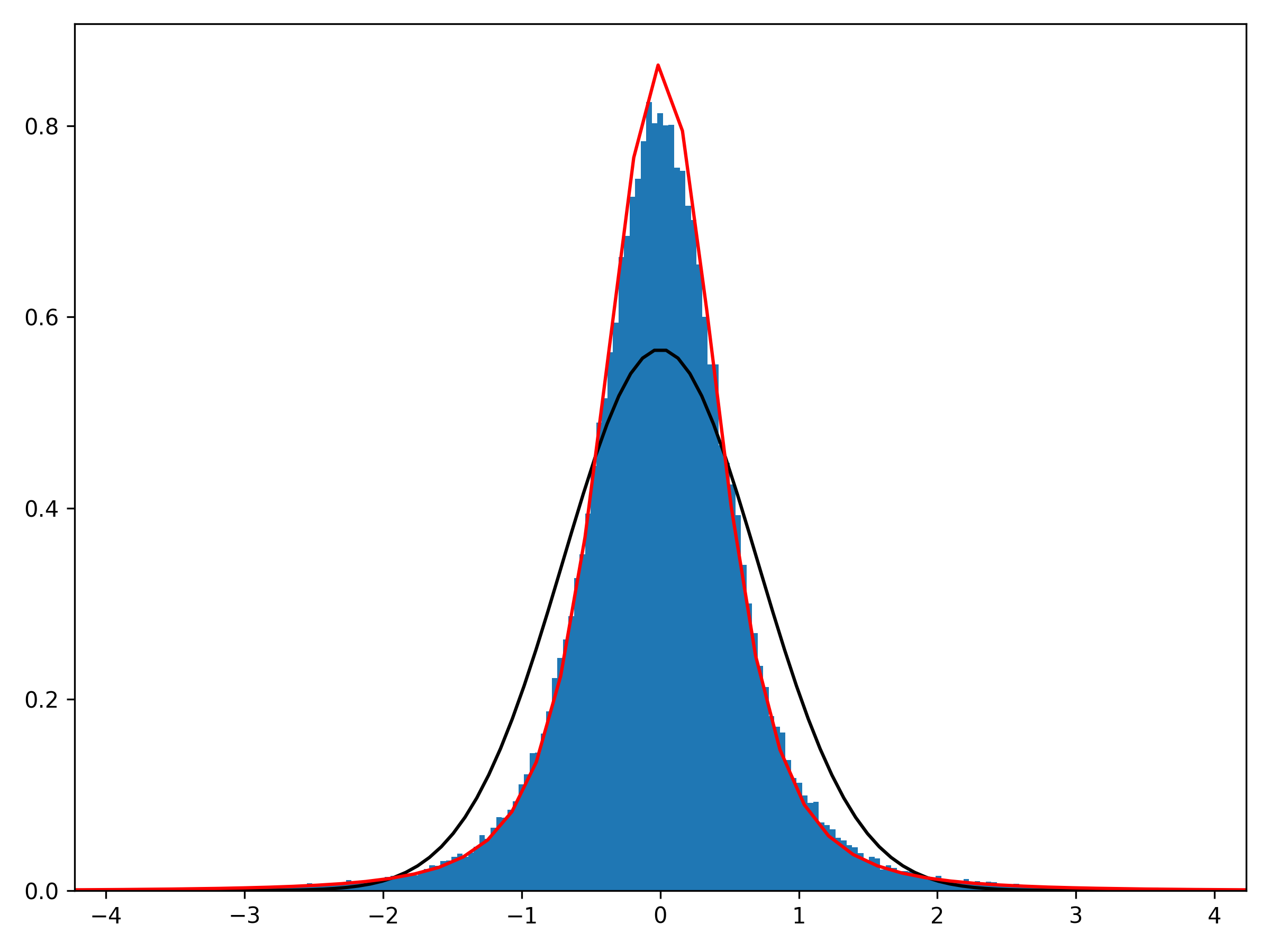}
\end{center}
\caption{\label{fig:sim1} Histogram of 120000 realisations of $\Re \sum_{n \le x} \alpha(n) \theta^{\Omega(n)/2}$, $x = 10000$.}
\end{figure}

\begin{figure}
\begin{center}
\includegraphics[width=0.4\textwidth]{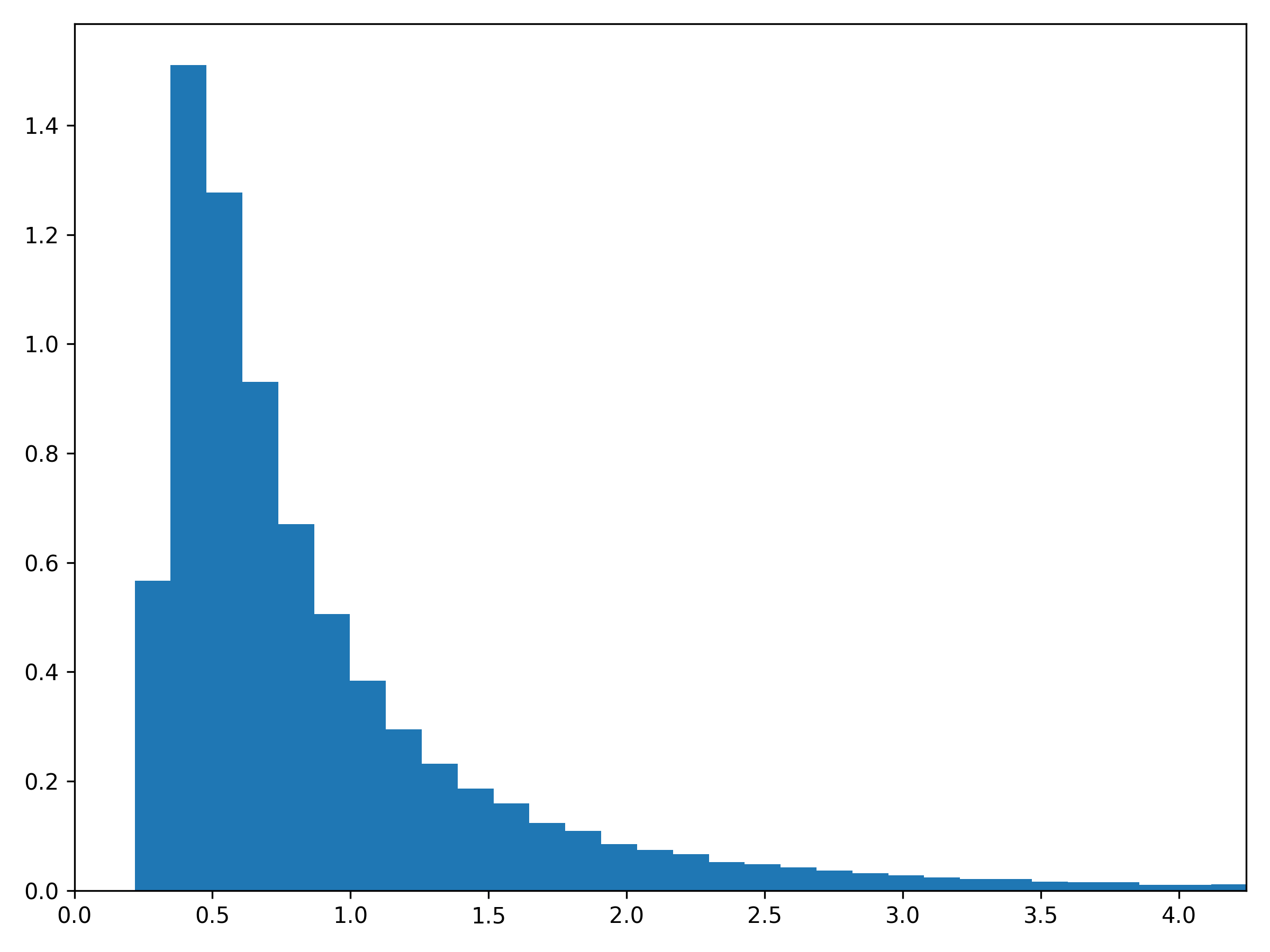} \hspace{1cm}
\includegraphics[width=0.4\textwidth]{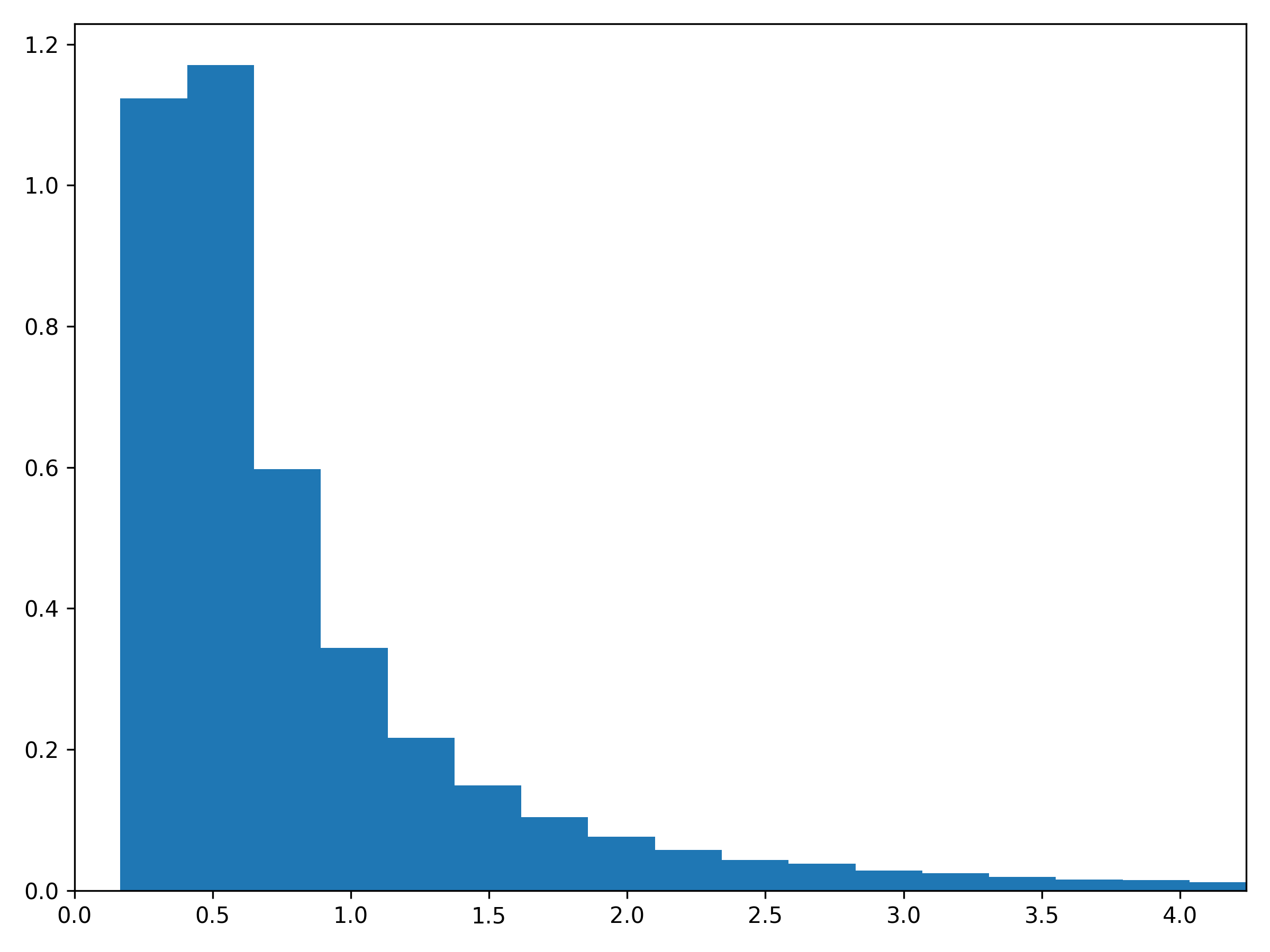}\\
\includegraphics[width=0.4\textwidth]{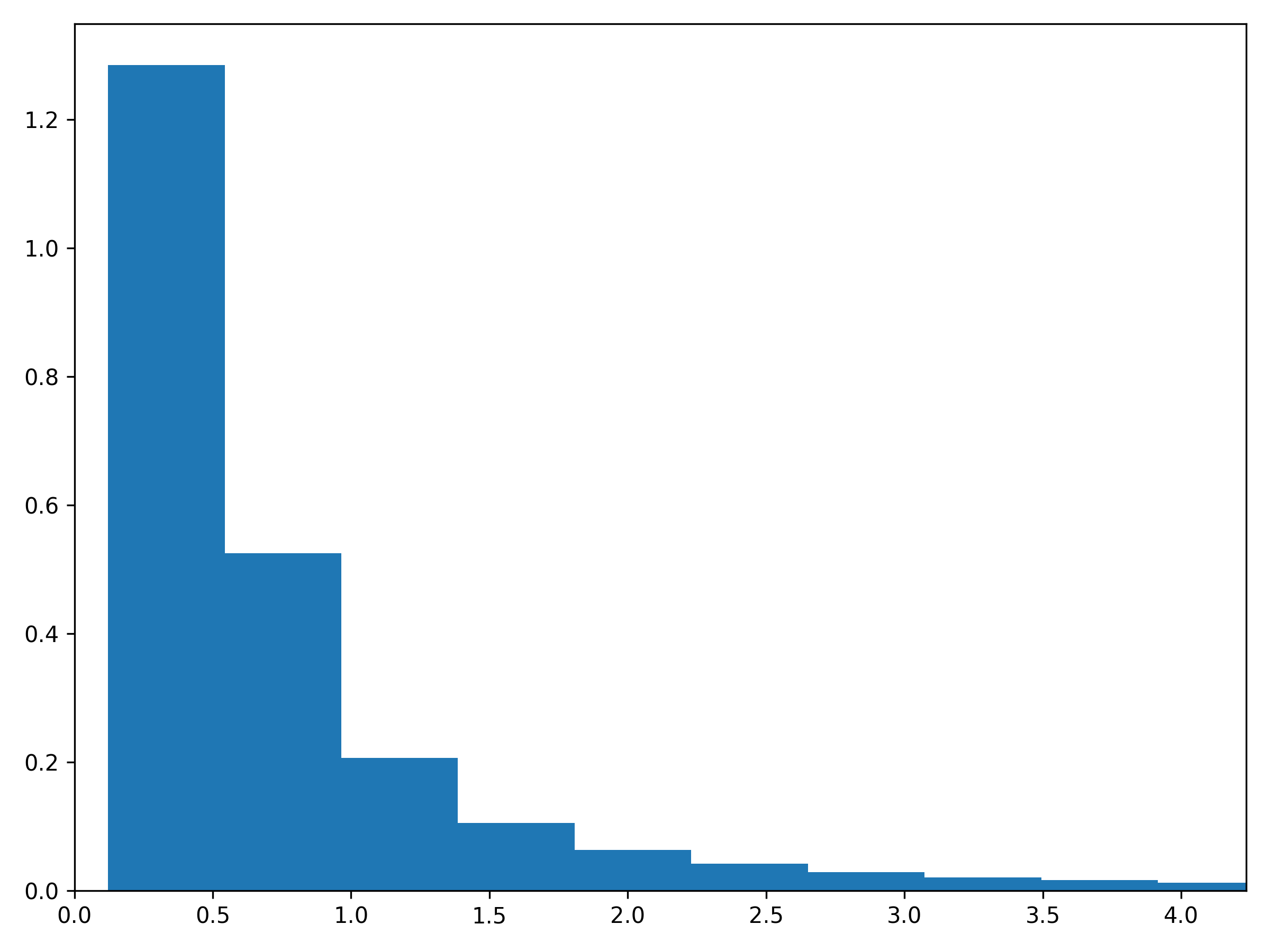} \hspace{1cm}
\includegraphics[width=0.4\textwidth]{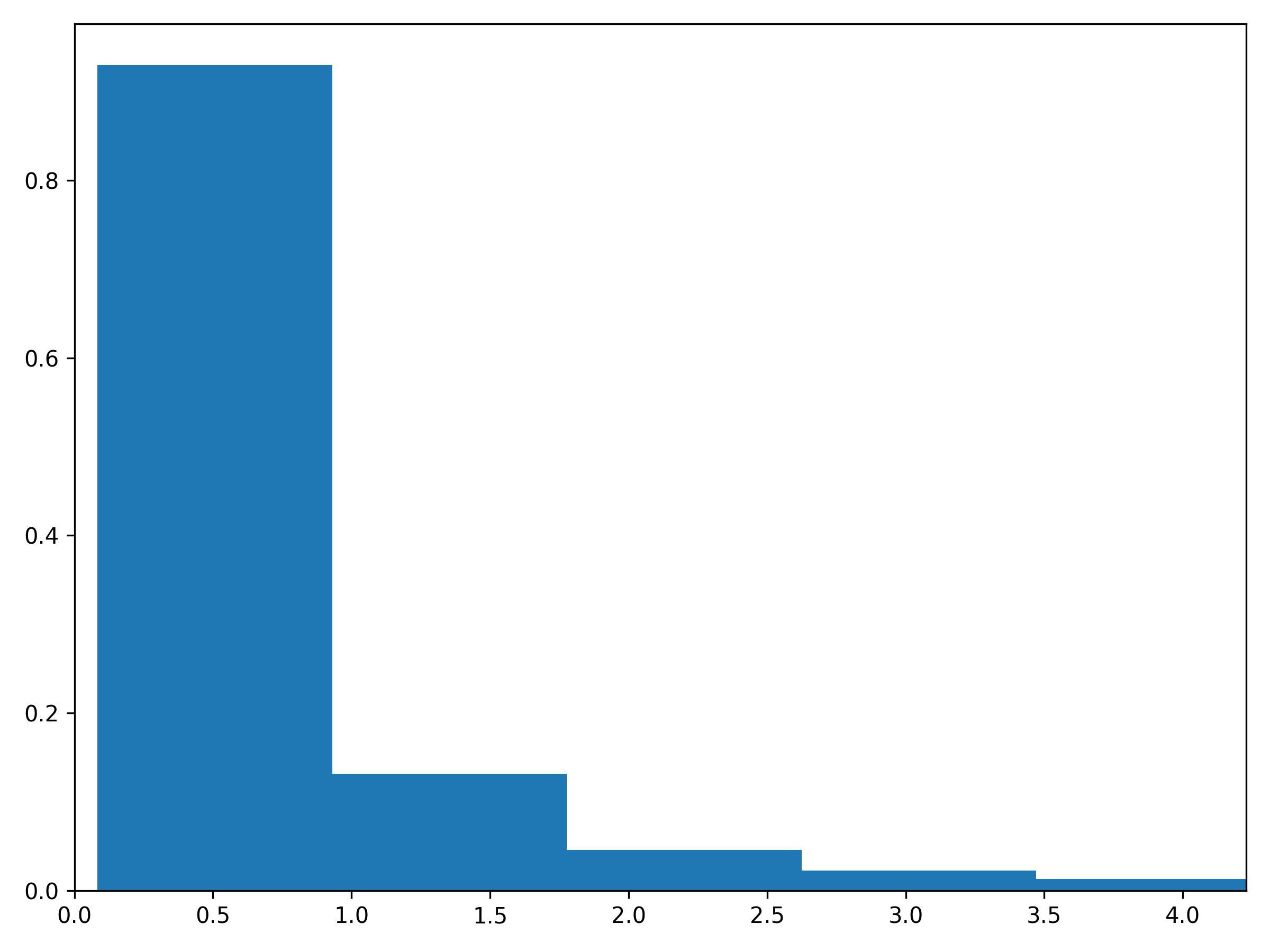}
\end{center}
\caption{\label{fig:sim2} Histogram of 120000 realisations of $\frac{1}{2\pi} \int_{\RR} \frac{m_{x, \infty}(ds)}{|\frac{1}{2} + is|^2}$, $x = 10000$.}
\end{figure}

\section*{Acknowledgments}
O.G.~has received funding from the European Research Council (ERC) under the European Union's Horizon 2020 research and innovation programme (grant agreement number 851318). We thank Adam Harper for comments that improved the introduction. The first author thanks Anurag Sahay and Max Xu for engaging discussions.

\section{Structure of the proof of \texorpdfstring{\Cref{thm:main}}{Theorem \ref{thm:main}}}\label{sec:mainproof}
\subsection{Analogy with the work of Najnudel, Paquette and Simm}
Let $\theta>0$. Let $(N_i)_{i=1}^{\infty}$ be i.i.d.~complex random variables distributed according to $N_i \sim \Na_{\CC}(0,1)$. Najnudel, Paquette and Simm \cite{NPS} investigated the distribution of the random variable $c_n$, defined as the coefficient of $u^n$ in the power series
\begin{equation}\label{eq:cndef} \exp\bigg( \sqrt{\theta}\sum_{i=1}^{\infty} N_i \frac{u^i}{\sqrt{i}}\bigg).
\end{equation}
This was motivated by the study of secular coefficients in the circular beta ensemble. It turns out that the asymptotic behaviour of $c_n$ as $n \to \infty$ is connected to the Gaussian multiplicative chaos measure on the unit circle, formally defined as
\begin{align}\label{eq:form}
    \mathrm{GMC}_{\theta}(d\vartheta):=\lim_{r \to 1^-} (1-r^2)^{\theta}\exp\bigg( 2\sqrt{\theta}\Re\sum_{k=1}^{\infty} N_k \frac{(re^{i\vartheta})^k}{\sqrt{k}}\bigg)d\vartheta, \qquad \vartheta \in [0, 2\pi],
\end{align}

\noindent when $\theta \in (0,1)$ (see \cite[Section 1.3]{NPS} for a discussion of \eqref{eq:form}). When $\theta \in (0,\tfrac{1}{2})$, they proved that \cite[Theorem 1.8]{NPS}
\begin{align}\label{eq:distNPS}
    \frac{c_n}{\EE[|c_n|^2]^{1/2}} \xrightarrow[n \to \infty]{d} \sqrt{V_\infty} \ G
\end{align}
\noindent where $\displaystyle V_\infty$ has law 
\[ V_{\infty}\overset{d}{=}\frac{1}{2\pi} \int_{0}^{2\pi} \mathrm{GMC}_{\theta}(d\vartheta)\]
and is independent of $G \sim \Na_\CC(0, 1)$. Their main tool in proving \eqref{eq:distNPS} is the martingale central limit theorem, and our proof has a lot in parallel with theirs. To see the analogy between $S_x$ and $c_n/\EE[|c_n|^2]^{1/2}$, we note that the generating function of $\alpha f$, i.e., its Dirichlet series, is given by
\[ \sum_{n=1}^{\infty} \frac{\alpha(n)f(n)}{n^s}=\prod_{p} \left( \sum_{i=0}^{\infty} \frac{\alpha(p^i)f(p^i)}{p^{is}}\right).\]
Using the central limit theorem and condition (a) in \Cref{thm:main}, 
\[ \sqrt{\frac{\log n}{n}}\sum_{n\le p< 2n} \alpha(p)f(p) \xrightarrow[n \to \infty]{d}  \sqrt{\theta}\ G\]
where $G \sim \Na_\CC(0, 1)$. Thus, informally,  
\[ \sum_{n=1}^{\infty} \frac{\alpha(n)f(n)}{n^s}\approx \exp \big(\sum_{p} \frac{\alpha(p)f(p)}{p^s}\big) \approx \exp\bigg( \sum_{i=1}^{\infty} 2^{-is}\sum_{2^i\le p<2^{i+1}} \alpha(p)f(p)\bigg) \approx \exp\bigg( \sqrt{\theta} \sum_{i=1}^{\infty} N_i 2^{-is} \sqrt{\frac{2^i}{\log(2^i)}} \bigg),\]
which resembles the power series in \eqref{eq:cndef}.
\subsection{The (complex) martingale central limit theorem}
The following result is deduced in \Cref{app:cmCLT} from the real-valued martingale central limit theorem \cite[Theorem 3.2 and Corollary 3.1]{HH1980}. It is the main tool behind the proof of \Cref{thm:main}.
\begin{lem}[Martingale central limit theorem]\label{lem:mCLT}
For each $n$, let $(M_{n,j})_{j\le k_n}$ be a complex-valued, mean-zero and square integrable martingale with respect to the filtration $(\Fa_{j})_j$, and $\Delta_{n, j} := M_{n, j} - M_{n, j-1}$ be the corresponding martingale differences. Suppose the following conditions are satisfied.
\begin{itemize}
    \item[(a)] The conditional covariances converge, i.e.,
    \begin{align}
    \label{eq:condcov1}
 \sum_{j =1}^{k_n} \EE\left[\Delta_{n, j}^2 | \Fa_{j-1}\right] &\xrightarrow[n \to \infty]{p} 0\\
 \label{eq:condcov2}
\qquad \text{and}\qquad V_{n} := \sum_{j =1}^{k_n} \EE\left[|\Delta_{n, j}|^2 | \Fa_{j-1}\right] & \xrightarrow[n \to \infty]{p} V_\infty.
    \end{align}

    \item[(b)] The conditional Lindeberg condition holds: for any $\delta > 0$,
    \begin{align*}
        \sum_{j = 1}^{k_n} \EE\left[|\Delta_{n, j}|^2 1_{\{|\Delta_{n, j}| > \delta\}} | \Fa_{j-1}\right] \xrightarrow[n \to \infty]{p} 0.
    \end{align*}
\end{itemize}

\noindent Then 
\begin{align*}
    M_{n, k_n} \xrightarrow[n \to \infty]{d} \sqrt{V_\infty} \ G
\end{align*}

\noindent where $G \sim \Na_{\CC}(0,1)$ is independent of $V_\infty$, and the convergence in law is also stable in the sense of \Cref{def:stable}.
\end{lem}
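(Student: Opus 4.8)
The plan is to deduce the complex statement from the real-valued stable martingale central limit theorem of Hall and Heyde \cite[Theorem 3.2 and Corollary 3.1]{HH1980} by a Cram\'er--Wold-type device. A convenient feature of the present setting is that the filtration $(\Fa_j)_j$ does not depend on $n$, so the nesting hypothesis required for the \emph{stable} version of the real-valued CLT holds automatically. Write $\Fa_\infty := \bigvee_{j} \Fa_j$ for the ambient $\sigma$-algebra against whose bounded measurable functions stable convergence is tested; note $V_\infty$, being a probability limit of the $\Fa_\infty$-measurable random variables $V_n$, is itself $\Fa_\infty$-measurable.

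For a fixed $\lambda \in \CC$ I would introduce the real-valued, mean-zero, square-integrable martingale $N_{n,j} := \Re(\overline{\lambda}\, M_{n,j})$ with respect to $(\Fa_j)_j$, whose differences are $\Re(\overline{\lambda}\,\Delta_{n,j})$. Expanding
\[ \Re(\overline{\lambda}\,\Delta_{n,j})^2 = \tfrac14\Big(\overline{\lambda}^{\,2}\Delta_{n,j}^2 + \lambda^2\,\overline{\Delta_{n,j}}^{\,2} + 2|\lambda|^2|\Delta_{n,j}|^2\Big) \]
and summing conditional expectations over $j\le k_n$, the first two groups contribute $o_p(1)$ by \eqref{eq:condcov1} (and its complex conjugate, using that conjugation commutes with conditional expectation), while $\tfrac14\sum_{j\le k_n}\EE[\,2|\lambda|^2|\Delta_{n,j}|^2\mid\Fa_{j-1}] = \tfrac12|\lambda|^2 V_n \xrightarrow{p} \tfrac12|\lambda|^2 V_\infty$ by \eqref{eq:condcov2}; hence $\sum_{j\le k_n}\EE[\Re(\overline{\lambda}\Delta_{n,j})^2\mid\Fa_{j-1}] \xrightarrow{p} \tfrac12|\lambda|^2 V_\infty$. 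The conditional Lindeberg condition transfers to $N_{n,j}$ because $|\Re(\overline{\lambda}\Delta_{n,j})| \le |\lambda|\,|\Delta_{n,j}|$, so the Lindeberg truncation for $N_{n,j}$ at level $\delta$ is dominated by $|\lambda|^2$ times the one for $\Delta_{n,j}$ at level $\delta/|\lambda|$ (the case $\lambda=0$ being trivial). Applying the real-valued stable martingale CLT to $(N_{n,j})_{j\le k_n}$ — whose hypotheses, in the form of Corollary 3.1, are exactly the conditional Lindeberg condition, conditional-variance convergence to an a.s.\ finite limit, and the (automatic) nesting condition, so that the unconditional moment bound appearing in the bare Theorem 3.2 is not needed — yields, for every bounded $\Fa_\infty$-measurable $W$ and every $t\in\RR$,
\[ \EE\big[\,e^{\,i t\,\Re(\overline{\lambda} M_{n,k_n})}\,W\,\big] \;\xrightarrow[n\to\infty]{}\; \EE\big[\,e^{-\tfrac14 t^2 |\lambda|^2 V_\infty}\,W\,\big]. \]

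To conclude I would specialise to $t=1$ and $\lambda = a+ib$, so that $\Re(\overline{\lambda}M_{n,k_n}) = a\,\Re M_{n,k_n} + b\,\Im M_{n,k_n}$. Since a standard complex Gaussian $G \sim \Na_\CC(0,1)$ has $\Re(\overline{\lambda}G)\sim \Na(0,\tfrac12|\lambda|^2)$, one has $\EE[\exp(i(a\,\Re(\sqrt{V_\infty}G)+b\,\Im(\sqrt{V_\infty}G)))\mid \Fa_\infty] = \exp(-\tfrac14(a^2+b^2)V_\infty)$ when $G$ is taken independent of $\Fa_\infty$; hence
\begin{align*}
\EE\big[e^{\,i(a\,\Re M_{n,k_n} + b\,\Im M_{n,k_n})}\,W\big] &\longrightarrow \EE\big[e^{-\tfrac14(a^2+b^2)V_\infty}\,W\big] \\
&= \EE\big[e^{\,i(a\,\Re(\sqrt{V_\infty}G) + b\,\Im(\sqrt{V_\infty}G))}\,W\big].
\end{align*}
For $W\ge 0$ with $\EE[W]=1$ this is L\'evy continuity on $\RR^2$ for the $W$-tilted laws of $(\Re M_{n,k_n},\Im M_{n,k_n})$, so these converge weakly to the $W$-tilted law of $(\Re(\sqrt{V_\infty}G),\Im(\sqrt{V_\infty}G))$; splitting a general bounded $\Fa_\infty$-measurable $W$ into positive and negative parts then gives $\EE[h(M_{n,k_n})W] \to \EE[h(\sqrt{V_\infty}G)W]$ for every bounded continuous $h\colon\CC\to\CC$, which is precisely stable convergence of $M_{n,k_n}$ to $\sqrt{V_\infty}G$ in the sense of \Cref{def:stable}, with $G\sim\Na_\CC(0,1)$ independent of $\Fa_\infty$ and hence of $V_\infty$.

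The algebraic identities for the conditional second moments and the transfer of the Lindeberg condition are routine. The hard part will be the careful handling of the stable mode of convergence: one must keep in mind that the limit is realised on an enlargement of the original space, that it is pinned down by its \emph{conditional} law given $\Fa_\infty$, and — most importantly — that the one-dimensional Gaussian limits obtained for the various real projections $\Re(\overline{\lambda}M_{n,k_n})$ have to cohere into a single complex Gaussian mixture $\sqrt{V_\infty}G$ rather than an unrelated family of Gaussians; it is the joint convergence against arbitrary bounded $\Fa_\infty$-measurable test variables, together with $\Fa_\infty$-measurability of $V_\infty$, that delivers this coherence. A secondary point worth spelling out is the verification that the version of the Hall--Heyde theorem invoked (Corollary 3.1) indeed trades the unconditional bound $\sup_n \EE[\max_j |\Re(\overline{\lambda}\Delta_{n,j})|^2] < \infty$ for the conditional Lindeberg condition, which is what makes the reduction go through under hypotheses (a) and (b) of the lemma alone.
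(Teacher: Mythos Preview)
Your proposal is correct and takes essentially the same approach as the paper: both reduce to the real-valued Hall--Heyde theorem via the Cram\'er--Wold device, applied to the real martingale $\Re(\overline{\lambda}M_{n,j}) = a_1\Re M_{n,j} + a_2\Im M_{n,j}$ (with $\lambda = a_1+ia_2$), after verifying that the conditional-variance and Lindeberg conditions transfer. Your treatment of the stable mode of convergence---passing through characteristic functions tested against bounded $\Fa_\infty$-measurable $W$ and then invoking L\'evy continuity for the $W$-tilted laws---is in fact more explicit than the paper's, which simply cites the Cram\'er--Wold device and leaves the compatibility with stable convergence implicit.
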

\subsection{A class of functions}
Given $\theta>0$ we introduce a subset $\Ma_{\theta}\subseteq \Ma$ of all functions $g \in \Ma$ which satisfy the following mild conditions:
\begin{enumerate}
\item For every $n \in \NN$, $g(n) \ge 0$.
    \item As $t \to \infty$, 
    \begin{equation}\label{eq:loglim}
    \sum_{p\le t} \frac{g(p)\log p}{p}\sim \theta \log t.
    \end{equation}
\item For $t \ge 2$, $\sum_{p \le t} g(p) \ll \frac{t}{\log t}$.
\item The following series converges: 
    \begin{equation}\label{eq:series}
    \sum_{p} \bigg(\frac{g^2(p)}{p^2}+\sum_{i \ge 2} \frac{g(p^i)}{p^i}\bigg)<\infty.
    \end{equation}
\item If $\theta\le 1$ then we also impose that for $t \ge 2$,\[\sum_{p, i \ge 2,\, p^i \le t}g(p^i) \ll \frac{t}{\log t}.\]
\item As $x \to \infty$,
\begin{equation}\label{eq:flatsum} \sum_{n \le x} h(n) \sim \frac{e^{-\gamma\theta}}{\Gamma(\theta)}\frac{x}{\log x}\prod_{p \le x} \left( \sum_{i=0}^{\infty} \frac{h(p^i)}{p^i}\right)
    \end{equation}
    holds for every $h$ of the form $h(\cdot)=g(\cdot)\mathbf{1}_{(\cdot,d)=1}$ for some $d \in \NN$.
\end{enumerate}
If $f \in \Ma$ satisfies the conditions in \Cref{thm:main} then necessarily $|f|^2\in \Ma_{\theta}$ as a direct consequence of Wirsing's theorem \cite{Wirsing} (see \Cref{app:mean} for details).

Let $P(1) = 1$, and for $n >1$  denote by $P(n)$ the largest prime factor of $n$. Of particular importance to us is the fact that if $g \in \Ma_{\theta}$ then one can compute the asymptotics of \[\sum_{n \le x,\, P(n)\le x^{\OurEpsilon}} h(n)\]
for any given $\OurEpsilon>0$ and any $h$ of the form $h(\cdot)=g(\cdot)\mathbf{1}_{(\cdot,d)=1}$ as an immediate consequence of a classical result of de Bruijn and van Lint \cite{dBvL} (see \Cref{lem:sum}). 
\subsection{Overview of the proof}\label{subsec:overview}
The proof of \Cref{thm:main} will consist of 4 steps, inspired by \cite{NPS}. Each step will require (in addition to  $f \in \Ma$) slightly different restrictions on $f$. The conditions in \Cref{thm:main} will be easily seen to imply all these restrictions. After describing the steps, we will show how, together with \Cref{lem:mCLT}, they imply \Cref{thm:main}. We shall use the following $\sigma$-algebras:
\begin{align}
\Fa_y := \sigma(\alpha(p), p\le y),\qquad \Fa_{y^-} := \sigma(\alpha(p), p <y),
\end{align}
generated by the random variables $\{\alpha(p), p \le y\}$ and  $\{\alpha(p), p < y\}$, respectively.

\paragraph{Step 1: truncation.} \mbox{}\\
Instead of attacking the problem directly, we introduce a truncation parameter $\OurEpsilon>0$ and define
\[ S_{x,\OurEpsilon} :=\frac{1}{\sqrt{\sum_{n \le x} |f(n)|^2}}\sum_{\substack{n\le x\\ P(n) \ge x^{\OurEpsilon}\\ P(n)^2 \nmid n}} \alpha(n) f(n).\]
We write
\begin{align}\label{eq:eps-truncated-S}
S_{x,\OurEpsilon} = \sum_{x^{\OurEpsilon}\le p\le x} Z_{x,p}, \qquad  Z_{x,p} := \frac{1}{\sqrt{\sum_{n \le x} |f(n)|^2}}\sum_{\substack{\substack{n\le x\\ P(n)=p\\ p^2 \nmid n}}} \alpha(n) f(n).
\end{align}
We achieve some technical simplification in using these truncated sums from the beginning, and the following lemma shows we lose little by doing so.  
\begin{lem}\label{lem:neg}
		Let $\theta>0$. Let $f\in \Ma$ be a function such that $|f|^2 \in \Ma_{\theta}$ and
\begin{equation}\label{eq:ppasump}
\sum_{p,i\ge 2,\, p^i \le x}  |f(p^i)|^2 =o\left( \frac{x}{(\log x)^2}\right)
\end{equation}
as $x \to \infty$.  Then $\limsup_{\OurEpsilon\to 0^+} \limsup_{x \to \infty}\EE\left[|S_x - S_{x,\OurEpsilon}|^2\right]=0$.
\end{lem}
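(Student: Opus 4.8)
The plan is to bound $\EE[|S_x - S_{x,\OurEpsilon}|^2]$ by expanding the difference $\sum_{n\le x}\alpha(n)f(n) - \sum_{\substack{n\le x, P(n)\ge x^{\OurEpsilon}, P(n)^2\nmid n}}\alpha(n)f(n)$ as a sum over $n$ in the complementary set, and then invoking the orthogonality relation \eqref{eq:orth} to collapse the second moment to a diagonal sum $\sum_{n} |f(n)|^2$ over that same complementary set, divided by the normalising factor $\sum_{n\le x}|f(n)|^2$. Concretely, the integers $n\le x$ that are \emph{excluded} from $S_{x,\OurEpsilon}$ split into two (overlapping) families: those with $P(n) < x^{\OurEpsilon}$ (``smooth'' numbers), and those with $P(n)^2 \mid n$. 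So it suffices to show, after dividing by $\sum_{n\le x}|f(n)|^2 \asymp x(\log x)^{\theta-1}$ (by the Wirsing asymptotic quoted in the remark, valid since $|f|^2\in\Ma_\theta$), that

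\begin{align*}
\sum_{\substack{n\le x\\ P(n)<x^{\OurEpsilon}}} |f(n)|^2 + \sum_{\substack{n\le x\\ P(n)^2\mid n}} |f(n)|^2 = o_{\OurEpsilon\to 0}\!\left(x(\log x)^{\theta-1}\right),
\end{align*}

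uniformly for large $x$, where the $o$ is in the sense that the $\limsup_{x}$ of the ratio tends to $0$ as $\OurEpsilon\to 0^+$.

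For the first (smooth) sum, I would apply the de Bruijn--van Lint type estimate referenced after \eqref{eq:flatsum} (i.e.\ \Cref{lem:sum}) with $h = |f|^2 \in \Ma_\theta$: this gives $\sum_{n\le x,\, P(n)\le x^{\OurEpsilon}} |f(n)|^2 \sim c(\OurEpsilon)\, x(\log x)^{\theta-1}$ for an explicit constant $c(\OurEpsilon)$ (essentially $\rho$-function-weighted, of size $O(\OurEpsilon^{\theta})$ or at any rate $\to 0$ as $\OurEpsilon\to 0$), so that after dividing by the full sum the contribution is $O_{\OurEpsilon\to 0}(1)\cdot o(1)$ and vanishes in the iterated limit. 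For the second sum, write $n = p^2 m$ with $p = P(n)$, so $P(m)\le p$ and the ``$p^2$'' part contributes $|f(p^2)|^2 \le |f(p^k)|^2$-type terms; bounding crudely, $\sum_{n\le x, P(n)^2\mid n}|f(n)|^2 \le \sum_{p}\sum_{k\ge 2} |f(p^k)|^2 \sum_{m\le x/p^k} |f(m)|^2 \mathbf{1}_{(m,p)\text{ small}}$, and the inner sum is $\ll (x/p^k)(\log x)^{\theta-1}$ by Wirsing again (or trivially $\ll x/p^k$), reducing matters to showing $\sum_{p, k\ge 2} |f(p^k)|^2 p^{-k}$ is finite — which is exactly condition~\eqref{eq:series} in the definition of $\Ma_\theta$ — together with the hypothesis \eqref{eq:ppasump} to handle the diagonal terms $n = p^k$ themselves (where there is no $m$-sum to absorb the loss of a $1/\log x$). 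More precisely, the prime-power integers $n=p^k$ with $k\ge 2$ contribute $\sum_{p^k\le x, k\ge 2} |f(p^k)|^2 = o(x/(\log x)^2) = o(x(\log x)^{\theta-1})$ since $\theta>0$, which is precisely why \eqref{eq:ppasump} is imposed; all other $n$ with $P(n)^2\mid n$ carry a genuine $m$-factor and are controlled by the convergent series \eqref{eq:series} with room to spare.

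The main obstacle, I expect, is not any single estimate but making the bookkeeping uniform: one must take $\limsup_{x\to\infty}$ \emph{first} and only then $\OurEpsilon\to 0^+$, so the smooth-number bound must be of the shape $c(\OurEpsilon)(1+o_x(1))$ with $c(\OurEpsilon)\to 0$, and one cannot afford any error term that degrades as $\OurEpsilon\to 0$ for fixed $x$. This forces the use of a genuine asymptotic (de Bruijn--van Lint via \Cref{lem:sum}) for the smooth part rather than a crude Rankin-type bound, whereas for the prime-power-divisible part a crude bound suffices because $\OurEpsilon$ plays no role there. A minor technical point is that \Cref{lem:sum} is stated for $h(\cdot) = g(\cdot)\mathbf{1}_{(\cdot,d)=1}$; here we only need $d=1$, so this causes no difficulty. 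Assembling the two bounds and dividing by $\sum_{n\le x}|f(n)|^2$ yields $\limsup_{x\to\infty}\EE[|S_x - S_{x,\OurEpsilon}|^2] \ll c(\OurEpsilon) \to 0$ as $\OurEpsilon\to 0^+$, which is the claim.
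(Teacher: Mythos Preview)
Your reduction via orthogonality to the two sums (smooth part and $P(n)^2\mid n$ part) is correct, and your treatment of the smooth sum $A_{1,\OurEpsilon}$ via \Cref{lem:sum} matches the paper exactly. The gap is in the second sum.

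You bound $\sum_{n\le x,\,P(n)^2\mid n}|f(n)|^2$ over \emph{all} such $n$, having dropped the constraint $P(n)\ge x^{\OurEpsilon}$. Your estimate $\sum_{m\le x/p^k}|f(m)|^2 \ll (x/p^k)(\log x)^{\theta-1}$ then yields, after dividing by $\sum_{n\le x}|f(n)|^2\asymp x(\log x)^{\theta-1}$, a bound of size $\sum_{p,\,k\ge 2}|f(p^k)|^2/p^k$. This is a \emph{positive constant}, not $o(1)$: the convergence of the series \eqref{eq:series} only tells you the ratio is bounded, not that it vanishes as $x\to\infty$. So ``with room to spare'' is false as stated, and the split into $m=1$ versus $m\ge 2$ does not isolate the difficulty (the problematic contribution comes from small primes $p$, where $1/p^k$ is not small, not from the pure prime powers $n=p^k$).

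The paper avoids this by retaining the constraint $P(n)\ge x^{\OurEpsilon}$ in the second family, so that the two families are \emph{disjoint} rather than overlapping. With that constraint, $p\ge x^{\OurEpsilon}$ and $i\ge 2$ force $p^i\ge x^{2\OurEpsilon}$, and after splitting at $p^i\le\sqrt{x}$ versus $p^i>\sqrt{x}$ one obtains
\[
A_{2,\OurEpsilon}\ll \sum_{\substack{p\ge x^{\OurEpsilon},\,i\ge 2\\ p^i\le\sqrt{x}}}\frac{|f(p^i)|^2}{p^i}
\;+\;\log x\sum_{\substack{i\ge 2\\ \sqrt{x}<p^i\le x}}\frac{|f(p^i)|^2}{p^i}.
\]
The first sum is now the \emph{tail} (in $p^i\ge x^{2\OurEpsilon}$) of the convergent series \eqref{eq:series}, hence $\to 0$ as $x\to\infty$ for each fixed $\OurEpsilon$; the second is handled by \eqref{eq:ppasump} via partial summation and is $o(1)$. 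This is precisely where the hypothesis \eqref{eq:ppasump} enters, and the constraint $p\ge x^{\OurEpsilon}$ is what converts your $O(1)$ bound into the required $o(1)$.
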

Ultimately, we apply \Cref{lem:mCLT} to $S_{x,\OurEpsilon}$.

\paragraph{Step 2: Lindeberg condition.} \mbox{} \\
The proof of the following lemma will be given in \Cref{sec:Lindeberg}. 
\begin{lem}\label{lem:lind}
	Let $\theta >0$. Let $f\in \Ma$ be a function such that $|f|^2 \in \Ma_{\theta}$ and
\begin{align}
\label{eq:sumpi}&\sum_{p} \bigg(\sum_{i \ge 2} \frac{|f(p^i)|^2i}{p^i}\bigg)^2<\infty,\\
\label{eq:passum}&\sum_{p \le x} \left(|f(p^2)|^2+|f(p)|^4\right) \ll x^2(\log x)^{-2\theta-4}.
\end{align}
 Then $\limsup_{x \to \infty}\sum_{x^{\OurEpsilon}\le p\le x} \EE [|Z_{x,p}|^4] =0$ for every $\OurEpsilon>0$.
\end{lem}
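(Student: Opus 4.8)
The plan is a fourth-moment computation, carried out after factoring the largest prime out of each summand. First I would observe that every $n\le x$ with $P(n)=p$ and $p^2\nmid n$ is uniquely of the form $n=mp$ with $m\le x/p$ and $P(m)<p$; since $(m,p)=1$ this gives $\alpha(n)=\alpha(p)\alpha(m)$ and $f(n)=f(p)f(m)$, so that
\[ Z_{x,p}=\frac{f(p)\,\alpha(p)}{\sqrt{\sum_{n\le x}|f(n)|^2}}\,A_{x,p},\qquad A_{x,p}:=\sum_{\substack{m\le x/p\\ P(m)<p}}\alpha(m)f(m). \]
As $|\alpha(p)|=1$ this yields $\EE[|Z_{x,p}|^4]=|f(p)|^4\bigl(\sum_{n\le x}|f(n)|^2\bigr)^{-2}\EE[|A_{x,p}|^4]$, and since $|f|^2\in\Ma_{\theta}$ the mean-value estimate \eqref{eq:flatsum} gives the lower bound $\sum_{n\le x}|f(n)|^2\gg_f x(\log x)^{\theta-1}$ for the denominator.

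The core of the argument is to bound $\EE[|A_{x,p}|^4]$. Expanding $|A_{x,p}|^4$ and applying the orthogonality relation \eqref{eq:orth} keeps only the quadruples with $m_1m_2=m_3m_4$; grouping by the common product $N=m_1m_2$ (so that $P(N)<p$), and discarding the ranges $m_i\le x/p$, one gets
\[ \EE[|A_{x,p}|^4]=\sum_{\substack{N\le(x/p)^2\\ P(N)<p}}\Bigl|\sum_{m_1m_2=N,\ m_i\le x/p}f(m_1)f(m_2)\Bigr|^2\le\sum_{N\le(x/p)^2}g(N),\qquad g:=(|f|\ast|f|)^2. \]
Here $g$ is a nonnegative multiplicative function with $g(p)=4|f(p)|^2$, so $\sum_{p\le t}g(p)\tfrac{\log p}{p}\sim4\theta\log t$ by \eqref{eq:loglim}, and the hypotheses \eqref{eq:sumpi} and \eqref{eq:passum} are precisely what is needed to tame $g$ on prime powers: $g(p^2)=(2|f(p^2)|+|f(p)|^2)^2$, whose sum over $p$ is controlled by the left-hand side of \eqref{eq:passum}, while the contribution of the higher prime powers is absorbed using \eqref{eq:sumpi} together with the prime-power bounds already built into $\Ma_{\theta}$. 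Feeding this into a standard mean-value estimate for multiplicative functions (Shiu's or Wirsing's theorem, or \Cref{lem:sum} in its smooth-support form) gives $\sum_{N\le Y}g(N)\ll_f(1+Y)(\log(2+Y))^{4\theta}$, hence $\EE[|A_{x,p}|^4]\ll_f(1+x/p)^2(\log x)^{4\theta}$ for $x^{\OurEpsilon}\le p\le x$. I expect this step to be the main obstacle: one must check that $g$ lies within the scope of the mean-value theorem, and conceptually the bound is governed not by the diagonal $\{m_1,m_2\}=\{m_3,m_4\}$ (which contributes only $\ll_f(1+x/p)^2(\log x)^{2\theta-2}$) but by the off-diagonal terms, which supply the extra power of $\log x$.

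Finally I would assemble the pieces. Using $(1+x/p)^2\le4x^2/p^2$ for $p\le x$ together with the lower bound for the denominator,
\[ \sum_{x^{\OurEpsilon}\le p\le x}\EE[|Z_{x,p}|^4]\ll_f\frac{(\log x)^{4\theta}}{x^2(\log x)^{2\theta-2}}\sum_{x^{\OurEpsilon}\le p\le x}|f(p)|^4\Bigl(\frac{x}{p}\Bigr)^2=(\log x)^{2\theta+2}\sum_{x^{\OurEpsilon}\le p\le x}\frac{|f(p)|^4}{p^2}. \]
By partial summation from \eqref{eq:passum} (which gives $\sum_{p\le t}|f(p)|^4\ll t^2(\log t)^{-2\theta-4}$) the tail satisfies $\sum_{p>x^{\OurEpsilon}}|f(p)|^4/p^2\ll_\theta(\OurEpsilon\log x)^{-2\theta-3}$; the truncation $p\ge x^{\OurEpsilon}$ is essential here, since without it the series $\sum_p|f(p)|^4/p^2$ is merely convergent rather than small. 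Combining the two displays gives $\sum_{x^{\OurEpsilon}\le p\le x}\EE[|Z_{x,p}|^4]\ll_{f,\OurEpsilon}(\log x)^{-1}$, which tends to $0$ as $x\to\infty$, proving the lemma.
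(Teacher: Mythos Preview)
Your proof is correct and follows essentially the same approach as the paper's: both expand the fourth moment via \eqref{eq:orth}, reduce to a sum of $g=(|f|*|f|)^2$ over integers $\le (x/p)^2$, bound this by $\ll (x/p)^2(\log x)^{4\theta+o(1)}$, and finish by summing $|f(p)|^4/p^2$ over $p\ge x^{\varepsilon}$ using \eqref{eq:passum}. Your organisation is slightly cleaner in that you factor out the prime $p$ at the outset and obtain the weight $|f(p)|^4$ directly, whereas the paper groups by the full product $m=ab$ and then factors $m=p^2m'$, picking up the (larger but still admissible) weight $H(p^2)\ll |f(p)|^4+|f(p^2)|^2$.

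The one place where the paper is more economical is the step you flag as the main obstacle. Rather than invoking Shiu's or Wirsing's theorem and checking that $g$ lies in scope, the paper uses the completely elementary bound
\[
\sum_{N\le Y}g(N)\le Y\sum_{N\le Y}\frac{g(N)}{N}\le Y\prod_{q\le Y}\Bigl(\sum_{i\ge 0}\frac{g(q^i)}{q^i}\Bigr),
\]
and then shows $\sum_{q}\sum_{i\ge 2}g(q^i)/q^i<\infty$ directly from \eqref{eq:sumpi} and the condition \eqref{eq:series} built into $\Ma_\theta$; this reduces the Euler product to $\prod_{q\le Y}(1+g(q)/q)=(\log Y)^{4\theta+o(1)}$. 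This sidesteps any hypothesis-checking for a black-box mean-value theorem and is all that is needed for the final $(\log x)^{-1+o(1)}$ saving.
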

We claim \Cref{lem:lind} implies that for any $\delta > 0$,
\begin{align} \label{eq:checkLind}
    \sum_{x^{\OurEpsilon} \le p \le x} \EE\left[|Z_{x, p}|^2 1_{\{|Z_{x, p}| > \delta \}}| \Fa_{p^-}\right] \xrightarrow[x \to \infty]{p} 0.
\end{align}
\noindent Indeed, the left-hand side of the above  can be upper bounded by
\begin{align*}
     \delta^{-2} \sum_{x^\OurEpsilon \le p \le x} \EE\left[|Z_{x, p}|^4| \Fa_{p^-}\right]
\end{align*}
\noindent using Cauchy--Schwarz, and
\begin{align*}
    \EE \bigg[\sum_{x^\OurEpsilon \le p \le x} \EE\left[|Z_{x, p}|^4| \Fa_{p^-}\right]\bigg] =\sum_{ x^{\OurEpsilon} \le p \le x} \EE\left[|Z_{x, p}|^4\right]
\end{align*}
\noindent vanishes when we first send $x \to \infty$ and then $\OurEpsilon \to 0^+$ by \Cref{lem:lind}. This shows that the convergence \eqref{eq:checkLind} holds in $L^1$, and in particular in probability. Observe \eqref{eq:checkLind} corresponds to the Lindeberg condition in \Cref{lem:mCLT}.

\paragraph{Step 3: approximating the conditional variance.}
\mbox{}\\
Let
\[ T_{x,\OurEpsilon} :=\sum_{x^{\OurEpsilon}\le p\le x} \EE\left[ |Z_{x,p}|^2 \mid \Fa_{p^-}\right]\]
and
\begin{align}\label{eq:Uxdefinition}
U_x := \bigg(\sum_{n\le x}|f(n)|^2\left(\frac{1}{n}-\frac{1}{x}\right)\bigg)^{-1}\int_{1}^{x} \frac{|s_t|^2}{t}dt, \qquad  s_t := \frac{1}{\sqrt{t}} \sum_{n\le t}  \alpha(n)f(n).
\end{align}
Given $\theta>0$, define $\rho_{\theta} \colon (0,\infty)\to (0,\infty)$ via $\rho_{\theta}(t)=t^{\theta-1}/\Gamma(\theta)$ for $t\le 1$ and by
\[t\rho_{\theta}(t)=\theta \int_{t-1}^{t}\rho_{\theta}(v)dv\]
for $t>1$. See Smida \cite{Smida} for an asymptotic investigation of $\rho_{\theta}$, which tends rapidly to $0$. Let
\begin{equation}\label{eq:Ceps}
C_{\OurEpsilon}:=\theta \int_{\OurEpsilon}^{1} \frac{(1-v)^{\theta-1}}{v} \frac{\Gamma(\theta)\rho_{\theta}\left( \frac{1-v}{v}\right)}{((1-v)/v)^{\theta-1}}dv.
\end{equation}
It was shown in \cite[Equation~(4.38)]{NPS} that $\lim_{\OurEpsilon\to 0^+}C_{\OurEpsilon}=1$.
\begin{proposition}\label{prop:l2}
Let $\theta \in (0,\frac{1}{2})$. Let $f\in \Ma$ be a function such that $|f|^2 \in \Ma_{\theta}$. Suppose there exists $c>0$ such that $|f(p)|=O(p^{\frac{1}{2}-c})$  and $|f(p^k)|^2= O(2^{k(1-c)})$ for all $k\ge 2$ and primes  $p$. Then $\lim_{x \to \infty}\EE[ |T_{x,\OurEpsilon}-C_{\OurEpsilon} U_x|^2] =0$.
\end{proposition}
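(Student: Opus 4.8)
The plan is to compute $\EE[|Z_{x,p}|^2 \mid \Fa_{p^-}]$ explicitly and show that, after summing over $x^{\OurEpsilon} \le p \le x$, the result is close in $L^2$ to $C_{\OurEpsilon} U_x$. First I would write $\sum_{n \le x, P(n)=p, p^2 \nmid n} \alpha(n)f(n) = \alpha(p) f(p) \sum_{m \le x/p, P(m) < p} \alpha(m) f(m)$, so that, since $\alpha(p)$ is independent of $\Fa_{p^-}$ and has $\EE[|\alpha(p)|^2]=1$,
\begin{align*}
\EE[|Z_{x,p}|^2 \mid \Fa_{p^-}] = \frac{|f(p)|^2}{\sum_{n\le x}|f(n)|^2} \Bigl| \sum_{\substack{m \le x/p \\ P(m) < p}} \alpha(m) f(m) \Bigr|^2 .
\end{align*}
The inner sum is essentially a smooth-number partial sum of $\alpha f$. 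The key identity to exploit is that $s_t := t^{-1/2}\sum_{n \le t}\alpha(n)f(n)$ can be related to these $P(m)<p$-restricted sums by a combinatorial/Dirichlet-convolution sieve: summing $|f(p)|^2/p$-weighted contributions over primes $p$ reconstructs the full square $|s_t|^2$ up to the smooth-number correction encoded by $\rho_\theta$. Concretely, I would aim to show that $T_{x,\OurEpsilon}$ is, up to $o(1)$ errors in $L^2$, a weighted average $\int_1^x |s_t|^2 t^{-1}\,(\text{kernel})\,dt$, where the kernel arises from $\sum_{x^{\OurEpsilon}\le p \le x,\ p \le x/t} |f(p)|^2/p \approx \theta \log(x/(tp))\cdot(\ldots)$ via condition (a), i.e. $|f|^2 \in \Ma_\theta$ and \eqref{eq:loglim}.

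The heart of the argument is the second-moment (variance) estimate: one must bound $\EE[|T_{x,\OurEpsilon} - C_{\OurEpsilon}U_x|^2]$, which after expanding becomes a sum over pairs of primes $p, q$ of covariances $\EE[\,|s_{(x/p)}^{(p)}|^2 |s_{(x/q)}^{(q)}|^2\,]$ (schematically) minus the corresponding diagonal terms. Here $|f|^2 \in \Ma_\theta$ gives, via the de Bruijn--van Lint asymptotic (\Cref{lem:sum}), precise mean-value formulas for $\sum_{n \le y, P(n) \le y^{\delta}} |f(n)|^2$ in terms of $\rho_\theta$, and these feed into computing both the main term (producing $C_{\OurEpsilon}$ as in \eqref{eq:Ceps}) and controlling the fourth-moment-type error. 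The technical conditions $|f(p)| = O(p^{1/2-c})$ and $|f(p^k)|^2 = O(2^{k(1-c)})$ enter to guarantee absolute convergence of the various Euler products and to ensure the contribution of prime powers and of large $f(p)$ does not overwhelm the main term; in particular the Rankin-type bounds let one replace $\alpha f$ by its "prime part" with negligible error and justify swapping sums and expectations.

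The main obstacle, and the reason this proposition forces $\theta \in (0,\tfrac12)$, is the variance bound: controlling $\EE[|s_t|^4]$ (or the mixed fourth moments appearing after expanding $|T_{x,\OurEpsilon}-C_{\OurEpsilon}U_x|^2$) uniformly enough in $t$. For $|f|^2\in\Ma_\theta$ one expects $\EE[|s_t|^4] \ll (\log t)^{2\theta-1}$ at best from a direct Euler-product computation of $\EE[\,|\sum_{n\le t}\alpha(n)f(n)|^4\,] = \sum_{n_1 n_2 = n_3 n_4, n_i \le t} f(n_1)f(n_2)\overline{f(n_3)f(n_4)}$; when $\theta < \tfrac12$ this is summable against the $t^{-1}dt$ measure in a way that makes the off-diagonal $p \ne q$ terms genuinely smaller than the diagonal, whereas for $\theta \ge \tfrac12$ this fails and one would need the better-than-squareroot cancellation of Harper type rather than a plain $L^2$/$L^4$ estimate. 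So I would budget most of the effort for: (i) the exact $\Fa_{p^-}$-conditional computation above; (ii) identifying the main term and extracting $C_{\OurEpsilon}$ using \eqref{eq:Ceps} and the $\rho_\theta$ asymptotics; and (iii) the fourth-moment bound on $s_t$ and the resulting pair-correlation estimate showing the cross terms cancel the square of the main term, leaving $o(1)$ — this last step being where $\theta<\tfrac12$ is essential.
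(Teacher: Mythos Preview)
Your high-level plan matches the paper: expand $\EE[|T_{x,\varepsilon} - C_\varepsilon U_x|^2]$ into $\EE[T_{x,\varepsilon}^2] - 2C_\varepsilon \EE[T_{x,\varepsilon} U_x] + C_\varepsilon^2\EE[U_x^2]$ and show each of the three expectations converges to $C_\varepsilon^2 R$, $C_\varepsilon R$, $R$ for a common constant $R$. Your computation of $\EE[|Z_{x,p}|^2 \mid \Fa_{p^-}]$ is correct, you correctly identify the de~Bruijn--van~Lint asymptotic (\Cref{lem:sum}) as the source of $C_\varepsilon$, and your diagnosis that $\theta<\tfrac12$ enters through a fourth-moment-type quantity is right.

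The gap is in your step~(iii). You propose to \emph{bound} $\EE[|s_t|^4]$ and use a ``pair-correlation estimate showing the cross terms cancel the square of the main term''. But bounds alone cannot produce the required cancellation: to get the three expectations to combine to $o(1)$ you need each to converge to a \emph{specific} limit, not merely to be $O(1)$. The paper achieves this by an explicit manoeuvre you do not mention. In each case one expands the second moment (via \eqref{eq:orth}) as a sum over tuples with $n_1 m_1 = n_2 m_2$, parametrises such solutions by writing $n_i = g_1 n_i'$ with $(n_1',n_2')=1$, $m_1 = g_2 n_2'$, $m_2 = g_2 n_1'$, and further splits each $g_j$ according to its prime factors in common with $n_1'n_2'$. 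This recasts each of $\EE[U_x^2]$, $\EE[T_{x,\varepsilon}U_x]$, $\EE[T_{x,\varepsilon}^2]$ as a sum over coprime pairs $(n_1,n_2)$ of terms $\max\{n_1,n_2\}^{-2}(\cdots)\cdot S_j(x,n_1n_2,\ldots)$, where the factor $S_j$ is a ratio of partial sums of $|f|^2$ that is \emph{uniformly bounded} and has an explicit pointwise limit (computed from \eqref{eq:flatsum}, \Cref{lem:log} and \Cref{lem:sum}). Dominated convergence then gives the three limits.

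The restriction $\theta<\tfrac12$ enters exactly where you suspect, but as the convergence of the dominating series rather than as a pointwise bound on $\EE[|s_t|^4]$: one needs
\[
\sum_{(n_1,n_2)=1}\frac{\widetilde f(n_1)^2\widetilde f(n_2)^2}{\max\{n_1,n_2\}^2}<\infty,
\qquad \widetilde f(n):=\sum_{g\mid n^\infty}\frac{|\overline{f(g)}f(gn)|}{g},
\]
and since $\sum_{n\le T}\widetilde f(n)^2\ll T(\log T)^{\theta-1+o(1)}$ the tail over $\max\{n_1,n_2\}\ge T$ is $(\log T)^{2\theta-1+o(1)}$, which vanishes precisely when $\theta<\tfrac12$. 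The conditions $|f(p)|=O(p^{1/2-c})$ and $|f(p^k)|^2=O(2^{k(1-c)})$ are used only to verify that $\widetilde f$ inherits enough of the $\Ma_\theta$ structure for this estimate. So the mechanism is parametrisation~$+$~dominated convergence, not a variance bound on the difference; your sketch would need this ingredient to go through.
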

When establishing \Cref{prop:l2} we shall assume $x$ is sufficiently large  as this implies that the denominator $\sum_{n \le x} |f(n)|^2(1/n - 1/x)\ge 0$ in $U_x$ is not $0$. 
The denominator was chosen as it forces \[\EE [U_x] \equiv 1.\]
Indeed, $\EE [|s_t|^2 ] = \sum_{n \le t} |f(n)|^2/t$ by \eqref{eq:orth} and so 
\[ \EE \left[\int_{1}^{x} \frac{|s_t|^2}{t} dt\right] = \sum_{n \le x} |f(n)|^2  \int_{n}^{x} \frac{dt}{t^2} = \sum_{n \le x} |f(n)|^2 \left(\frac{1}{n}-\frac{1}{x}\right).\]
\paragraph{Step 4: convergence of conditional variance.}
\mbox{}\\
To conclude the proof of \Cref{thm:main}, we need to identify the limit of $U_x$ which is the subject of \Cref{sec:mc-element}. As we shall see in \Cref{sec:mc-element}, the task of identifying the limit of $U_x$ is closely related to understanding the random Euler product
\begin{align*}
    A_y(s) := \prod_{p \le y} \bigg(1 + \sum_{k \ge 1} \frac{\alpha(p)^k f(p^k)  }{p^{ks}}\bigg)=\sum_{P(n) \le y} \frac{\alpha(n)f(n)}{n^s},
\end{align*}

\noindent and will require the following probabilistic ingredient.
\begin{thm}\label{thm:mc-convergence}
Let $\theta \in (0, \frac{1}{2})$, and $f\in \Ma$ be a function such that $|f|^2 \in \mathbf{P}_\theta$ and
\begin{align}\label{eq:f-summability}
 \sum_p \bigg(\frac{|f(p)|^3}{p^{3/2}} 
+ \frac{|f(p^2)|^2}{p^{2}} +\sum_{k \ge 3} \frac{|f(p^k)|^2}{p^{k/2}} \bigg) < \infty.
\end{align}

\noindent Write 
\begin{align}\label{eq:mc-measure}
\sigma_t = \tfrac{1}{2}\left(1 + \frac{1}{\log t}\right) \qquad \text{and} \qquad     m_{y, t}(ds) := \frac{|A_y(\sigma_t + is)|^2}{\EE\left[|A_y(\sigma_t + is)|^2\right]}ds, \qquad s \in \RR.
\end{align}

\noindent Then there exists a nontrivial random Radon measure $m_\infty(ds)$ on $\RR$  such that the following are true: for any bounded interval $I$ and any test function $h \in C(I)$, we have
\begin{align}\label{eq:thm:mc-convergence}
   (i) \quad m_{y, \infty}(h) \xrightarrow[y \to \infty]{L^2} m_\infty(h)
    \qquad \text{and} \qquad
  (ii) \quad   m_{\infty, t}(h) \xrightarrow[t \to \infty]{L^2} m_\infty(h)
\end{align}

\noindent and in particular the above convergence also holds in probability. Moreover,  the limiting measure $m_\infty(ds)$ is supported on $\RR$ almost surely.
\end{thm}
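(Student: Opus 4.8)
The plan is to run a second--moment ($L^2$) argument, which is precisely what the hypothesis $\theta<\tfrac12$ makes available. The structural point to start from is that, for each fixed $s$ and each $\sigma\in[\tfrac12,1)$, the normalised density $|A_y(\sigma+is)|^2/\EE[|A_y(\sigma+is)|^2]$ is a nonnegative mean-one martingale in $y$: since $\EE[1+\sum_{k\ge1}\alpha(p)^kf(p^k)p^{-k(\sigma+is)}]=1$ the product $A_y(\sigma+is)$ is a martingale in $y$, and independence over primes together with \eqref{eq:orth} gives $\EE[|A_{y'}(\sigma+is)|^2\mid\Fa_y]=|A_y(\sigma+is)|^2\,\EE[\prod_{y<p\le y'}|1+\cdots|^2]$, which after dividing by the normalisation returns $|A_y(\sigma+is)|^2/\EE[|A_y(\sigma+is)|^2]$. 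Hence, by Fubini, for every bounded interval $I$ and every $h\in C(I)$ the quantity $m_{y,\sigma}(h):=\int_I h(s)\,|A_y(\sigma+is)|^2/\EE[|A_y(\sigma+is)|^2]\,ds$ is a complex martingale in $y$, and this holds equally whether $\sigma$ is one of the values $\sigma_t>\tfrac12$ (so that $m_{y,\sigma_t}=m_{y,t}$ in the notation of \eqref{eq:mc-measure}) or the critical value $\sigma=\tfrac12$. The whole problem then reduces to a bound on $\EE[|m_{y,\sigma}(h)|^2]$ uniform in $y$ and $\sigma$.

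The heart of the matter is the two-point function
\[
\mathcal K_{y,\sigma}(s,s'):=\frac{\EE[|A_y(\sigma+is)|^2\,|A_y(\sigma+is')|^2]}{\EE[|A_y(\sigma+is)|^2]\,\EE[|A_y(\sigma+is')|^2]}=\prod_{p\le y}\frac{\EE[|G_p(\sigma,s)|^2\,|G_p(\sigma,s')|^2]}{\EE[|G_p(\sigma,s)|^2]\,\EE[|G_p(\sigma,s')|^2]},
\]
where $G_p(\sigma,s):=1+\sum_{k\ge1}\alpha(p)^kf(p^k)p^{-k(\sigma+is)}$ and the factorisation is by independence. Expanding a local factor and using $\EE[\alpha(p)^j]=0$ for $j\ge1$ and $\EE[|\alpha(p)|^{2j}]=1$, I expect the $p$-th factor to equal $1+2|f(p)|^2p^{-2\sigma}\cos((s-s')\log p)+\mathcal E_p(s,s')$ with $|\mathcal E_p(s,s')|\ll|f(p)|^3p^{-3/2}+|f(p^2)|^2p^{-2}+\sum_{k\ge3}|f(p^k)|^2p^{-k/2}$ uniformly: the cubic term absorbs the error in $\log(1+\cdot)$ (using $|f(p)|=o(p^{1/2})$, a consequence of \eqref{eq:f-summability}) together with the non-leading $k=1$ terms, and the remaining terms control the prime powers. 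By \eqref{eq:f-summability} the series $\sum_p\mathcal E_p$ converges, so $\log\mathcal K_{y,\sigma}(s,s')=2\Re\sum_{p\le y}|f(p)|^2p^{-2\sigma-i(s-s')}+O(1)$; feeding $\pi_{|f|^2}(t)=\theta\,\mathrm{Li}(t)+\Ea_{|f|^2}(t)$ (the hypothesis $|f|^2\in\mathbf P_\theta$) into partial summation, together with Mertens-type estimates for $\sum_{p\le y}p^{-\sigma-i\tau}$, I expect $2\Re\sum_{p\le y}|f(p)|^2p^{-2\sigma-i(s-s')}=-2\theta\log|s-s'|+O(1)$ uniformly for $\tfrac12\le\sigma<1$ and $|s-s'|$ bounded and $\gtrsim\max(1/\log y,\sigma-\tfrac12)$, with the one-sided bound $\le2\theta\log(1/\max(1/\log y,\sigma-\tfrac12))+O(1)$ below that cutoff. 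This yields $\mathcal K_{y,\sigma}(s,s')\ll|s-s'|^{-2\theta}$ uniformly, and therefore
\[
\sup_y\EE[|m_{y,\sigma}(h)|^2]=\sup_y\int_I\!\!\int_I h(s)\overline{h(s')}\,\mathcal K_{y,\sigma}(s,s')\,ds\,ds'\ \ll\ \|h\|_\infty^2\int_I\!\!\int_I|s-s'|^{-2\theta}\,ds\,ds'<\infty,
\]
the last integral being finite exactly because $2\theta<1$. Being an $L^2$-bounded martingale in $y$, $m_{y,\sigma}(h)$ converges in $L^2$ and a.s.; call the limit $m_\infty(h)$ for $\sigma=\tfrac12$ (where $m_{y,\infty}(h):=m_{y,\sigma}(h)|_{\sigma=1/2}$ is well defined since $A_y(\tfrac12+i\cdot)$ is a genuine finite Euler product) and $m_{\infty,t}(h)$ for $\sigma=\sigma_t$. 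This already gives $m_{y,\infty}(h)\to m_\infty(h)$ in $L^2$. To produce a random Radon measure one applies Riesz representation on the almost sure event on which $h\mapsto m_\infty(h)$ is defined, linear and nonnegative (for $h\ge0$) on a fixed countable dense subfamily of $C_c(\RR)$ and satisfies $|m_\infty(h)|\le m_\infty(I)\|h\|_\infty$ on each $I$ --- all routine from $L^2$ convergence. Finally $\EE[m_{y,\infty}(ds)]=ds$ since $\EE[|A_y(\tfrac12+is)|^2]/\EE[|A_y(\tfrac12+is)|^2]=1$, so $\EE[m_\infty(h)]=\int_I h$, making $m_\infty$ nontrivial with Lebesgue intensity.

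The remaining convergence $m_{\infty,t}(h)\to m_\infty(h)$ in $L^2$ as $t\to\infty$ (i.e.\ $\sigma_t\downarrow\tfrac12$) I would obtain by dominated convergence for second moments: expand $\EE|m_{\infty,t}(h)-m_\infty(h)|^2$ into three double integrals of correlation kernels --- $\mathcal K_{\infty,\sigma_t}$, $\mathcal K_{\infty,1/2}$, and the mixed kernel $\lim_y\EE[|A_y(\sigma_t+is)|^2\,|A_y(\tfrac12+is')|^2]/(\EE[\cdots]\EE[\cdots])$, all three existing by the $L^2$ convergence already proved and all factorising over primes --- check that each converges pointwise for $s\ne s'$ to the common limit $\mathcal K_{\infty,1/2}(s,s')$ as $\sigma_t\downarrow\tfrac12$, while all three are dominated by $C|s-s'|^{-2\theta}\in L^1(I\times I)$, so the three integrals share a limit and the difference tends to $0$. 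For the support statement: the two-point bounds also give $\EE[m_\infty(J)^2]\asymp_J1$ for every bounded interval $J$, so Paley--Zygmund gives $\PP(m_\infty(J)>0)>0$; since $(\alpha(p)p^{-ia})_p\overset{d}{=}(\alpha(p))_p$ for all $a\in\RR$ the law of $m_\infty$ is translation invariant, and $\{m_\infty(J)=0\}$ is insensitive to each individual $\alpha(p_0)$ --- using the factorisation $m_\infty(ds)=(|G_{p_0}(\tfrac12+is)|^2/\EE[\cdots])\,\widetilde m_\infty(ds)$ with $\widetilde m_\infty\perp\alpha(p_0)$, the fact that $G_{p_0}(\tfrac12+i\cdot)$ is a nontrivial periodic analytic function (hence has a countable zero set), and that $\widetilde m_\infty$ a.s.\ gives zero mass to any fixed countable set (a short second-moment argument; compare \Cref{rem:support-ppt}) --- so Kolmogorov's $0$--$1$ law forces $\PP(m_\infty(J)>0)=1$, and intersecting over rational intervals gives $\mathrm{supp}(m_\infty)=\RR$ almost surely.

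The main obstacle is the uniform two-point estimate $\mathcal K_{y,\sigma}(s,s')\ll|s-s'|^{-2\theta}$: one must carry the error $\Ea_{|f|^2}$ from $|f|^2\in\mathbf P_\theta$ through partial summation, control the prime powers via \eqref{eq:f-summability}, and --- crucially --- treat the full range of separations, from the genuinely critical regime $|s-s'|\gtrsim1$, through the intermediate range down to the effective cutoff $\max(1/\log y,\sigma-\tfrac12)$, to the sub-cutoff range where the kernel saturates; the uniformity (in $y$, in $\sigma\in[\tfrac12,1)$ and in the positions of $s,s'$) of the Mertens-type sums $\sum_{p\le y}|f(p)|^2p^{-\sigma-i\tau}$ is where essentially all of the analytic input lies.
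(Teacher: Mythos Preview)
Your proposal is correct and follows the same $L^2$ second-moment route as the paper: the martingale structure of $m_{y,\sigma}(h)$, the two-point bound $\mathcal K_{y,\sigma}(s,s')\ll|s-s'|^{-2\theta}$ extracted from $|f|^2\in\mathbf P_\theta$ by partial summation (this is the content of the paper's Lemmas~\ref{lem:truncate}--\ref{lem:martingaleL2est-2}), $L^2$ martingale convergence for (i), and the Kolmogorov $0$--$1$ law for full support. The one organisational difference is in (ii). You expand $\EE|m_{\infty,t}(h)-m_\infty(h)|^2$ directly into three infinite-product kernels and send $\sigma_t\downarrow\tfrac12$ by dominated convergence; this is valid, but the pointwise convergence $\mathcal K_{\infty,\sigma_t}(s,s')\to\mathcal K_{\infty,1/2}(s,s')$ is not quite as automatic as you suggest, since $\sum_p|f(p)|^2/p$ diverges and term-by-term DCT on $\log\mathcal K$ fails---one needs an Abel-type argument (the conditionally convergent sum $\sum_p|f(p)|^2p^{-1}\cos(\tau\log p)$ exists for $\tau\ne0$, and then $\sum_p|f(p)|^2p^{-1-\varepsilon}\cos(\tau\log p)\to\sum_p|f(p)|^2p^{-1}\cos(\tau\log p)$ as $\varepsilon\downarrow0$ by Abel's theorem). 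The paper avoids this by routing through the intermediate quantity $m_{y,y}(h)$: it shows separately that $\EE|m_{\infty,y}(h)-m_{y,y}(h)|^2\to0$ (a tail estimate, Lemma~\ref{lem:mcL2truncate}) and $\EE|m_{y,\infty}(h)-m_{y,y}(h)|^2\to0$ (comparing two \emph{finite} Euler products at different $\sigma$, Lemma~\ref{lem:mcL2compare}), so that every kernel encountered in a DCT step is a finite product whose pointwise limit is trivial. Either packaging gives the result with the same analytic inputs.
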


We will obtain from \Cref{thm:mc-convergence} the following result.
\begin{lem}\label{lem:limit-cond-var}
Under the same setting as \Cref{thm:mc-convergence}, we have
\begin{align}\label{eq:Uxlim}
    U_x \xrightarrow[x \to \infty]{p} \frac{1}{2\pi} \int_{\RR} \frac{m_\infty(ds)}{|\frac{1}{2} + is|^2}
\end{align}
\noindent and the limit is almost surely finite and strictly positive. 
\end{lem}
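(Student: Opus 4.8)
The plan is to turn the time-average in $U_x$ into a frequency integral over the critical line via Plancherel, to identify the resulting random measure with the multiplicative-chaos measure $m_\infty$ of \Cref{thm:mc-convergence}, and to read off finiteness and strict positivity from that theorem. \textbf{Step 1 (Plancherel reduction).} Put $D_x(s):=\sum_{n\le x}\alpha(n)f(n)n^{-s}$ and $A(u):=\sum_{n\le\min(u,x)}\alpha(n)f(n)$, so that $D_x(s)/s=\int_0^\infty A(e^v)e^{-sv}\,dv$ for $\Re s>0$. Plancherel's theorem applied on the line $\Re s=\tfrac12$ to the $L^2$-function $v\mapsto A(e^v)e^{-v/2}\mathbf 1_{v\ge0}$ yields the exact identity
\[
\frac{1}{2\pi}\int_{\RR}\frac{|D_x(\tfrac12+is)|^2}{|\tfrac12+is|^2}\,ds=\int_1^x\frac{|s_t|^2}{t}\,dt+|s_x|^2 .
\]
By Wirsing's theorem \cite{Wirsing} and partial summation, $\sum_{n\le x}|f(n)|^2(\tfrac1n-\tfrac1x)\sim\sum_{n\le x}|f(n)|^2/n\asymp(\log x)^\theta$, while $\EE|s_x|^2=\tfrac1x\sum_{n\le x}|f(n)|^2\asymp(\log x)^{\theta-1}$, so Markov's inequality gives $|s_x|^2\big/\big(\sum_{n\le x}|f(n)|^2(\tfrac1n-\tfrac1x)\big)\xrightarrow{p}0$. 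Since also $\sum_{n\le x}|f(n)|^2/n=\EE|D_x(\tfrac12+is)|^2$ for every $s$, putting
\[
\widetilde m_x(ds):=\frac{|D_x(\tfrac12+is)|^2}{\EE\,|D_x(\tfrac12+is)|^2}\,ds
\]
(whence $\EE[\widetilde m_x(ds)]=ds$, consistently with $\EE[U_x]\equiv1$) reduces the lemma to showing $\tfrac1{2\pi}\int_{\RR}|\tfrac12+is|^{-2}\,\widetilde m_x(ds)\xrightarrow{p}\tfrac1{2\pi}\int_{\RR}|\tfrac12+is|^{-2}\,m_\infty(ds)$.

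\textbf{Step 2 (identifying the limiting measure).} The main input is that $\widetilde m_x(h)\xrightarrow{p}m_\infty(h)$ for every bounded interval $I$ and every $h\in C(I)$. This is proved by an $L^2$ second-moment argument that compares the partial-sum measure $\widetilde m_x$, built from the Dirichlet polynomial $D_x(\tfrac12+is)$, with the random Euler-product measures $m_{y,\infty}$ and $m_{\infty,t}$ of \eqref{eq:mc-measure}: after the correct normalisation (where Wirsing's mean-value theorem makes the relevant diagonal sums agree up to $1+o(1)$), the chaotic off-diagonal fluctuations of $\widetilde m_x$ and of these Euler products are shown to coincide in the limit once the auxiliary scales are coupled appropriately. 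The truncation \Cref{lem:neg} and the de Bruijn--van Lint smooth-number asymptotics (\Cref{lem:sum}) are the arithmetic ingredients, and the restriction $\theta\in(0,\tfrac12)$ is what keeps the second moments in this comparison finite. Granting this, \Cref{thm:mc-convergence} identifies the common limit as $m_\infty$, so $\widetilde m_x(h)\xrightarrow{p}m_\infty(h)$.

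\textbf{Step 3 (non-compact weight, finiteness, positivity).} To replace $h\in C(I)$ by the weight $h_\infty(s):=|\tfrac12+is|^{-2}$ one truncates to $|s|\le T$: since $\EE[\widetilde m_x(ds)]=ds$ and $\EE[m_\infty(ds)]=ds$ (the latter from $\EE[m_{y,\infty}(ds)]=ds$ and the $L^2$-convergence in \Cref{thm:mc-convergence}), the tails $\int_{|s|>T}h_\infty\,\widetilde m_x$ and $\int_{|s|>T}h_\infty\,m_\infty$ have expectation $\int_{|s|>T}h_\infty(s)\,ds=o_T(1)$ uniformly in $x$, and a three-term ($\varepsilon/3$) estimate upgrades Step 2 to $\tfrac1{2\pi}\int_{\RR}h_\infty\,\widetilde m_x\xrightarrow{p}\tfrac1{2\pi}\int_{\RR}h_\infty\,m_\infty$, which together with Step 1 proves \eqref{eq:Uxlim}. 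Finiteness of the limit follows from Tonelli and $\EE[m_\infty(ds)]=ds$: $\EE\big[\tfrac1{2\pi}\int_{\RR}h_\infty\,m_\infty\big]=\tfrac1{2\pi}\int_{\RR}|\tfrac12+is|^{-2}\,ds=1<\infty$. Strict positivity follows since \Cref{thm:mc-convergence} asserts that $m_\infty$ is a.s.\ nontrivial and supported on all of $\RR$, so $m_\infty(I)>0$ for every nonempty open interval $I$ and hence $\int_{\RR}h_\infty\,m_\infty>0$ almost surely.

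\textbf{Main obstacle.} The hard part is Step 2 --- showing that the chaos generated by the truncated sum $\sum_{n\le x}\alpha(n)f(n)n^{-1/2-is}$ on the critical line is the same, after normalisation, as that of the random Euler product defining $m_\infty$, with enough uniformity in $s$ to survive integration against the weight $|\tfrac12+is|^{-2}$, which is non-integrable at the origin of the $s$-variable only through its effect on the random measure's tails but whose interplay with the $\theta<\tfrac12$ regime is delicate. This is exactly the place where the auxiliary truncation, the smooth-number machinery, and the hypothesis $\theta<\tfrac12$ all come into play, and it is the technical heart of \Cref{sec:mc-element}.
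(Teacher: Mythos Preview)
Your Step 1 Plancherel computation and Step 3 tail/positivity arguments are correct and match what the paper does at those stages. The gap is Step 2.

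You assert that $\widetilde m_x(h)\to m_\infty(h)$ in probability for $h\in C(I)$, where $\widetilde m_x$ is built from the \emph{Dirichlet polynomial} $D_x(\tfrac12+is)=\sum_{n\le x}\alpha(n)f(n)n^{-1/2-is}$. This is not established anywhere in the paper. \Cref{thm:mc-convergence} proves convergence only for the two approximations $m_{y,\infty}$ (Euler product truncated in primes, on the critical line) and $m_{\infty,t}$ (full Euler product, evaluated at $\sigma_t=\tfrac12+\tfrac{1}{2\log t}$). Neither is the Dirichlet-polynomial measure $\widetilde m_x$: the second-moment structure of $|D_x|^2$ does not factor over primes, so a direct $L^2$ comparison of $\widetilde m_x$ with $m_{y,\infty}$ would require controlling correlation sums of the shape $\sum_{n_1m_1=n_2m_2,\;n_i,m_i\le x}\cdots$, essentially the hard GCD analysis of \S\ref{subsec:l2computation}. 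The ingredients you cite (\Cref{lem:neg}, the de Bruijn--van Lint asymptotics of \Cref{lem:sum}) are the tools for comparing $T_{x,\varepsilon}$ with $U_x$, not for this step; your sketch does not actually carry out the comparison.

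The paper avoids this entirely. Instead of working with $D_x$ on $\Re s=\tfrac12$, it moves to $\sigma_x=\tfrac12+\tfrac{1}{2\log x}$ and uses the \emph{full} Euler product $A_\infty(\sigma_x+is)$, which converges absolutely there. Plancherel then gives
\[
\frac{1}{2\pi}\int_\RR\frac{|A_\infty(\sigma_x+is)|^2}{|\sigma_x+is|^2}\,ds=\int_1^\infty\frac{|s_t|^2}{t^{1+1/\log x}}\,dt,
\]
i.e.\ a Laplace transform of $t\mapsto|s_{e^t}|^2$. Since $m_{\infty,x}$ is exactly one of the approximations covered by \Cref{thm:mc-convergence}(ii), the left side (after normalising by $\EE|A_\infty(\sigma_x+is)|^2\sim C(\log x)^\theta$) converges in $L^1$ to $\int_\RR|\tfrac12+is|^{-2}m_\infty(ds)$. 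The passage from $\int_1^\infty|s_t|^2t^{-1-1/\log x}dt$ back to the hard-cutoff $\int_1^x|s_t|^2t^{-1}dt$ in $U_x$ is then handled by a probabilistic Hardy--Littlewood--Karamata Tauberian theorem (\Cref{theo:tauberian}). This is the missing idea: the Tauberian step is what lets the paper stay within the Euler-product framework of \Cref{thm:mc-convergence} and never confront the Dirichlet polynomial directly.
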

For the moment let us assume the validity of all the results presented in the current section. We are ready to explain:
\begin{proof}[Proof of \Cref{thm:main}]
Let $\OurEpsilon > 0$ be fixed. Since $Z_{x, p}$ is linear in $\alpha(p)$ by construction, for any $x > 0$ we automatically obtain $\sum_{x^{\OurEpsilon} \le p \le x} \EE[Z_{x, p}^2 | \Fa_{p^-}] =  0$ almost surely. Moreover, we have
\begin{align*}
    T_{x,\OurEpsilon} :=\sum_{x^{\OurEpsilon}\le p\le x} \EE\left[ |Z_{x,p}|^2 \mid \Fa_{p^-}\right]
    \xrightarrow[x \to \infty]{p} \frac{C_\OurEpsilon}{2\pi} \int_{\RR} \frac{m_\infty(ds)}{|\tfrac{1}{2} + is|^2} = C_\OurEpsilon V_\infty
\end{align*}

\noindent by \Cref{prop:l2} and \Cref{lem:limit-cond-var}. Combining this with \Cref{lem:lind} (which implies the conditional Lindeberg condition), we see that the martingale sequence associated to $S_{x, \OurEpsilon}$ satisfies all assumptions in \Cref{lem:mCLT}, and thus
$S_{x, \OurEpsilon} \xrightarrow[x \to \infty]{d}  \sqrt{C_\OurEpsilon V_\infty} G$ where the distributional convergence is also stable.

To establish the stable convergence of $S_{x}$, it is helpful to use the formulation \eqref{eq:stable3} in the language of weak convergence: we would like to show that
\begin{align*} 
    \lim_{x \to \infty} \EE\left[Y \widetilde{h}(S_x)\right]
    = \EE\left[Y\widetilde{h}(\sqrt{V_\infty} G) \right]
\end{align*}

\noindent for any bounded $\Fa_\infty$-measurable random variable $Y$ and any bounded continuous function $\widetilde{h}\colon \CC \to \RR$. By a density argument, it suffices to establish this claim for bounded Lipschitz functions $\widetilde{h}$. Consider
\begin{align*}
    \left|\EE\left[Y \widetilde{h}(S_x)\right] - 
    \EE\left[Y\widetilde{h}(\sqrt{V_\infty} G) \right]\right|
    & \le \left|\EE\left[Y \widetilde{h}(S_{x, \OurEpsilon})\right] - 
    \EE\left[Y\widetilde{h}(\sqrt{C_\OurEpsilon V_\infty} G) \right]\right|\\
    & \qquad + \EE\left[Y \left|\widetilde{h}(S_x) - \widetilde{h}(S_{x, \OurEpsilon})\right|\right]
    + \EE\left[Y \left|\widetilde{h}(\sqrt{V_\infty} G) - \widetilde{h}(\sqrt{C_\OurEpsilon V_\infty} G)\right|\right].
\end{align*}

\noindent We see that the first term on the right-hand side converges to $0$ as $x \to \infty$ as a consequence of the stable convergence of $S_{x, \OurEpsilon}$, whereas the third term converges to $0$ as $\OurEpsilon \to 0^+$ as a consequence of dominated convergence. Meanwhile, the middle term is bounded by
\begin{align*}
    \|\widetilde{h}\|_{\mathrm{Lip}} \EE[Y^2]^{1/2} 
     \EE\left[\left|S_x - S_{x, \OurEpsilon}\right|^2\right]^{\frac{1}{2}}
\end{align*}

\noindent which also vanishes in the limit as $x \to \infty$ and then $\OurEpsilon \to 0^+$ by \Cref{lem:neg}. This completes the proof of \Cref{thm:main}.
\end{proof}
\paragraph{Outline.} The remainder of the article is organised as follows.
\begin{itemize}
    \item In \Cref{sec:physical}, we shall carry out the first three steps, proving \Cref{lem:neg}, \Cref{lem:lind} and \Cref{prop:l2}.
    \item  In \Cref{sec:mc-element}, we shall carry out the fourth step, establishing \Cref{thm:mc-convergence} and \Cref{lem:limit-cond-var}. Towards the end we shall also provide the proof of \Cref{cor:mom-convergence}, and discuss extension to larger values of $q$.
    \item In \Cref{sec:rad}, we treat the Rademacher case, and highlight the changes needed for the proof of \Cref{thm:mainRad} and \Cref{cor:mom-convergenceRad}.
    \item Finally, we collect various results about mean values of nonnegative multiplicative functions and abstract probability ingredients in the two appendices.
\end{itemize}

\section{Proofs of \texorpdfstring{\Cref{lem:neg}}{Lemma \ref{lem:neg}}, \texorpdfstring{\Cref{lem:lind}}{Lemma \ref{lem:lind}} and \texorpdfstring{\Cref{prop:l2}}{Proposition \ref{prop:l2}}}\label{sec:physical}
\subsection{Truncation: proof of \texorpdfstring{\Cref{lem:neg}}{Lemma \ref{lem:neg}}}
By \eqref{eq:orth},
\[ \EE\left[|S_x - S_{x,\OurEpsilon}|^2\right] =(\sum_{n\le x} |f(n)|^2)^{-1} \sum_{\substack{n \le x\\ P(n) < x^{\OurEpsilon} \text{ or } P(n)^2 \mid n}} |f(n)|^2 = A_{1,\OurEpsilon} + A_{2,\OurEpsilon}\]
where
\begin{equation}\label{eq:A12}
A_{1,\OurEpsilon}:=(\sum_{n\le x} |f(n)|^2)^{-1} \sum_{\substack{n \le x\\ P(n) < x^{\OurEpsilon}}}|f(n)|^2 ,\qquad
A_{2,\OurEpsilon}:=(\sum_{n\le x} |f(n)|^2)^{-1} \sum_{\substack{n \le x\\ P(n) \ge x^{\OurEpsilon} \\ P(n)^2 \mid n}}|f(n)|^2.
\end{equation}
By \Cref{lem:sum} with $g=|f|^2$,
\[ \limsup_{x \to \infty} A_{1,\OurEpsilon} = \Gamma(\theta)\rho_{\theta}(1/\OurEpsilon)\OurEpsilon^{\theta-1}\]
which tends to $0$ as $\OurEpsilon\to 0^+$ \cite{Smida}. In particular, $\limsup_{\OurEpsilon\to 0^+}\limsup_{x \to \infty} A_{1,\OurEpsilon} =0 $.\footnote{By making slightly different assumptions we may avoid \Cref{lem:sum}. Applying \Cref{lem:upper} with $g(\cdot) =|f(\cdot)|^2 \mathbf{1}_{P(\cdot) \le x^{\OurEpsilon}}$ we obtain
$\sum_{n \le x,\, P(n) \le x^{\OurEpsilon}}|f(n)|^2\ll \tfrac{x}{\log x}\prod_{p \le x^{\OurEpsilon}} ( \sum_{i=0}^{\infty} |f(p^i)|^2/p^i)$; this upper bound and \eqref{eq:flatsum} with $g=|f|^2$ imply
 $A_{1,\OurEpsilon} \ll \exp(-\sum_{x^{\OurEpsilon}<p\le x} |f(p)|^2/p) \ll \OurEpsilon^{\theta}$ if $x$ is large enough in terms of $\OurEpsilon$.}
 To estimate $A_{2,\OurEpsilon}$ we denote $P(n)$ by $p$, and the multiplicity of $P(n)$ in $n$ by $i$ (i.e., $P(n)^i \mid n$ but $P(n)^{i+1} \nmid i$), so
\begin{equation}	\label{eq:A2}
	\begin{split}
	A_{2,\OurEpsilon} &= (\sum_{n \le x}|f(n)|^2)^{-1} \sum_{p \ge x^{\OurEpsilon}} \sum_{i\ge 2} |f(p^i)|^2 \sum_{\substack{m\le x/p^i \\ P(m)<p}}|f(m)|^2\\
	&\le(\sum_{n \le x}|f(n)|^2)^{-1} \sum_{p \ge x^{\OurEpsilon}} \sum_{i\ge 2} |f(p^i)|^2 \sum_{m\le x/p^i}|f(m)|^2
	\end{split}
\end{equation}
where $m$ stands for $n/p^i$. Considering $p^i \le \sqrt{x}$ and $p^i>\sqrt{x}$ separately in \eqref{eq:A2}, we deduce using \eqref{eq:flatsum} applied twice (with $f=|g|^2$) that
\[ 	A_{2,\OurEpsilon}\ll \sum_{\substack{p \ge x^{\OurEpsilon},\,i \ge 2\\ p^i \le \sqrt{x}}}  \frac{|f(p^i)|^2}{p^i} + \log x\sum_{\substack{p \ge x^{\OurEpsilon},\,i \ge 2\\ x\ge p^i > \sqrt{x}}} \frac{|f(p^i)|^2}{p^i}.\]
We bound the last sums using \eqref{eq:ppasump}, finding that for any fixed $\OurEpsilon>0$, $\limsup_{x \to \infty} A_{2,\OurEpsilon} =0 $.
\subsection{Lindeberg condition: proof of \texorpdfstring{\Cref{lem:lind}}{Lemma \ref{lem:lind}}}
\label{sec:Lindeberg}
By \eqref{eq:orth},
\begin{equation}\label{eq:4thsum}
\sum_{x^{\OurEpsilon}\le p\le x} \EE\left[|Z_{x,p}|^4\right]=(\sum_{n\le x}|f(n)|^2)^{-2}\sum_{x^{\OurEpsilon}\le p\le x}  \sum_{\substack{ab=cd\\ a,b,c,d \le x \\ P(a)=P(b)=P(c)=P(d)=p\\ P(a)^2 \nmid a,b,c,d}} f(a)f(b)\overline{f(c)f(d)}.
\end{equation}
The inner sum in the right-hand side of \eqref{eq:4thsum} can be written as 
\[\sum_{\substack{ab=cd\\ a,b,c,d \le x \\ P(a)=P(b)=P(c)=P(d)=p \\P(a)^2 \nmid a,b,c,d}} f(a)f(b)\overline{f(c)f(d)}=\sum_{\substack{m \le x^2\\P(m)=p}}h(m), \qquad h(m):=\bigg|\sum_{\substack{ab=m\\ P(a)=P(b)\\a,b\le x\\ P(m)^2 \nmid a,b}}f(a)f(b)\bigg|^2.\]
We write $p^2 \mid \mid m$ to indicate that $p^2$ divides $m$ but $p^3$ does not. We have the pointwise bound
$h(m) \le (|f|*|f|)^2(m) \cdot \mathbf{1}_{P(m)^2 \mid\mid  m}$ so that
\begin{equation*} \sum_{\substack{ab=cd\\ a,b,c,d \le x \\ P(a)=P(b)=P(c)=P(d)=p\\ P(a)^2 \nmid a,b,c,d}} f(a)f(b)\overline{f(c)f(d)} \le \sum_{\substack{m \le x^2\\P(m)=p\\ P(m)^2 \mid\mid  m}}\fname(m), \qquad \fname := (|f|*|f|)^2.
\end{equation*}
It follows that 
\begin{align}\label{eq:magicupper}
\sum_{x^{\OurEpsilon}\le p\le x}  \sum_{\substack{ab=cd\\ a,b,c,d \le x \\ P(a)=P(b)=P(c)=P(d)=p\\ P(a)^2 \nmid a,b,c,d}} f(a)f(b)\overline{f(c)f(d)}\le \sum_{x^{\OurEpsilon}\le p\le x}\sum_{\substack{p^2 \mid \mid m \le x^2\\ P(m)\le p}} \fname(m) = \sum_{x^{\OurEpsilon}\le p \le x}\fname(p^2) \sum_{\substack{m' \le x^2/p^2\\ P(m')<p}}\fname(m')
\end{align}
where $m'$ stands for $m/p^2$. Next,
\begin{equation}\label{eq:trivupper}
\sum_{\substack{m' \le x^2/p^2\\ P(m')<p}}\fname(m') \le \sum_{m' \le x^2/p^2}\fname(m') \le \frac{x^2}{p^2} \sum_{m'\le x^2/p^2} \frac{\fname(m')}{m'} \le \frac{x^2}{p^2} \prod_{q\le x^2/p^2} \left(\sum_{i=0}^{\infty} \frac{\fname(q^i)}{q^i}\right).
\end{equation}
Our assumptions on $|f(p)|$ and $|f(p^i)|$ in \eqref{eq:series} (with $g=|f|^2$) and \eqref{eq:sumpi} imply $\sum_{q,i \ge 2} \fname(q^i)/q^i < \infty$, and so
\begin{equation}\label{eq:nohigh}
\prod_{q\le x^2/p^2} \left(\sum_{i=0}^{\infty} \frac{\fname(q^i)}{q^i}\right)\ll \prod_{q \le x^2/p^2} \left(1+ \frac{\fname(q)}{q}\right) .
\end{equation}
Since $\fname(q) = 4|f(q)|^2$, we have
\[\prod_{q \le x^2/p^2} \bigg(1+\frac{\fname(q)}{q}\bigg) \le \prod_{q \le x^2} \bigg(1+\frac{\fname(q)}{q}\bigg)  = (\log x)^{4\theta+o(1)}\]
by \eqref{eq:loglim} and \eqref{eq:series} (with $g=|f|^2$). Thus, \eqref{eq:trivupper} and \eqref{eq:nohigh} imply
 \begin{align}\label{eq:Hb}
 	\sum_{x^{\OurEpsilon}\le p \le x}\fname(p^2) \sum_{\substack{m' \le x^2/p^2\\ P(m')<p}}\fname(m')\ll  x^2 (\log x)^{4\theta+o(1)}\sum_{x^{\OurEpsilon}\le p \le x}\frac{\fname(p^2)}{p^2}.
 \end{align}
By definition of $\fname$, $\fname(p^2) \ll |f(p^2)|^2 + |f(p)|^4$. Our assumptions on $f(p)$ and $f(p^2)$ in \eqref{eq:passum} show that the $p$-sum in the right-hand side of \eqref{eq:Hb} is $\ll_{\OurEpsilon} (\log x)^{-2\theta-3}$. We conclude by plugging \eqref{eq:Hb} in \eqref{eq:magicupper} and then \eqref{eq:magicupper} in \eqref{eq:4thsum}, and noting $\sum_{n \le x} |f(n)|^2 = x (\log x)^{\theta-1+o(1)}$ by \eqref{eq:flatsum} with $h=|f|^2$ and \Cref{lem:slow}.

\subsection{Approximating conditional variance: proof of \texorpdfstring{\Cref{prop:l2}}{Proposition \ref{prop:l2}}}\label{subsec:l2computation}
Given $f \in \Ma$ we define
\[ \Lsum:= \sum_{(n_1,n_2)=1} \frac{1}{\max\{n_1,n_2\}^2} \prod_{i=1}^{2} \bigg|\sum_{g_i \mid n_i^{\infty}} \frac{\overline{f(g_i)}f(g_in_i)}{g_i} \bigg|^2 \bigg(\sum_{h \mid (n_1n_2)^{\infty} }\frac{|f^2(h)|}{h} \bigg)^{-2},\]
where the outer sum is over all pairs of coprime positive integers $n_1$ and $n_2$. The notation `$g \mid n^{\infty}$' indicates that $g$ ranges over all positive integers whose prime factors divide $n$, e.g., if $n=p$ then $g$ ranges over all powers of $p$. If $f$ is completely multiplicative then
\[ \Lsum = \sum_{(n_1,n_2)=1} \frac{|f(n_1)|^2 |f(n_2)|^2}{\max\{n_1,n_2\}^2}.\]
The proof of \Cref{prop:l2} is broken into three parts:
\begin{lem}\label{lem:conv}
	Let $\theta \in (0,\frac{1}{2})$. Suppose $f \in \Ma$ satisfies $|f|^2 \in \Ma_{\theta}$, and that
 \begin{equation}\label{eq:Ldom}
		\Lsum':=\sum_{(n_1,n_2)=1} \frac{1}{\max\{n_1,n_2\}^2} \prod_{i=1}^{2} \bigg(\sum_{g_i \mid n_i^{\infty}} \frac{|\overline{f(g_i)}f(g_in_i)|}{g_i} \bigg)^2 \bigg(\sum_{h \mid (n_1n_2)^{\infty} }\frac{|f^2(h)|}{h} \bigg)^{-2}
	\end{equation}
converges.  Then, as $x \to \infty$, 
	\begin{align}
\label{eq:1stpart}		\EE[ U_x^2] &\to \Lsum,\\
\label{eq:2ndpart}		\EE [T_{x,\OurEpsilon} U_x] &\to C_{\OurEpsilon}\Lsum,\\
\label{eq:3rdpart}		\EE [T_{x,\OurEpsilon}^2] &\to C^2_{\OurEpsilon}\Lsum.
	\end{align}
\end{lem}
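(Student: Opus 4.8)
The plan is to reduce all three limits to a single second-moment computation, since by the definition of $T_{x,\OurEpsilon}$ as a sum of conditional variances and the formula for $U_x$, every one of the three quantities is (after expanding the squares and using \eqref{eq:orth}) a sum over quadruples of integers of products of $f$-values weighted by reciprocals of maxima. First I would compute $\EE[U_x^2]$ directly: writing $U_x = (\sum_{n\le x}|f(n)|^2(1/n-1/x))^{-1}\int_1^x |s_t|^2 t^{-1}dt$ and using $\EE[\alpha(n_1)\overline{\alpha(n_2)}\alpha(n_3)\overline{\alpha(n_4)}]$, the fourth mixed moment of Steinhaus variables, one gets a sum over $n_1n_3 = n_2n_4$ (with the combinatorics of shared prime factors producing the $g_i \mid n_i^\infty$ inner sums once one factors each $n_j$ into its part coprime to a ``core'' and its remaining part). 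After a change of variables $n_i = $ core times $g_i$, the double integral $\int\int_{1}^{x} \min(t_1,t_2)^{-1}\cdots$ collapses to $\sum 1/\max\{n_1,n_2\}^2$ up to lower-order terms, and the normalising denominator converges to $(\sum_{h\mid (n_1n_2)^\infty}|f^2(h)|/h)^{-2}$; this yields \eqref{eq:1stpart}. The hypothesis that $\Lsum'$ (the absolute-value version) converges is exactly what is needed to justify interchanging limit and summation by dominated convergence, and to control the tails uniformly in $x$.

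Next I would handle $\EE[T_{x,\OurEpsilon}U_x]$ and $\EE[T_{x,\OurEpsilon}^2]$ in parallel. The key structural observation is that $T_{x,\OurEpsilon} = \sum_{x^\OurEpsilon \le p \le x}\EE[|Z_{x,p}|^2 \mid \Fa_{p^-}]$, and each conditional expectation is itself, by \eqref{eq:orth} applied to the $\alpha(p)$-variable alone, a sum over pairs $(a,b)$ with $P(a)=P(b)=p$, $p^2\nmid a,b$, $ab$ a perfect-square-free-at-$p$ quantity, of $f(a)\overline{f(b)}$ times an $\Fa_{p^-}$-measurable quantity built from the smaller primes. Expanding $T_{x,\OurEpsilon}^2$ then gives a double sum over primes $p,q$; when $p\ne q$ the contributions reorganise into essentially the same quadruple sum as in $\EE[U_x^2]$ but with an extra constraint forcing one prime to be the largest, and when $p=q$ the diagonal is negligible (this is where one uses $\theta<\tfrac12$ and the growth bounds $|f(p)|=O(p^{1/2-c})$, $|f(p^k)|^2 = O(2^{k(1-c)})$ to show the diagonal and off-diagonal error terms vanish). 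The factor $C_\OurEpsilon$ emerges from the de Bruijn--van Lint asymptotics (\Cref{lem:sum}) for $\sum_{n\le x,\, P(n)\le x^\OurEpsilon} h(n)$: the restriction $P(n)\le p \le x$ in the inner sums, combined with $p\ge x^\OurEpsilon$, contributes precisely the integral $\theta\int_\OurEpsilon^1 \frac{(1-v)^{\theta-1}}{v}\cdot\frac{\Gamma(\theta)\rho_\theta((1-v)/v)}{((1-v)/v)^{\theta-1}}dv = C_\OurEpsilon$ in \eqref{eq:Ceps}, and it appears once in \eqref{eq:2ndpart} and twice (squared) in \eqref{eq:3rdpart} because $T_{x,\OurEpsilon}$ carries one truncation constraint and $T_{x,\OurEpsilon}^2$ carries two.

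I expect the main obstacle to be the bookkeeping in $\EE[T_{x,\OurEpsilon}^2]$: one must carefully separate the ``large prime'' $p$ (resp.\ $q$) from the rest of each integer, track which of the two large primes dominates, show the coincidence locus $p=q$ and the configurations where a large prime divides the ``other'' integer are error terms, and then match the surviving main term to $C_\OurEpsilon^2 \Lsum$ — all while keeping every sum absolutely convergent via the $\Lsum'$ hypothesis and the prime-power growth bounds. The analytic inputs (Mertens-type estimates for $\sum_{x^\OurEpsilon<p\le x}|f(p)|^2/p$, the asymptotics \eqref{eq:flatsum} for mean values of the relevant nonnegative multiplicative functions, and the smallness of $\rho_\theta$) are all available from the cited results and the appendices, so the difficulty is organisational rather than conceptual; the $\theta<\tfrac12$ constraint enters precisely to make the second moment of $U_x$ (equivalently $\Lsum$) finite, which is the crux flagged in \Cref{subsec:overview}.
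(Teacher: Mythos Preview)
Your overall strategy is sound and matches the paper: expand each of the three expectations via the orthogonality relation \eqref{eq:orth}, parametrise the resulting multiplicative constraint (e.g.\ $n_1 m_1 = n_2 m_2$) through GCDs so that the core variables $(n_1,n_2)$ with $(n_1,n_2)=1$ emerge together with the inner $g_i \mid n_i^\infty$ sums, and then pass to the limit by dominated convergence using the convergence of $R'$. You also correctly identify that $C_\OurEpsilon$ arises from the de\thinspace Bruijn--van\thinspace Lint asymptotics (\Cref{lem:sum}) applied to the smoothness-restricted inner sums, once per factor of $T_{x,\OurEpsilon}$.

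There is, however, a point where your description diverges from the paper and where your justification is off. For $\EE[T_{x,\OurEpsilon}^2]$ you propose to split the double prime sum $\sum_{p,q}$ into diagonal $p=q$ and off-diagonal $p\ne q$, discarding the diagonal as an error term ``where one uses $\theta<\tfrac12$ and the growth bounds $|f(p)|=O(p^{1/2-c})$, $|f(p^k)|^2=O(2^{k(1-c)})$''. Two problems: first, those pointwise growth bounds are \emph{not} hypotheses of \Cref{lem:conv} (they belong to \Cref{prop:l2}, where they are used only to verify that $R'<\infty$); second, the paper does not discard the diagonal at all. Instead, after the GCD parametrisation the paper obtains an \emph{exact} identity of the form
\[
\EE[T_{x,\OurEpsilon}^2]=\sum_{(n_1,n_2)=1}\frac{1}{\max\{n_1,n_2\}^2}\Bigl|\sum_{g_i\mid n_i^\infty}\tfrac{\overline{f(g_1)}f(g_1 n_1)f(g_2)\overline{f(g_2 n_2)}}{g_1 g_2}\,S_3(x,n_1 n_2,\max\{n_1,n_2\}g_1 g_2)\Bigr|^2,
\]
where the sum over the two primes $p,q$ has been absorbed into two copies of the single function $S_3$ (one for each factor of the square). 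The diagonal $p=q$ sits inside $|S_3|^2$ and is part of the main term; nothing is thrown away. The limit $C_\OurEpsilon^2 R$ then follows by showing $S_3(x,m,d)\to C_\OurEpsilon D_m$ pointwise and $S_3=O_\OurEpsilon(1)$ uniformly, exactly as for $S_1$ in the $\EE[U_x^2]$ computation. Your diagonal/off-diagonal route may be salvageable, but the reason you give for negligibility is the wrong one, and the paper's factorised identity is both cleaner and avoids the issue entirely.
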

Note that $\Lsum'$ dominates $\Lsum$. Before embarking on the proof of \Cref{lem:conv}, we explain why \eqref{eq:Ldom} converges when the assumptions of \Cref{prop:l2} hold. In fact, we estimate the tail of \eqref{eq:Ldom}. Let
\[ \widetilde{f}(n) := \sum_{g \mid n^{\infty}} \frac{|\overline{f(g)}f(gn)|}{g}.\]
By the triangle inequality and the fact that $f(1)=1$,
\[ \sum_{\substack{(n_1,n_2)=1\\ \max\{n_1,n_2\} \ge T}} \frac{1}{\max\{n_1,n_2\}^2} \prod_{i=1}^{2} \big(\sum_{g_i \mid n_i^{\infty}} \frac{|\overline{f(g_i)}f(g_in_i)|}{g_i}\big)^2 \big(\sum_{h \mid (n_1n_2)^{\infty} }\frac{|f^2(h)|}{h} \big)^{-2} \le  \sum_{\max\{n_1,n_2\} \ge T} \frac{(\widetilde{f}(n_1)\widetilde{f}(n_2))^2}{\max\{n_1,n_2\}^2}.\]
By Cauchy--Schwarz, 
\begin{equation}\label{eq:cs}
\widetilde{f}(p^i)^2 \ll |f(p^i)|^2 + \sum_{j\ge 1}\frac{|f(p^j)|^2}{p^j} \sum_{j\ge 1}\frac{|f(p^{i+j})|^2}{p^{j}}, \quad (\widetilde{f}(p) - |f(p)|)^2\le \sum_{j \ge 1} \frac{|f(p^j)|^2}{p^j }\sum_{j \ge 1} \frac{|f(p^{j+1})|^2}{p^{j}}.
\end{equation}
From \eqref{eq:cs} we see that our assumptions on $f$ imply
\begin{equation}\label{eq:assumpf}
\begin{split}
&\sum_{p \le T} \widetilde{f}(p)^2 \log p \ll T \qquad (T \ge 2),\qquad \sum_{p \le T} \frac{\widetilde{f}(p)^2\log p}{p} \sim \theta \log T \qquad (T \to \infty),\\
&\sum_{p, i \ge 2} \frac{\widetilde{f}(p^i)^2\log(p^i)}{p^i}<\infty.
\end{split}
\end{equation}
By \Cref{lem:upper} and \eqref{eq:assumpf} we have
\begin{equation}\label{eq:tildsum}
\sum_{n \le T} \widetilde{f}(n)^2 \ll \frac{T}{\log T}\sum_{n \le T} \frac{\widetilde{f}(n)^2}{n}\le \frac{T}{\log T}\prod_{p \le T} \sum_{i=0}^{\infty}\bigg(\frac{\widetilde{f}(p^i)^2}{p^i}\bigg) \ll\frac{T}{\log T}\prod_{p \le T} \bigg(1+ \frac{\widetilde{f}(p)^2}{p}\bigg) \ll T(\log T)^{\theta -1+o(1)}
\end{equation}
uniformly for $T \ge 2$, and so
\[
\sum_{\max\{n_1,n_2\} \ge T} \frac{(\widetilde{f}(n_1)\widetilde{f}(n_2))^2}{\max\{n_1,n_2\}^2}\ll  \sum_{n \ge T} \frac{\widetilde{f}(n)^2 (\log (n+1))^{\theta-1+o(1)}}{n} \ll (\log T)^{2\theta-1+o(1)}\]
where we used \eqref{eq:tildsum} and the fact that $\theta<\tfrac{1}{2}$. Here $n$ stands for $\max\{n_1,n_2\}$.

\subsubsection{Proof of first part of \texorpdfstring{\Cref{lem:conv}}{Lemma \ref{lem:conv}}}
We introduce
\begin{equation}\label{eq:Fmx}
 F_m(x):= \sum_{\substack{h \le x\\(h,m)=1}} |f(h)|^2\left(\frac{1}{h}-\frac{1}{x}\right),\qquad S_1(x,m,d):= \frac{F_m(x/d)}{F_1(x)}.
\end{equation}
The proof of \eqref{eq:1stpart} relies on the following identity which holds for \textit{all} multiplicative functions.
\begin{lem}\label{lem:iden1}
Let $f \in \Ma$. We have
\begin{equation}\label{eq:Uxrep}
\EE [U_x^2] = \sum_{\substack{n_i \le x\\(n_1,n_2)=1}} \frac{1}{\max\{n_1,n_2\}^2}\left| \sum_{g_{i}\mid n_i^{\infty}} \frac{\overline{f(g_1)}f(g_{1}n_1)f(g_2)\overline{f(g_2n_2)}}{g_{1}g_{2}} S_1(x,n_1n_2,\max\{n_1,n_2\}g_1g_2)\right|^2.
\end{equation}
\end{lem}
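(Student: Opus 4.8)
The plan is to expand $U_x^2$ directly from the definition \eqref{eq:Uxdefinition} and take expectations using the orthogonality relation \eqref{eq:orth}. Write $U_x = F_1(x)^{-1} \int_1^x |s_t|^2 t^{-1}\,dt$ with $s_t = t^{-1/2}\sum_{n\le t}\alpha(n)f(n)$, so that
\[
\EE[U_x^2] = F_1(x)^{-2} \int_1^x\!\!\int_1^x \frac{\EE\big[|s_t|^2 |s_u|^2\big]}{tu}\,dt\,du.
\]
Expanding $|s_t|^2|s_u|^2$ gives a quadruple sum over $a,b\le t$ and $c,d\le u$ of $\alpha(a)\overline{\alpha(b)}\alpha(c)\overline{\alpha(d)} f(a)\overline{f(b)}f(c)\overline{f(d)} / (tu)^{1/2}\cdot(tu)^{-1/2}$; by \eqref{eq:orth} the expectation of the $\alpha$-product is $1$ precisely when $ac = bd$ and $0$ otherwise. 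So the first step is to parametrize the solutions of $ac=bd$: writing $n_1 = a/(a,b)$, $n_2 = b/(a,b)$ one has $(n_1,n_2)=1$, and the general coprime-type decomposition lets us write $a = g_1 n_1$, $b = g_1$ (times a common factor), $c = g_2$, $d = g_2 n_2$ with $g_i$ supported on the primes of $n_i$ — this is exactly the structure that produces the $\sum_{g_i\mid n_i^\infty}$ sums. I would carry out this elementary but slightly fiddly reindexing carefully, keeping track of which of $a,b,c,d$ carries the ``extra'' divisor.

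The second step is to perform the $t,u$ integrals. After the reindexing, the constraints $a,b\le t$ and $c,d\le u$ become lower limits on $t$ and $u$ (namely $t \ge \max\{a,b\} = \max\{n_1,n_2\}g_1$ up to the common factor, similarly for $u$), and the integrand no longer depends on $t,u$ except through the factor $(tu)^{-1}$ coming from $s_t, s_u$ and the $t^{-1}u^{-1}$ from the measure — wait, more precisely $|s_t|^2 = t^{-1}\sum\cdots$ so there is a $t^{-1}$ from each $s$. Collecting these, $\int_{\max}^x \frac{dt}{t^2}$ type integrals appear, and these are exactly what gets packaged into the quantity $S_1(x,m,d) = F_m(x/d)/F_1(x)$ after recognizing that the coprimality condition $(h,m)=1$ in $F_m$ encodes the restriction that the ``free'' variable $h$ (the common factor $(a,b)=(c,d)$) must be coprime to $n_1 n_2$ in order not to disturb the coprimality of $n_1,n_2$. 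The appearance of $F_{n_1n_2}$ rather than $F_1$, and the shift by $\max\{n_1,n_2\}g_1g_2$, both come out of this bookkeeping.

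The main obstacle, I expect, is purely organizational: correctly matching the two independent sums (one from $|s_t|^2$, one from $|s_u|^2$) so that the result comes out as a \emph{squared modulus} $|\sum_{g_i\mid n_i^\infty}\cdots|^2$ rather than a double sum — this requires observing that the $(t,a,b)$ system and the $(u,c,d)$ system are complex conjugates of each other under the substitution $n_1\leftrightarrow n_2$, which is why $f(g_1)\overline{f(g_1 n_1)}$ pairs with $\overline{f(g_2)}f(g_2 n_2)$ and the whole inner expression is $|W|^2$ for $W = \sum_{g_i} \overline{f(g_1)}f(g_1n_1) f(g_2)\overline{f(g_2 n_2)} (g_1g_2)^{-1} S_1(\cdots)$. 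One must also check that the common factor $h=(a,b)$ ranges over \emph{all} integers coprime to $n_1n_2$ with no upper constraint beyond $h \le x/(\max\{n_1,n_2\}g_1g_2)$, which is precisely the sum defining $F_{n_1n_2}$ at argument $x/(\max\{n_1,n_2\}g_1g_2)$; there are no convergence issues here since everything is a finite sum for fixed $x$, so the identity is exact and holds for all $f\in\Ma$ with no hypotheses needed.
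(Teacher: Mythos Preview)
Your approach is the same as the paper's, and the overall architecture (expand, apply \eqref{eq:orth} to reduce to $ac=bd$, parametrize, integrate in $t,u$, recognise $F_{n_1n_2}$) is exactly right. But the parametrization step as you describe it is garbled in a way that would derail the computation.

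After writing $n_1=a/(a,b)$, $n_2=b/(a,b)$ and similarly introducing $g_2=(c,d)$ so that $c=g_2 n_2$, $d=g_2 n_1$, the gcds $g_1=(a,b)$ and $g_2=(c,d)$ are \emph{independent} and \emph{arbitrary} positive integers, not supported on the primes of $n_1$ or $n_2$ and not equal to one another. The missing step is a further three-way split of each $g_j$: write $g_j=g_{j,1}g_{j,2}h_j$ where $g_{j,i}\mid n_i^\infty$ and $(h_j,n_1n_2)=1$. It is this decomposition, together with multiplicativity of $f$, that gives
\[
f(g_j n_1)=f(g_{j,1}n_1)\,f(g_{j,2})\,f(h_j),\qquad f(g_j n_2)=f(g_{j,2}n_2)\,f(g_{j,1})\,f(h_j),
\]
so that the $h_j$-dependence factors out as $|f(h_j)|^2$. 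The free sums over $h_1$ and $h_2$ (two separate variables, one from each gcd) are what produce the two copies of $F_{n_1n_2}(x/\cdots)$, and the $g_{1,1},g_{1,2}$ become the $g_1,g_2$ of the final formula with $g_i\mid n_i^\infty$. Your sentence ``the common factor $h=(a,b)$ ranges over all integers coprime to $n_1n_2$'' is not correct: only the $h_j$-part of $(a,b)$ is coprime to $n_1n_2$, while the $g_{j,i}$-parts are precisely what generate the inner sum $\sum_{g_i\mid n_i^\infty}$. Once this split is made explicit, the modulus-squared structure drops out because the $(g_1,h_1)$-block and the $(g_2,h_2)$-block are complex conjugates of each other, exactly as you anticipated.
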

\begin{proof}
By definition,
\[\EE [U_x^2 ]=F_1(x)^{-2} S, \qquad S:=\int_{1}^{x}\int_{1}^{x} \frac{\EE\left[|s_t|^2|s_r|^2\right]}{tr}dtdr.\]
By \eqref{eq:orth},
\[\EE \left[|s_t|^2|s_r|^2 \right]=\frac{1}{tr} \sum_{\substack{n_1,n_2 \le t \\ m_1, m_2 \le r\\ n_1 m_1 = n_2 m_2}}f(n_1)\overline{f(n_2)}f(m_1)\overline{f(m_2)}.\]
It follows that
\begin{align*}
S &= \sum_{\substack{n_1,n_2,m_1,m_2 \le x \\ n_1 m_1 = n_2 m_2}} f(n_1)f(m_1)\overline{f(n_2)f(m_2)} \int_{\max\{n_1,n_2\}}^{x} \int_{\max\{m_1,m_2\}}^{x}\frac{dtdr}{t^2r^2}\\
&= \sum_{\substack{n_1,n_2,m_1,m_2 \le x \\ n_1 m_1 = n_2 m_2}} f(n_1)f(m_1)\overline{f(n_2)f(m_2)} \left( \frac{1}{\max\{n_1,n_2\}}-\frac{1}{x}\right) \left( \frac{1}{\max\{m_1,m_2\}}-\frac{1}{x} \right).
\end{align*}
We parametrize the solutions to $n_1 m_1 = n_2 m_2$. We let $g_1:=(n_1,n_2)$ and $n'_i:=n_i/g_1$ ($i=1,2$). Since $(n'_1,n'_2)=1$ and $n'_1m_1=n'_2 m_2$ we must have $m_1 = g_2 n'_2$ and $m_2 = g_2 n'_1$ for some $g_2$. Conversely, given coprime $n'_1$ and $n'_2$, and arbitrary $g_1,g_2$, we can construct a solution to $n_1m_1=n_2m_2$ via $n_i=g_1n'_i$, $m_1 = g_2 n'_2$ and $m_2=g_2 n'_1$. So 
\[ S =\sum_{\substack{n'_1,n'_2,g_1,g_2\\(n'_1,n'_2)=1 \\ n'_1, n'_2 \le x/\max\{g_1,g_2\}}} \frac{f(g_1n'_1)f(g_2n'_2)\overline{f(g_2n'_1)f(g_1n'_2)}}{g_1 g_2} \prod_{j=1}^{2}\left( \frac{1}{\max\{n'_1,n'_2\}}-\frac{g_j}{x}\right).\]
For $1\le i,j\le 2$ we denote $g_{i,j}=(g_i,n^{\prime \infty}_j)$, i.e., the largest divisor of $g_i$ whose prime factors divide $n_j$. Since $(n'_1,n'_2)=1$ we have $(g_{i,1},g_{i,2})=1$ for $i=1,2$. We introduce $h_i = g_i/(g_{i,1}g_{i,2})$, which is the largest divisor of $g_i$ coprime to $n'_1n'_2$. Observe that
\begin{equation}\label{eq:factor}
f(g_i n'_1) =f(g_{i,1}n'_1) f(g_{i,2})f( h_i),\qquad f(g_i n'_2) =f(g_{i,2}n'_2) f(g_{i,1})f( h_i),
\end{equation}
for $i=1,2$, by multiplicativity of $f$. 
The sum $S$ transforms into
\begin{equation}\label{eq:toexp}
 S=\sum_{\substack{n'_i \le x\\(n'_1,n'_2)=1}} \frac{1}{\max\{n'_1,n'_2\}^2}\left| \sum_{g_{1,i}\mid n_i^{\prime \infty}} \frac{\overline{f(g_{1,1})}f(g_{1,1}n'_1)f(g_{1,2})\overline{f(g_{1,2}n'_2)}}{g_{1,1}g_{1,2}} F_{n_1n_2}\left(\frac{x}{\max\{n'_1,n'_2\}g_{1,1}g_{1,2}}\right)\right|^2
\end{equation}
where in each of the sums, $i$ ranges over $\{1,2\}$. Replacing the notation $n'_i$ with $n_i$, and $g_{1,i}$ with $g_i$, and dividing \eqref{eq:toexp} by $F_1(x)^2$ we conclude the proof.
 \end{proof}
We proceed to prove \eqref{eq:1stpart}, where $|f|^2 \in \Ma_{\theta}$ is assumed.  Note that $S_1(x,m,d)$ vanishes if $d \ge x$. Let 
\begin{equation}\label{eq:Lprime}
 \Lname_m:= \prod_{p \mid m} \bigg(\sum_{i=0}^{\infty} \frac{|f(p^i)|^2}{p^i}\bigg)^{-1} = \bigg(\sum_{h \mid m^{\infty}} \frac{|f^2(h)|}{h}\bigg)^{-1}.
 \end{equation} 
\noindent We make two claims: 
\begin{itemize}
    \item $S_1(x,m,d)$ is bounded uniformly for $d \le x$ and $m \ge 1$, and 
    \item $\lim_{x \to \infty} S_1(x,m,d)= \Lname_m$ for any fixed $m$ and $d$. 
    \end{itemize}
Given these claims, the required result will follow by taking $x$ to $\infty$ in \eqref{eq:Uxrep} and using dominated convergence (recall that \eqref{eq:Ldom} is finite by assumption). We proceed to establish the claims. Given $x_2 \ge x_1 \ge 1$ we necessarily have
\begin{equation}\label{eq:triv1}
0\le F_m(x_1) \le F_1(x_1) \le \sum_{n \le x_1 } \frac{|f(n)|^2}{n} \le \sum_{n \le x_2} \frac{|f(n)|^2}{n}.
\end{equation}
Moreover,
\begin{equation}\label{eq:triv2}
F_1(x) \asymp \sum_{n \le x} \frac{|f(n)|^2}{n}
\end{equation}
by \eqref{eq:flatsum} and \Cref{lem:log} (with $g=|f|^2$). So
\[ 0\le S_1(x,m,d) \le F_1(x/d)/F_1(x) \ll   1, \] 
proving the first claim. Similarly, \eqref{eq:flatsum} and \Cref{lem:log} imply that
\begin{equation}\label{eq:logapp} \sum_{n \le x}g(n) \left(\frac{1}{n}-\frac{1}{x}\right) \sim  \sum_{n \le x}\frac{g(n) }{n} \sim \frac{e^{-\gamma\theta }}{\Gamma(\theta+1)} \prod_{p \le x} \left( \sum_{i=0}^{\infty} \frac{g(p^i) }{p^i}\right)
\end{equation}
holds as $x \to \infty$ for all functions of the form $g(\cdot)=|f(\cdot)|^2\mathbf{1}_{(\cdot,m)=1}$ for some $m$. By two applications of \eqref{eq:logapp}, once with $g=|f|^2$ and a second time with $g(\cdot)=|f(\cdot)|^2 \mathbf{1}_{(\cdot,m)=1}$ and $x/d$ in place of $x$, we obtain
\begin{equation}\label{eq:S1lim}
\lim_{x \to \infty} S_1(x,m,d) =\lim_{x \to \infty} \prod_{x/d < p \le x} \left( \sum_{i=0}^{\infty} \frac{|f(p^i)|^2}{p^i}\right)^{-1} \prod_{p \mid m} \bigg(\sum_i \frac{|f(p^i)|^2}{p^i}\bigg)^{-1}=\prod_{p \mid m} \bigg(\sum_i \frac{|f(p^i)|^2}{p^i}\bigg)^{-1},
\end{equation}
proving the second claim.
\subsubsection{Proof of second part of \texorpdfstring{\Cref{lem:conv}}{Lemma \ref{lem:conv}}}
The proof of \eqref{eq:2ndpart} is based on the following lemma. We use the notation introduced in \eqref{eq:Fmx}.
\begin{lem}\label{lem:TxUxS2}
Let $f \in \Ma$. We have
\begin{equation}\label{eq:TxUxS2}
\begin{split}
\EE [T_{x,\OurEpsilon}U_x]&=\sum_{\substack{n_i\le x \\ (n_1,n_2)=1}} \frac{1}{\max\{n_1,n_2\}^2}  \sum_{\substack{g_{1,i}\mid n_i^{\infty}\\ g_{2,i}\mid n_i^{\infty}}} \frac{\overline{f(g_{1,1})}f(g_{1,1}n_1)f(g_{1,2})\overline{f(g_{1,2}n_2)}f(g_{2,1})\overline{f(g_{2,1}n_1)}\overline{f(g_{2,2})}f(g_{2,2}n_2)}{g_{1,1}g_{1,2}g_{2,1}g_{2,2}}\\ &\qquad \cdot S_2(x,n_1n_2,\max\{n_1,n_2\}g_{1,1}g_{1,2},\max\{n_1,n_2\}g_{2,1}g_{2,2}),
\end{split}
\end{equation}
where
\begin{align}
S_2(x,m,d_1,d_2) &:= S_{2,1}(x,m,d_1)S_{2,2}(x,m,d_2),\\ S_{2,1}(x,m,d) &:=   F_m(x/d)/F_1(x) = S_1(x,m,d),\\
S_{2,2}(x,m,d) &:= (\sum_{n\le x} |f(n)|^2)^{-1} \sum_{\substack{h \le x^{1-\OurEpsilon}/d\\(h,m)=1}} |f^2(h)|d \sum_{P(hm)+1,x^{\OurEpsilon}\le p \le \frac{x}{dh}}|f^2(p)|.
\end{align}
\end{lem}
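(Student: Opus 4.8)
\textbf{Proof proposal for \Cref{lem:TxUxS2}.} The plan is to expand $\EE[T_{x,\OurEpsilon}U_x]$ directly from the definitions, using orthogonality \eqref{eq:orth} to collapse the expectation, and then to reparametrize the resulting arithmetic sum exactly as in the proof of \Cref{lem:iden1}. Recall that $T_{x,\OurEpsilon}=\sum_{x^{\OurEpsilon}\le p\le x}\EE[|Z_{x,p}|^2\mid \Fa_{p^-}]$ and $Z_{x,p}$ is a linear form in $\alpha(p)$ whose coefficient involves $\alpha(m)f(mp)$ over $m\le x/p$ with $P(m)<p$. Writing $Z_{x,p}=\alpha(p)W_p$ with $W_p$ being $\Fa_{p^-}$-measurable (here one uses $P(m)^2\nmid m$ and $P(m)=p$ to peel off exactly one factor $\alpha(p)$), we get $\EE[|Z_{x,p}|^2\mid\Fa_{p^-}]=|W_p|^2$ since $\EE|\alpha(p)|^2=1$, and hence $T_{x,\OurEpsilon}=\sum_p |W_p|^2$ is itself a quadratic expression in the $\alpha(q)$ with $q<p$. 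First I would multiply this against $U_x = F_1(x)^{-1}\int_1^x |s_t|^2 t^{-1}\,dt$, take expectations, and apply \eqref{eq:orth} to force the arithmetic matching condition: the four ``$T$-variables'' $a,b$ (from $W_p$) together with $p$ must multiply to the same integer as the product of the two ``$U$-variables'' coming from $|s_t|^2$.

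The second step is the combinatorial bookkeeping. As in \Cref{lem:iden1}, one parametrizes the solutions of the multiplicative equation by coprime cores $n_1,n_2$ and auxiliary divisors. The new feature compared to \Cref{lem:iden1} is that the prime $p$ from the $T$-factor plays a distinguished role: it is the largest prime factor, it appears to the first power, and it lies in the range $[x^{\OurEpsilon},x]$ with $p>P(\text{rest})$. After separating $p$, the remaining variables decouple into a ``$U$-type'' block (producing the factor $S_{2,1}=S_1$ exactly as before, via the tail integral $\int_{\max}^{x}dt/t^2$ giving $F_m(x/d)/F_1(x)$ with $d=\max\{n_1,n_2\}g_{1,1}g_{1,2}$) and a ``$T$-type'' block. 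In the $T$-type block one sums $|f^2(p)|$ over the admissible range $P(hm)<p\le x/(dh)$ with $p\ge x^{\OurEpsilon}$, where $h$ is the part of the summation coprime to the core carrying the extra $\alpha$-free mass from $|W_p|^2$, and the weight $|f^2(h)|$ and factor $d$ appear from tracking the constraint $p^2\nmid$ (one integer) together with $hp\le x$. This reproduces $S_{2,2}(x,m,d_2)$ with $d_2=\max\{n_1,n_2\}g_{2,1}g_{2,2}$, and the product structure $S_2=S_{2,1}S_{2,2}$ reflects that the $U$-block and the $T$-block are governed by independent ranges of summation once $p$ and the cores are fixed. One must also check that the phases $\alpha(\cdot)$ cancel to leave exactly the stated product of eight $f$-values divided by $g_{1,1}g_{1,2}g_{2,1}g_{2,2}$; this is the same multiplicativity splitting \eqref{eq:factor} used in \Cref{lem:iden1}, now applied on both the $n_1$-side and the $n_2$-side for each of the two copies (the $U$-copy and the $T$-copy).

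I expect the main obstacle to be purely notational rather than conceptual: keeping straight the four families of auxiliary divisors $g_{1,1},g_{1,2},g_{2,1},g_{2,2}$ and verifying that the coprimality relations $(n_1,n_2)=1$, $(g_{i,1},g_{i,2})=1$, $(h,m)=1$ are exactly the ones that make the factorizations \eqref{eq:factor} valid and the summation ranges disjoint. A subtler point is handling the condition ``$P(m)=p$ with $p^2\nmid m$'' in $Z_{x,p}$ when it is squared: the integer $p$ can come from the $a$-variable or the $b$-variable of $W_p$, but since both are multiplied together and then matched against the $U$-side, one checks that after extracting $p$ the leftover really is a sum of $|f^2(h)|$ against a single inner $p$-sum of $|f^2(p)|$, with the lower cutoff $P(hm)+1$ ensuring $p$ remains the largest prime and the cutoff $x^{\OurEpsilon}$ carried over from the truncation. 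Once these constraints are correctly transcribed, the identity \eqref{eq:TxUxS2} follows by comparing the two sides term by term, exactly as \eqref{eq:Uxrep} was obtained, with no analytic input needed at this stage.
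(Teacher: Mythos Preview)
Your proposal follows essentially the same route as the paper: compute $\EE[|Z_{x,p}|^2\mid\Fa_{p^-}]$ explicitly (the paper does this via a conditional orthogonality relation, equivalent to your $Z_{x,p}=\alpha(p)W_p$ observation), multiply against $U_x$, apply \eqref{eq:orth}, integrate over $t$ to produce the $F_m$ factor, and then parametrize the resulting multiplicative equation by coprime cores and split via \eqref{eq:factor}.

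One small correction to your description of the matching step: once $\alpha(p)$ has been peeled off from both summands in $|W_p|^2$, the prime $p$ contributes only the scalar $|f(p)|^2$ and plays no further role in the orthogonality matching. The condition that \eqref{eq:orth} forces is $n_1m'_1=n_2m'_2$, pairing one $U$-variable with one $T$-variable on each side, not ``the $T$-variables together with $p$ multiply to the product of the $U$-variables'' as you wrote. This is why the parametrization via $g_1=(n_1,n_2)$ and $n'_i=n_i/g_1$ from \Cref{lem:iden1} goes through verbatim, with the only new feature being the smoothness constraint $P(m'_i)\le p-1$ on the $T$-side variables (which is what eventually produces the condition $p>P(hm)$ in $S_{2,2}$). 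Apart from this slip in the description, your plan is the paper's plan and the obstacles you anticipate are the right ones.
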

\begin{proof}
Let $p$ be a prime. We have the following variant of \eqref{eq:orth}, proved in the same way:
\begin{equation}\label{eq:orthF}
	\EE \left[\alpha(n)\overline{\alpha}(m) \mid \Fa_{p^-}\right]=\alpha(n')\overline{\alpha}(m')\delta_{n'',m''}
\end{equation}
where $n'n''=n$, $m'm''$, and $n'$ and $m'$ are the largest divisors of $n$ and $m$ with $P(n'),P(m') \le p-1$. By \eqref{eq:orthF},
\begin{equation}\label{eq:zpsquared}
\begin{split}
\EE\left[ |Z_{x,p}|^2 \mid \Fa_{p^-}\right]&=\frac{1}{\sum_{n \le x}|f(n)|^2} \sum_{\substack{m_1,m_2 \le x\\ P(m_1)=P(m_2)=p\\p^2 \nmid m_1,m_2}} f(m_1)\overline{f(m_2)}\EE \left[\alpha(m_1)\overline{\alpha}(m_2) \mid \Fa_{p^-}\right]\\
&=\frac{1}{\sum_{n \le x}|f(n)|^2} |f^2(p)| \sum_{\substack{m_i' \le x/p\\ P(m'_i)\le p-1}} f(m'_1)\alpha(m'_1) \overline{f(m'_2)\alpha(m'_2)},
\end{split}
\end{equation}
where $i$ ranges over $\{1,2\}$, and the $m'_i$ stand for $m_i/p$. By definition,
\begin{equation}\label{eq:ETU}
\EE [T_{x,\OurEpsilon} U_x] = F_1(x)^{-1}\sum_{p \le x}\int_{1}^{x} \frac{1}{t} \EE \left[|s_t|^2 \EE \left[|Z_{x,p}|^2 \mid \Fa_{p^-}\right]\right] dt.
\end{equation}
By \eqref{eq:orth} and \eqref{eq:zpsquared},
\begin{equation}\label{eq:mixed}
\begin{split}
\EE &\left[|s_t|^2 \EE\left[|Z_{x,p}|^2 \mid \Fa_{p^-}\right]\right] \\
&=\frac{1}{t\sum_{n \le x}|f(n)^2|} \sum_{n_i \le t}\EE \bigg[f(n_1)\alpha(n_1)\overline{f(n_2)\alpha(n_2)}\sum_{\substack{m'_i \le x/p\\ P(m'_i) \le p-1}}|f^2(p)| f(m'_1)\alpha(m'_1)\overline{f(m'_2)\alpha(m'_2)}\bigg]\\ 
&=\frac{1}{t\sum_{n \le x}|f(n)|^2} |f^2(p)|\sum_{\substack{n_i\le t,\, m'_i\le x/p\\ P(m'_i)\le p-1\\ n_1m'_1=n_2 m'_2}} f(n_1)f( m'_1)\overline{f(n_2)f(m'_2)}.
\end{split}
\end{equation}
From \eqref{eq:ETU} and \eqref{eq:mixed},
\begin{align}\label{eq:etu} \EE[ T_{x,\OurEpsilon} U_x ]=F_1(x)^{-1} ( \sum_{n \le x} |f(n)|^2)^{-1} S
	\end{align}
for
\begin{equation*}
\begin{split}
S&:= \sum_{x^{\OurEpsilon}\le p \le x}|f^2(p)|\int_{1}^{x} t^{-2}\sum_{\substack{n_i \le t ,\,  m'_i \le x/p\\ P(m'_i)\le p-1\\ n_1m'_1=n_2 m'_2}} f(n_1)f(m'_1)\overline{f(n_2)f(m'_2)}dt\\
&= \sum_{x^{\OurEpsilon} \le p \le x}|f^2(p)|\sum_{\substack{n_i\le x,\, m'_i \le x/p\\ P(m'_i)\le p-1\\ n_1m'_1=n_2 m'_2}} f(n_1)f(m'_1)\overline{f(n_2)f(m'_2)} \left(\frac{1}{\max\{n_1,n_2\}} - \frac{1}{x}\right).
\end{split}
\end{equation*}
We now parametrize the solutions to $n_1m'_1=n_2m'_2$. Let $n'_i = n_i/g_1$ ($i=1,2$) where $g_1=(n_1,n_2)$. Then $m'_2 = g_2 n'_1$, $m'_1 = g_2 n'_2$ for $g_2= (m'_1,m'_2)$. Note that $P(n'_1)$, $P(n'_2)$ and $P(g_2)$ are $\le p-1$ if $P(m'_1)$, $P(m'_2)$ are $\le p-1$. Conversely, given $n'_1,n'_2,g_2$ such that $P(n'_1)$, $P(n'_2)$ and $P(g_2)$ are $\le p-1$ and a positive integer $g_1$ we can construct $n_i=n'_i g_1$, $m'_1= n'_2 g_2$ and $m'_2= n'_1g_2$ that will satisfy $n_1m'_1=n_2m'_2$ and $P(m'_i)\le p-1$. Using this parametrization, $S$ transforms into
\[S=\sum_{x^{\OurEpsilon} \le p \le x}|f^2(p)|\sum_{\substack{g_1 n'_i  \le x\\g_2 n'_i \le x/p\\ P(n'_i),P(g_2) \le p-1\\ (n'_1,n'_2)=1}} f(g_1n_1')f(g_2n'_2)\overline{f(g_2 n'_1 )f(g_1 n_2')} \left(\frac{1}{g_1\max\{n'_1,n'_2\}} - \frac{1}{x}\right).\]
We use the notation $g_{i,j}=(g_i,n^{\prime \infty}_j)$ and $h_i = g_i/(g_{i,1}g_{i,2})$, as in the proof of \Cref{lem:iden1}, as well as the notation $F_m$ defined in \eqref{eq:Fmx}. By \eqref{eq:factor}, the sum $S$ transforms into
\begin{equation}\label{eq:Stransform}
\begin{split}
	S=\sum_{x^{\OurEpsilon}\le p \le x}|f^2(p)|&\sum_{\substack{n'_i \le x\\P(n'_i)\le p-1\\(n'_1,n'_2)=1}} \frac{1}{\max\{n'_1,n'_2\}} \\
	& \sum_{g_{1,i}\mid n_i^{\prime \infty}} \frac{\overline{f(g_{1,1})}f(g_{1,1}n'_1)f(g_{1,2})\overline{f(g_{1,2}n'_2)}}{g_{1,1}g_{1,2}}F_{n'_1n'_2}\left(\frac{x}{\max\{n'_1,n'_2\}g_{1,1}g_{1,2}}\right) \\  &\sum_{g_{2,i}\mid n_i^{\prime \infty}}f(g_{2,1}) \overline{f(g_{2,1}n'_1)}\overline{f(g_{2,2})}f(g_{2,2}n'_2)\sum_{\substack{h_2 \le x/(\max\{n'_1,n'_2\}pg_{2,1}g_{2,2})\\(h_2,n'_1n'_2)=1\\ P(h_2)\le p-1}}|f(h_2)|^2
 \end{split}
\end{equation}
where $i$ ranges over $\{1,2\}$. Replacing the notation $n'_i$ with $n_i$ and $h_2$ with $h$, we obtain  \eqref{eq:TxUxS2} from \eqref{eq:Stransform} and \eqref{eq:etu}.
\end{proof}
We now establish \eqref{eq:2ndpart}, where $|f|^2 \in \Ma_{\theta}$ is assumed. Observe that $S_2(x,m,d_1,d_2)$ vanishes if $d_1>x$ or $d_2 >x^{1-\OurEpsilon}$.  Recall $\Lname_m$ was defined in \eqref{eq:Lprime}. We make two claims:
\begin{itemize}
\item $S_2(x,m,d_1,d_2) = O_{\OurEpsilon}(1)$ uniformly when $d_1 \le x$, $d_2 \le  x^{1-\OurEpsilon}$ hold simultaneously,
\item $\lim_{x \to \infty} S_2(x,m,d_1,d_2) = C_{\OurEpsilon}\Lname_m^{2}$ for every fixed $m,d_1,d_2$.
\end{itemize}
Given these claims, the required limit will follow by taking $x$ to $\infty$ in \eqref{eq:TxUxS2} and using dominated convergence. To establish boundedness, we consider $S_{2,1}$ and $S_{2,2}$ individually. We already saw $S_{2,1} \ll 1$ (by \eqref{eq:triv1}--\eqref{eq:triv2}). To bound $S_{2,2}$, we first bound the inner sum over $p$. The bound $\sum_{p \le t} |f(p)|^2 \ll t/\log (t+1)$ yields
\begin{align}
S_{2,2}(x,m,d) &\ll x(\sum_{n \le x}|f(n)|^2)^{-1} \sum_{ h \le x^{1-\OurEpsilon}/d}\frac{|f^2(h)| }{h \log(x/(dh) +1)}  \\
&\ll \frac{x}{\OurEpsilon\log x}(\sum_{n \le x}|f(n)|^2)^{-1} F_1(x^{1-\OurEpsilon}/d)\ll  \frac{x }{\OurEpsilon\log x}(\sum_{n \le x}|f(n)|^2)^{-1}F_1(x).
\end{align}
By \eqref{eq:flatsum} and \Cref{lem:log} with $g=|f|^2$ , $\sum_{n \le x} |f(n)|^2 \asymp x F_1(x)/\log x$ and so $S_{2,2}(x,m,d) \ll 1/\OurEpsilon$.

We now estimate $S_2$ asymptotically when $m$, $d_1$ and $d_2$ are fixed. We already estimated $S_{2,1}=S_1$ in \eqref{eq:S1lim}:
\begin{equation}
\lim_{x \to \infty} S_{2,1}(x,m,d) =\Lname_m.
\end{equation}
It remains to show $S_{2,2}$ tends to $C_{\OurEpsilon} \Lname_m$. By interchanging the order of summation we find that
\begin{equation}\label{eq:S2simp} S_{2,2}(x,m,d) =  \sum_{x^{\OurEpsilon} \le p \le \frac{x}{d}} |f^2(p)| \sum_{\substack{h \le x/(pd)\\ P(h)\le p-1\\(h,m)=1}} |f^2(h)|d/ (\sum_{n \le x}|f(n)|^2) 
\end{equation}
if $x$ is large enough with respect to $m$ and $\OurEpsilon$. Let $\delta>0$ be a parameter that eventually will be sent to $0^+$. We write
\[ S_{2,2}= S_{2,2,\le 1- \delta}+S_{2,2,>1- \delta}\]
where $S_{2,2,\le 1-\delta}$ is the contribution of $x^{\OurEpsilon} \le p \le x^{1-\delta}$ to \eqref{eq:S2simp}, while  $S_{2,2,>1- \delta}$ is the contribution of $x^{1-\delta}<p \le x/d$.  We define
\[ \widetilde{\rho_{\theta}}(t) :=  \Gamma(\theta)\rho_{\theta}(t)/t^{\theta-1}.\] 
To estimate the $h$-sum in \eqref{eq:S2simp} we apply \Cref{lem:sum} with $g(\cdot)=|f(\cdot)|^2\mathbf{1}_{(\cdot,m)=1}$, which  gives
\begin{equation}\label{eq:hsum} \sum_{\substack{h \le x/(pd) \\(h,m)=1 \\P(h)\le p-1}} |f^2(h)| \sim \widetilde{\rho_{\theta}}\left(\frac{\log(x/(pd))}{\log(p-1)}\right) \sum_{n \le x/(pd),\, (n,m) = 1} |f^2(n)|
\end{equation}
as $x \to \infty$, uniformly for $x^{\OurEpsilon} \le p \le x^{1-\delta}$. \Cref{lem:slow} and \eqref{eq:flatsum} applied twice, with $g=|f|^2$ and $g(\cdot)=|f(\cdot)|^2 \mathbf{1}_{(\cdot,m)=1}$, imply that
\begin{equation}\label{eq:shortvslong} \frac{\sum_{n \le x/(pd),\, (n,m)=1} |f^2(n)| }{\sum_{n \le x} |f^2(n)|} \sim \frac{\Lname_m}{pd}  \left(\frac{\log(1+x/(pd))}{\log (1+x)}\right)^{\theta-1} \frac{L(\log (x/(pd)))}{L(\log x)}
\end{equation}
as $x/(pd) \to \infty$, for an explicit slowly varying function $L=L_{|f^2|}\colon [0,\infty) \to (0,\infty)$ described in \Cref{lem:slow}. Recall that for any given $b \in (0,1)$, \begin{equation}\label{eq:unif}
L(tu)\sim L(u)
\end{equation}
holds as $u \to \infty$, \textit{uniformly} for $t \in [b,1/b]$. This can be deduced directly from the description of $L$ in \Cref{lem:slow}, but is also a general property of slowly varying functions \cite[p.~180]{Korevaar}. From \eqref{eq:hsum}, \eqref{eq:shortvslong} and \eqref{eq:unif},
\begin{equation}\label{eq:S2asymp}
S_{2,2,\le 1-\delta} =(1+o_{\delta}(1))\Lname_m \sum_{x^{\OurEpsilon} \le p \le x^{1-\delta}} \frac{|f^2(p)|\log p}{p}  \frac{( 1-\log p/\log x)^{1-\theta}}{\log p} \widetilde{\rho_{\theta}}\bigg( \frac{\log x}{\log p}-1\bigg),
\end{equation}
where, for fixed $\delta$, $o_{\delta}(1)\to 0$ as $x \to \infty$ by \eqref{eq:unif}. By integration by parts, the $p$-sum in \eqref{eq:S2asymp} tends to
\[ \theta \int_{x^{\OurEpsilon}}^{x^{1-\delta}} \frac{(1-\log t/\log x)^{1-\theta}}{t\log t} \widetilde{\rho_{\theta}}\bigg(\frac{\log x}{\log t}-1\bigg) dt  =\theta \int_{\OurEpsilon}^{1-\delta}  \frac{(1-v)^{1-\theta}}{v} \widetilde{\rho_{\theta}}\bigg(\frac{1-v}{v}\bigg)dv=C_{\OurEpsilon}+O(\delta) \]
as $x \to \infty$, 
where the implied constant in $O(\delta)$ depends on $\theta$ alone.
It remains to control $S_{2,2,>1-\delta}$.
Omitting the conditions $(h,m)=1$ and $P(h) \le p-1$ in \eqref{eq:S2simp} and using  \Cref{lem:slow} and \eqref{eq:flatsum} twice with $g=|f|^2$, we find that 
\begin{equation}\label{eq:S221minus} S_{2,2,>1-\delta}\ll \sum_{x^{1-\delta} < p \le \frac{x}{d}} \frac{|f^2(p)|}{p} \bigg( \frac{\log (1+x/(pd))}{\log (1+x)}\bigg)^{\theta-1} \frac{L(\log(x/(pd)))}{L(\log x)}. 
\end{equation}
For any given $a >0$, there exists $C_a>0$ such that
\begin{equation}\label{eq:Lratio} L(u)/L(v)\le (v/u)^a
\end{equation}
holds for all $v \ge u \ge C_a$ (this follows from the description of $L$ in \Cref{lem:slow}, and is also a general property of slowly varying functions \cite[p.~180]{Korevaar}). From \eqref{eq:S221minus} and \eqref{eq:Lratio},
\begin{equation}\label{eq:junk}  S_{2,2,>1-\delta} \ll  \sum_{x^{1-\delta} < p \le \frac{x}{d}} \frac{|f^2(p)|}{p} \bigg( \frac{\log (1+x/(pd))}{\log (1+x)}\bigg)^{\frac{\theta}{2}-1} \ll  \max\left\{\delta,\frac{1}{\log x}\right\}^{\frac{\theta}{2}}
\end{equation}
where we used \eqref{eq:Lratio} with $a=\theta/2$ in the first inequality, and $\sum_{p \le t} |f(p)|^2 \ll t/\log (t+1)$ in the second inequality.   Our estimates on $S_{2,2,\le 1-\delta}$ and $S_{2,2,>1-\delta}$ imply $\lim_{x \to \infty} S_{2,2}(x,m,d) = C_{\OurEpsilon} \Lname_m$ as needed.
\subsubsection{Proof of third part of \texorpdfstring{\Cref{lem:conv}}{Lemma \ref{lem:conv}}}
We start with the following general identity.
\begin{lem}\label{lem:txsquared}
Let $f \in \Ma$. We have
\begin{align}\label{eq:Txexp}
\EE [|T_{x,\OurEpsilon}|^2]&= \sum_{\substack{n_i \le x\\(n_1,n_2)=1}} \frac{1}{\max\{n_1,n_2\}^2}\left| \sum_{g_{i} \mid n^{\infty}_i}\frac{\overline{f(g_{1})}f(g_{1}n_1)f(g_{2})\overline{f(g_{2}n_2)}}{g_{1}g_2} S_3(x,n_1n_2,\max\{n_1,n_2\}g_{1}g_{2})\right|^2
\end{align}
where
\begin{align} 
\label{eq:S3def}S_3(x,m,d) &:= \sum_{\substack{\max\{x^{\OurEpsilon},P(m)+1\} \le p \le x/d}} \frac{|f^2(p)|}{p}  T_m\left(x,\frac{x}{pd},p\right),\\
\notag T_m(x,y,p) &:= \frac{\sum_{ h \le y,\,  P(h)\le p-1,\, (h,m)=1}|f^2(h)|/y}{\sum_{n \le x}|f(n)|^2/x}.
 \end{align}
\end{lem}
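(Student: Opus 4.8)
The plan is to derive \eqref{eq:Txexp} by the same three‑move argument that produced \eqref{eq:Uxrep} in \Cref{lem:iden1} and \eqref{eq:TxUxS2} in \Cref{lem:TxUxS2}: expand the square, apply the orthogonality relation \eqref{eq:orth}, parametrize the resulting equation, and regroup prime‑power by prime‑power using multiplicativity. Since every summation variable is bounded by $x$, all sums are finite, so this is a purely algebraic identity — there is nothing to check about convergence; implicitly $x$ is large enough that $\sum_{n\le x}|f(n)|^2\ne 0$.

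First I would record the building block. By \eqref{eq:zpsquared}, writing $W:=\sum_{n\le x}|f(n)|^2$ and $A_p:=\sum_{m\le x/p,\,P(m)\le p-1}\alpha(m)f(m)$, one has $\EE[|Z_{x,p}|^2\mid\Fa_{p^-}]=|f^2(p)|\,|A_p|^2/W\ge 0$; hence $T_{x,\OurEpsilon}\ge 0$, so $|T_{x,\OurEpsilon}|^2=T_{x,\OurEpsilon}^2$ and
\[ \EE[|T_{x,\OurEpsilon}|^2]=\frac{1}{W^2}\sum_{x^{\OurEpsilon}\le p_1,p_2\le x}|f^2(p_1)||f^2(p_2)|\,\EE\big[|A_{p_1}|^2|A_{p_2}|^2\big]. \]
Expanding $|A_{p_1}|^2|A_{p_2}|^2$ as a sum over quadruples $a,b\le x/p_1$ and $c,d\le x/p_2$ with $P(a),P(b)\le p_1-1$, $P(c),P(d)\le p_2-1$, and using that $\alpha$ is completely multiplicative together with \eqref{eq:orth} (so $\EE[\alpha(a)\alpha(c)\overline{\alpha(b)\alpha(d)}]=\mathbf 1_{ac=bd}$), this becomes a sum over such quadruples subject to $ac=bd$, weighted by $f(a)f(c)\overline{f(b)f(d)}$.

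Next I would parametrize $ac=bd$ exactly as in the proofs of \Cref{lem:iden1,lem:TxUxS2}: put $G_1:=(a,b)$, $n_1:=a/G_1$, $n_2:=b/G_1$ (so $(n_1,n_2)=1$), which forces $c=G_2n_2$, $d=G_2n_1$ for some $G_2$; conversely any coprime $(n_1,n_2)$ and any $G_1,G_2$ give a solution. Then decompose the two gcd‑variables by prime support relative to $n_1,n_2$: set $g_1:=(G_1,n_1^{\infty})$, $g_2:=(G_1,n_2^{\infty})$, $h_1:=G_1/(g_1g_2)$ (coprime to $n_1n_2$), and analogously $\tilde g_1,\tilde g_2,h_2$ from $G_2$. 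Four applications of the factorization \eqref{eq:factor} split $f(a)f(c)\overline{f(b)f(d)}$ into $|f^2(h_1)||f^2(h_2)|$ times $[\overline{f(g_1)}f(g_1n_1)][f(g_2)\overline{f(g_2n_2)}][f(\tilde g_1)\overline{f(\tilde g_1n_1)}][\overline{f(\tilde g_2)}f(\tilde g_2n_2)]$. The bookkeeping step is to push the constraints through this substitution: the size bounds on $a,b$ and on $c,d$ become $h_1\le x/(p_1\max\{n_1,n_2\}g_1g_2)$ and $h_2\le x/(p_2\max\{n_1,n_2\}\tilde g_1\tilde g_2)$, while the primality bounds become $P(h_1)\le p_1-1$, $P(h_2)\le p_2-1$ together with $P(n_1n_2)<\min\{p_1,p_2\}$ — precisely the condition making both outer sums run over $p_i>\max\{x^{\OurEpsilon},P(n_1n_2)\}$, which matches the range in the definition \eqref{eq:S3def} of $S_3$. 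Carrying out the $(p_1,h_1)$‑summation and reading off the definitions of $S_3$ and $T_m$ identifies it as $\frac{W}{\max\{n_1,n_2\}g_1g_2}S_3(x,n_1n_2,\max\{n_1,n_2\}g_1g_2)$, and symmetrically the $(p_2,h_2)$‑summation with $\tilde g_1,\tilde g_2$. Substituting back, the factors of $W$, $\max\{n_1,n_2\}$, $g_1g_2$ and $\tilde g_1\tilde g_2$ cancel against the $1/W^2$ and the denominators $g_1g_2$, $\tilde g_1\tilde g_2$ and $\max\{n_1,n_2\}^2$ already written in \eqref{eq:Txexp}; since $S_3$ is real‑valued the $(\tilde g_1,\tilde g_2)$‑block is the complex conjugate of the $(g_1,g_2)$‑block, so the two collapse into a single squared modulus, giving \eqref{eq:Txexp} after renaming $g_1,g_2$.

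I expect the only genuine difficulty to be organizational: tracking which prime‑power parts of $G_1$ and $G_2$ fall on $n_1$, on $n_2$, or on the free parts $h_1,h_2$, and checking that the two simultaneous restrictions $P(\cdot)\le p_1-1$ and $P(\cdot)\le p_2-1$ on the shared variables reduce cleanly to $P(n_1n_2)<\min\{p_1,p_2\}$ plus the separate bounds on $h_1$ and $h_2$. Everything else runs parallel to the manipulations already carried out for \Cref{lem:iden1} and \Cref{lem:TxUxS2}.
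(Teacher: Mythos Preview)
Your proposal is correct and follows essentially the same approach as the paper's proof: expand $\EE[T_{x,\OurEpsilon}^2]$ via \eqref{eq:zpsquared} and \eqref{eq:orth}, parametrize the resulting multiplicative constraint $ac=bd$ by extracting the gcd, split each gcd-variable into its $n_1^\infty$-, $n_2^\infty$-, and coprime parts using \eqref{eq:factor}, and then collapse the $(p,h)$-sums into $S_3$. The only cosmetic difference is that the paper first assumes $p\le q$ without loss of generality before carrying out the parametrization, whereas you treat $p_1,p_2$ symmetrically throughout and observe at the end that the two blocks are complex conjugates; your bookkeeping of the constraint $P(n_1n_2)<\min\{p_1,p_2\}$ and its clean separation into the two $S_3$ ranges is exactly what is needed.
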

\begin{proof}
By definition,
\begin{equation}\label{eq:Tsquared}
\EE[ |T_{x,\OurEpsilon}|^2 ]=  \sum_{x^{\OurEpsilon}\le p,q \le x} \EE \left[\EE\left[ |Z_{x,p}|^2 \mid \Fa_{p^-}\right] \EE\left[ |Z_{x,q}|^2 \mid \Fa_{q^-}\right]\right].
\end{equation}
Given primes $p,q\le x$ we have, by \eqref{eq:zpsquared} and \eqref{eq:orth},
\begin{equation*}
\begin{split}
\EE \big[\EE\left[ |Z_{x,p}|^2 \mid \Fa_{p^-}\right] &\EE\left[ |Z_{x,q}|^2 \mid \Fa_{q^-}\right]\big] \\
&= \frac{|f^2(p)f^2(q)|}{(\sum_{n \le x}|f(n)|^2)^2}  \sum_{\substack{m'_i \le x/p,\, m''_i \le x/q \\ P(m'_i) \le p-1,\, P(m''_i)\le q-1\\ m'_1 m''_1 = m'_2 m''_2}}  f(m'_1)f(m''_1)\overline{f(m'_2)f(m''_2)}.
\end{split}
\end{equation*} 
Here $i$ ranges over $\{1,2\}$. Suppose without loss of generality that $p \le q$. We parametrize the solutions to $m'_1 m''_1 = m'_2 m''_2$: we let $g_1=(m'_1,m'_2)$ and introduce a variable $g_2$ with $P(g_2)\le q-1$, and then given $m'_i$ we parametrize $m''_i$ by $m''_1 = g_2 (m'_2/g_1)$, $m''_2 = g_2 (m'_1/g_1)$. Denoting $m'_i/g_1$ by $n'_i$, we can write
\begin{multline}\label{eq:epq1}
\EE \left[\EE\left[ |Z_{x,p}|^2 \mid \Fa_{p^-}\right] \EE\left[ |Z_{x,q}|^2 \mid \Fa_{q^-}\right]\right] = \frac{ |f^2(p)f^2( q)|}{(\sum_{n \le x}|f(n)|^2)^2} \sum_{\substack{P(n'_ig_1)\le p-1\\ P(g_2)\le q-1\\ (n'_1,n'_2)=1 \\ n'_i g_1 \le x/p \\ n'_i g_2 \le x/q}} f(g_1 n'_1)f(g_2 n'_2)\overline{f(g_1 n'_2 ) f(g_2 n'_1)}.
\end{multline}
We use the notation $g_{i,j}=(g_i,n^{\prime \infty}_j)$ and $h_i = g_i/(g_{i,1}g_{i,2})$ as in the proof of \Cref{lem:iden1}. By \eqref{eq:factor}, \eqref{eq:epq1} transforms into
\begin{equation}\label{eq:epq}
	\begin{split}
\EE \big[\EE\left[ |Z_{x,p}|^2 \mid \Fa_{p^-}\right]& \EE\left[ |Z_{x,q}|^2 \mid \Fa_{q^-}\right]\big] = \frac{|f^2(p)f^2( q)|}{(\sum_{n \le x}|f(n)|^2)^2} \\
& \sum_{\substack{P(n'_i)\le p-1\\  n'_i\le x/q\\(n'_1,n'_2)=1 }}\overline{f(g_{1,1})}f(g_{1,1}n'_1)f(g_{1,2})\overline{f(g_{1,2}n'_2)}f(g_{2,1})\overline{f(g_{2,1}n'_1)}\overline{f(g_{2,2})}f(g_{2,2}n'_2)\\
 & \sum_{\substack{P(h_1) \le p-1 \\ h_1 \le x/(p\max\{n'_1,n'_2\} g_{1,1}g_{1,2})\\(h_1,n'_1n'_2)=1}} |f(h_1)|^2     \sum_{\substack{P(h_2) \le q-1 \\ h_2 \le x/(q\max\{n'_1,n'_2\} g_{2,1}g_{2,2})\\(h_2,n'_1n'_2)=1}} |f(h_2)|^2.
 \end{split}
\end{equation}
Renaming $n'_i$ to $n_i$ and $g_{1,i}$ to $g_i$, we obtain from \eqref{eq:Tsquared} and \eqref{eq:epq} the required identity.
\end{proof}
We proceed to prove \eqref{eq:3rdpart}, where $|f|^2 \in \Ma_{\theta}$ is assumed. If $d>x$ then clearly $S_3(x,m,d)$ vanishes. We make two claims:
\begin{itemize}
    \item $S_3(x,m,d)=O_{\OurEpsilon}(1)$ holds uniformly for $d\le x$ and $m \ge 1$, and
    \item $\lim_{x \to \infty} S_3(x,m,d) =  C_{\OurEpsilon}\Lname_m$ for every fixed $m$ and $d$. 
\end{itemize}
Given these claims, the required result will follow by taking $x$ to $\infty$ in \eqref{eq:Txexp} and using dominated convergence.  We first explain boundedness. By omitting the conditions $P(h)\le p-1$ and $(h,m)=1$ in $T_m(x,y,p)$ and using \eqref{eq:flatsum} (with $g=|f|^2$), we find that
\begin{align*}
S_3(x,m,d) &\ll \sum_{x^{\OurEpsilon}\le p \le x/d} \frac{|f^2(p)|}{p} \frac{L(\log x -\log (pd))}{L(\log x)}\left(1-\frac{\log(pd)-1}{\log x}\right)^{\theta-1}\\
&\ll \sum_{x^{\OurEpsilon}\le p \le x/d} \frac{|f^2(p)|}{p} \left(1-\frac{\log(pd)-1}{\log x}\right)^{\frac{\theta}{2} - 1}
\end{align*}
for a slowly varying function $L=L_{|f^2|}\colon [0,\infty)\to (0,\infty)$ described in \Cref{lem:slow}; in the last inequality we made use of \eqref{eq:Lratio} with $a= \theta/2$. We continue by considering the interval in which $p$ lies:
\begin{align*}S_3(x,m,d) &\ll (\log x)^{1-\frac{\theta}{2} }\sum_{ 1 \le e^k \le x^{1-\OurEpsilon}/d} (k+1)^{\frac{\theta}{2} - 1} \sum_{p \in [x/(de^{k+1}),x/(de^{k})]} \frac{|f^2(p)|}{p}\\
& \ll (\log x)^{1-\frac{\theta}{2}}\sum_{1 \le e^k \le x^{1-\OurEpsilon}/d} \frac{(k+1)^{\frac{\theta}{2} - 1}}{\log (1+x/(de^k))}\ll \frac{1}{\OurEpsilon}.
\end{align*}
To estimate $S_3$ asymptotically for fixed $m$ and $d$, we introduce $\delta > 0$ and let $S_{3,\le 1-\delta}$ and $S_{3,>1-\delta}$ be the corresponding contributions of $\max\{x^{\OurEpsilon},P(m)+1\} \le p \le x^{1-\delta}$ and $x^{1-\delta} \le p \le x/d$ to \eqref{eq:S3def}.

To study $S_{3,\le 1-\delta}$ we estimate $T_m(x,x/(pd),p)$ asymptotically using \Cref{lem:sum} with $h(\cdot) =g(\cdot)\mathbf{1}_{(\cdot,m)=1}$ and \eqref{eq:flatsum} with $h=g$ and $h(\cdot) =g(\cdot)\mathbf{1}_{(\cdot,m)=1}$. We obtain that $S_{3,\le 1-\delta}$ is asymptotic to the right-hand side of \eqref{eq:S2asymp}, whose limit was already computed: 
\[\lim_{x \to \infty} S_{3,\le 1-\delta}(x,m,d)=\lim_{x \to \infty} S_{2,2,\le 1-\delta}(x,m,d)=\Lname_m \bigg( \theta \int_{\OurEpsilon}^{1-\delta}  \frac{(1-v)^{1-\theta}}{v} \widetilde{\rho_{\theta}}\bigg(\frac{1-v}{v}\bigg)dv\bigg) = \Lname_m( C_{\OurEpsilon}+O(\delta)). \]
It remains to control $S_{3,>1-\delta}$. Omitting the conditions $(h,m)=1$ and $P(h) \le p-1$ in $T_m(x,x/(pd),p)$ and using  \Cref{lem:slow} and \eqref{eq:flatsum} twice with $g=|f|^2$, we find that $S_{3,>1-\delta}$ is bounded in terms of the sum in the right-hand side of \eqref{eq:junk} which was already estimated there as $\ll \max\{\delta, 1/\log x\}^{\theta/2}$.
\section{Convergence of conditional variance} \label{sec:mc-element}
\subsection{Proof of \texorpdfstring{\Cref{thm:mc-convergence}}{Theorem \ref{thm:mc-convergence}}}
\subsubsection{Some elementary estimates}
We need to collect a few estimates before proceeding to the construction of the random measure $m_\infty$.
\begin{lem}\label{lem:truncate}
Let $f\colon \NN \to \CC$ be a function satisfying \eqref{eq:f-summability} and $|f|^2 \in \mathbf{P}_\theta$ for some $\theta >0$. Then there exists some deterministic $y_0 = y_0(f) > 0$ such that
\begin{align}\label{eq:truncate}
& \left|1 + \sum_{k =1}^2 \frac{\alpha(p)^k f(p^k)}{p^{z}}\right|^{-2} \left|1 + \sum_{k \ge 3} \frac{\alpha(p)^k f(p^k)}{p^{z}}\right|^2
= 
1 + \asymptoticO \left(  \left|\frac{f(p)}{p^{1/2}}\right|^3 + \left|\frac{f(p^2)}{p}\right|^3 +  \sum_{k \ge 3} \left|\frac{f(p^k)}{p^{k/2}}\right| \right)
\end{align}

\noindent where the implicit constants on the right-hand side are uniform in $p \ge y_0$, $\Re z \ge \frac{1}{2}$, and also for any sequence $(\alpha(p))_{p}$ satisfying $\sup_p |\alpha(p)| \le 1$. In particular, there exists some deterministic constant $C = C(y_0) \in (0, \infty)$ such that
\begin{align}\label{eq:truncate-product}
\prod_{p \ge y_0} \left|1 + \sum_{k \ge 3} \frac{\alpha(p)^k f(p^k)}{p^{kz}}\right|^2
\le C \prod_{p \ge y_0} \left|1 + \sum_{k = 1}^2 \frac{\alpha(p)^k f(p^k)}{p^{kz}}\right|^2
\end{align}

\noindent uniformly in $\Re z \ge \frac{1}{2}$.

\end{lem}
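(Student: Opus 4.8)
The plan is to reduce \eqref{eq:truncate} to a one-line algebraic identity — writing the ratio on the left as $\bigl|1+(\text{small correction})\bigr|^2$ over a quantity that is uniformly bounded away from $0$ — and then to obtain \eqref{eq:truncate-product} by multiplying \eqref{eq:truncate} over $p$. The only real input is that, under \eqref{eq:f-summability}, the three quantities
\[
  \eta_1(p):=\frac{|f(p)|}{p^{1/2}},\qquad \eta_2(p):=\frac{|f(p^2)|}{p},\qquad \eta_{\ge 3}(p):=\sum_{k\ge 3}\frac{|f(p^k)|}{p^{k/2}}
\]
tend to $0$ as $p\to\infty$ and are, after cubing the first two, summable in $p$. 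For $\eta_1$ and $\eta_2$ the decay is immediate, since the general terms of the convergent series $\sum_p|f(p)|^3/p^{3/2}$ and $\sum_p|f(p^2)|^2/p^2$ must vanish. For $\eta_{\ge 3}$ I would apply Cauchy--Schwarz in $k$:
\[
  \eta_{\ge 3}(p)^2\le\Bigl(\sum_{k\ge 3}\frac{|f(p^k)|^2}{p^{k/2}}\Bigr)\Bigl(\sum_{k\ge 3}p^{-k/2}\Bigr)\ll\frac{1}{p^{3/2}}\sum_{k\ge 3}\frac{|f(p^k)|^2}{p^{k/2}},
\]
whose right-hand side vanishes as $p\to\infty$ because $\sum_p\sum_{k\ge 3}|f(p^k)|^2/p^{k/2}<\infty$. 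Hence there is a deterministic $y_0=y_0(f)$ with $\eta_1(p),\eta_2(p),\eta_{\ge 3}(p)\le\tfrac1{10}$ for all $p\ge y_0$; this is the cutoff in the statement.

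Next I would fix $p\ge y_0$, a complex $z$ with $\Re z\ge\tfrac12$, and any sequence with $\sup_p|\alpha(p)|\le 1$. Since $|p^{kz}|=p^{k\Re z}\ge p^{k/2}$ and $|\alpha(p)^k|\le 1$, putting
\[
  a:=\sum_{k=1}^2\frac{\alpha(p)^kf(p^k)}{p^{kz}},\qquad b:=\sum_{k\ge 3}\frac{\alpha(p)^kf(p^k)}{p^{kz}},
\]
one gets $|a|\le\eta_1(p)+\eta_2(p)\le\tfrac15$ and $|b|\le\eta_{\ge 3}(p)\le\tfrac1{10}$, so in particular $|1+a|\ge\tfrac45>0$ and the left-hand side of \eqref{eq:truncate} is well defined. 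Writing the numerator of \eqref{eq:truncate} as the full Euler factor $\bigl|1+a+b\bigr|^2$ (the $p$-th factor of $A_y$),
\[
  \frac{\bigl|1+a+b\bigr|^2}{\bigl|1+a\bigr|^2}=\Bigl|1+\frac{b}{1+a}\Bigr|^2=1+2\Re\frac{b}{1+a}+\Bigl|\frac{b}{1+a}\Bigr|^2=1+\asymptoticO\bigl(|b|\bigr)=1+\asymptoticO\bigl(\eta_{\ge 3}(p)\bigr),
\]
where the penultimate step uses $|1+a|\ge\tfrac45$ and $|b|\le\tfrac1{10}$. This is \eqref{eq:truncate} — the terms $\eta_1(p)^3$ and $\eta_2(p)^3$ appear only as harmless extra slack in the error — and all implied constants are absolute, hence uniform in $p\ge y_0$, in $z$ with $\Re z\ge\tfrac12$, and in $(\alpha(p))_p$.

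Finally, for \eqref{eq:truncate-product} I would multiply \eqref{eq:truncate} over the primes $y_0\le p\le y$ and bound $\prod_{y_0\le p\le y}\bigl(1+\asymptoticO(\cdot)\bigr)\le\exp\bigl(\asymptoticO(1)\bigr)=:C<\infty$ uniformly in $y$; this is legitimate as soon as $\sum_{p\ge y_0}\bigl(\eta_1(p)^3+\eta_2(p)^3+\eta_{\ge 3}(p)\bigr)<\infty$. Here $\sum_p\eta_1(p)^3=\sum_p|f(p)|^3/p^{3/2}$ is finite by \eqref{eq:f-summability}; since $\eta_2(p)\le 1$ for $p\ge y_0$ we have $\eta_2(p)^3\le\eta_2(p)^2=|f(p^2)|^2/p^2$, again summable by \eqref{eq:f-summability}; and, combining the displayed bound on $\eta_{\ge 3}(p)^2$ with a second Cauchy--Schwarz over $p$,
\[
  \sum_{p\ge y_0}\eta_{\ge 3}(p)\ll\sum_p p^{-3/4}\Bigl(\sum_{k\ge 3}\frac{|f(p^k)|^2}{p^{k/2}}\Bigr)^{1/2}\le\Bigl(\sum_p p^{-3/2}\Bigr)^{1/2}\Bigl(\sum_p\sum_{k\ge 3}\frac{|f(p^k)|^2}{p^{k/2}}\Bigr)^{1/2}<\infty .
\]
There is no serious obstacle in this lemma; if one is to be singled out, it is precisely the two places where the $L^2$-type hypothesis \eqref{eq:f-summability} has to be upgraded to pointwise-in-$p$ and $\ell^1$-in-$p$ control — proving $\eta_1,\eta_2,\eta_{\ge 3}\to 0$ (which is what forces a \emph{deterministic} cutoff $y_0$, past which the degree-$\le 2$ factor stays bounded away from $0$) and summing $\eta_{\ge 3}(p)$ over $p$ by Cauchy--Schwarz.
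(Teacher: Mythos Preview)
Your proof is correct and follows essentially the same approach as the paper: choose $y_0$ so that the tails $\eta_1,\eta_2,\eta_{\ge 3}$ are small, write the ratio as $|1+(\text{small})|^2=1+O(\text{small})$, and then multiply over primes using the summability from \eqref{eq:f-summability}. Your algebraic step $|1+a+b|^2/|1+a|^2=|1+b/(1+a)|^2$ is in fact a bit cleaner than the paper's (which expands $(1+A)^{-1}$ as a truncated geometric series and thereby picks up the extra $O(|A|^3)$ term that you observe is redundant), and your Cauchy--Schwarz justification of $\sum_p\eta_{\ge 3}(p)<\infty$ makes explicit a step the paper leaves implicit.
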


\begin{proof}
Condition \eqref{eq:f-summability} implies that there exists some $\OurEpsilon > 0$ and some $y_0> 0$  such that for any $p \ge y_0$, one has $\sum_{k \ge 1} p^{-k/2} \left|f(p^k) \right| \le 1 - \OurEpsilon$. Consider the elementary estimate
\begin{align*}
(1+A)^{-1} (1+A+B) 
& =[1 - A + A^2 + \asymptoticO(|A|^3)] (1+A+B) \\
& = 1 + B - AB + \asymptoticO(|A|^3 + |A|^2|B|)
= 1 + \asymptoticO(|A|^3 + |B|)
 \qquad \text{for $|A|+|B| \le 1 - \OurEpsilon$}.
\end{align*}

\noindent Applying this with
\begin{align*}
A = \frac{\alpha(p) f(p)}{p^{z}} + \frac{\alpha(p)^2 f(p^2)}{p^{2z}},
\qquad B = \sum_{k \ge 3} \frac{\alpha(p)^k f(p^k)}{p^{kz}}
\end{align*}

\noindent for $p \ge y_0$, we have $|A| \le  |p^{-1/2} f(p)| + |p^{-1} f(p^2)|$ and $|B| \le \sum_{k \ge 3} |p^{-k/2} f(p^k)|$ and thus \eqref{eq:truncate} holds  uniformly for all $z \in \CC$ satisfying $\Re z \ge \frac{1}{2}$ (and the bound can be made deterministic).  Using the fact that $|f(p)^2| \le p$ for any $p \ge y_0$,  we have
\begin{align*}
\sum_{p \ge y_0} \left[ \left|\frac{f(p)}{p^{1/2}}\right|^3 + \left|\frac{f(p^2)}{p}\right|^3 +  \sum_{k \ge 3} \left|\frac{f(p^k)}{p^{k/2}}\right| \right]
\le  \sum_{p \ge y_0} \left[ \left|\frac{f(p)}{p^{1/2}}\right|^3 + \left|\frac{f(p^2)}{p}\right|^2 +  \sum_{k \ge 3} \left|\frac{f(p^k)}{p^{k/2}}\right| \right] \ll 1
\end{align*}

\noindent by the assumption \eqref{eq:f-summability}. In particular, the infinite random product
\begin{align*}
\prod_{p \ge p_0} \left|1 + \sum_{k =1}^2 \frac{\alpha(p)^k f(p^k)}{p^{z}}\right|^{-2} \left|1 + \sum_{k \ge 1} \frac{\alpha(p)^k f(p^k)}{p^{z}}\right|^2
\end{align*}

\noindent exists and its value may be bounded from above uniformly in $\Re z \ge \frac{1}{2}$ by some absolute constant $C(y_0)$. This concludes the proof.
\end{proof}

\begin{lem}\label{lem:cross-moments}
Let $f \colon \NN \to \CC$. Let $z_j = \sigma_{t_j} + is_j$ for $j=1, 2$. Then
\begin{align}\label{eq:1-point}
\EE\left[ \left|1 + \sum_{k \ge 1} \frac{\alpha(p)^k f(p^k)}{p^{kz_1}}\right|^2\right]
& = \exp\left( \frac{|f(p)|^2}{p^{2\sigma_{t_1}}} + \asymptoticO\left(\frac{|f(p)|^4}{p^{2}} + \sum_{k \ge 2} \frac{|f(p^k)|^2}{p^{k}}\right) \right)
\end{align}

\noindent and
\begin{equation}\label{eq:2-point}
\begin{split}
&\EE\left[ \left|1 + \sum_{k =1}^2 \frac{\alpha(p)^k f(p^k)}{p^{kz_1}}\right|^2 \left|1 + \sum_{k =1}^2 \frac{\alpha(p)^k f(p^k)}{p^{kz_2}}\right|^2\right]\\
& = \exp \left( \frac{|f(p)|^2}{p^{2\sigma_{t_1}}} + \frac{|f(p)|^2}{p^{2\sigma_{t_2}}} + 2 \frac{|f(p)|^2}{p^{\sigma_{t_1} + \sigma_{t_2}}} \cos(|s_1 - s_2| \log p) + \asymptoticO \left( \frac{|f(p^2)|^2}{p^{2}} + \sum_{k=1}^2 \left[\left|\frac{f(p^k)}{p^{k/2}}\right|^3 + \left|\frac{f(p^k)}{p^{k/2}}\right|^4 \right]\right) \right)
\end{split}
\end{equation}

\noindent uniformly for $t_j \ge 2$, $s_j \in \RR$ and $p\ge 2$.
\end{lem}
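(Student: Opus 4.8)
The plan is to expand both sides term by term, exploit the orthogonality $\EE[\alpha(p)^{a}\overline{\alpha(p)}^{b}]=\delta_{a,b}$ of powers of the uniform variable $\alpha(p)$, isolate the main terms, and then pass to the exponential form by an elementary logarithmic estimate; throughout one uses $\sigma_{t}=\tfrac12(1+1/\log t)>\tfrac12$ for $t\ge2$, so $p^{-2k\sigma_{t}}\le p^{-k}$ for all $k\ge1$, $p\ge2$. For \eqref{eq:1-point} I would write the squared modulus as the local factor times its conjugate; orthogonality kills all off-diagonal contributions and leaves $\EE[\,|1+\sum_{k\ge1}\alpha(p)^{k}f(p^{k})/p^{kz_{1}}|^{2}\,]=1+\sum_{k\ge1}|f(p^{k})|^{2}p^{-2k\sigma_{t_{1}}}$. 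Writing this as $1+x+y$ with $x:=|f(p)|^{2}p^{-2\sigma_{t_{1}}}\le|f(p)|^{2}/p$ and $y:=\sum_{k\ge2}|f(p^{k})|^{2}p^{-2k\sigma_{t_{1}}}\le\sum_{k\ge2}|f(p^{k})|^{2}/p^{k}$, the claim follows from the elementary inequality $|\log(1+x+y)-x|\le x^{2}+y$, valid for all $x,y\ge0$ (upper bound: $\log(1+x+y)\le x+y$; lower bound: $\log(1+x+y)\ge\log(1+x)\ge x-\tfrac{x^{2}}{2}$), since $x^{2}\le|f(p)|^{4}/p^{2}$.

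For \eqref{eq:2-point}, set $W_{j}:=1+\alpha(p)f(p)p^{-z_{j}}+\alpha(p)^{2}f(p^{2})p^{-2z_{j}}$, so the left-hand side equals $\EE[|W_{1}W_{2}|^{2}]$. Expanding the product as a polynomial $W_{1}W_{2}=\sum_{n=0}^{4}\alpha(p)^{n}C_{n}$ in $\alpha(p)$, orthogonality gives $\EE[|W_{1}W_{2}|^{2}]=\sum_{n=0}^{4}|C_{n}|^{2}$. Here $|C_{0}|^{2}=1$; the linear coefficient is $C_{1}=u+v$ with $u:=f(p)p^{-z_{1}}$ and $v:=f(p)p^{-z_{2}}$, contributing $|u+v|^{2}=|u|^{2}+|v|^{2}+2\Re(u\bar v)$, and since $\Re(u\bar v)=|f(p)|^{2}p^{-\sigma_{t_{1}}-\sigma_{t_{2}}}\cos((s_{1}-s_{2})\log p)$ this is precisely the main exponent displayed in \eqref{eq:2-point}. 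Each of $C_{2},C_{3},C_{4}$ is a sum of monomials of total degree $\ge2$ in the quantities $f(p^{k})p^{-kz_{j}}$ in which either some factor carries $f(p^{2})$ or the monomial is $uv$; estimating these using $p^{-2k\sigma_{t}}\le p^{-k}$ together with Young-type inequalities $ab\le\tfrac12(a^{2}+b^{2})$ and $ab^{2}\le a^{3}+b^{3}$ shows $\sum_{n=2}^{4}|C_{n}|^{2}=O\big(|f(p^{2})/p|^{2}+\sum_{k=1}^{2}(|f(p^{k})/p^{k/2}|^{3}+|f(p^{k})/p^{k/2}|^{4})\big)$. Thus $\EE[|W_{1}W_{2}|^{2}]=1+|u+v|^{2}+O(E)$, where $E$ denotes the error term on the right-hand side of \eqref{eq:2-point}.

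It remains to exponentiate $1+|u+v|^{2}+O(E)$. Since the lemma imposes no growth condition on $f$, both $|u+v|^{2}$ and $E$ may be large, so a naive Taylor expansion of the logarithm is not available; instead I would use the lower bound $\EE[|W_{1}W_{2}|^{2}]\ge|\EE[W_{1}W_{2}]|^{2}=|C_{0}|^{2}=1$ (Cauchy--Schwarz), together with $|u+v|^{2}\le(|u|+|v|)^{2}\le4|f(p)|^{2}/p$, hence $|u+v|^{4}\le16|f(p)/p^{1/2}|^{4}=O(E)$. A short case analysis on the sizes of $|u+v|^{2}$ and $E$ then gives $\log\EE[|W_{1}W_{2}|^{2}]=|u+v|^{2}+O(|u+v|^{4}+E)=|u+v|^{2}+O(E)$, which is \eqref{eq:2-point}. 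The only real difficulty here is bookkeeping: one must check that the cross terms hidden inside $|C_{2}|^{2}$ — of the shapes $f(p^{2})p^{-2z_{1}}\overline{uv}$ and $f(p^{2})p^{-2z_{1}}\overline{f(p^{2})p^{-2z_{2}}}$ — decompose precisely into the error terms of \eqref{eq:2-point} (this is where the cubic terms $|f(p^{k})/p^{k/2}|^{3}$ enter, via $ab^{2}\le a^{3}+b^{3}$), and that the exponentiation step is carried out uniformly in $p$ with no smallness assumption, which is exactly why the elementary bound $\EE[|W_{1}W_{2}|^{2}]\ge1$ is needed.
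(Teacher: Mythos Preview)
Your proposal is correct and follows essentially the same route as the paper: expand via the orthogonality $\EE[\alpha(p)^a\overline{\alpha(p)}^b]=\delta_{a,b}$, isolate the main terms, and exponentiate. The only differences are organizational (you compute $W_1W_2=\sum_{n}C_n\alpha(p)^n$ first and use $\EE[|W_1W_2|^2]=\sum_n|C_n|^2$, whereas the paper expands $|W_1|^2|W_2|^2$ directly before averaging), and you are in fact more explicit than the paper about justifying the exponentiation step uniformly in the absence of any smallness assumption on $f$.
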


\begin{proof}
Expanding the square, we have
\begin{equation}\label{eq:1st-mom-expand}
\begin{split}
\left|1 + \sum_{k \ge 1} \frac{\alpha(p)^k f(p^k)}{p^{kz_1}}\right|^2
& =  1 + 2\Re \sum_{k \ge 1} \frac{\alpha(p)^k f(p^k)}{p^{kz_1}} + \sum_{j, k \ge 1} 
\frac{\alpha(p)^j f(p^j)}{p^{jz_1}}
\frac{\overline{\alpha(p)^k f(p^k)}}{p^{k\overline{z_1}}}.
\end{split}
\end{equation}

\noindent Evaluating expectation term-by-term (by absolute convergence and Fubini's theorem), we obtain
\begin{equation}\label{eq:full-expansion}
\begin{split}
\EE\left[ \left|1 + \sum_{k \ge 1} \frac{\alpha(p)^k f(p^k)}{p^{kz_1}}\right|^2\right]
& = 1 + \sum_{k \ge 1} \frac{|f(p^k)|^2}{p^{2k\sigma_{t_1}}}
\end{split}
\end{equation}

\noindent which gives \eqref{eq:1-point} with the desired uniformity. As for the second claim, consider
\begin{align*}
&\left|1 + \sum_{k =1}^2 \frac{\alpha(p)^k f(p^k)}{p^{kz_1}}\right|^2\left|1 + \sum_{k =1}^2 \frac{\alpha(p)^k f(p^k)}{p^{kz_2}}\right|^2\\
& =  \left(1 + 2\Re \sum_{k = 1}^2 \frac{\alpha(p)^k f(p^k)}{p^{kz_1}} + \sum_{j, k = 1} ^2
\frac{\alpha(p)^j f(p^j)}{p^{jz_1}}
\frac{\overline{\alpha(p)^k f(p^k)}}{p^{k\overline{z_1}}}\right)\\
& \qquad \qquad \qquad \qquad  \times
\left(1 + 2\Re \sum_{k = 1}^2 \frac{\alpha(p)^k f(p^k)}{p^{kz_2}} + \sum_{j, k = 1} ^2
\frac{\alpha(p)^j f(p^j)}{p^{jz_2}}
\frac{\overline{\alpha(p)^k f(p^k)}}{p^{k\overline{z_2}}}\right)\\
& = 1 
+ 2\Re \left[\sum_{k = 1}^2 \left(\frac{1}{p^{kz_1}}+\frac{1}{p^{kz_2}} \right) \alpha(p)^k f(p^k)\right]
+ \sum_{j, k = 1} ^2
\left(\frac{1}{p^{jz_1 + k\overline{z_1}}} + \frac{1}{p^{jz_2 + k\overline{z_2}}}\right)
\alpha(p)^j f(p^j)\overline{\alpha(p)^k f(p^k)}\\
& \qquad + 4\left(\Re \sum_{k = 1}^2 \frac{\alpha(p)^k f(p^k)}{p^{kz_1}}\right) \left(\Re \sum_{k = 1}^2 \frac{\alpha(p)^k f(p^k)}{p^{kz_2}}\right)
+ \asymptoticO\left(
\sum_{k=1}^2 \left[\left|\frac{f(p^k)}{p^{k/2}}\right|^3 + \left|\frac{f(p^k)}{p^{k/2}}\right|^4 \right]
\right)
\end{align*}

\noindent where the implicit constant in the error term is uniform in $t_j \ge 2, s_j \in \RR, p \ge 2$, and for any sequence $(\alpha(p))_p$ satisfying $|\alpha(p)| \le 1$.
Taking $\alpha$ to be the Steinhaus random multiplicative function, we obtain 
\begin{align*}
&\EE \left[ \left|1 + \sum_{k =1}^2 \frac{\alpha(p)^k f(p^k)}{p^{kz_1}}\right|^2\left|1 + \sum_{k =1}^2 \frac{\alpha(p)^k f(p^k)}{p^{kz_2}}\right|^2\right]\\
& = 1 + \sum_{k =1}^2 |f(p^k)|^2\left[ \frac{1}{p^{2k\sigma_{t_1}}} +  \frac{1}{p^{2k\sigma_{t_2}}}\right]
+ 2 \frac{|f(p)|^2}{p^{\sigma_{t_1} + \sigma_{t_2}}} \cos(|s_1 - s_2| \log p)\\
& \qquad  \qquad \qquad  +2 \underbrace{\frac{|f(p^2)|^2}{p^{2(\sigma_{t_1} + \sigma_{t_2})}} \cos(2|s_1 - s_2| \log p)}_{ \ll |f(p^2)|^2 / p^2}
 + \asymptoticO\left(
\sum_{k=1}^2 \left[\left|\frac{f(p^k)}{p^{k/2}}\right|^3 + \left|\frac{f(p^k)}{p^{k/2}}\right|^4 \right]
\right)
\end{align*}

\noindent which implies our second estimate \eqref{eq:2-point} with the same uniformity.
\end{proof}
\begin{lem}\label{lem:martingaleL2est}
Let $g\colon \NN \to \CC$ be a function such that $g \in \mathbf{P}_\theta$ for some $\theta \in \CC$. We have
\begin{align}\label{eq:exp-sum}
\sum_{p \le y} \frac{g(p)}{p^{1+\frac{c}{\log y}}} \cos(s \log p) 
= \theta \log\left[\min \left(|s|^{-1}, \log y\right)\right]  + \asymptoticO(1)
\end{align}

\noindent uniformly in $y \ge 3$ and any compact subsets of $s \in \RR$ and $c \ge 0$. Moreover, for each fixed $c \ge 0$ there exists some constant $C_g = C_g(c) \in \CC$ such that
\begin{align} \label{eq:log-sum-twist}
\sum_{p \le y} \frac{g(p)}{p^{1+\frac{c}{\log y}}}
= \theta \log\log y+ C_g + o(1)
\end{align}

\noindent as $y \to \infty$.
\end{lem}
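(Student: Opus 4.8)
The plan is to apply partial summation, inserting the hypothesis $\pi_g(t) = \theta\,\mathrm{Li}(t) + \Ea_g(t)$ to peel off a main term (an oscillatory integral) from an error term controlled by $\Ea_g$. Write $\phi(t) := t^{-1-c/\log y}\cos(s\log t)$. Partial summation gives $\sum_{p\le y}g(p)\phi(p) = \pi_g(y)\phi(y) - \int_2^y \pi_g(t)\phi'(t)\,dt$; substituting $\pi_g = \theta\,\mathrm{Li} + \Ea_g$ and reversing partial summation on the $\mathrm{Li}$-piece (using $\mathrm{Li}(2)=0$, $d\mathrm{Li}(t) = dt/\log t$) leads to
\[
\sum_{p\le y}\frac{g(p)}{p^{1+\frac{c}{\log y}}}\cos(s\log p) = \theta\int_2^y \frac{\cos(s\log t)}{t^{1+\frac{c}{\log y}}\log t}\,dt + \int_{2^-}^y \frac{\cos(s\log t)}{t^{1+\frac{c}{\log y}}}\,d\Ea_g(t).
\]

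For the error integral, integrate by parts once more: it equals $\phi(y)\Ea_g(y) - \int_2^y \Ea_g(t)\phi'(t)\,dt$ (the lower boundary vanishes since $\Ea_g(2^-)=0$). The first term is $\asymptoticO(|\Ea_g(y)|/y)$, uniformly bounded and tending to $0$ because $\Ea_g(y) = o(y/\log y)$. Since $\phi'(t) = \asymptoticO\big((1+|s|+c/\log y)\,t^{-2}\big) = \asymptoticO\big((1+|s|)t^{-2}\big)$ uniformly for $s$, $c$ in compact sets and $y\ge 3$, the integral is $\ll (1+|s|)\int_2^\infty t^{-2}|\Ea_g(t)|\,dt < \infty$, uniformly bounded by the hypothesis. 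This gives the $\asymptoticO(1)$ needed for \eqref{eq:exp-sum}; for \eqref{eq:log-sum-twist} one observes further that the boundary term at $y$ tends to $0$ and, by dominated convergence ($t^{-2-c/\log y}\le t^{-2}$, $\int_2^\infty t^{-2}|\Ea_g| <\infty$), the error integral converges to $\int_2^\infty t^{-2}\Ea_g(t)\,dt$, a constant depending only on $g$.

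It remains to analyse $\theta\int_2^y \frac{\cos(s\log t)}{t^{1+c/\log y}\log t}\,dt$, which after $u=\log t$ equals $\theta\int_{\log 2}^{\log y}u^{-1}e^{-cu/\log y}\cos(su)\,du$. Since $0\le 1 - e^{-cu/\log y}\le cu/\log y$ on the range, replacing $e^{-cu/\log y}$ by $1$ costs $\ll c\int_{\log 2}^{\log y}(\log y)^{-1}\,du \le c = \asymptoticO(1)$, so it suffices to evaluate $\theta\int_{\log 2}^{\log y}u^{-1}\cos(su)\,du$ (for \eqref{eq:log-sum-twist} take $s=0$ and use the substitution $v=u/\log y$ to get $\int_{(\log 2)/\log y}^{1}v^{-1}e^{-cv}\,dv = \log\log y - \log\log 2 + \int_0^1\tfrac{e^{-cv}-1}{v}\,dv + o(1)$, since $\tfrac{e^{-cv}-1}{v}$ is bounded near $0$). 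Assume $s\ge 0$. If $s\log y\le 1$ (in particular $s=0$), then $\cos(su) = 1 + \asymptoticO((su)^2)$ throughout, so the integral is $\log\log y + \asymptoticO(s^2\log^2 y) = \log[\min(s^{-1},\log y)] + \asymptoticO(1)$. If $s\log y>1$ and $s^{-1}\ge\log 2$, split at $u_0 = s^{-1}$: on $[\log 2, s^{-1}]$ again $\cos(su)=1+\asymptoticO((su)^2)$, giving $\log(s^{-1}) + \asymptoticO(1)$, while integration by parts on $[s^{-1},\log y]$ bounds the tail by $\asymptoticO(1)$. Finally if $s^{-1}<\log 2$, then $s$ is bounded away from $0$ and above, so $\log[\min(s^{-1},\log y)] = \log(s^{-1}) = \asymptoticO(1)$ for $y$ large, and integration by parts over $[\log 2,\log y]$ again gives $\asymptoticO(1)$. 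In all cases $\int_{\log 2}^{\log y}u^{-1}\cos(su)\,du = \log[\min(s^{-1},\log y)] + \asymptoticO(1)$, uniformly (this is just the asymptotic $\mathrm{Ci}(s\log y) - \mathrm{Ci}(s\log 2)$ of the cosine integral).

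Combining the three pieces yields \eqref{eq:exp-sum}, and assembling the constants isolated above (the limit of the error integral, $-\theta\log\log 2$, and $\theta\int_0^1\tfrac{e^{-cv}-1}{v}\,dv$) gives \eqref{eq:log-sum-twist} with $C_g = C_g(c)$. The only step requiring genuine care is the oscillatory integral: pinning down the threshold $|s|\sim 1/\log y$ at which the behaviour switches from $\theta\log\log y$ to $\theta\log(1/|s|)$, and checking that every error term stays uniform over compact ranges of $s$ and $c$. The partial summation and the $L^1$-estimate for $t^{-2}\Ea_g(t)$ are routine.
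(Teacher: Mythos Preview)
Your proof is correct and follows the same route as the paper's: partial summation against $d\pi_g = \theta\,d\mathrm{Li} + d\Ea_g$, integration by parts on the $\Ea_g$-piece, and a three-case analysis of the oscillatory main term according to where $|s|^{-1}$ sits relative to $\log 2$ and $\log y$. Your only departure is that you absorb $e^{-cu/\log y}$ into an $O(c)$ error at the outset, which lets you handle the tail $\int_{s^{-1}}^{\log y} u^{-1}\cos(su)\,du$ by a single integration by parts, whereas the paper keeps the exponential throughout and in the middle case invokes the closed-form antiderivative of $e^{-bx}\cos(ax)$ --- a harmless but slightly heavier computation.
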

\begin{proof}
Without loss of generality suppose $s \ge 0$, and let $\OurEpsilon = c / \log y$ for convenience. Let us rewrite the left-hand side of \eqref{eq:exp-sum} as
\begin{align*}
& \sum_{p \le y} \frac{g(p)}{p^{1+\OurEpsilon}} \cos(s \log p) 
= \int_{2^-}^{y^+} \frac{\cos(s\log x)}{x^{1+\OurEpsilon}} d\pi_{g}(x) \\
& \qquad = \theta \int_2^y \frac{\cos(s \log x)}{x^{1+\OurEpsilon}} d\mathrm{Li}(x) + \int_{2^-}^{y^+} \frac{\cos(s\log x)}{x^{1+\OurEpsilon}}  d\Ea_g(x).
\end{align*}

\noindent Using integration by parts, it is easy to check that the residual term is equal to
\begin{align}\label{eq:pf-PNT-error}
\frac{\cos(s\log x)}{x^{1+\OurEpsilon}} \Ea_g(x)\bigg|_{2^-}^{y^+} + \int_{2^-}^{y} \Ea_g(x) d \left[ \frac{\cos(s \log x)}{x^{1+\OurEpsilon}}\right]
 = \asymptoticO\left(1 +  \frac{|\Ea_g(y)|}{y} - \int_2^y \frac{|\Ea_g(x)|}{x^2}dx \right),
\end{align}
 
\noindent which is $\asymptoticO(1)$ by the assumption that $g \in \mathbf{P}_\theta$. As for the main term
\begin{align*}
\theta \int_2^y \frac{\cos(s \log x)}{x^{1+\OurEpsilon} \log x}dx
& = \theta \int_{\log 2}^{\log y} \cos(s u) e^{-\OurEpsilon u} \frac{du}{u}
= \theta \int_{\frac{\log 2}{\log y}}^1 \cos(s u \log y) e^{-c u } \frac{du}{u},
\end{align*}
\noindent we shall estimate its size depending on the value of $s$ relative to $y$:
\begin{itemize}
\item When $1/s \ge \log y$, we have
\begin{equation}\label{eq:pf-PNT-error2}
\begin{split}
    \int_{\frac{\log 2}{\log y}}^1 \cos(s u \log y) e^{-c u } \frac{du}{u}
    &= \int_{\frac{\log 2}{\log y}}^1 \frac{du}{u}
    + \int_{\frac{\log 2}{\log y}}^1 \left[\cos(s u \log y) e^{-c u } - e^{-cu} + e^{-cu} - 1\right]\frac{du}{u}\\
    & = -\log \frac{\log 2}{\log y} 
    + \asymptoticO\left(\int_0^1 \left[(su \log y)^2 + cu\right] \frac{du}{u}\right)
    = \log \log y + \asymptoticO(1).
\end{split}
\end{equation}
    \item When $\log 2 < 1/s < \log y $, we have
\begin{equation}\label{eq:alt_bound}
\begin{split}
 \int_{\log 2}^{\log y} \cos(su) e^{-\OurEpsilon u}\frac{du}{u}
& =   \int_{\log 2}^{s^{-1}} \frac{du}{u}
+ \int_{\log 2}^{s^{-1}} \left[ \cos(su)e^{-\OurEpsilon u} - 1\right]\frac{du}{u}
+  \int_{s^{-1}}^{\log y} \frac{\cos(s u) e^{-\OurEpsilon u} du}{u}\\
&=\log\frac{1}{s} - \log \log 2 
+ \asymptoticO\left(\int_{\log 2}^{s^{-1}} [(su)^2 + \OurEpsilon u]\frac{du}{u}\right)
+ 
\int_{1}^{s \log y} e^{-\frac{\OurEpsilon}{s}  v} \cos v\frac{dv}{v}\\
&=\log\frac{1}{s} +
\int_{1}^{s \log y} e^{-\frac{\OurEpsilon}{s}  v} \cos v\frac{dv}{v} + \asymptoticO(1)
\end{split}
\end{equation}

\noindent with the desired uniformity in $y, s$ and $c$. To treat the remaining integral, we use the identity
\begin{align*}
\frac{d}{dx} \frac{e^{-bx}[a\sin(ax) - b \cos(ax)]}{a^2 + b^2}
= e^{-bx} \cos (ax)
\end{align*}

\noindent for any $a, b, x \in \RR$, and consider 
\begin{equation*}
\begin{split}
  \int_{1}^{T} e^{-\frac{\OurEpsilon}{s}  v} \cos v\frac{dv}{v}
& = e^{-\frac{\OurEpsilon}{s}  v}   \frac{ \sin v - \frac{\OurEpsilon}{s} \cos v}{1+\frac{\OurEpsilon^2}{s^2}} \frac{1}{v} \bigg|_1^T + \int_1^T  e^{-\frac{\OurEpsilon}{s}  v}   \frac{ \sin v - \frac{\OurEpsilon}{s} \cos v}{1+\frac{\OurEpsilon^2}{s^2}}\frac{dv}{v^2} 
\end{split}
\end{equation*}

\noindent which is bounded in absolute value uniformly in $s \ge 0$, $\OurEpsilon \ge 0$ and $T \ge 1$. Taking $T = s \log y$, we see that the right-hand side of \eqref{eq:alt_bound} is equal to $\log(1/s) + \asymptoticO(1)$ with the desired uniformity. 

\item When $1/s \le \log 2 < \log y$ (and in particular $\log (1/s) = \asymptoticO(1)$), we may instead consider
\begin{align*}
\int_{\log 2}^{\log y} \cos(su) e^{-\OurEpsilon u}\frac{du}{u}
&=\int_{1}^{s \log y} e^{-\frac{\OurEpsilon}{s}  v} \cos v\frac{dv}{v}
- \int_{1}^{s\log 2} e^{-\frac{\OurEpsilon}{s}  v} \cos v\frac{dv}{v}
\end{align*}

\noindent which is again bounded in absolute value with the desired uniformity.
\end{itemize}
This verifies \eqref{eq:exp-sum}. As for \eqref{eq:log-sum-twist}, one can obtain the improved asymptotics by identifying the constant $C_g$ from \eqref{eq:pf-PNT-error} and \eqref{eq:pf-PNT-error2} when $c = \OurEpsilon = 0$ and $s = 0$. This concludes the proof.
\end{proof}

\begin{lem}\label{lem:martingaleL2est-2}
Let $g\colon \NN \to \CC$ be a function such that $g \in \mathbf{P}_\theta$ for some $\theta \in \CC$. Then
\begin{align*}
&\sum_{p > y}\frac{g(p)\cos(s\log p)}{p^{1+ \frac{1}{\log y}}} = \asymptoticO(1)
\end{align*}

\noindent uniformly in $y \ge 3$ and $s$ in any compact subset of $\RR$. Moreover,
\begin{itemize}
\item For $s$ in any compact subset of $\RR \setminus \{0\}$, we have uniformly
    \begin{align*}
&\sum_{p > y}\frac{g(p)\cos(s\log p)}{p^{1+ \frac{1}{\log y}}} = o(1)
\qquad \text{as $y \to \infty$.}
\end{align*}
\item For $s = 0$, we have
\begin{align*}
    \lim_{y \to \infty} \sum_{p > y}\frac{g(p)}{p^{1+ \frac{1}{\log y}}} = \theta \int_1^\infty e^{-u} \frac{du}{u}.
\end{align*}
\end{itemize} 
\end{lem}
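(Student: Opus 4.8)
The plan is to follow the template of the proof of \Cref{lem:martingaleL2est}: express the tail sum as a Riemann--Stieltjes integral against $d\pi_g$, and exploit the decomposition $\pi_g(t)=\theta\,\mathrm{Li}(t)+\Ea_g(t)$ afforded by $g\in\mathbf{P}_\theta$. Fix $y\ge 3$ and put $\OurEpsilon:=1/\log y$, so $0<\OurEpsilon<1$. Writing $\sum_{p>y}g(p)\cos(s\log p)p^{-1-\OurEpsilon}=\int_{y^+}^{\infty}\cos(s\log x)\,x^{-1-\OurEpsilon}\,d\pi_g(x)$ and splitting,
\begin{equation*}
\sum_{p>y}\frac{g(p)\cos(s\log p)}{p^{1+\OurEpsilon}}
=\theta\int_{y}^{\infty}\frac{\cos(s\log x)}{x^{1+\OurEpsilon}\log x}\,dx
\;+\;\int_{y^+}^{\infty}\frac{\cos(s\log x)}{x^{1+\OurEpsilon}}\,d\Ea_g(x),
\end{equation*}
all integrals converging absolutely in view of the hypotheses on $\Ea_g$.

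First I would dispose of the error integral by integration by parts, exactly as in the derivation of \eqref{eq:pf-PNT-error}: the boundary term at $\infty$ vanishes because $\Ea_g(x)=o(x/\log x)$; the boundary term at $y$ contributes $\asymptoticO(|\Ea_g(y)|/y^{1+\OurEpsilon})=\asymptoticO(|\Ea_g(y)|/y)$; and since $\tfrac{d}{dx}\big[\cos(s\log x)\,x^{-1-\OurEpsilon}\big]=\asymptoticO\big((1+|s|)\,x^{-2}\big)$, the remaining integral is $\asymptoticO\big((1+|s|)\int_{y}^{\infty}|\Ea_g(x)|\,x^{-2}\,dx\big)$. By the assumptions $\Ea_g(y)=o(y/\log y)$ and $\int_2^\infty |\Ea_g(x)|\,x^{-2}\,dx<\infty$, this whole error term is $\asymptoticO(1)$ uniformly in $y\ge 3$ and in $s$ ranging over any fixed compact subset of $\RR$, and it moreover tends to $0$ as $y\to\infty$ (the boundary piece is $o(1/\log y)$, and the integral piece is the tail of a convergent integral).

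Next I would compute the main term. The substitution $u=\log x$ followed by $v=u/\log y$ (legitimate since $\OurEpsilon u=v$) gives
\begin{equation*}
\theta\int_{y}^{\infty}\frac{\cos(s\log x)}{x^{1+\OurEpsilon}\log x}\,dx
=\theta\int_{\log y}^{\infty}\frac{\cos(su)\,e^{-\OurEpsilon u}}{u}\,du
=\theta\int_{1}^{\infty}\frac{\cos(sv\log y)\,e^{-v}}{v}\,dv.
\end{equation*}
Bounding $|\cos|\le 1$ shows this is $\asymptoticO\!\big(|\theta|\int_1^\infty e^{-v}v^{-1}\,dv\big)=\asymptoticO(1)$, uniformly in $y$ and $s$; together with the error estimate this proves the first bound. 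When $s=0$ the last integral equals $\theta\int_1^\infty e^{-v}v^{-1}\,dv$ \emph{exactly}, with no dependence on $y$, so letting $y\to\infty$ and using that the error term vanishes yields $\lim_{y\to\infty}\sum_{p>y}g(p)\,p^{-1-1/\log y}=\theta\int_1^\infty e^{-u}\,du/u$. When $s\neq 0$, I would write $a:=s\log y$ and integrate by parts against the antiderivative $e^{-v}\big(a\sin(av)-\cos(av)\big)/(1+a^2)$ of $e^{-v}\cos(av)$ --- the same identity used in the proof of \Cref{lem:martingaleL2est} --- obtaining a bound $\asymptoticO\!\big((|a|+1)/(1+a^2)\big)=\asymptoticO\!\big(1/(1+|s|\log y)\big)$; for $s$ in a compact subset of $\RR\setminus\{0\}$ this is $\asymptoticO(1/\log y)=o(1)$ uniformly, which is the second assertion.

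I do not expect a genuine obstacle here; the argument is a close variant of \Cref{lem:martingaleL2est}. The only points that need care are keeping the error-term bounds uniform over compact sets of $s$ (which is precisely what the definition of $\mathbf{P}_\theta$ delivers) and the observation that after the rescaling $v=u/\log y$ the main term becomes $y$-independent, so that the $s=0$ limit can simply be read off.
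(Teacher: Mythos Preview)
Your proof is correct and follows essentially the same approach as the paper's: both express the tail sum as a Riemann--Stieltjes integral against $d\pi_g$, split off the error contribution via integration by parts (bounding it by $\asymptoticO(|\Ea_g(y)|/y)+\asymptoticO((1+|s|)\int_y^\infty |\Ea_g(x)|x^{-2}dx)$), and reduce the main term via the substitution $v=(\log x)/\log y$ to $\theta\int_1^\infty \cos(sv\log y)e^{-v}v^{-1}\,dv$, which is handled identically in the three cases. The only cosmetic difference is that the paper records the main-term bound as $\asymptoticO(1/\max(1,|s|\log y))$ directly, whereas you spell out the integration-by-parts computation yielding the equivalent $\asymptoticO((|a|+1)/(1+a^2))$ with $a=s\log y$.
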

\begin{proof}
We have
\begin{align*}
\sum_{p > y}\frac{g(p)\cos(s\log p)}{p^{1+ \frac{1}{\log y}}}
&= \int_y^\infty \frac{\cos(s\log x)}{x^{1+\frac{1}{\log y}}} d\pi_{g}(x)\\
&=\theta \int_y^\infty \frac{\cos(s\log x)}{x^{1+\frac{1}{\log y}} }\frac{dx}{\log x}
+ \int_y^\infty  \frac{\cos(s\log x)}{x^{1+\frac{1}{\log y}} } d\Ea_g(x)\\
& = \theta\int_{1}^\infty \cos (us\log y)  e^{-u}\frac{du}{u}
- \Ea_g(y) \frac{\cos(s \log y)}{ey} -  \int_y^\infty \Ea_g(x) d \left[  \frac{\cos(s\log x)}{x^{1+\frac{1}{\log y}} } \right]\\
& = \theta\int_{1}^\infty \cos (us\log y)  e^{-u}\frac{du}{u} + \asymptoticO\left(|\Ea_g(y)| / y\right)
+ \asymptoticO\left([1+|s|]\int_y^\infty \frac{|\Ea_g(x)|}{x^2}dx\right)\\
& = \asymptoticO\left(\frac{1}{\max(1, |s| \log y)}\right)
+ \asymptoticO\left(1 / \log y \right)
+ \asymptoticO\left([1+|s|]\int_y^\infty \frac{|\Ea_g(x)|}{x^2}dx\right)
\end{align*}

\noindent and our first two claims follow with the desired uniformity. If $s = 0$, we can identify the limit (as $y \to \infty$) from the first integral in the second last line.
\end{proof}
\subsubsection{Proof of \texorpdfstring{\Cref{thm:mc-convergence}}{Theorem \ref{thm:mc-convergence}}} \label{subsubsec:pf-mc-stein}
\paragraph{Step 1: martingale convergence.}
It is straightforward to check that for any prime number $p_0 \le  y$, 
\begin{align*}
\EE\left[ m_{y, \infty}(h) \bigg| \Fa_{p_0}\right]
& = \int_I h(s) \EE\left[ \frac{|A_y(\frac{1}{2} + is)|^2}{\EE[|A_y(\frac{1}{2} + is)|^2]} \bigg| \Fa_{p_0}\right] ds \\
& =  \int_I h(s) \frac{|A_{p_0}\frac{1}{2} + is)|^2}{\EE[|A_{p_0}(\frac{1}{2} + is)|^2]} ds = m_{p_0, \infty}(h),
\end{align*}

\noindent i.e., $\left(m_{y, \infty}(h)\right)_{y}$ is a martingale with respect to the filtration $(\Fa_y)_y$. Moreover, if we let $y_0 > 0$ be the constant from \Cref{lem:truncate}, then
\begin{align*}
& \frac{\EE\left[A_y(\frac{1}{2} + is_1)|^2A_y(\frac{1}{2} + is_2)|^2\right]}{\EE\left[A_y(\frac{1}{2} + is_1)|^2\right]\EE\left[A_y(\frac{1}{2} + is_2)|^2\right]}
= \prod_{p \le y} \frac{\EE\left[\left|1 + \sum_{k \ge 1} \frac{\alpha(p)^k f(p^k)}{p^{k(1/2 + is_1)}}\right|^2\left|1 + \sum_{k \ge 1} \frac{\alpha(p)^k f(p^k)}{p^{k(1/2 + is_2)}}\right|^2\right]}{\EE\left[\left|1 + \sum_{k \ge 1} \frac{\alpha(p)^k f(p^k)}{p^{k(1/2 + is_1)}}\right|^2\right]\EE\left[\left|1 + \sum_{k \ge 1} \frac{\alpha(p)^k f(p^k)}{p^{k(1/2 + is_2)}}\right|^2\right]}\\
& \qquad \ll \prod_{y_0 \le p \le y} \frac{\EE\left[\left|1 + \sum_{k =1}^2 \frac{\alpha(p)^k f(p^k)}{p^{k(1/2 + is_1)}}\right|^2\left|1 + \sum_{k = 1}^2 \frac{\alpha(p)^k f(p^k)}{p^{k(1/2 + is_2)}}\right|^2\right]}{\EE\left[\left|1 + \sum_{k \ge 1} \frac{\alpha(p)^k f(p^k)}{p^{k(1/2 + is_1)}}\right|^2\right]\EE\left[\left|1 + \sum_{k \ge 1} \frac{\alpha(p)^k f(p^k)}{p^{k(1/2 + is_2)}}\right|^2\right]}\\
& \qquad \ll \exp \left( \sum_{p \le y} 2 \frac{|f(p)|^2}{p} \cos(|s_1 - s_2| \log p)
 + \asymptoticO \left( \frac{|f(p)|^4}{p^{2}} +\sum_{k \ge 2} \frac{|f(p^k)|^2}{p^{k}}\right)
\right) 
\end{align*}

\noindent uniformly in $y \ge y_0$ and $s_1, s_2 \in \RR$ by \Cref{lem:cross-moments}. Observe that the error term in the exponent satisfies
\begin{align*}
\sum_p \left[\frac{|f(p)|^4}{p^{2}} +\sum_{k \ge 2} \frac{|f(p^k)|^2}{p^{k}}\right]
\ll \sum_p \left[\frac{|f(p)|^3}{p^{3/2}}  +\sum_{k \ge 2} \frac{|f(p^k)|^2}{p^{k}}\right] \ll 1
\end{align*}

\noindent by the assumption \eqref{eq:f-summability}. In particular,
\begin{align*}
\EE\left[ m_{y, \infty}(h)^2\right]
& = \iint_{I \times I} h(s_1) h(s_2)  \frac{\EE\left[A_y(\frac{1}{2} + is_1)|^2A_y(\frac{1}{2} + is_2)|^2\right]}{\EE\left[A_y(\frac{1}{2} + is_1)|^2\right]\EE\left[A_y(\frac{1}{2} + is_2)|^2\right]} ds_1 ds_2\\
& \ll \|h\|_\infty^2 \iint_{I \times I}  \exp \left( \sum_{p \le y} 2 \frac{|f(p)|^2}{p} \cos(|s_1 - s_2| \log p)\right)  ds_1 ds_2\\
& \ll \iint_{I \times I} \frac{ds_1 ds_2}{|s_1 - s_2|^{2\theta}}
\end{align*}

\noindent by \Cref{lem:martingaleL2est} (with $g = |f|^2$ and $c = 0$), and this integral is bounded uniformly in $y$ for any fixed $\theta \in (0, \frac{1}{2})$. Therefore, we can invoke the $L^2$ martingale convergence theorem (see e.g., \cite[Corollary 9.23]{Kal2021}), which says that there exists some nontrivial random variable $m_{\infty}(h)$ such that
\begin{align}\label{eq:martingale-convergence}
m_{y, \infty}(h) \to m_{\infty}(h) \qquad \text{as} \quad  y \to \infty
\end{align}

\noindent in the sense of almost sure convergence and convergence in $L^2$.

\begin{rem}\label{rem:missing-abstract}
The fact that there exists some some Radon measure $m_\infty(ds)$ on $\RR$ such that the limits in \eqref{eq:martingale-convergence} can be realised as integrals against $m_\infty(ds)$ (which is part of the statement of \Cref{thm:mc-convergence}) does not follow from martingale convergence theorem but abstract theory of probability/functional analysis. Since this is not needed for the rest of our proof, we refer the interested readers to \Cref{sec:convergence_measure} for a brief discussion of convergence of random measures and the references therein.
\end{rem}
\paragraph{Step 2: truncation.}
Our next goal is to show that
\begin{lem} \label{lem:mcL2truncate}
Under the same setting as \Cref{thm:mc-convergence}, we have
\begin{align}
\lim_{y \to \infty} \EE\left[ \left| m_{\infty, y}(h) - m_{y,y}(h)\right|^2 \right] = 0.
\end{align}
\end{lem}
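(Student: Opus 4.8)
The plan is to isolate the contribution of the primes $p>y$ via independence. Fix $y$ large, write $z=\sigma_y+is$ with $\sigma_y=\tfrac12(1+\tfrac{1}{\log y})$, and note that since $\sigma_y>\tfrac12$ the infinite Euler product $A_\infty(z)=\prod_p\bigl(1+\sum_{k\ge1}\alpha(p)^kf(p^k)/p^{kz}\bigr)$ converges (for each fixed $y$) and factors as $A_\infty(z)=A_y(z)B_y(z)$, where $B_y(z):=\prod_{p>y}\bigl(1+\sum_{k\ge1}\alpha(p)^kf(p^k)/p^{kz}\bigr)$, with $A_y(z)$ being $\Fa_y$-measurable and $B_y(z)$ independent of $\Fa_y$. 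In particular $\EE[|A_\infty(z)|^2]=\EE[|A_y(z)|^2]\,\EE[|B_y(z)|^2]$, so that, recalling $m_{\infty,y}(ds)=\tfrac{|A_\infty(z)|^2}{\EE[|A_\infty(z)|^2]}ds$ and $m_{y,y}(ds)=\tfrac{|A_y(z)|^2}{\EE[|A_y(z)|^2]}ds$, one gets the exact identity
\[
m_{\infty,y}(h)-m_{y,y}(h)=\int_I h(s)\,\frac{|A_y(z)|^2}{\EE[|A_y(z)|^2]}\left(\frac{|B_y(z)|^2}{\EE[|B_y(z)|^2]}-1\right)ds,\qquad z=\sigma_y+is .
\]
Thus the discrepancy between the two random measures is carried entirely by the mean-one tail quantity $|B_y(z)|^2/\EE[|B_y(z)|^2]$.

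Next I would expand the square and integrate: writing $z_j=\sigma_y+is_j$ and using the independence of the $p\le y$ and $p>y$ Euler factors to factor the expectation,
\[
\EE\bigl[|m_{\infty,y}(h)-m_{y,y}(h)|^2\bigr]=\iint_{I\times I}h(s_1)\overline{h(s_2)}\,\Phi_y(s_1,s_2)\,\Psi_y(s_1,s_2)\,ds_1\,ds_2,
\]
where
\[
\Phi_y:=\frac{\EE[|A_y(z_1)|^2|A_y(z_2)|^2]}{\EE[|A_y(z_1)|^2]\,\EE[|A_y(z_2)|^2]},\qquad \Psi_y:=\frac{\EE[|B_y(z_1)|^2|B_y(z_2)|^2]}{\EE[|B_y(z_1)|^2]\,\EE[|B_y(z_2)|^2]}-1
\]
(the subtraction of $1$ uses $\EE[|B_y(z_j)|^2/\EE[|B_y(z_j)|^2]]=1$, which kills the two cross terms). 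Discarding the $k\ge3$ Euler factors via \Cref{lem:truncate} (which perturbs each ratio by a factor $1+o(1)$ uniformly in $s_1,s_2$, using the summability in \eqref{eq:f-summability}) and applying the one- and two-point formulas of \Cref{lem:cross-moments} — in which both heights coincide, $\sigma_{t_1}=\sigma_{t_2}=\sigma_y$ — yields
\[
\Phi_y(s_1,s_2)\ll \exp\!\bigg(\sum_{p\le y}\frac{2|f(p)|^2}{p^{1+1/\log y}}\cos(|s_1-s_2|\log p)+O(1)\bigg),\qquad
\log\!\bigl(1+\Psi_y(s_1,s_2)\bigr)=\sum_{p>y}\frac{2|f(p)|^2}{p^{1+1/\log y}}\cos(|s_1-s_2|\log p)+o(1),
\]
where the $O(1)$ and the $o(1)$ are uniform in $s_1,s_2$ (the error terms from \Cref{lem:truncate} and \Cref{lem:cross-moments} are summable over all $p$, hence have tail sums over $p>y$ tending to $0$, by \eqref{eq:f-summability}).

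The two prime sums are precisely the quantities controlled in \Cref{lem:martingaleL2est} and \Cref{lem:martingaleL2est-2}, applied with $g=|f|^2\in\mathbf{P}_\theta$ (and $c=1$). The first gives $\Phi_y(s_1,s_2)\ll\min(|s_1-s_2|^{-1},\log y)^{2\theta}\ll|s_1-s_2|^{-2\theta}$ uniformly in $y$, and since $\theta<\tfrac12$ the function $(s_1,s_2)\mapsto|s_1-s_2|^{-2\theta}$ is integrable on $I\times I$. The second gives $\Psi_y(s_1,s_2)=O(1)$ uniformly in $y$ and $s_1,s_2$, together with $\Psi_y(s_1,s_2)\to0$ as $y\to\infty$ for every fixed pair with $s_1\ne s_2$. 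Consequently the integrand $h(s_1)\overline{h(s_2)}\Phi_y\Psi_y$ is dominated in absolute value by $C_h\,|s_1-s_2|^{-2\theta}\in L^1(I\times I)$ and converges to $0$ at Lebesgue-almost every point of $I\times I$ (namely everywhere off the diagonal), so dominated convergence gives $\EE[|m_{\infty,y}(h)-m_{y,y}(h)|^2]\to0$.

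The point that requires care — and what I expect to be the main obstacle — is that $\Psi_y$ does \emph{not} vanish on the diagonal: by the $s=0$ case of \Cref{lem:martingaleL2est-2}, $\Psi_y(s,s)\to e^{2\theta\int_1^\infty e^{-u}\,du/u}-1>0$, reflecting the fact that the primes in $(y,\infty)$ genuinely influence the Euler product at the height $\sigma_y=\tfrac12+\tfrac{1}{2\log y}$ (the "effective" range of primes there extends up to $\sim y^2$). The resolution is structural: this persistent correlation lives on a Lebesgue-null set, so the estimate must be organised as a dominated-convergence statement on $I\times I$, combining the off-diagonal decorrelation $\Psi_y\to0$ with the $y$-uniform integrable bound $\Phi_y\ll|s_1-s_2|^{-2\theta}$ — and it is exactly here that the hypothesis $\theta<\tfrac12$ is used (for integrability of $|s_1-s_2|^{-2\theta}$). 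The remaining work — passing from the full Euler factors to their $k\le2$ truncations and keeping all error terms uniform in $s_1,s_2$ — is routine given \eqref{eq:f-summability} and parallels the computation already carried out in Step~1 above.
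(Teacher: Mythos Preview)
Your proposal is correct and follows essentially the same approach as the paper: factor $A_\infty=A_yB_y$ via independence, expand the second moment as the double integral $\iint h(s_1)\overline{h(s_2)}\,\Phi_y\Psi_y$ (the paper writes $\Psi_y=K_y-1$), bound $\Phi_y\ll|s_1-s_2|^{-2\theta}$ using \Cref{lem:truncate}, \Cref{lem:cross-moments} and \Cref{lem:martingaleL2est}, show $\Psi_y$ is uniformly bounded and tends to $0$ off the diagonal via \Cref{lem:martingaleL2est-2}, and conclude by dominated convergence. Your additional remark that $\Psi_y(s,s)$ has a positive limit (so the argument genuinely requires dominated convergence on $I\times I$ rather than a uniform estimate) is a nice point that the paper leaves implicit.
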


\begin{proof}
Let $y_0$ be the constant in \Cref{lem:truncate} and $y \ge y_0$, and write $z_j = \sigma_y + is_j$ for $j = 1, 2$. Then
\begin{align*}
 \EE\left[ \left| m_{\infty, y}(h) - m_{y,y}(h)\right|^2 \right] 
& = \iint_{I \times I} h(s_1)h(s_2) ds_1 ds_2  \frac{\EE\left[A_y(\sigma_y + is_1)|^2A_y(\sigma_y+ is_2)|^2\right]}{\EE\left[A_y(\sigma_y + is_1)|^2\right]\EE\left[A_y(\sigma_y + is_2)|^2\right]} \left(K_{y}(s_1, s_2) - 1\right)
\end{align*}

\noindent where
\begin{align*}
K_y(s_1, s_2)
& := \prod_{p > y} \frac{\EE\left[ \left|1 + \sum_{k \ge 1}\frac{\alpha(p)^k f(p^k)}{p^{kz_1}}\right|^2 \left|1 + \sum_{k \ge 1} \frac{\alpha(p)^k f(p^k)}{p^{kz_2}}\right|^2\right]}{\EE\left[ \left|1 + \sum_{k \ge 1}\frac{\alpha(p)^k f(p^k)}{p^{kz_1}}\right|^2\right]\EE\left[ \left|1 + \sum_{k \ge 1}\frac{\alpha(p)^k f(p^k)}{p^{kz_2}}\right|^2\right]}\\
& \ll \prod_{p > y} \frac{\EE\left[ \left|1 + \sum_{k = 1}^2\frac{\alpha(p)^k f(p^k)}{p^{kz_1}}\right|^2 \left|1 + \sum_{k =1}^2 \frac{\alpha(p)^k f(p^k)}{p^{kz_2}}\right|^2\right]}{\EE\left[ \left|1 + \sum_{k \ge 1}\frac{\alpha(p)^k f(p^k)}{p^{kz_1}}\right|^2\right]\EE\left[ \left|1 + \sum_{k \ge 1}\frac{\alpha(p)^k f(p^k)}{p^{kz_2}}\right|^2\right]}\\
& \ll\exp \left (2\sum_{p > y} \frac{|f(p)|^2}{p^{1 + \frac{1}{\log y}}} \cos(|s_1 - s_2| \log p)\right) 
\end{align*}

\noindent by \Cref{lem:truncate} and \Cref{lem:cross-moments}. Applying \Cref{lem:martingaleL2est-2} with $g = |f|^2$ and $s = |s_1 - s_2|$, we see that $|K_y(s_1, s_2) - 1|$ is uniformly bounded in $y$ and $s_1, s_2 \in I$, and $\lim_{y \to \infty} |K_y(s_1, s_2) - 1| = 0$ for any fixed $s_1 \ne s_2$. Meanwhile,
\begin{align*}
& \frac{\EE\left[A_y(\sigma_y + is_1)|^2A_y(\sigma_y+ is_2)|^2\right]}{\EE\left[A_y(\sigma_y + is_1)|^2\right]\EE\left[A_y(\sigma_y + is_2)|^2\right]} 
= \prod_{p \le y} \frac{\EE\left[\left|1 + \sum_{k \ge 1} \frac{\alpha(p)^k f(p^k)}{p^{kz_1}}\right|^2\left|1 + \sum_{k \ge 1} \frac{\alpha(p)^k f(p^k)}{p^{kz_2}}\right|^2\right]}{\EE\left[\left|1 + \sum_{k \ge 1} \frac{\alpha(p)^k f(p^k)}{p^{kz_1}}\right|^2\right]\EE\left[\left|1 + \sum_{k \ge 1} \frac{\alpha(p)^k f(p^k)}{p^{kz_2}}\right|^2\right]}\\
& \qquad \ll \prod_{y_0 \le p \le y}  \frac{\EE\left[\left|1 + \sum_{k =1}^2 \frac{\alpha(p)^k f(p^k)}{p^{kz_1}}\right|^2\left|1 + \sum_{k = 1}^2 \frac{\alpha(p)^k f(p^k)}{p^{kz_2}}\right|^2\right]}{\EE\left[\left|1 + \sum_{k \ge 1} \frac{\alpha(p)^k f(p^k)}{p^{kz_1}}\right|^2\right]\EE\left[\left|1 + \sum_{k \ge 1} \frac{\alpha(p)^k f(p^k)}{p^{kz_2}}\right|^2\right]}\\
& \qquad \ll  \exp \left( \sum_{p \le y} 2 \frac{|f(p)|^2}{p^{1+\frac{1}{\log y}}} \cos(|s_1 - s_2| \log p)\right) 
\end{align*}

\noindent by \Cref{lem:truncate} and \Cref{lem:cross-moments}. The above bound is $\ll |s_1 - s_2|^{-2\theta}$ uniformly in $y$ and $s_1, s_2 \in I$ by \Cref{lem:martingaleL2est}, and is integrable in particular. Therefore, we conclude by dominated convergence that
\begin{align*}
&\lim_{y \to \infty} \EE\left[ \left| m_{\infty, y}(h) - m_{y,y}(h)\right|^2 \right] \\
& = \iint_{I \times I} \lim_{y \to \infty} h(s_1)h(s_2) ds_1 ds_2  \frac{\EE\left[A_y(\sigma_y + is_1)|^2A_y(\sigma_y+ is_2)|^2\right]}{\EE\left[A_y(\sigma_y + is_1)|^2\right]\EE\left[A_y(\sigma_y + is_2)|^2\right]} \left(K_{y}(s_1, s_2) - 1\right) = 0.
\end{align*}
\end{proof}

\paragraph{Step 3: comparing two truncated Euler products.}
We now show that the new approximating sequence $(m_{y, y}(h))_y$ has the limit as our martingale construction.
\begin{lem}\label{lem:mcL2compare}
Under the same setting as \Cref{thm:mc-convergence}, we have
\begin{align}\label{eq:final-2ndmom}
\lim_{y \to \infty} \EE\left[ \left| m_{y, \infty}(h) - m_{y,y}(h)\right|^2 \right] = 0.
\end{align}
\end{lem}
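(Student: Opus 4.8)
The plan is to expand the squared $L^2$-distance as a double integral over $I\times I$ and run a dominated convergence argument. For $a,b\in\{\tfrac12,\sigma_y\}$ set
\[
\Phi_y^{a,b}(s_1,s_2):=\frac{\EE\big[|A_y(a+is_1)|^2\,|A_y(b+is_2)|^2\big]}{\EE\big[|A_y(a+is_1)|^2\big]\,\EE\big[|A_y(b+is_2)|^2\big]}=\prod_{p\le y}L_p^{a,b}(s_1,s_2).
\]
Since $h$ is real-valued, $\EE\big[|m_{y,\infty}(h)-m_{y,y}(h)|^2\big]=\iint_{I\times I}h(s_1)h(s_2)\big(\Phi_y^{1/2,1/2}-2\Phi_y^{1/2,\sigma_y}+\Phi_y^{\sigma_y,\sigma_y}\big)(s_1,s_2)\,ds_1\,ds_2$, so it suffices to check that (i) the bracketed kernel is dominated, uniformly in $y$, by a fixed $L^1(I\times I)$ function of $(s_1,s_2)$, and (ii) for each fixed $s_1\ne s_2$ it tends to $0$ as $y\to\infty$.

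For (i): by \Cref{lem:truncate} (dropping the $k\ge3$ prime powers at the cost of a uniform multiplicative constant) and \Cref{lem:cross-moments} applied to the numerator and to the two denominators,
\[
L_p^{a,b}(s_1,s_2)=\exp\!\Big(\tfrac{2|f(p)|^2}{p^{a+b}}\cos(|s_1-s_2|\log p)+O(e_p)\Big),\qquad e_p:=\tfrac{|f(p)|^3}{p^{3/2}}+\tfrac{|f(p^2)|^2}{p^2}+\textstyle\sum_{k\ge3}\tfrac{|f(p^k)|^2}{p^{k/2}},
\]
with the $O$-constant uniform in $p\ge y_0$, $a,b\ge\tfrac12$, $s_1,s_2\in\RR$, and $\sum_p e_p<\infty$ by \eqref{eq:f-summability}; the finitely many primes $p<y_0$ are harmless, as there $L_p^{a,b}$ is bounded above uniformly in the parameters. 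Since $a+b=1+\tfrac{c}{\log y}$ with $c\in\{0,\tfrac12,1\}$, \Cref{lem:martingaleL2est} (with $g=|f|^2$, $c$ ranging over the compact set $\{0,\tfrac12,1\}$) gives $\Phi_y^{a,b}(s_1,s_2)\ll\exp\!\big(2\theta\log\min(|s_1-s_2|^{-1},\log y)\big)\ll|s_1-s_2|^{-2\theta}$, uniformly in $y\ge3$ and $s_1,s_2\in I$. Since $\theta<\tfrac12$, $(s_1,s_2)\mapsto|s_1-s_2|^{-2\theta}$ is integrable on $I\times I$, so the kernel is dominated by $4\|h\|_\infty^2|s_1-s_2|^{-2\theta}\in L^1(I\times I)$.

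For (ii): fix $s_1\ne s_2$ and put $\tau:=|s_1-s_2|$. By (i) the factor $\Phi_y^{1/2,1/2}(s_1,s_2)$ is bounded in $y$, so it is enough to show that the log-increments $\log\Phi_y^{1/2,\sigma_y}-\log\Phi_y^{1/2,1/2}$ and $\log\Phi_y^{\sigma_y,\sigma_y}-\log\Phi_y^{1/2,1/2}$ tend to $0$, since then $\Phi_y^{1/2,1/2}-2\Phi_y^{1/2,\sigma_y}+\Phi_y^{\sigma_y,\sigma_y}=\Phi_y^{1/2,1/2}\big(1-2e^{o(1)}+e^{o(1)}\big)\to0$. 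Each log-increment equals $\sum_{p\le y}\big(\log L_p^{a_y,b_y}-\log L_p^{1/2,1/2}\big)$; a Taylor expansion of $\log(1+\cdot)$ together with \Cref{lem:truncate} and \Cref{lem:cross-moments} gives $\log L_p^{a,b}=\tfrac{2|f(p)|^2}{p^{a+b}}\cos(\tau\log p)+R_p^{a,b}$ with $|R_p^{a,b}|\ll e_p$ uniformly in $p\ge y_0$ and $a,b\ge\tfrac12$ (and, for each fixed $p$, $R_p^{a_y,b_y}\to R_p^{1/2,1/2}$ as $\sigma_y\to\tfrac12$). Hence the log-increment is
\[
\sum_{p\le y}\Big(\tfrac{2|f(p)|^2}{p^{1+c/\log y}}-\tfrac{2|f(p)|^2}{p}\Big)\cos(\tau\log p)+\sum_{p\le y}\big(R_p^{a_y,b_y}-R_p^{1/2,1/2}\big)+o(1),\qquad c\in\{\tfrac12,1\},
\]
where the finite $p<y_0$ part contributes the $o(1)$; the second sum $\to0$ by dominated convergence for series (domination by the summable $e_p$).

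The crux is the first sum. The bound $|p^{-c/\log y}-1|\le c\log p/\log y$ on $p\le y$ only gives $\ll\tfrac{1}{\log y}\sum_{p\le y}\tfrac{|f(p)|^2\log p}{p}\asymp1$, so the oscillation of $\cos(\tau\log p)$ must be exploited. Partial summation with $\pi_{|f|^2}(t)=\theta\,\mathrm{Li}(t)+\Ea_{|f|^2}(t)$ turns the smooth main part, after the substitution $u=\log t=v\log y$, into $2\theta\int_{(\log2)/\log y}^{1}\tfrac{e^{-cv}-1}{v}\cos(\tau v\log y)\,dv$, which $\to0$ by the Riemann--Lebesgue lemma (the non-oscillatory factor $\tfrac{e^{-cv}-1}{v}$ being bounded, continuous up to $v=0$, and of bounded variation, and the shrinking lower endpoint negligible); the $\Ea_{|f|^2}$-part is handled by a second integration by parts, the boundary terms vanishing (using $\Ea_{|f|^2}(y)/y\to0$ and $t^{-c/\log y}\to1$) and the remaining integral vanishing by dominated convergence, its integrand being $O((1+\tau)/t^2)$ uniformly in $y$, $\to0$ pointwise, while $\int_2^\infty t^{-2}|\Ea_{|f|^2}(t)|\,dt<\infty$. (Equivalently, this follows from \Cref{lem:martingaleL2est}, \Cref{lem:martingaleL2est-2}, and Abel summation together with the continuity at $\sigma=1$ of $\sigma\mapsto\sum_p|f(p)|^2p^{-\sigma}\cos(\tau\log p)$ for $\tau\ne0$.) This gives (ii), and with (i) the dominated convergence theorem applied to the double integral yields \eqref{eq:final-2ndmom}.
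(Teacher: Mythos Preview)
Your proof is correct and follows the same overall strategy as the paper: expand the square as a double integral of $h(s_1)h(s_2)\big(\Phi_y^{1/2,1/2}-2\Phi_y^{1/2,\sigma_y}+\Phi_y^{\sigma_y,\sigma_y}\big)$, dominate each $\Phi_y^{a,b}$ uniformly by $C\,|s_1-s_2|^{-2\theta}$ via \Cref{lem:truncate}, \Cref{lem:cross-moments} and \Cref{lem:martingaleL2est}, and conclude by dominated convergence. The only difference is in the pointwise step: the paper simply asserts that all three kernels converge to the same infinite product $\prod_p L_p^{1/2,1/2}(s_1,s_2)$ for $s_1\ne s_2$, whereas you show directly that the log-increments $\log\Phi_y^{a_y,b_y}-\log\Phi_y^{1/2,1/2}$ vanish (via a Riemann--Lebesgue/Abel-continuity argument on the oscillatory main term and dominated convergence on the summable remainder), which is in fact the computation needed to justify the paper's assertion; so your treatment is more explicit here, not genuinely different.
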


\begin{proof}
Expanding the square, we have
\begin{align}\label{eq:2nd-moment-expand}
\EE\left[ \left| m_{y, \infty}(h) - m_{y,y}(h)\right|^2 \right] 
= \EE\left[ \left| m_{y, \infty}(h)\right|^2 \right] 
-2 \EE\left[  m_{y, \infty}(h) m_{y,y}(h) \right] 
+ \EE\left[ \left|m_{y,y}(h)\right|^2 \right]
\end{align}

\noindent where (for $t_1, t_2 \in \{y, \infty\}$)
\begin{align}\label{eq:truncated-DCT}
\EE\left[ m_{y, t_1}(h) m_{y, t_2}(h) \right]
= \iint_{I \times I} h(s_1) h(s_2) \frac{\EE\left[A_y(\sigma_{t_1} + is_1)|^2A_y(\sigma_{t_2}+ is_2)|^2\right]}{\EE\left[A_y(\sigma_{t_1} + is_1)|^2\right]\EE\left[A_y(\sigma_{t_2} + is_2)|^2\right]}  ds_1 ds_2
\end{align}

\noindent and the ratio of expectations inside the integrand is
\begin{align*}
\ll \|h\|_\infty^2  \exp \left( \sum_{p \le y} 2 \frac{|f(p)|^2}{p^{1+\frac{1}{2}[\log^{-1} t_1 + \log^{-1} t_2]}} \cos(|s_1 - s_2| \log p)\right) 
\ll |s_1 - s_2|^{-2\theta}
\end{align*} 

\noindent uniformly in $s_1, s_2 \in I$ by \Cref{lem:martingaleL2est}, and it has a pointwise limit
\begin{align*}
\lim_{y \to \infty} \frac{\EE\left[A_y(\sigma_{t_1} + is_1)|^2A_y(\sigma_{t_2}+ is_2)|^2\right]}{\EE\left[A_y(\sigma_{t_1} + is_1)|^2\right]\EE\left[A_y(\sigma_{t_2} + is_2)|^2\right]} 
= \prod_p \frac{\EE\left[ \left|1 + \sum_{k \ge 1} \frac{\alpha(p)^k f(p^k)}{p^{k(1/2+is_1)}}\right|^2 \left|1 + \sum_{k \ge 1} \frac{\alpha(p)^k f(p^k)}{p^{k(1/2+is_2)}}\right|^2\right]}{\EE\left[ \left|1 + \sum_{k \ge 1} \frac{\alpha(p)^k f(p^k)}{p^{k(1/2 + is_1)}}\right|^2 \right]\EE\left[ \left|1 + \sum_{k \ge 1} \frac{\alpha(p)^k f(p^k)}{p^{k(1/2+is_2)}}\right|^2 \right]}
\end{align*}

\noindent for any $s_1 \ne s_2$. Thus, all the expectations appearing on the right-hand side of \eqref{eq:2nd-moment-expand} converge to the same limit
\begin{align*}
\iint_{I \times I} h(s_1) h(s_2) ds_1 ds_2  \prod_p \frac{\EE\left[ \left|1 + \sum_{k \ge 1} \frac{\alpha(p)^k f(p^k)}{p^{k(1/2+is_1)}}\right|^2 \left|1 + \sum_{k \ge 1} \frac{\alpha(p)^k f(p^k)}{p^{k(1/2+is_2)}}\right|^2\right]}{\EE\left[ \left|1 + \sum_{k \ge 1} \frac{\alpha(p)^k f(p^k)}{p^{k(1/2 + is_1)}}\right|^2 \right]\EE\left[ \left|1 + \sum_{k \ge 1} \frac{\alpha(p)^k f(p^k)}{p^{k(1/2+is_2)}}\right|^2 \right]}
\end{align*}

\noindent as $y \to \infty$ by dominated convergence, and we conclude that \eqref{eq:final-2ndmom} holds.
\end{proof}

\paragraph{Step 4: full support.}
\begin{proof}[Proof of \Cref{thm:mc-convergence}]
Combining the martingale convergence in Step 1, as well as \Cref{lem:mcL2truncate} and \Cref{lem:mcL2compare} from Step 2 and 3 respectively, we have proved the $L^2$ convergence \eqref{eq:thm:mc-convergence}. We just need to verify the final claim that $m_\infty(ds)$ is supported on $\RR$ almost surely, and it is sufficient to check that $\PP(m_\infty(I) = 0) = 0$ for any compact interval $I$ with positive Lebesgue measure $|I| > 0$.

Note that for any fixed $p_0 > 0$, the sequence of random variables
\begin{align*}
\int_I m_{(p_0, y], \infty}(ds)
:= \int_I \prod_{p_0 < p \le y} \frac{ \left|1 + \sum_{k \ge 1} \frac{\alpha(p)^k f(p^k)}{p^{k(1/2+is)}}\right|^2}{\EE\left[\left|1 + \sum_{k \ge 1} \frac{\alpha(p)^k f(p^k)}{p^{k(1/2+is)}}\right|^2\right]} ds
\end{align*}

\noindent is again an $L^2$-bounded martingale with respect to the filtration $(\Fa_y)_y$, and thus converges to some almost sure limit $m_{(p_0, \infty]}(I)$ as $y \to \infty$, and this limit satisfies the trivial comparison inequality
\begin{align*}
C_{\min}(I) m_{(p_0, \infty]}(I) 
\le m_\infty(I) 
\le C_{\max}(I) m_{(p_0, \infty]}(I) 
\end{align*}

\noindent almost surely with
\begin{align*}
C_{\min}(I) := \left\{ \min_{s \in I}  \prod_{p \le p_0} \frac{ \left|1 + \sum_{k \ge 1} \frac{\alpha(p)^k f(p^k)}{p^{k(1/2+is)}}\right|^2}{\EE\left[\left|1 + \sum_{k \ge 1} \frac{\alpha(p)^k f(p^k)}{p^{k(1/2+is)}}\right|^2\right]} \right\} , \quad
C_{\max}(I) := \left\{ \max_{s \in I}  \prod_{p \le p_0} \frac{ \left|1 + \sum_{k \ge 1} \frac{\alpha(p)^k f(p^k) }{p^{k(1/2+is)}}\right|^2}{\EE\left[\left|1 + \sum_{k \ge 1} \frac{\alpha(p)^k f(p^k) }{p^{k(1/2+is)}}\right|^2\right]} \right\} 
\end{align*}

\noindent which are some random but finite values (the minimum and maximum are attained because we are dealing with finite product over $p \le p_0$, and any infinite sums that we encounter are absolutely convergent). In other words, $m_\infty(I) = 0$ if and only if $m_{(p_0, \infty]}(I) = 0$.

By construction, $m_{(p_0, \infty]}(I)$ only depends on $\alpha(p)$ for $p > p_0$, i.e., it is measurable with respect to the $\sigma$-algebra $\Ga_{p_0} := \sigma\left(\alpha(p), p > p_0\right)$. In particular, we have
\begin{align*}
\{m_\infty(I) = 0\} = \{m_{(p_0, \infty]}(I)\} \in \Ga_{p_0}.
\end{align*}

\noindent Since this is true for arbitrary values of $p_0$, we have $\{m_\infty(I) = 0\}  \in \bigcap_{p_0}\Ga_{p_0}$. Given that $(\alpha(p))_{p}$ are i.i.d. random variables, the tail $\sigma$-algebra $\bigcap_{p_0}\Ga_{p_0}$ is trivial by Kolmogorov $0-1$ law  \cite[Theorem~4.13]{Kal2021}. In particular, we have
\begin{align}\label{eq:01-conclusion}
\PP(m_\infty(I) = 0) \in \{0, 1\}.
\end{align}

\noindent On the other hand, recall from \eqref{eq:martingale-convergence} that the convergence of $m_{y, \infty}(I)$ to $m_\infty(I)$ holds in $L^2$. This implies $\EE[m_\infty(I)] = |I| > 0$, and in particular $\PP(m_\infty(I) > 0) > 0$. Combining this with \eqref{eq:01-conclusion} we deduce that $\PP(m_\infty(I) = 0)  = 1$, which concludes our proof.
\end{proof}

\begin{rem}\label{rem:support-ppt}
Let us discuss some further support properties of the measure $m_\infty(ds).$

By Fatou's lemma (or uniform integrability by our $L^2$ construction), we have $\EE\left[m_{\infty}(\{s_0\})\right] = 0 $ for any fixed $s_0 \in \RR$. Thus $\PP(m_\infty(\{s_0\}) = 0) = 1$ and this can be extended to a fixed subset of $\RR$ with at most countably many points.

It is not difficult to refine our analysis and show that $m_\infty(ds)$ does not contain any Dirac mass with probability $1$. By stationarity, let us prove that this is the case when we restrict ourselves to the interval $[0,1]$. Since $\EE[m_\infty([0,1])] = 1$, we know $\PP\left(m_\infty([0,1]) < \infty\right) = 1$. Moreover, if we denote by $\Da$ the collection of dyadic points on $[0,1]$, then we have $\PP\left(m_\infty(\Da) = 0\right) = 1$ by our previous observation.  However, on the event $\{m_\infty([0,1]) < \infty \} \cap \{m_\infty(\Da) = 0\}$, we have
\begin{align*}
    m_\infty([0,1])^2
    \ge \sum_{j = 0}^{2^n - 1} m_{\infty}([j2^{-n}, (j+1)2^{-n}])^2
    \ge \left(\sup_{s_0 \in [0,1]} m_\infty(\{s_0\})\right)  m_\infty([0,1])
\end{align*}

\noindent for any $n \in \NN$, and
\begin{align*}
\EE\left[ \sum_{j = 0}^{2^n - 1} m_{\infty}([j2^{-n}, (j+1)2^{-n}])^2\right]
\ll \sum_{j=0}^{2^n - 1} \iint_{[j2^{-n}, (j+1)2^{-n}]^2} \frac{ds_1 ds_2}{|s_1 - s_2|^{2\theta}}
\ll 2^{-n(1-2\theta)} \xrightarrow[n \to \infty]{\theta \in (0, \frac{1}{2})} 0.
\end{align*}

\noindent In particular, $\EE\left[ \left(\sup_{s_0 \in [0,1]} m_\infty(\{s_0\})\right)  m_\infty([0,1])\right] = 0$. By the full support property of $m_\infty$, we have $\PP(m_\infty([0,1]) > 0) = 1$ and conclude that $\PP\left(\sup_{s_0 \in [0,1]} m_\infty(\{s_0\}) > 0 \right) = 0$.
\end{rem}

\subsection{Proof of \texorpdfstring{\Cref{lem:limit-cond-var}}{Lemma \ref{lem:limit-cond-var}}} \label{sec:var-Tauberian}
To deduce the convergence of $U_x$, we just need one final probabilistic ingredient, namely a generalisation of the Hardy--Littlewood--Karamata Tauberian theorem.
\begin{thm}[{\cite[Theorem A.2]{BW2023}}]
\label{theo:tauberian}
Let $\nu(d \cdot)$ be a nonnegative random measure on $\RR_{\ge 0}$, $\nu(t) := \int_0^t \nu(ds)$, and suppose the Laplace transform \begin{align*}
    \hat{\nu}(\lambda) := \int_0^\infty e^{-\lambda s} \nu(ds)
\end{align*}

\noindent exists almost surely for any $\lambda > 0$. If
\begin{itemize}\setlength\itemsep{0em}
\item $\rho \in [0, \infty)$ is fixed;
\item $L\colon (0, \infty) \to (0, \infty)$ is a deterministic slowly varying function at $0$; and 
\item $C_\nu$ is some nonnegative (finite) random variable,
\end{itemize}
then we have
\begin{align}\label{eq:tauberian}
    \frac{\lambda^\rho}{L(\lambda^{-1})} \hat{\nu}(\lambda) \xrightarrow[\lambda \to 0^+]{p} C_\nu
    \qquad \Rightarrow \qquad 
    \frac{t^{-\rho}}{L(t)} \nu(t) \xrightarrow[t \to \infty]{p} \frac{C_\nu}{\Gamma(1+\rho)}.
\end{align}
\end{thm}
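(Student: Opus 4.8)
The plan is to lift the classical (deterministic) Hardy--Littlewood--Karamata Tauberian theorem to the random setting, the point being that the standard proof of Karamata's theorem runs through the continuity theorem for Laplace transforms of measures on $[0,\infty)$, and this can be carried out pathwise once the hypothesis ``convergence in probability as $\lambda\to 0^+$'' has been converted to ``almost sure convergence along a subsequence'' by the usual subsequence extraction.

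\textbf{Reduction to sequences and subsequences.} Convergence in probability is metrisable, so it suffices to prove that for every fixed sequence $t_n\to\infty$ one has $\frac{t_n^{-\rho}}{L(t_n)}\nu(t_n)\xrightarrow{p}\frac{C_\nu}{\Gamma(1+\rho)}$; and for this, by the subsequence criterion, it is enough to extract from an arbitrary subsequence a further subsequence along which the convergence holds almost surely. Fix such a subsequence, relabel it $(t_n)$, and set $\lambda_n:=1/t_n\to 0^+$, so that the quantity of interest is $\frac{\lambda_n^{\rho}}{L(1/\lambda_n)}\nu(1/\lambda_n)$. Next fix a countable dense set $\mathcal{Q}\subseteq(0,\infty)$. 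For each $q\in\mathcal{Q}$ we have $q\lambda_n\to 0^+$, so the hypothesis applied along $q\lambda_n$, together with $L(1/(q\lambda_n))/L(1/\lambda_n)\to 1$ (slow variation, in the limit $1/\lambda_n\to\infty$), gives
\begin{equation*}
\frac{\lambda_n^{\rho}}{L(1/\lambda_n)}\hat\nu(q\lambda_n)\xrightarrow[n\to\infty]{p}q^{-\rho}C_\nu\qquad(q\in\mathcal{Q}).
\end{equation*}
Enumerating $\mathcal{Q}$, extracting subsequences one $q$ at a time and diagonalising, we obtain a single subsequence (still written $(\lambda_n)$) and an event $\Omega_0$ of full probability on which $C_\nu<\infty$, $\hat\nu(\lambda)<\infty$ for all $\lambda>0$, and the above convergence holds surely for every $q\in\mathcal{Q}$.

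\textbf{Pathwise Tauberian step.} Fix $\omega\in\Omega_0$ and introduce the deterministic Radon measures $\mu_n$ on $[0,\infty)$ defined as the pushforward of $\frac{\lambda_n^{\rho}}{L(1/\lambda_n)}\nu(\cdot)(\omega)$ under $t\mapsto\lambda_n t$, so that $\mu_n([0,x])=\frac{\lambda_n^{\rho}}{L(1/\lambda_n)}\nu(x/\lambda_n)(\omega)$ and $\int_0^\infty e^{-us}\mu_n(ds)=\frac{\lambda_n^{\rho}}{L(1/\lambda_n)}\hat\nu(u\lambda_n)(\omega)$. By the previous step this Laplace transform converges to $q^{-\rho}C_\nu(\omega)$ for every $q\in\mathcal{Q}$; since $u\mapsto\int e^{-us}\mu_n(ds)$ is non-increasing, sandwiching $u$ between nearby rationals and using continuity of $u\mapsto u^{-\rho}$ on $(0,\infty)$ upgrades this to convergence for all $u>0$. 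The limit $u\mapsto C_\nu(\omega)u^{-\rho}$ is the Laplace transform of the genuinely locally finite measure $m^{(\rho)}(ds):=C_\nu(\omega)\frac{s^{\rho-1}}{\Gamma(\rho)}ds$ when $\rho>0$, and of $C_\nu(\omega)\delta_0$ when $\rho=0$. By the extended continuity theorem for Laplace transforms of measures on $[0,\infty)$, $\mu_n\to m^{(\rho)}$ vaguely; since the limiting distribution function $x\mapsto m^{(\rho)}([0,x])=C_\nu(\omega)x^{\rho}/\Gamma(1+\rho)$ is continuous (and $m^{(\rho)}$ has no atom at $1$), we get $\mu_n([0,x])\to C_\nu(\omega)x^{\rho}/\Gamma(1+\rho)$ for every $x>0$. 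Taking $x=1$ yields $\frac{\lambda_n^{\rho}}{L(1/\lambda_n)}\nu(1/\lambda_n)(\omega)\to C_\nu(\omega)/\Gamma(1+\rho)$ for all $\omega\in\Omega_0$, which is exactly the almost sure convergence along the subsequence that we needed; tracing back through the reduction completes the proof.

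\textbf{Expected main obstacle.} The probabilistic wrapper (subsequence extraction, diagonalisation over $\mathcal{Q}$, the slow-variation ratio) is routine. The real work is the pathwise deterministic step: verifying the hypotheses of the Laplace-transform continuity theorem — local finiteness of the limit measure $m^{(\rho)}$ (using $C_\nu<\infty$), the bootstrap from convergence on a countable dense set of $u$ to all $u>0$ via monotonicity, and the fact that the limiting distribution function is continuous so that vague convergence delivers the value at the single point $x=1$ — and making sure the identification of $u^{-\rho}$ with the Laplace transform of $s^{\rho-1}/\Gamma(\rho)\,ds$ (respectively of $\delta_0$ for $\rho=0$) is handled uniformly across $\rho\in[0,\infty)$.
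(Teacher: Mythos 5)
The paper does not actually prove this result: it is stated as a citation of {\cite[Theorem A.2]{BW2023}}, with no in-paper argument. So there is no proof in the paper to compare against, and your attempt must be judged on its own.

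Your proof is correct, and it is the natural way to establish a convergence-in-probability Tauberian theorem: reduce to a.s.\ convergence along a subsequence via the standard subsequence criterion, diagonalise over a countable dense set of scaling parameters $q$ (so that the slow-variation ratio $L(1/(q\lambda_n))/L(1/\lambda_n)\to 1$ can be invoked), and then run the classical deterministic Karamata argument pathwise. The deterministic step is implemented through the extended Laplace continuity theorem for Radon measures on $[0,\infty)$: you correctly note that $u\mapsto\hat\mu_n(u)$ is non-increasing, which upgrades convergence on the countable dense set to all $u>0$, that $u^{-\rho}$ is the Laplace transform of $s^{\rho-1}/\Gamma(\rho)\,ds$ (respectively $\delta_0$ for $\rho=0$), that the limiting measure is locally finite on $\{C_\nu<\infty\}$, and that its distribution function has no atom at $x=1$, so that proper convergence delivers the value at $1$. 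One cosmetic point: the statement writes ``slowly varying at $0$'', but since the argument of $L$ on both sides tends to $\infty$, what is meant (and what your argument correctly uses) is slow variation at $\infty$. I see no gap.
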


\begin{proof}[Proof of \Cref{lem:limit-cond-var}]
We first show that
\begin{align}\label{eq:con-var1}
    \int_\RR \frac{|A_\infty(\sigma_x+it)|^2}{\EE\left[|A_\infty(\sigma_x+it)|^2\right]} \frac{dt}{\left|\sigma_x + it\right|^2} 
    = \int_\RR \frac{m_{\infty, x}(dt)}{\left|\sigma_x + it\right|^2} 
    \xrightarrow[x \to \infty]{L^1} \int_\RR \frac{m_\infty(dt)}{|1/2 + it|^2}.
\end{align}

\noindent To see this, consider the upper bound
\begin{align*}
\EE\left|\int_\RR \frac{m_{\infty, x}(dt)}{\left|\sigma_x + it\right|^2}  -  \int_\RR \frac{m_\infty(dt)}{|1/2 + it|^2}\right|
& \le \sum_{n \in \ZZ} 
\EE\left|\int_{n}^{n+1} \frac{m_{\infty, x}(dt)}{\left|1/2 + it\right|^2}  -  \int_{n}^{n+1} \frac{m_\infty(dt)}{|1/2 + it|^2}\right|\\
& \quad + \sum_{n \in \ZZ} 
\EE\left|\int_{n}^{n+1} \left[\frac{1}{|\sigma_x + it|^{2}} - \frac{1}{|1/2 + it|^{2}}\right] m_{\infty, x}(dt)\right|
\end{align*}

\noindent where the summand on the right-hand side is $\ll (1+n^2)^{-1}$ uniformly in $x$ (hence summable in particular), and by dominated convergence we just have to verify that
\begin{align*}
    (a)& \quad \EE\left|\int_{n}^{n+1} \frac{m_{\infty, x}(dt)}{\left|1/2 + it\right|^2}  -  \int_{n}^{n+1} \frac{m_\infty(dt)}{|1/2 + it|^2}\right| \to 0\\
    \qquad \text{and} \qquad
    (b)& \quad \EE\left|\int_{n}^{n+1} \left[\frac{1}{|\sigma_x + it|^{2}} - \frac{1}{|1/2 + it|^{2}}\right] m_{\infty, x}(dt)\right| \to 0
\end{align*}

\noindent for each $n \in \ZZ$. But (a) immediately follows from the $L^2$-convergence in \Cref{thm:mc-convergence} (by taking $h(t) := |1/2 + it|^{-2}$ on $I = [n, n+1]$), whereas (b) is a simple consequence of the bound
\begin{align*}
    \EE\left|\int_{n}^{n+1} \left[\frac{1}{|\sigma_x + it|^{2}} - \frac{1}{|1/2 + it|^{2}}\right] m_{\infty, x}(dt)\right|
    \le \frac{1}{\log x}\EE\left[\int_{n}^{n+1} \frac{m_{\infty, x}(dt)}{|1/2 + it|^{4}} \right] \le \frac{16}{\log x} \to 0.
\end{align*}

Next, we recall that $\EE[|A_\infty(\sigma_x + it)|^2]$ is independent of $t \in \RR$, see \eqref{eq:full-expansion} from the proof of \Cref{lem:cross-moments}. This combined with \eqref{eq:log-sum-twist} in \Cref{lem:martingaleL2est} implies that there exists some constant $C \in (0, \infty)$ such that $\EE[|A_\infty(\sigma_x + it)|^2] \sim C \log^{\theta} x$ as $x \to \infty$. In other words, we have (by Plancherel's identity)
\begin{align*}
    \frac{2\pi}{C \log^\theta x} \int_1^\infty \frac{|s_t|^2}{t^{1+ \frac{1}{\log x}}} dt 
    = \frac{1}{C \log^\theta x} \int_{\RR} \frac{|A_\infty(\sigma_x+it)|^2}{\left|\sigma_x + it\right|^2} dt
    \xrightarrow[x \to \infty]{L^1} \int_\RR \frac{m_\infty(dt)}{|1/2 + it|^2}.
\end{align*}

\noindent We now apply \Cref{theo:tauberian}: taking $\lambda = 1/\log x$ and $\rho = \theta$, we see that
\begin{align*}
\frac{2\pi}{C \log^\theta x} \int_1^\infty \frac{|s_t|^2}{t^{1+ \frac{1}{\log x}}} dt 
=  \frac{2\pi}{C \log^\theta x} \int_0^\infty e^{-t / \log x} |s_{e^{t}}|^2dt 
\xrightarrow[x \to \infty]{L^1} \int_\RR \frac{m_\infty(dt)}{|1/2 + it|^2}
\end{align*}

\noindent (and the convergence also holds in probability in particular) implies that
\begin{align*}
\frac{2\pi}{C \log^\theta x} \int_1^x \frac{|s_t|^2}{t} dt 
=  \frac{2\pi}{C \log^\theta x} \int_0^{\log x} |s_{e^{t}}|^2 dt 
\xrightarrow[x \to \infty]{p} \frac{1}{\Gamma(1+\theta)}\int_\RR \frac{m_\infty(dt)}{|1/2 + it|^2},
\end{align*}

\noindent and the last convergence also holds in $L^1$ since
\begin{align}\label{eq:U_x-ui}
\frac{2\pi }{C \log^\theta x} \int_1^x \frac{|s_t|^2}{t} dt  \le 
\frac{2\pi e}{C \log^\theta x} \int_1^\infty \frac{|s_t|^2}{t^{1+ \frac{1}{\log x}}} dt 
\end{align}

\noindent and the right-hand side is uniformly integrable (by its $L^1$-convergence as $x \to \infty$). In particular,
\begin{align*}
\lim_{x \to \infty}\frac{2\pi}{C \log^\theta x} \EE\left[\int_1^x \frac{|s_t|^2}{t} dt\right]
= \frac{1}{\Gamma(1+\theta)} \EE\left[\int_\RR \frac{m_\infty(dt)}{|1/2 + it|^2} \right]
= \frac{1}{\Gamma(1+\theta)}\int_{\RR} \frac{dt}{\frac{1}{4} + t^2} = \frac{2\pi}{\Gamma(1+\theta)}.
\end{align*}

\noindent Comparing this with the normalisation condition $\EE[U_x] = 1$, we conclude that \eqref{eq:Uxlim} holds.
\end{proof}

\subsection{Uniform integrability of \texorpdfstring{$|S_x|^{2q}$}{Sx2q}} \label{subsec:mom-convergence}
\begin{proof}[Proof of \Cref{cor:mom-convergence}]
Given $q \ge 0$, if we want to show $\lim_{x \to \infty}\EE[|S_x|^{2q}] = \EE [|\sqrt{V_{\infty}} G|^{2q}]$ using \Cref{thm:main} it suffices to prove that $|S_x|^{2q}$ is uniformly integrable, see \cite[Lemma 5.11]{Kal2021}. This means proving 
\[     \sup_{x \ge 3} \EE\left[|S_x|^{2q} 1_{\{|S_x|^{2q} > K\}}\right]\xrightarrow[K \to \infty]{} 0. \]
If $q \in [0,1)$ then this is straightforward because
\begin{align*}
    \sup_{x \ge 3} \EE\left[|S_x|^{2q} 1_{\{|S_x|^{2q} > K\}}\right]
    & \le \sup_{x \ge 3} \EE\left[|S_x|^{2q} \left(|S_x| / K^{1/2q}\right)^{2 - 2q}\right]
    = K^{1 - \frac{1}{q}}
\end{align*}
\noindent using the fact that $\EE[|S_x|^2] \equiv 1$. For $q=1$ we argue directly: since $\EE [|S_x|^2]\equiv 1$ and $\EE [|G|^2]\equiv 1$, the claim $\lim_{x \to \infty}\EE[|S_x|^{2q}] = \EE [|\sqrt{V_{\infty}} G|^{2q}]$ for $q=1$ is equivalent to saying $\EE V_{\infty} \equiv 1$. From \Cref{thm:mc-convergence} and the identity $\int_{\RR} ds/|1/2+is|^2 = 2\pi$, $\EE V_{\infty} \equiv 1$ holds and we are done.
\end{proof}
To extend the proof of \Cref{cor:mom-convergence} to $q \in (1, 2)$ when $\theta \in (0, \tfrac{1}{2})$, it suffices to verify $\sup_{x \ge 3}\EE[|S_{x}|^4]< \infty$. This is plausible because we expect the size of $\EE[|S_x|^4]$ to be comparable to that of $\EE[|U_x|^2]$, which we know is uniformly bounded due to the fact that $\lim_{x \to \infty} \EE[|U_x|^2] = \Lsum$ from \Cref{lem:conv}. Indeed it may even be possible to check that $\lim_{x \to \infty} \EE[|S_{x}|^4] = 2\Lsum = \Gamma(3) \EE[V_\infty^2]$ via direct computation, by adapting the analysis in \Cref{subsec:l2computation}. More generally, one should be able to compute $\lim_{x \to \infty} \EE[|S_{x}|^{2k}]$ for any positive integer $k<1/\theta$.

The convergence $\EE[|S_x|^{2q}] \to \Gamma(1+q) \EE[V_\infty^q]$ is expected to be true for all $q \in [0, 1/\theta)$ (and should continue to hold for $\theta \in [\tfrac{1}{2}, 1)$ if one could verify distributional convergence in this range). This threshold should not be surprising as it corresponds to the integrability of $V_\infty$.\footnote{We do not establish this fact in the current paper, but for the special case where $f(n) = \theta^{\Omega(n)/2}$ this is essentially verified in \cite[Theoerem 1.8]{sw}} It is reasonable to believe that in the aforementioned range for $(q, \theta)$, we have
\begin{align*}
    \EE[|S_x|^{2q}] \ll 1 + \EE\left[\left(\int_\RR \frac{m_{y, \infty}(ds)}{|\tfrac{1}{2} + is|^2}\right)^q\right]
\end{align*} 

\noindent where $m_{y, \infty}(ds)$ is defined in \eqref{eq:mc-measure} and $y = y(x)$ is some suitable function that goes to infinity as $x \to \infty$. With techniques of Gaussian approximation, one could then translate moment bounds for Gaussian multiplicative chaos to the uniform integrability of $|S_x|^{2q}$ for all fixed $q \in [1, 1/\theta).$ We refer the interested readers to \cite{ShortNote} for details of the approximation procedure and ingredients from the theory of Gaussian multiplicative chaos, and \cite{Har2020} for an alternative and more self-contained analysis.

\section{The Rademacher case}\label{sec:rad}
\subsection{Main adaptations}
We explain how to prove \Cref{thm:mainRad} by modifying the proof of \Cref{thm:main}. The proof relies on the (real-valued) martingale central limit theorem. The following is a special case of \cite[Theorem 3.2 and Corollary 3.1]{HH1980}.
\begin{thm}\label{thm:mCLT}
For each $n$, let $(M_{n,j})_{j\le k_n}$ be a real-valued, mean-zero and square integrable martingale with respect to the filtration $(\Fa_{j})_j$, and $\Delta_{n, j} := M_{n, j} - M_{n, j-1}$ be the corresponding martingale differences. Suppose the following conditions are satisfied.
\begin{itemize}
    \item[(a)] The conditional covariance converges, i.e.,
    \begin{align}
        V_{n} := \sum_{j =1}^{k_n} \EE[|\Delta_{n, j}|^2 | \Fa_{j-1}] & \xrightarrow[n \to \infty]{p} V_\infty.
    \end{align}

    \item[(b)] The conditional Lindeberg condition holds: for any $\delta > 0$,
    \begin{align*}
        \sum_{j = 1}^{k_n} \EE[|\Delta_{n, j}|^2 1_{\{|\Delta_{n, j}| > \delta\}} | \Fa_{j-1}] \xrightarrow[n \to \infty]{p} 0.
    \end{align*}
\end{itemize}

\noindent Then
\begin{align*}
    M_{n, k_n} \xrightarrow[n \to \infty]{d} \sqrt{V_\infty} \ G
\end{align*}

\noindent where $G \sim \Na(0, 1)$ is independent of $V_\infty$, and the convergence in law is also stable.
\end{thm}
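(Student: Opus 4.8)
The plan is to recognise \Cref{thm:mCLT} as the specialisation of Hall and Heyde's martingale central limit theorem \cite[Theorem 3.2 and Corollary 3.1]{HH1980} to an array whose underlying filtration does not vary with $n$. Concretely, I would set $M_{n,0}:=0$, so that $\Delta_{n,1}=M_{n,1}$ and each row $(M_{n,j})_{0\le j\le k_n}$ is a zero-mean square-integrable martingale, and regard $\{M_{n,j},\Fa_{j}\colon 0\le j\le k_n\}_{n\ge 1}$ as a martingale array in the sense of \cite{HH1980} with difference array $(\Delta_{n,j})$. The nesting condition $\Fa_{n,j}\subseteq\Fa_{n+1,j}$ required there is automatic in our situation because the $\sigma$-algebras coincide from row to row; this is precisely the feature responsible for the convergence being stable rather than merely distributional.

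Next I would match hypotheses (a) and (b) to the conditions of \cite[Theorem 3.2]{HH1980}. Hypothesis (b) is the conditional Lindeberg condition verbatim, and by the standard truncation argument it yields both the asymptotic negligibility $\max_{j\le k_n}|\Delta_{n,j}|\xrightarrow[n\to\infty]{p}0$ and the approximation $\sum_{j\le k_n}\bigl(\Delta_{n,j}^2-\EE[\Delta_{n,j}^2\mid\Fa_{j-1}]\bigr)\xrightarrow[n\to\infty]{p}0$; combined with hypothesis (a) this gives $\sum_{j\le k_n}\Delta_{n,j}^2\xrightarrow[n\to\infty]{p}V_\infty$ with $V_\infty$ finite almost surely (which is part of the hypothesis, and is verified in each application in this paper). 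In the real-valued setting there is no analogue of the auxiliary condition \eqref{eq:condcov1} of \Cref{lem:mCLT} to check, since $\Delta_{n,j}^2=|\Delta_{n,j}|^2$. Then \cite[Theorem 3.2]{HH1980} yields that $M_{n,k_n}=\sum_{j\le k_n}\Delta_{n,j}$ converges in distribution to a variable with characteristic function $t\mapsto\EE[\exp(-\tfrac{1}{2}V_\infty t^2)]$, i.e.\ to $\sqrt{V_\infty}\,G$ with $G\sim\Na(0,1)$ independent of $V_\infty$; the stable-convergence refinement, available under the nesting condition, promotes this to $\bigvee_j\Fa_j$-stable convergence. Since $V_\infty$ is $\bigvee_j\Fa_j$-measurable — it is the almost sure limit of the $\Fa_{k_n-1}$-measurable quantities $\sum_{j\le k_n}\EE[\Delta_{n,j}^2\mid\Fa_{j-1}]$ — this is exactly stable convergence in the sense of \Cref{def:stable} ($\bigvee_j\Fa_j$ being the full $\sigma$-algebra in each of our applications).

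I expect the only genuinely delicate points to be bookkeeping rather than mathematics: confirming that our fixed-filtration, variable-length configuration meets the array hypotheses of \cite{HH1980} to the letter — notably the nesting condition underpinning the stable statement — and checking that almost sure finiteness of $V_\infty$, as opposed to a uniform bound on $\EE[\max_{j\le k_n}|\Delta_{n,j}|^2]$, already suffices for the conclusion; if the cited statement does not cover this case directly, it is recovered by truncating $V_\infty$ at an arbitrary level $K$ and letting $K\to\infty$, using the asymptotic negligibility of the increments. The complex-valued version \Cref{lem:mCLT} is deduced from \Cref{thm:mCLT} by splitting martingales into real and imaginary parts, which is exactly where the extra condition \eqref{eq:condcov1} is needed; this reduction is carried out in \Cref{app:cmCLT}.
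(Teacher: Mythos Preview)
Your proposal is correct and matches the paper's approach exactly: the paper does not give an independent proof of \Cref{thm:mCLT} but simply records it as a special case of \cite[Theorem~3.2 and Corollary~3.1]{HH1980}, and you have supplied the bookkeeping that justifies the citation (fixed filtration giving the nesting condition for free, conditional Lindeberg implying the negligibility and square-sum conditions, Corollary~3.1 handling the passage from conditional variances to the stable mixture limit). There is nothing further to add.
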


Let
\begin{align}
\Fa_p^\RadSym := \sigma(\Rad(q), q \le p),\qquad \Fa_{p^-}^\RadSym := \sigma(\Rad(q), q <p).
\end{align}
We use \Cref{lem:mCLT} and follow the 4 steps described in \Cref{sec:mainproof}. Each step will possibly require (in addition to  $f \in \Ma^{\Sf}$) different restrictions on $f$. The conditions in \Cref{thm:mainRad} will be easily seen to imply all these restrictions. 
Given $f \in \Ma^{\Sf}$ and $\OurEpsilon>0$, we define 
\[ S^{\RadSym}_{x,\OurEpsilon} :=\frac{1}{\sqrt{\sum_{n \le x} f(n)^2}}\sum_{\substack{n\le x\\ P(n) \ge x^{\OurEpsilon}}}\Rad(n) f(n)\]
and write
\begin{align}\label{eq:eps-truncated-SRad}
S^{\RadSym}_{x,\OurEpsilon} = \sum_{x^{\OurEpsilon}\le p\le x} Z^{\RadSym}_{x,p}, \qquad  Z^{\RadSym}_{x,p} := \frac{1}{\sqrt{\sum_{n \le x} |f(n)|^2}}\sum_{\substack{n\le x\\ P(n)=p}} \Rad(n) f(n).
\end{align}
\begin{lem}\label{lem:negRad}
		Let $\theta>0$. Suppose $f\in \Ma^{\Sf}$ satisfies  $f^2 \in \Ma_{\theta}$. Then $\limsup_{\OurEpsilon\to 0^+} \limsup_{x \to \infty}\EE[(S^{\RadSym}_x - S^{\RadSym}_{x,\OurEpsilon})^2]=0$.
\end{lem}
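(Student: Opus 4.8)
The plan is to mirror the treatment of the term $A_{1,\OurEpsilon}$ from the proof of \Cref{lem:neg}, which is in fact \emph{simpler} in the Rademacher setting: since $f \in \Ma^{\Sf}$ is supported on squarefree integers, the entire contribution of $n$ with $P(n)^2 \mid n$ is automatically zero, so there is no analogue of the term $A_{2,\OurEpsilon}$ and no condition of the type \eqref{eq:ppasump} is required.

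First I would expand the second moment using the orthogonality relation \eqref{eq:orthRad}. Since
$S^{\RadSym}_x - S^{\RadSym}_{x,\OurEpsilon} = \big(\sum_{n\le x} f(n)^2\big)^{-1/2}\sum_{n\le x,\, P(n)<x^{\OurEpsilon}} \Rad(n)f(n)$
and $f(n)^2 = f(n)^2\mu^2(n)$ because $f$ vanishes on non-squarefrees, we get (for $x$ large enough that the denominator is positive, which holds since $f^2 \in \Ma_\theta$ with $\theta>0$ via \eqref{eq:flatsum})
\begin{align*}
\EE\big[(S^{\RadSym}_x - S^{\RadSym}_{x,\OurEpsilon})^2\big] = \Big(\sum_{n\le x} f(n)^2\Big)^{-1}\sum_{\substack{n\le x\\ P(n)<x^{\OurEpsilon}}} f(n)^2 .
\end{align*}
This is exactly the quantity $A_{1,\OurEpsilon}$ of \eqref{eq:A12}, now with $g=f^2$ in place of $|f|^2$.

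Next I would apply \Cref{lem:sum} with $g=f^2$ (which lies in $\Ma_\theta$ by hypothesis) to the smooth-number sum in the numerator, together with \eqref{eq:flatsum} for the denominator, obtaining
\begin{align*}
\limsup_{x\to\infty}\EE\big[(S^{\RadSym}_x - S^{\RadSym}_{x,\OurEpsilon})^2\big] = \Gamma(\theta)\,\rho_\theta(1/\OurEpsilon)\,\OurEpsilon^{\theta-1}.
\end{align*}
Since $\rho_\theta(t)$ decays faster than any power of $t$ as $t\to\infty$ (Smida \cite{Smida}), the right-hand side tends to $0$ as $\OurEpsilon\to 0^+$, which gives the claim. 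Alternatively, as in the footnote to the proof of \Cref{lem:neg}, one can bypass \Cref{lem:sum} by combining the upper bound of \Cref{lem:upper} applied to $f(\cdot)^2\mathbf{1}_{P(\cdot)\le x^{\OurEpsilon}}$ with \eqref{eq:flatsum} for $g=f^2$, which yields $A_{1,\OurEpsilon}\ll \exp\big(-\sum_{x^{\OurEpsilon}<p\le x} f(p)^2/p\big)\ll \OurEpsilon^{\theta}$ once $x$ is large in terms of $\OurEpsilon$. There is essentially no obstacle here; this is the easy direction, and the only point worth flagging is the absence of the prime-power contribution, which is what makes the Rademacher statement cleaner than \Cref{lem:neg}.
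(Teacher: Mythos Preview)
Your proposal is correct and follows essentially the same approach as the paper, which simply states that the proof is identical to that of \Cref{lem:neg} with the obvious substitutions. Your explicit observation that the term $A_{2,\OurEpsilon}$ vanishes automatically because $f\in\Ma^{\Sf}$ is supported on squarefrees (so no analogue of \eqref{eq:ppasump} is needed) is exactly the point, and the remainder of your argument reproduces the $A_{1,\OurEpsilon}$ analysis verbatim.
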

The proof of \Cref{lem:negRad} is identical to that of \Cref{lem:neg}, once we change $S_x$ to $S^{\RadSym}_x$ and $S^{\RadSym}_{x,\OurEpsilon}$ to $S_{x,\OurEpsilon}$. 
\begin{lem}\label{lem:lindRad}
	Let $\theta >0$. Suppose $f\in \Ma^{\Sf}$ satisfies $f^2 \in \Ma_{\theta}$ and $\sum_{p \le x} f(p)^4 \ll x^2(\log x)^{-4\theta-4}$. Then $\limsup_{x \to \infty}\sum_{x^{\OurEpsilon}\le p\le x} \EE [(Z^{\RadSym}_{x,p})^4] =0$ for every $\OurEpsilon>0$.
\end{lem}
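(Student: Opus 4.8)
The plan is to follow the proof of \Cref{lem:lind} in \Cref{sec:Lindeberg}; the one essential new feature is that the Rademacher orthogonality \eqref{eq:orthRad} only forces the diagonal condition $abcd=\square$ (four squarefree integers with square product) in place of the stronger $ab=cd$ that governed the Steinhaus case. I first reduce $\EE[(Z^{\RadSym}_{x,p})^4]$ to an arithmetic sum by exploiting the independence of $\Rad(p)$ from $\{\Rad(q):q<p\}$: writing each squarefree $n\le x$ with $P(n)=p$ as $n=pn'$ with $P(n')<p$, and using $\Rad(pn')=\Rad(p)\Rad(n')$ together with $\Rad(p)^4=1$, one gets
\[
\EE[(Z^{\RadSym}_{x,p})^4]=\frac{f(p)^4}{(\sum_{n\le x}f(n)^2)^2}\,\EE\big[W^4\big],\qquad W:=\sum_{\substack{n'\le x/p\\ P(n')<p}}\Rad(n')f(n').
\]
Squaring $W$ and using $\Rad(a)\Rad(b)=\Rad(ab/\gcd(a,b)^2)$ for squarefree $a,b$ gives $W^2=\sum_{k\text{ sqfree}}\Rad(k)c_k$ with $c_k:=\sum_{ab/\gcd(a,b)^2=k,\ a,b\le x/p,\ P(a),P(b)<p}f(a)f(b)$ (so $c_k=0$ unless $k\le(x/p)^2$). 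As the $k$'s are distinct squarefrees and the $c_k$'s deterministic, \eqref{eq:orthRad} yields $\EE[W^4]=\EE[(\sum_k\Rad(k)c_k)^2]=\sum_k c_k^2$, whence
\[
\sum_{x^{\OurEpsilon}\le p\le x}\EE[(Z^{\RadSym}_{x,p})^4]=\Big(\sum_{n\le x}f(n)^2\Big)^{-2}\sum_{x^{\OurEpsilon}\le p\le x}f(p)^4\sum_{k}c_k^2.
\]

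The crux is to estimate $\sum_k c_k^2$. Parametrising $a=g\alpha$, $b=g\beta$ with $g=\gcd(a,b)$, $\alpha\beta=k$, $(\alpha,\beta)=1$ and, from squarefreeness, $(g,k)=1$, the constraint $a,b\le x/p$ forces $g\le (x/p)/\max(\alpha,\beta)\le(x/p)/\sqrt{k}$; hence, with $\fname:=(|f|*|f|)^2$ as in the proof of \Cref{lem:lind},
\[
|c_k|\le (|f|*|f|)(k)\sum_{g\le (x/p)/\sqrt{k}}f(g)^2,\qquad c_k^2\le \fname(k)\Big(\sum_{g\le (x/p)/\sqrt{k}}f(g)^2\Big)^2 .
\]
Unlike in \Cref{lem:lind}, one cannot drop the upper bound $a,b\le x/p$ here: the relation $ab/\gcd(a,b)^2=k$ does not bound $a$ and $b$, and the ensuing sum over $g$ would diverge. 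Using the elementary mean-value bound $\sum_{g\le Y}f(g)^2\le Y\prod_{q\le Y}(1+f(q)^2/q)\ll Y(\log(Y+2))^{\theta+o(1)}$ — valid because $f^2\in\Ma_\theta$ gives $\sum_{q\le Y}f(q)^2/q=(\theta+o(1))\log\log Y$, and $\sum_q f(q)^4/q^2<\infty$ follows by partial summation from the hypothesis $\sum_{p\le x}f(p)^4\ll x^2(\log x)^{-4\theta-4}$ — together with the Rankin bound $\sum_{k\le M}\fname(k)/k\le\prod_{q\le M}(1+4f(q)^2/q+f(q)^4/q^2)\ll(\log M)^{4\theta+o(1)}$, one obtains
\[
\sum_k c_k^2\ \ll\ \Big(\tfrac{x}{p}\Big)^2(\log x)^{2\theta+o(1)}\sum_{k\le x^2}\frac{\fname(k)}{k}\ \ll\ \Big(\tfrac{x}{p}\Big)^2(\log x)^{6\theta+o(1)}.
\]

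To conclude, $\sum_{n\le x}f(n)^2\asymp x(\log x)^{\theta-1+o(1)}$ by \eqref{eq:flatsum} with $g=f^2$ and \Cref{lem:slow}, so $\sum_{x^{\OurEpsilon}\le p\le x}\EE[(Z^{\RadSym}_{x,p})^4]\ll(\log x)^{4\theta+2+o(1)}\sum_{x^{\OurEpsilon}\le p\le x}f(p)^4/p^2$. By the dyadic decomposition $p\in(x/2^{j+1},x/2^j]$ used at the end of \Cref{sec:Lindeberg}, the hypothesis $\sum_{p\le x}f(p)^4\ll x^2(\log x)^{-4\theta-4}$ gives $\sum_{x^{\OurEpsilon}\le p\le x}f(p)^4/p^2\ll_\OurEpsilon(\log x)^{-4\theta-3}$, hence $\sum_{x^{\OurEpsilon}\le p\le x}\EE[(Z^{\RadSym}_{x,p})^4]\ll_\OurEpsilon(\log x)^{-1+o(1)}\to 0$, which is the claim.

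The main obstacle — and the only place where the argument genuinely departs from \Cref{lem:lind} — is the estimate $\sum_k c_k^2\ll (x/p)^2(\log x)^{6\theta+o(1)}$: the weaker constraint $abcd=\square$ destroys the clean representation as $\sum_m|(f*f)(m)|^2$ available in the Steinhaus case, so one must retain the $\gcd$-variable $g$, and its summation contributes the extra factor $(\log x)^{2\theta}$ — this is precisely why the hypothesis carries the exponent $-4\theta-4$ rather than $-2\theta-4$. (A factor of $(\log x)^2$ could be recovered here by invoking the Shiu-type \Cref{lem:upper} for $f^2$, but the crude bound already suffices.) All the remaining ingredients — the mean-value estimates via \Cref{lem:sum}, \Cref{lem:upper} and \Cref{lem:slow}, and \eqref{eq:flatsum} — are transcribed verbatim from \Cref{sec:Lindeberg} with $\alpha$ replaced by $\Rad$.
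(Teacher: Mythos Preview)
Your proof is correct, and the overall strategy---reduce the fourth moment to an arithmetic count of squarefree quadruples with square product, parametrise, and bound by a multiplicative convolution---matches the paper's. The execution, however, is genuinely different in one place. The paper parametrises the condition $abcd=\square$ (with $a,b,c,d$ squarefree) by six pairwise-coprime variables $g_1,\dots,g_6$ via a dedicated combinatorial lemma (\Cref{lem:param}), which yields the clean pointwise bound $h^\RadSym(m)\le (f^2)^{*6}(m)\mathbf{1}_{P(m)^2\|m}$; after that the argument is transcribed verbatim from \Cref{sec:Lindeberg} with $\fname$ replaced by $\fname^\RadSym=(f^2)^{*6}$ and $\fname^\RadSym(p)=6f(p)^2$. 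You instead peel off the large prime immediately (via $\Rad(p)^4=1$), reduce to $\EE[W^4]=\sum_k c_k^2$ through the squarefree kernel of $W^2$, and parametrise each $c_k$ with only three variables ($g=\gcd(a,b)$ and $\alpha\beta=k$). Your route is more elementary---it avoids the six-variable lemma entirely---at the cost of a slightly looser bound: the crude Rankin estimate for $\sum_{g\le Y}f(g)^2$ drops a factor of $(\log x)^2$ that the paper recovers via \Cref{lem:upper}, but as you note this is harmless given the hypothesis exponent $-4\theta-4$. Both arguments land on the same $(\log x)^{6\theta+o(1)}$ factor and the same final $(\log x)^{-1+o(1)}$ decay.
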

The proof of \Cref{lem:lindRad} requires the following generalisation of \eqref{eq:orthRad}:
\begin{equation}\label{eq:genorth} \EE \left[ \prod_{i=1}^{k}\Rad(n_i)\right] = \mathbf{1}_{\prod_{i=1}^{k}n_i=\Box} \prod_{i=1}^{k} \mu^2(n_i),
\end{equation}
where $\Box$ stands for a perfect square. We also need the following lemma.
\begin{lem}\label{lem:param}
Let $m \in \NN$. Given squarefree $a,b,c,d \in \NN$ with $abcd=m^2$, there exist unique squarefree positive integers $(g_i)_{i=1}^{6}$ which are pairwise coprime except possibly for the pair $(g_1,g_2)$ and such that 
\begin{equation}\label{eq:param}
a = g_1 g_3 g_4, \, b = g_1 g_5 g_6,\, c = g_2 g_3 g_5,\, d = g_2 g_4 g_6
\end{equation}
and $abcd=(\prod_{i=1}^{6} g_i)^2$. Conversely, given squarefree positive integers $(g_i)_{i=1}^{6}$ which are pairwise coprime except possibly for $(g_1,g_2)$, and which satisfy $\prod_{i=1}^{6} g_i=m$, we can let $a$, $b$, $c$ and $d$ as in \eqref{eq:param} and obtain squarefree solutions to $abcd = m^2$.
\end{lem}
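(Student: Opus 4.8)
The plan is to argue prime by prime, exploiting squarefreeness throughout. First I would record that the displayed equality $abcd = (\prod_{i=1}^6 g_i)^2$ in the statement is automatic once \eqref{eq:param} holds, since
\[ (g_1g_3g_4)(g_1g_5g_6)(g_2g_3g_5)(g_2g_4g_6) = (g_1g_2g_3g_4g_5g_6)^2; \]
as all the integers involved are positive, this is equivalent to $\prod_{i=1}^6 g_i = m$, so it suffices to produce the $g_i$ with the stated coprimality and divisibility and to establish uniqueness. The combinatorial backbone of the whole lemma is the bijection between the six indices and the six two-element subsets of $\{a,b,c,d\}$: writing $g_1\leftrightarrow\{a,b\}$, $g_2\leftrightarrow\{c,d\}$, $g_3\leftrightarrow\{a,c\}$, $g_4\leftrightarrow\{a,d\}$, $g_5\leftrightarrow\{b,c\}$, $g_6\leftrightarrow\{b,d\}$, one sees from \eqref{eq:param} that $g_i$ is designed to be a common factor of exactly the two members of $\{a,b,c,d\}$ in its associated subset and of no others.

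For the forward direction I would fix a prime $p$ and consider which of $a,b,c,d$ it divides. Since these are squarefree and $abcd=m^2$ is a perfect square, $p$ divides $0$, $2$, or $4$ of them. Accordingly I define $v_p(g_i)\in\{0,1\}$: if $p$ divides none of $a,b,c,d$, set all $v_p(g_i)=0$; if $p$ divides exactly the two members of a subset $\sigma$, set $v_p(g_i)=1$ for the unique $i\leftrightarrow\sigma$ and $v_p(g_j)=0$ otherwise; if $p$ divides all four, set $v_p(g_1)=v_p(g_2)=1$ and $v_p(g_i)=0$ for $i\in\{3,4,5,6\}$. This produces squarefree $g_1,\dots,g_6$, and a prime lies in two distinct $g_i$'s only in the third case, where the two indices are forced to be $1$ and $2$; hence the $g_i$ are pairwise coprime apart from $(g_1,g_2)$. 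Checking \eqref{eq:param} then reduces, prime by prime, to the observation that in each of the three cases exactly one of the three $g_i$ occurring in the formula for a given one of $a,b,c,d$ is divisible by $p$ precisely when that integer is.

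For uniqueness I would note that any valid family $(g_i)$ must, at each prime $p$, be such that each of $a,b,c,d$ picks up its $p$-part from exactly one of its three constituent $g_i$'s (and from none when $p$ does not divide it). If $p$ divides none or exactly two of $a,b,c,d$ this pins down all the $v_p(g_i)$ as above. If $p$ divides all four, a short count (each index appears in exactly two of the four constraints, forcing the set of $p$-divisible indices to have size two and to partition $\{a,b,c,d\}$) shows the only combinatorially admissible assignments have $p$ dividing exactly one of the pairs $\{g_1,g_2\}$, $\{g_3,g_6\}$, $\{g_4,g_5\}$; of these only the first is compatible with the coprimality convention, so the $g_i$ are determined. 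The converse is then immediate: given squarefree $(g_i)_{i=1}^6$ that are pairwise coprime except for $(g_1,g_2)$ with $\prod_{i=1}^6 g_i=m$, each of the four products in \eqref{eq:param} involves three pairwise coprime squarefree $g_i$'s — crucially the pair $\{1,2\}$ never appears together in any of them — so $a,b,c,d$ are squarefree, and $abcd=(\prod_{i=1}^6 g_i)^2=m^2$.

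The only genuinely delicate point is the uniqueness claim in the case $p\mid\gcd(a,b,c,d)$, where one must verify that among the three assignments respecting the squarefreeness of $a,b,c,d$ only $p\mid g_1g_2$ respects the requirement that $g_i,g_j$ be coprime for $(i,j)\neq(1,2)$; everything else is routine prime-by-prime bookkeeping.
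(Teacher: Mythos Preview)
Your proof is correct and takes a genuinely different route from the paper. The paper defines the $g_i$ directly by GCD formulae, setting $g_1=(a,b)$, $g_2=(c,d)$, and then $g_3=(a/g_1,c/g_2)$, $g_4=(a/g_1,d/g_2)$, $g_5=(b/g_1,c/g_2)$, $g_6=(b/g_1,d/g_2)$; it argues that \eqref{eq:param} forces these formulae (giving uniqueness), checks the coprimality from squarefreeness of $a,b,c,d$, and then verifies \eqref{eq:param} by showing that the residual quotients $a'',b'',c'',d''$ are squarefree, pairwise coprime, and have square product, hence are all $1$. Your approach instead makes the underlying combinatorics explicit via the bijection between $\{1,\dots,6\}$ and the two-element subsets of $\{a,b,c,d\}$, and works prime by prime using the trichotomy that $p$ divides $0$, $2$, or $4$ of $a,b,c,d$. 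The paper's argument is shorter and avoids case analysis, while yours is more transparent about \emph{why} uniqueness holds---in particular your treatment of the case $p\mid\gcd(a,b,c,d)$, singling out the three perfect matchings $\{1,2\},\{3,6\},\{4,5\}$ and observing that only the first respects the coprimality convention, is a clean explanation of a point the paper handles only implicitly through the GCD formulae.
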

\begin{proof}[Proof of \Cref{lem:param}]
The converse is trivial because if we define $a,b,c,d$ as in \eqref{eq:param} then $abcd=(\prod_{i=1}^{6}g_i)^2=m^2$ and the assumptions on the $g_i$ imply $a$, $b$, $c$ and $d$ are squarefree. For the first part, the properties of $g_i$ and \eqref{eq:param} force us to define
\[g_1=(a,b),\, g_2=(c,d),\, g_3=\left( \frac{a}{g_1}, \frac{c}{g_2}\right), \, g_4=\left( \frac{a}{g_1}, \frac{d}{g_2}\right)\, g_5=\left( \frac{b}{g_1},\frac{c}{g_2}\right),\,g_6=\left( \frac{b}{g_1}, \frac{d}{g_2}\right).\]
The squarefreeness of $a,b,c,d$ implies  $(g_i,g_j)=1$ when $i \neq j$ except possibly when $\{i,j\}=\{1,2\}$. To verify \eqref{eq:param} for these $g_i$, let
$a'=a/g_1$, $b'=b/g_1$, $c'=c/g_2$ and $d'=d/g_2$. Then we have $(a',b')=(c',d')=1$ and 
\[ a'' b''c'' d''=\left(\frac{m}{g_1 g_2 g_3 g_4 g_5 g_6} \right)^2\]
with 
\[ a'' = \frac{a'}{(a',c')(a',d')},\, b''= \frac{b'}{(b',c')(b',d')}, \, c'' =\frac{c'}{(c',a')(c',b')},\, d''= \frac{d'}{(d',a')(d',b')}.\]
Since $a'',b'',c'',d''$ are squarefree and pairwise coprime, $a''b''c''d''$ being a perfect square implies $a''=b''=c''=d''=1$, which is equivalent to \eqref{eq:param}.
\end{proof}
\begin{proof}[Proof of \Cref{lem:lindRad}]
Let $f \in \Ma^{\Sf}$. By \eqref{eq:genorth},
\begin{equation}\label{eq:4thsumRad}
\sum_{x^{\OurEpsilon}\le p\le x} \EE\left[(Z^{\RadSym}_{x,p})^4\right]=(\sum_{n\le x}f(n)^2)^{-2}\sum_{x^{\OurEpsilon}\le p\le x}  \sum_{\substack{abcd=\Box\\ a,b,c,d \le x \\ P(a)=P(b)=P(c)=P(d)=p}} f(a)f(b)(c)f(d).
\end{equation}
The inner sum in the right-hand side of \eqref{eq:4thsumRad} can be written as 
\[\sum_{\substack{abcd=\Box\\ a,b,c,d \le x \\ P(a)=P(b)=P(c)=P(d)=p}} f(a)f(b)f(c)f(d)=\sum_{\substack{m \le x^2\\P(m)=p}}h^{\RadSym}(m), \quad h^{\RadSym}(m):=\sum_{\substack{abcd=m^2\\ P(a)=P(b)=P(c)=P(d)\\a,b,c,d\le x}} f(a)f(b)f(c)f(d).\]
We claim that we have the pointwise bound
\begin{equation}\label{eq:hpointwise} 0\le h^{\RadSym}(m) \le (f^2 * f^2 * f^2 *f^2 * f^2 * f^2)(m) \cdot \mathbf{1}_{P(m)^2 \mid\mid  m}.
\end{equation}
Indeed, if $f(a)f(b)f(c)f(d) \neq 0$ and $abcd=m^2$ then $a,b,c,d$ are squarefree, and, in the notation of \Cref{lem:param}, there are $(g_i)_{i=1}^{6}$ such that  $m=\prod_{i=1}^{6} g_i$ and $f(a)f(b)f(c)f(d) = \prod_{i=1}^{6} f(g_i)^2$. From \eqref{eq:hpointwise},
\begin{equation*} \sum_{\substack{abcd=\Box\\ a,b,c,d \le x \\ P(a)=P(b)=P(c)=P(d)=p\\ a,b,c,d \text{ squarefree}}} f(a)f(b)f(c)f(d)\le \sum_{\substack{m \le x^2\\P(m)=p\\ P(m)^2 \mid\mid  m}}\fname^{\RadSym}(m), \qquad \fname^{\RadSym} := f^2 * f^2 * f^2 *f^2 * f^2 * f^2.
\end{equation*}
The rest of the proof continues as in the proof of \Cref{lem:lind}, the only difference being that $\fname^{\RadSym}(p) = 6f(p)^2$ (as opposed to $\fname(p)=4|f(p)|^2$). 
\end{proof}

Let
\[ T^{\RadSym}_{x,\OurEpsilon} :=\sum_{x^{\OurEpsilon}\le p\le x} \EE\left[ (Z^{\RadSym}_{x,p})^2 \mid \Fa^{\RadSym}_{p^-}\right]\]
and
\begin{align}
	U^{\RadSym}_x := \bigg(\sum_{n\le x}f(n)^2\left(\frac{1}{n}-\frac{1}{x}\right)\bigg)^{-1}\int_{1}^{x} \frac{(s^{\RadSym}_t)^2}{t}dt, \qquad  s^{\RadSym}_t := \frac{1}{\sqrt{t}} \sum_{n\le t}  \Rad(n)f(n).
\end{align}
Recall $C_{\OurEpsilon}$ was defined in \eqref{eq:Ceps}.
\begin{proposition}\label{prop:l2Rad}
	Let $\theta \in (0,\frac{1}{2})$. Suppose $f\in \Ma^{\Sf}$ satisfies $f^2 \in \Ma_{\theta}$. Then $\lim_{x \to \infty}\EE[ (T^{\RadSym}_{x,\OurEpsilon}-C_{\OurEpsilon} U^{\RadSym}_x)^2]=0$.
\end{proposition}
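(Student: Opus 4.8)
The plan is to mirror the proof of \Cref{prop:l2} almost verbatim, replacing the Steinhaus orthogonality relation \eqref{eq:orth} by its Rademacher analogue \eqref{eq:orthRad} (and, where fourth moments intervene, the higher-order relation \eqref{eq:genorth}). As in \Cref{subsec:l2computation}, it suffices to establish the three limits
\begin{align}
\EE[(U^{\RadSym}_x)^2] &\to \Lsum^{\RadSym}, \label{eq:rad1}\\
\EE[T^{\RadSym}_{x,\OurEpsilon}U^{\RadSym}_x] &\to C_{\OurEpsilon}\Lsum^{\RadSym}, \label{eq:rad2}\\
\EE[(T^{\RadSym}_{x,\OurEpsilon})^2] &\to C_{\OurEpsilon}^2\Lsum^{\RadSym}, \label{eq:rad3}
\end{align}
for an appropriate constant $\Lsum^{\RadSym}$, since then $\EE[(T^{\RadSym}_{x,\OurEpsilon}-C_{\OurEpsilon}U^{\RadSym}_x)^2]\to \Lsum^{\RadSym}-2C_{\OurEpsilon}^2\Lsum^{\RadSym}+C_{\OurEpsilon}^2\Lsum^{\RadSym}=0$ (using $C_{\OurEpsilon}$ cancels correctly once one checks $\EE[U^{\RadSym}_x T^{\RadSym}_{x,\OurEpsilon}]$ and $\EE[(T^{\RadSym}_{x,\OurEpsilon})^2]$ carry exactly one and two factors of $C_{\OurEpsilon}$ respectively). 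Since $f$ is supported on squarefrees and real-valued, many of the divisor sums simplify: in the identity analogous to \Cref{lem:iden1}, the inner sums over $g_i\mid n_i^{\infty}$ collapse because $f(p^k)=0$ for $k\ge 2$, so $\widetilde f(p)=|f(p)|$ exactly and the combinatorial ``$g$-sums'' reduce to a single term. Consequently $\Lsum^{\RadSym}$ will be an explicit sum over coprime pairs $(n_1,n_2)$ of squarefree integers, namely $\Lsum^{\RadSym}=\sum_{(n_1,n_2)=1}\mu^2(n_1)\mu^2(n_2)f(n_1)^2 f(n_2)^2/\max\{n_1,n_2\}^2$ (possibly with a correction factor, to be pinned down in the computation), and its convergence for $\theta\in(0,\tfrac12)$ follows exactly as in \eqref{eq:tildsum}--\eqref{eq:tildsum} via \Cref{lem:upper} and \Cref{lem:slow}, using $f^2\in\Ma_{\theta}$.

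The concrete steps: first, prove the Rademacher analogue of \Cref{lem:iden1}. Expanding $\EE[(U^{\RadSym}_x)^2]=F_1(x)^{-2}\int_1^x\int_1^x t^{-1}r^{-1}\EE[(s^{\RadSym}_t)^2(s^{\RadSym}_r)^2]\,dt\,dr$ and applying \eqref{eq:genorth}, the condition $n_1m_1n_2m_2=\Box$ replaces $n_1m_1=n_2m_2$; but since all four variables are squarefree and share the constraint coming from $P(\cdot)$, \Cref{lem:param} shows that the squarefree solutions are parametrised by six pairwise-coprime (except $(g_1,g_2)$) squarefree integers, and the multiplicativity identity $f(a)f(b)f(c)f(d)=\prod_{i=1}^{6}f(g_i)^2$ holds. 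This yields a clean closed form for $\EE[(U^{\RadSym}_x)^2]$ in terms of $S_1(x,m,d)$ exactly as in \eqref{eq:Uxrep}, and the same two claims (uniform boundedness of $S_1$ for $d\le x$, and $S_1(x,m,d)\to\Lname_m$ for fixed $m,d$, where now $\Lname_m=\prod_{p\mid m}(1+f(p)^2)^{-1}$) drive \eqref{eq:rad1} by dominated convergence. Second, repeat the proof of \Cref{lem:TxUxS2} with \eqref{eq:orthF} replaced by its Rademacher conditional-expectation analogue (which holds because $\Rad$ is independent across primes and $\EE[\Rad(p)]=0$): one gets $\EE[(Z^{\RadSym}_{x,p})^2\mid\Fa^{\RadSym}_{p^-}]=(\sum_{n\le x}f(n)^2)^{-1}f(p)^2\sum_{m'_i\le x/p,\,P(m'_i)\le p-1}f(m'_1)\Rad(m'_1)f(m'_2)\Rad(m'_2)$, identical in structure to \eqref{eq:zpsquared}, and the rest of the derivation of \eqref{eq:TxUxS2} goes through with $|f^2(p)|$ replaced by $f(p)^2$. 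The boundedness and pointwise-limit claims for $S_2$ and $S_{2,2}$ then follow verbatim from \Cref{lem:sum}, \Cref{lem:slow}, \Cref{lem:log} and the de Bruijn--van Lint estimates, giving \eqref{eq:rad2}. Third, for \eqref{eq:rad3} one repeats the proof of \Cref{lem:txsquared}: here $\EE[(Z^{\RadSym}_{x,p})^2(Z^{\RadSym}_{x,q})^2\mid\ldots]$ requires $m'_1m''_1m'_2m''_2=\Box$, and since all these are squarefree one again invokes \Cref{lem:param} (or rather its specialisation to this setting) to parametrise, obtaining the same $S_3$-type structure and hence the same limit $C_{\OurEpsilon}^2\Lsum^{\RadSym}$.

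I expect the main obstacle to be the bookkeeping in the square-condition parametrisations for the fourth-moment-type quantities \eqref{eq:rad3} (and to a lesser extent \eqref{eq:rad1}): in the Steinhaus case the diagonal $n_1m_1=n_2m_2$ is a clean two-term multiplicative identity handled by the $g_1=(n_1,n_2)$, $g_2=(m'_1,m'_2)$ trick, whereas here the constraint ``product is a perfect square'' of squarefree integers is genuinely a six-variable parametrisation (\Cref{lem:param}). One must check carefully that, after this parametrisation, the resulting multiple Dirichlet-type sum still factors through exactly the same arithmetic functions $\widetilde f$, $\Lname_m$ and the same de Bruijn--van Lint/Wirsing asymptotics, so that the constant $\Lsum^{\RadSym}$ genuinely converges for $\theta<\tfrac12$ and the three limits match up to powers of $C_{\OurEpsilon}$. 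The analytic estimates themselves---bounding $S_{2,2,>1-\delta}$, applying \eqref{eq:unif} and \eqref{eq:Lratio}, the integration-by-parts producing $C_{\OurEpsilon}+O(\delta)$---are literally unchanged because they only use $f^2\in\Ma_{\theta}$, $\sum_{p\le t}f(p)^2\ll t/\log t$, and the summability $\sum_p|f(p)|^3/p^{3/2-c}<\infty$, all of which are hypotheses of \Cref{prop:l2Rad}. The one genuinely new combinatorial input, \Cref{lem:param}, is already proved in the excerpt, so the proof reduces to plugging it in at the three places where a square-condition arises and verifying the arithmetic simplifications afforded by $f\in\Ma^{\Sf}$.
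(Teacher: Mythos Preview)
Your overall strategy is correct and matches the paper's: reduce to the three limits \eqref{eq:rad1}--\eqref{eq:rad3}, derive identities analogous to \Cref{lem:iden1}, \Cref{lem:TxUxS2}, \Cref{lem:txsquared} via the square-constraint parametrisation \Cref{lem:param}, and conclude by dominated convergence using the same $S_1, S_{2,2}, S_3$ asymptotics driven by $f^2\in\Ma_\theta$.

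There is one concrete misconception to fix. You predict that $\Lsum^{\RadSym}$ will be a sum over coprime \emph{pairs} $(n_1,n_2)$, essentially the Steinhaus $\Lsum$ simplified by $f(p^k)=0$ for $k\ge 2$. This is not what emerges. In the Steinhaus case the constraint $n_1m_1=n_2m_2$ produces two ``outer'' coprime variables $n'_1,n'_2$ and two ``inner'' diagonal variables $g_1,g_2$; the latter are summed out to give the $F_m$ factors. In the Rademacher case the constraint $n_1n_2m_1m_2=\Box$ on squarefrees, parametrised by \Cref{lem:param}, gives six variables $g_1,\dots,g_6$: now $g_1,g_2$ are summed out (yielding two factors of $S_1$), but the remaining outer sum is over \emph{four} pairwise-coprime variables $g_3,g_4,g_5,g_6$, with denominator $\max\{g_3g_4,g_5g_6\}\cdot\max\{g_3g_5,g_4g_6\}$ rather than $\max\{n_1,n_2\}^2$. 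The paper's constant is
\[
\Lsum^{\RadSym}=\sum_{\substack{g_3,g_4,g_5,g_6\\ \text{pairwise coprime}}}\frac{f^2(g_3)f^2(g_4)f^2(g_5)f^2(g_6)}{\max\{g_3g_4,g_5g_6\}\max\{g_3g_5,g_4g_6\}}\prod_{p\mid g_3g_4g_5g_6}\bigl(1+f(p)^2/p\bigr)^{-2}.
\]
Your remark that ``inner sums over $g_i\mid n_i^\infty$ collapse'' is therefore a red herring: there are no such inner sums here---the structural change is in the number of outer variables. Correspondingly, the convergence of $\Lsum^{\RadSym}$ is not ``exactly as in \eqref{eq:tildsum}'' but requires a four-variable dyadic argument: bounding the contribution of $g_i\in[2^{h_i-1},2^{h_i})$ by $2^{\sum h_i - \max\{h_3+h_4,h_5+h_6\}-\max\{h_3+h_5,h_4+h_6\}}\prod_i h_i^{\theta-1+o(1)}$ and checking this is summable for $\theta<\tfrac12$. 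Once you correct this, the rest of your plan (boundedness and pointwise limits of $S_1,S_{2,2},S_3$, dominated convergence) goes through as you describe.
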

Given $f \in \Ma^{\Sf}$ we define
\[ \Lsum^{\RadSym}:= \sum_{\substack{g_3,g_4,g_5,g_6\in \NN \\ \text{pairwise coprime}}} \frac{f^2(g_3) f^2(g_4)f^2(g_5)f^2(g_6)}{\max\{g_3 g_4, g_5 g_6\} \max\{g_3 g_5,g_4g_6\}}  \prod_{p\mid g_3 g_4g_5g_6}(1+f^2(p)/p)^{-2}.\]
\Cref{prop:l2Rad} will follow from
\begin{lem}\label{lem:convRad}
	Let $\theta \in (0,\frac{1}{2})$. Suppose $f \in \Ma^{\Sf}$ satisfies $f^2 \in \Ma_{\theta}$. Then $\Lsum^{\RadSym}$ converges and
	\begin{align}\label{eq:limits}
\lim_{x \to \infty}\EE[ (U^{\RadSym}_x)^2]= \Lsum^{\RadSym},\qquad \lim_{x \to \infty}\EE [T^{\RadSym}_{x,\OurEpsilon} U^{\RadSym}_x] = C_{\OurEpsilon}\Lsum^{\RadSym},\qquad \lim_{x \to \infty}
\EE [(T^{\RadSym}_{x,\OurEpsilon})^2] =C^2_{\OurEpsilon}\Lsum^{\RadSym}.
	\end{align}
\end{lem}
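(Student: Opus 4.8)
The plan is to transcribe the proof of \Cref{lem:conv} almost line for line, making two structural substitutions. First, wherever that proof uses orthogonality \eqref{eq:orth} to reduce a fourth moment to a sum over solutions of $n_1m_1=n_2m_2$, the Rademacher identity \eqref{eq:genorth} instead produces a sum over \emph{squarefree} quadruples with $n_1n_2m_1m_2=\Box$, which we parametrise by \Cref{lem:param}: writing $n_1=g_1g_3g_4$, $n_2=g_1g_5g_6$, $m_1=g_2g_3g_5$, $m_2=g_2g_4g_6$ with $g_1,\dots,g_6$ squarefree and pairwise coprime except possibly $(g_1,g_2)$, one has $f(n_1)f(n_2)f(m_1)f(m_2)=\prod_{i=1}^6 f^2(g_i)$. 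Secondly, because $f$ vanishes off the squarefrees, the divisor sums $\sum_{g\mid n^{\infty}}$ that appear in \Cref{lem:iden1}, \Cref{lem:TxUxS2} and \Cref{lem:txsquared} collapse to their $g=1$ term, and $\sum_{h\mid m^{\infty}}f^2(h)/h=\prod_{p\mid m}(1+f^2(p)/p)$; in particular the quantity \eqref{eq:Lprime} (now formed with $f^2$) is $\Lname_m=\prod_{p\mid m}(1+f^2(p)/p)^{-1}$.

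Carrying out the computation of \Cref{lem:iden1} with these changes and integrating out the variables $t,r$ exactly as there, one obtains
\[
\EE[(U^{\RadSym}_x)^2]=\sum_{\substack{g_3,g_4,g_5,g_6\in\NN\\\text{pairwise coprime}}}\frac{f^2(g_3)f^2(g_4)f^2(g_5)f^2(g_6)}{M_1M_2}\,S_1(x,K,M_1)\,S_1(x,K,M_2),
\]
where $K:=g_3g_4g_5g_6$, $M_1:=\max\{g_3g_4,g_5g_6\}$, $M_2:=\max\{g_3g_5,g_4g_6\}$, and $S_1(x,m,d):=F_m(x/d)/F_1(x)$ with $F_m$ as in \eqref{eq:Fmx} (with $f^2$ in place of $|f|^2$). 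Here $g_1,g_2$ have been summed out: they range \emph{independently} over squarefree integers coprime to $K$ (they may share primes), so the sum over them factors as $\tfrac1{M_1}F_K(x/M_1)\cdot\tfrac1{M_2}F_K(x/M_2)$. As in the first part of \Cref{lem:conv}, $S_1(x,m,d)$ is bounded uniformly for $d\le x$, and $\lim_{x\to\infty}S_1(x,m,d)=\Lname_m$, by \Cref{lem:log}, \Cref{lem:slow} and \eqref{eq:flatsum}. Running the same manipulations for $\EE[T^{\RadSym}_{x,\OurEpsilon}U^{\RadSym}_x]$ and $\EE[(T^{\RadSym}_{x,\OurEpsilon})^2]$, now using the Rademacher analogue
\[
\EE\!\left[(Z^{\RadSym}_{x,p})^2\mid\Fa^{\RadSym}_{p^-}\right]=\frac{f(p)^2}{\sum_{n\le x}f(n)^2}\Big(\sum_{\substack{m\le x/p\\ P(m)<p}}\RadSym(m)f(m)\Big)^2
\]
of \eqref{eq:zpsquared}, yields identities of the same shape, with $S_1(x,K,M_2)$ replaced by $S_{2,2}(x,K,M_2)$ for $\EE[T^{\RadSym}_{x,\OurEpsilon}U^{\RadSym}_x]$, respectively with $S_1(x,K,M_i)$ replaced by $S_3(x,K,M_i)$ ($i=1,2$) for $\EE[(T^{\RadSym}_{x,\OurEpsilon})^2]$, where $S_{2,2},S_3$ are defined precisely as in \Cref{lem:TxUxS2} and \Cref{lem:txsquared} with $|f|^2$ replaced by $f^2$. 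The only point requiring care is that the condition $P(m)<p$ restricts $g_2,g_3,g_4,g_5,g_6$ (but not $g_1$) when fed through \Cref{lem:param}, exactly as $P(m'_i)\le p-1$ did in the Steinhaus proofs. Granting this, the bounds $S_{2,2},S_3=O_{\OurEpsilon}(1)$ and the pointwise limits $S_{2,2}(x,m,d),S_3(x,m,d)\to C_{\OurEpsilon}\Lname_m$ follow verbatim from \Cref{subsec:l2computation} (split $x^{\OurEpsilon}\le p\le x^{1-\delta}$ from $x^{1-\delta}<p\le x/d$, evaluate asymptotically via \Cref{lem:sum}, \Cref{lem:slow} and \eqref{eq:Lratio}, and recognise $C_{\OurEpsilon}$ from \eqref{eq:Ceps}).

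All three limits in \eqref{eq:limits} then follow from dominated convergence applied to these identities (with constants $1$, $C_{\OurEpsilon}$, $C_{\OurEpsilon}^2$, exactly as \eqref{eq:1stpart}--\eqref{eq:3rdpart} follow from \Cref{lem:conv}), provided the dominating series $\sum_{g_3,g_4,g_5,g_6\text{ pw.\ coprime}}f^2(g_3)f^2(g_4)f^2(g_5)f^2(g_6)/(M_1M_2)$ converges; as a by-product this also shows $\Lsum^{\RadSym}$ converges, since $\Lsum^{\RadSym}$ is this same sum with the extra factor $\prod_{p\mid K}(1+f^2(p)/p)^{-2}\le1$. To prove the convergence I would use the elementary inequality
\[
M_1M_2\ge\tfrac14(g_3g_4+g_5g_6)(g_3g_5+g_4g_6)=\tfrac14\big[g_4g_5(g_3^2+g_6^2)+g_3g_6(g_4^2+g_5^2)\big]\ge\tfrac12\max(g_3,g_6)\max(g_4,g_5)\sqrt K,
\]
the last step using $g_3^2+g_6^2\ge\max(g_3,g_6)^2$, $g_4^2+g_5^2\ge\max(g_4,g_5)^2$ and AM--GM. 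Writing $\{A,a\}=\{g_3,g_6\}$ and $\{B,b\}=\{g_4,g_5\}$ with $A\ge a$, $B\ge b$, the summand is $\ll f^2(A)f^2(a)f^2(B)f^2(b)\,A^{-3/2}a^{-1/2}B^{-3/2}b^{-1/2}$, so after discarding the remaining coprimality constraints the series is $\ll\big(\sum_{A}\frac{f^2(A)}{A^{3/2}}\sum_{a\le A}\frac{f^2(a)}{a^{1/2}}\big)^2$. By Wirsing's theorem \cite{Wirsing} (equivalently \eqref{eq:flatsum} and \Cref{lem:slow}) one has $\sum_{n\le N}f^2(n)\ll N(\log N)^{\theta-1+o(1)}$, hence $\sum_{a\le A}f^2(a)a^{-1/2}\ll A^{1/2}(\log A)^{\theta-1+o(1)}$, and then $\sum_A f^2(A)A^{-3/2}\sum_{a\le A}f^2(a)a^{-1/2}\ll\sum_A f^2(A)(\log A)^{\theta-1+o(1)}/A$, which is finite because $\sum_{n\le N}f^2(n)/n\ll(\log N)^{\theta+o(1)}$ and $\int^{\infty}(\log t)^{2\theta-2}\,d\log t<\infty$ precisely when $\theta<\tfrac12$. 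This is the only place where the hypothesis $\theta\in(0,\tfrac12)$ is genuinely used, and it is the step that is not a mechanical transcription of the Steinhaus argument; I expect it to be the main obstacle, with the rest being careful bookkeeping around \Cref{lem:param}. (Given \eqref{eq:limits}, \Cref{prop:l2Rad} then follows by expanding $\EE[(T^{\RadSym}_{x,\OurEpsilon}-C_{\OurEpsilon}U^{\RadSym}_x)^2]$ and using $\lim_{\OurEpsilon\to0^+}C_{\OurEpsilon}=1$, just as \Cref{prop:l2} follows from \Cref{lem:conv}.)
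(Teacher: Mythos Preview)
Your proposal is correct and follows essentially the same route as the paper: derive the three identities of \Cref{lem:idenRad} by combining the Rademacher orthogonality \eqref{eq:genorth} with the parametrisation of \Cref{lem:param}, then pass to the limit by dominated convergence, recycling verbatim the analysis of $S_1,S_{2,2},S_3$ from \Cref{subsec:l2computation}. Your handling of how the smoothness condition $P(m)<p$ propagates through \Cref{lem:param} is accurate and matches the paper's identities.

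The one place you diverge is the proof that the dominating series $\sum f^2(g_3)\cdots f^2(g_6)/(M_1M_2)$ converges. The paper argues dyadically: restricting each $g_i$ to $[2^{h_i-1},2^{h_i})$ and using $\sum_{n\le N}f^2(n)\ll N(\log N)^{\theta-1+o(1)}$, the contribution of a block is
\[
\ll 2^{\,h_3+h_4+h_5+h_6-\max\{h_3+h_4,h_5+h_6\}-\max\{h_3+h_5,h_4+h_6\}}(h_3h_4h_5h_6)^{\theta-1+o(1)},
\]
which is summable. You instead establish the pointwise bound $M_1M_2\ge\tfrac12\max(g_3,g_6)\max(g_4,g_5)\sqrt{g_3g_4g_5g_6}$ and reduce to $\big(\sum_A f^2(A)A^{-1}(\log A)^{\theta-1+o(1)}\big)^2$, finite for $\theta<\tfrac12$. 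Both arguments are valid; yours is slightly more explicit about where $\theta<\tfrac12$ enters, while the paper's dyadic version makes the combinatorics of the two maxima transparent.
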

To see $\Lsum^{\RadSym}$ converges when $f \in \Ma^{\Sf}$ satisfies $f^2 \in \Ma_{\theta}$, recall that  $\sum_{n \le T} f^2(n) \ll T (\log T)^{\theta-1+o(1)}$ by \Cref{lem:upper}. Given $h_3,\ldots,h_6 \in \NN$, the contribution of $2^{h_i-1} \le g_i <2^{h_i}$ ($3 \le i \le 6$)  to $\Lsum^{\RadSym}$ can be seen to be, once we discard the coprimality conditions and use $\prod_{p\mid g_3 g_4g_5g_6}(1+f^2(p)/p)^{-2} \le 1$, at most
\[\ll 2^{h_3+h_4+h_5+h_6-\max\{h_3+h_4,h_5+h_6\}-\max\{h_3+h_5,h_4+h_6\}} (h_3 h_4 h_5 h_6)^{\theta-1+o(1)}\]
which converges when summing over $h_i\in \NN$. The proof of \eqref{eq:limits} is based on the following identities, which use the notation introduced in \eqref{eq:Fmx}.
\begin{lem}\label{lem:idenRad}
	Let $f \in \Ma^{\Sf}$. Denote $M_1 = \max\{g_3g_4,g_5g_6\}$, $M_2 = \max\{g_3 g_5,g_4 g_6\}$ and $G=\prod_{i=3}^{6}g_i$. We have
	\begin{align}\label{eq:UxrepRad}
		\EE [(U^{\RadSym}_x)^2] &= \sum_{\substack{g_3,\ldots,g_6 \in \NN \\ \text{pairwise coprime}}} \frac{f^2(g_3)f^2(g_4)f^2(g_5)f^2(g_6)}{M_1 M_2} S_1(x,G,M_1)S_1(x,G,M_2),\\
\notag		\EE [ T^{\RadSym}_{x,\OurEpsilon}U^{\RadSym}_x] &= \sum_{\substack{g_3,\ldots,g_6 \in \NN \\ \text{pairwise coprime}}} \frac{f^2(g_3)f^2(g_4)f^2(g_5)f^2(g_6)}{M_1 M_2} S_1(x,G,M_1) \\ 
\notag &\qquad  \sum_{\substack{g_2 \le x/M_2\\(g_2,G)=1}} f^2(g_2) M_2 \sum_{\max\{x^{\OurEpsilon},P(g_2G)+1\} \le p \le x/(g_2 M_2)} f^2(p)/\sum_{n \le x} f^2(n),\\
\notag		\EE [ (T^{\RadSym}_{x,\OurEpsilon})^2] &= \sum_{\substack{g_3,\ldots,g_6 \in \NN \\ \text{pairwise coprime}}} \frac{f^2(g_3)f^2(g_4)f^2(g_5)f^2(g_6)}{M_1 M_2}\\
\notag & \qquad \prod_{i=1}^{2} \sum_{\substack{g \le x/M_i\\ (g,G)=1}} f^2(g) \sum_{\max\{x^{\OurEpsilon},P(gG)+1\}\le p \le x/(gM_i)} f^2(p) M_i/(\sum_{n \le x} f^2(n)).
\end{align}
\end{lem}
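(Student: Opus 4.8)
The plan is to prove \Cref{lem:idenRad} by the same strategy used for \Cref{lem:iden1}, \Cref{lem:TxUxS2} and \Cref{lem:txsquared} in the Steinhaus case, with the main new ingredient being the parametrisation from \Cref{lem:param}. Throughout one uses the Rademacher analogue \eqref{eq:genorth} of orthogonality to reduce every second moment to a counting problem over integers whose product (of the relevant pieces) is a perfect square, and then \Cref{lem:param} converts such constraints into a sum over pairwise-coprime $g_i$'s (with the one permitted exception $(g_1,g_2)$). The same $F_m(x)$-bookkeeping of \eqref{eq:Fmx} then produces exactly the $S_1$, and $S_{2,2}$-type inner sums already appearing in \Cref{sec:physical}, which is why the statement can be phrased entirely in terms of $S_1$ and the analogous $p$-sums.

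First I would establish the identity for $\EE[(U^{\RadSym}_x)^2]$. Writing $U^{\RadSym}_x = F_1(x)^{-1} \int_1^x |s^{\RadSym}_t|^2 t^{-1}\,dt$ and expanding, \eqref{eq:genorth} (the four-fold case) gives
\[
\EE[(U^{\RadSym}_x)^2] = F_1(x)^{-2} \sum_{\substack{a,b,c,d \le x \\ abcd = \Box,\ abcd \text{ squarefree-pieces}}} f(a)f(b)f(c)f(d) \Big(\tfrac{1}{\max\{a,b\}} - \tfrac1x\Big)\Big(\tfrac{1}{\max\{c,d\}} - \tfrac1x\Big),
\]
where one should be careful that \eqref{eq:genorth} forces $a,b,c,d$ squarefree and $abcd$ a perfect square. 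Apply \Cref{lem:param} to write $a = g_1 g_3 g_4$, $b = g_1 g_5 g_6$, $c = g_2 g_3 g_5$, $d = g_2 g_4 g_6$; since $f \in \Ma^{\Sf}$ is multiplicative on squarefrees and the $g_i$ are pairwise coprime except possibly $(g_1,g_2)$, we get $f(a)f(b)f(c)f(d) = f^2(g_1)f^2(g_2) \prod_{i=3}^6 f^2(g_i)$. Summing over $g_1$ produces a factor $F_{G}(x/(M_1))$ (with $G = g_3g_4g_5g_6$ and $M_1 = \max\{g_3g_4, g_5g_6\}$, since $\max\{a,b\} = g_1 M_1$), and summing over $g_2$ produces $F_{G}(x/M_2)$ with $M_2 = \max\{g_3 g_5, g_4 g_6\}$; dividing by $F_1(x)^2$ turns these into $S_1(x,G,M_1) S_1(x,G,M_2)$, giving \eqref{eq:UxrepRad}. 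The coprimality of the $g_i$ among themselves is exactly what is needed so that no cross-terms in $f$ appear.

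For $\EE[T^{\RadSym}_{x,\OurEpsilon} U^{\RadSym}_x]$ and $\EE[(T^{\RadSym}_{x,\OurEpsilon})^2]$ I would mirror the proofs of \Cref{lem:TxUxS2} and \Cref{lem:txsquared}: compute $\EE[(Z^{\RadSym}_{x,p})^2 \mid \Fa^{\RadSym}_{p^-}]$ using the conditional version of \eqref{eq:orthRad} (here $f^2(p)$ replaces $|f^2(p)|$, and the $p$-part is automatically linear since $f$ is supported on squarefrees, so no $p^2 \nmid \cdot$ restriction is needed), then pair it against $|s^{\RadSym}_t|^2$ or against a second conditional expectation, expand via \eqref{eq:genorth}, and parametrise the resulting equation $a b \cdots = \Box$ by \Cref{lem:param} (or its direct two-variable analogue as in the proof of \Cref{lem:TxUxS2}). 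The prime $p$ plays the role of the distinguished largest-prime factor; the sum over integers coprime to $n'_1 n'_2$ with largest prime factor $\le p-1$ becomes the inner $\sum_{\max\{x^{\OurEpsilon}, P(gG)+1\} \le p \le x/(gM_i)} f^2(p)$ against the outer $\sum_{g \le x/M_i,\ (g,G)=1} f^2(g)$, exactly as $S_{2,2}$ was built in \eqref{eq:S2simp}. I expect the main obstacle to be purely bookkeeping: correctly tracking the six $g_i$ variables produced by \Cref{lem:param} through the two nested applications needed for $\EE[(T^{\RadSym}_{x,\OurEpsilon})^2]$ (one for each $Z^{\RadSym}$ factor, with two distinct primes $p \le q$), and verifying that the leftover coprimality exception for $(g_1, g_2)$ does not obstruct the multiplicative factorisation of $f$ — which it does not, because $g_1$ and $g_2$ never sit inside the same argument of $f$ in \eqref{eq:param}. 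Once the identities are in this form, the asymptotic evaluation \eqref{eq:limits} follows by dominated convergence together with the boundedness and pointwise-limit estimates for $S_1$ and $S_{2,2}$ already proved in \Cref{subsec:l2computation} (using $f^2 \in \Ma_\theta$), and convergence of $\Lsum^{\RadSym}$ was checked above.
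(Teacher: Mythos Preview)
Your proposal is correct and follows essentially the same approach as the paper: expand the moment via \eqref{eq:genorth}, apply the parametrisation of \Cref{lem:param} to the constraint $abcd=\Box$, use multiplicativity of $f$ on the pairwise-coprime $g_i$ to factor $f(a)f(b)f(c)f(d)=\prod_{i=1}^6 f^2(g_i)$, and then sum over $g_1,g_2$ to produce the $F_G$ (hence $S_1$) factors. The paper gives exactly this argument for \eqref{eq:UxrepRad} and omits the other two identities as analogous, just as you outline.
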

We leave the deduction of \eqref{eq:limits} from \Cref{lem:idenRad} to the reader, as it is a matter of taking $x \to \infty$ and using dominated convergence as in the Steinhaus case. The derivation of \Cref{lem:idenRad} is similar to that of \Cref{lem:iden1}, \Cref{lem:TxUxS2} and \Cref{lem:txsquared}. We include the full details of \eqref{eq:UxrepRad}, the proofs of the other parts are omitted as they use the same ideas.
\begin{proof}[Proof of \eqref{eq:UxrepRad}]
	By definition,
	\[\EE [(U^{\RadSym}_x)^2 ]=F_1(x)^{-2} S^{\RadSym}, \qquad S^{\RadSym}:=\int_{1}^{x}\int_{1}^{x} \frac{\EE[(s^{\RadSym}_t)^2(s^{\RadSym}_r)^2]}{tr}dtdr.\]
	By \eqref{eq:genorth},
	\[\EE [(s^{\RadSym}_t)^2(s^{\RadSym}_r)^2 ]=\frac{1}{tr} \sum_{\substack{n_1,n_2 \le t \\ m_1, m_2 \le r\\ n_1 n_2 m_1 m_2=\Box }}f(n_1)f(n_2)f(m_1)f(m_2).\]
	It follows that
	\begin{align*}
		S^{\RadSym} = \sum_{\substack{n_1,n_2,m_1,m_2 \le x \\ n_1 n_2  m_1 m_2=\Box}} f(n_1)f(m_1)f(n_2)f(m_2) \left( \frac{1}{\max\{n_1,n_2\}}-\frac{1}{x}\right) \left( \frac{1}{\max\{m_1,m_2\}}-\frac{1}{x} \right).
	\end{align*}
Using \Cref{lem:param}, we may replace the $n_i$ and $m_i$ with squarefree $(g_i)_{i=1}^{6}$ such that
\[ n_1 = g_1 g_3 g_4,\, n_2 = g_1 g_5 g_6,\, m_1 = g_2g_3g_5,\, m_2 = g_2 g_4 g_6\]
and the $g_i$ are pairwise coprime except possibly for the pair $g_1,g_2$. Summing first over $g_3,\ldots,g_6$ and then over $g_1,g_2$, this allows us to express $S^{\RadSym}$ as
\[  \sum_{\substack{g_3,\ldots,g_6 \in \NN \\\ \text{pairwise coprime} }} \frac{f^2(g_3)f^2(g_4)f^2(g_5)f^2(g_6)}{\max\{g_3g_4,g_5g_6\}\max\{g_3g_5,g_4g_6\}} F_{g_3g_4g_5g_6}\bigg(\frac{x}{\max\{g_3g_4,g_5g_6\}}\bigg)F_{g_3g_4g_5g_6}\bigg(\frac{x}{\max\{g_3g_5,g_4g_6\}}\bigg).\]
To conclude we divide by $F_1(x)^2$.
\end{proof}

\subsection{Convergence of conditional variance in the Rademacher case}
Let us continue to write $\sigma_t = \frac{1}{2} \left(1 + \frac{1}{\log t}\right)$, but now define
\begin{align*}
    A_y^\RadSym(s) := \prod_{p \le y} \left(1 + \frac{\Rad(p)f(p)}{p^{ks}}\right)
    \qquad \text{and} \qquad 
    m_{y, t}^\RadSym(ds) := \frac{|A_y^\RadSym(\sigma_t + is)|^2}{\EE\left[|A_y^\RadSym(\sigma_t + is)|^2\right]}ds, \qquad s \in \RR.
\end{align*}

\noindent The following is the analogue of \Cref{thm:mc-convergence}.
\begin{thm} \label{thm:mc-convergence-rad}
Let $\theta \in (0, \frac{1}{2})$, and $f\in \Ma^{\Sf}$ be a function such that $f^2 \in \mathbf{P}_\theta$ and $\sum_p p^{-3/2} |f(p)|^3 < \infty$.
Then there exists a nontrivial random Radon measure $m_\infty^\RadSym(ds)$ on $\RR$  such that the following are true: for any bounded interval $I$ and any test function $h \in C(I)$, we have
\begin{align}\label{eq:thm:mc-convergenceRad}
   (i) \quad m_{y, \infty}^\RadSym(h) \xrightarrow[y \to \infty]{L^2} m_\infty(h)
    \qquad \text{and} \qquad
  (ii) \quad   m_{\infty, t}^\RadSym(h) \xrightarrow[t \to \infty]{L^2} m_\infty(h)
\end{align}

\noindent and in particular the above convergence also holds in probability. Moreover,  the limiting measure $m_\infty^\RadSym(ds)$ is supported on $\RR$ almost surely.
\end{thm}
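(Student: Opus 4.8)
The plan is to transcribe the proof of \Cref{thm:mc-convergence} with $\alpha$ replaced by $\Rad$, exploiting one structural simplification and handling one new feature. The simplification is that since $f \in \Ma^{\Sf}$ vanishes on non-squarefrees, every Euler factor $1 + \Rad(p)f(p)/p^{s}$ contains only its linear term; consequently the truncation of higher prime powers (the role played by \Cref{lem:truncate}, and part of the work inside \Cref{lem:mcL2truncate}) is vacuous here, and all auxiliary terms involving $f(p^2), f(p^3), \dots$ simply disappear. The only probabilistic input that needs reworking is the Rademacher version of \Cref{lem:cross-moments}.

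First I would record that version. With $z_j = \sigma_{t_j} + is_j$, using $\Rad(p)^2 = 1$ and $\EE[\Rad(p)] = 0$, a direct expansion gives $\EE[|1 + \Rad(p)f(p)/p^{z_1}|^2] = 1 + f(p)^2/p^{2\sigma_{t_1}}$ and
\begin{align*}
\frac{\EE\bigl[\,\bigl|1 + \tfrac{\Rad(p)f(p)}{p^{z_1}}\bigr|^2 \bigl|1 + \tfrac{\Rad(p)f(p)}{p^{z_2}}\bigr|^2\,\bigr]}{\EE\bigl[\,\bigl|1 + \tfrac{\Rad(p)f(p)}{p^{z_1}}\bigr|^2\,\bigr]\,\EE\bigl[\,\bigl|1 + \tfrac{\Rad(p)f(p)}{p^{z_2}}\bigr|^2\,\bigr]}
= 1 + \frac{2f(p)^2}{p^{\sigma_{t_1}+\sigma_{t_2}}}\bigl[\cos((s_1 - s_2)\log p) + \cos((s_1 + s_2)\log p)\bigr] + \asymptoticO\!\left(\frac{f(p)^4}{p^{2}}\right),
\end{align*}
where the error sums are controlled by $\sum_p f(p)^4/p^2 \ll \sum_p |f(p)|^3/p^{3/2} < \infty$ (note $|f(p)| = o(p^{1/2})$ follows from the hypothesis). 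The one genuinely new feature relative to \eqref{eq:2-point} is the term $\cos((s_1+s_2)\log p)$: it survives precisely because $\EE[\Rad(p)^2] = 1$ does not kill the ``non-conjugate'' cross term, in contrast to $\EE[\alpha(p)^2] = 0$ in the Steinhaus case.

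With this in hand, Step 1 runs as in \Cref{subsubsec:pf-mc-stein}: $\bigl(m^{\RadSym}_{y,\infty}(h)\bigr)_y$ is a martingale with respect to $(\Fa^{\RadSym}_y)_y$, and by the two-point estimate together with \Cref{lem:martingaleL2est} (with $g = f^2$, applied once with $s = |s_1 - s_2|$ and once with $s = |s_1 + s_2|$) one gets
\begin{align*}
\EE\bigl[m^{\RadSym}_{y,\infty}(h)^2\bigr]
\ll \iint_{I\times I} \exp\!\Bigl(2\!\sum_{p\le y}\!\tfrac{f(p)^2\cos((s_1-s_2)\log p)}{p}\Bigr)\exp\!\Bigl(2\!\sum_{p\le y}\!\tfrac{f(p)^2\cos((s_1+s_2)\log p)}{p}\Bigr)ds_1\,ds_2
\ll \iint_{I\times I}\frac{ds_1\,ds_2}{|s_1 - s_2|^{2\theta}\,|s_1 + s_2|^{2\theta}},
\end{align*}
and the point is that this last double integral is finite for $\theta \in (0, \tfrac12)$: each of $\{s_1 = s_2\}$ and $\{s_1 = -s_2\}$ carries an integrable codimension-one singularity, and in the coordinates $(u, v) = (s_1 - s_2, s_1 + s_2)$ the integrand factorises as $|u|^{-2\theta}|v|^{-2\theta}$, so integrability across their intersection is again equivalent to $2\theta < 1$. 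This uniform $L^2$ bound lets me invoke the $L^2$ martingale convergence theorem to produce $m^{\RadSym}_\infty(h)$ and claim (i). Steps 2--3 are the transcription of \Cref{lem:mcL2truncate} and \Cref{lem:mcL2compare}: the tail estimate $\sum_{p > y} f(p)^2 \cos(s\log p)/p^{1 + 1/\log y} = \asymptoticO(1)$, tending to $0$ for fixed $s \ne 0$, from \Cref{lem:martingaleL2est-2} (used with $s = |s_1 \pm s_2|$), together with dominated convergence against the integrable majorant $|s_1 - s_2|^{-2\theta}|s_1 + s_2|^{-2\theta}$, yields (ii). Finally, the full-support claim is the same Kolmogorov $0$--$1$ argument as in the Steinhaus case: for a compact interval $I$ with $|I| > 0$, the almost sure limit $m^{\RadSym}_{(p_0, \infty]}(I)$ of the $L^2$-bounded martingale $y \mapsto \int_I \prod_{p_0 < p \le y} |1 + \Rad(p)f(p)/p^{1/2 + is}|^2/\EE[\,\cdot\,]\, ds$ is $\sigma(\Rad(p): p > p_0)$-measurable, hence $\{m^{\RadSym}_\infty(I) = 0\}$ lies in the trivial tail $\sigma$-algebra, while $\EE[m^{\RadSym}_\infty(I)] = |I| > 0$ by the $L^2$ convergence forces $\PP(m^{\RadSym}_\infty(I) = 0) = 0$. (As in the Steinhaus case, the existence of an honest Radon measure $m^{\RadSym}_\infty(ds)$ realising these limits rests on the abstract material discussed in \Cref{sec:convergence_measure}.) The only real obstacle beyond mechanical substitution is checking that the new $|s_1 + s_2|^{-2\theta}$ singularity does not spoil the $L^2$ estimates --- which is exactly the step where the restriction $\theta < \tfrac12$ is used, just as in the Steinhaus argument.
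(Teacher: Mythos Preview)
Your proposal is correct and follows essentially the same route as the paper: you identify the Rademacher analogue of \Cref{lem:cross-moments} (including the new $\cos((s_1+s_2)\log p)$ term), then rerun the four steps of \Cref{subsubsec:pf-mc-stein} with the integrable majorant $|s_1-s_2|^{-2\theta}|s_1+s_2|^{-2\theta}$, exactly as the paper does. The only cosmetic difference is that the paper verifies integrability of the majorant by reducing to $I=[0,1]$ and bounding $|s_1+s_2|^{-2\theta}\le s_1^{-2\theta}$, whereas you use the change of variables $(u,v)=(s_1-s_2,s_1+s_2)$; both are valid.
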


In order to establish \Cref{thm:mc-convergence-rad}, we need
the following analogue of \Cref{lem:cross-moments}, the proof of which is straightforward and omitted here.
\begin{lem}\label{lem:cross-moments-rad}
Let $f \colon \NN \to \RR$. Let $z_j = \sigma_{t_j} + is_j$ for $j=1, 2$. We have
\begin{align} \label{eq:1-point-rad}
\EE\left[ \left|1 +  \frac{\Rad(p)f(p)}{p^{z_1}}\right|^2\right]
& = 1 +\frac{f(p)^2}{p^{2\sigma_{t_1}}}
= \exp \left( \frac{f(p)^2}{p^{2\sigma_{t_1}}} + \asymptoticO \left(\frac{f(p)^4}{p^{2}}\right)\right)
\end{align}

\noindent and
\begin{equation}\label{eq:2-point-rad}
\begin{split}
&\EE\left[ \prod_{j=1}^2 \left|1 + \frac{\Rad(p)f(p)}{p^{z_j}}\right|^2 \right]\\
& = \exp \Bigg\{ 
f(p)^2\left[ \frac{1}{p^{2\sigma_{t_1}}} + \frac{1}{p^{2\sigma_{t_2}}}
+ \frac{4\cos(s_1 \log p) \cos(s_2 \log p)}{p^{\sigma_{t_1} + \sigma_{t_2}}}\right] + \asymptoticO\left(
\left|\frac{f(p)}{p^{1/2}}\right|^3 + \left|\frac{f(p)}{p^{1/2}}\right|^4 \right)
\Bigg\}\\
& = \exp \Bigg\{ 
f(p)^2\left[ \frac{1}{p^{2\sigma_{t_1}}} + \frac{1}{p^{2\sigma_{t_2}}}
+ 2\frac{\cos((s_1 + s_2) \log p) + \cos((s_1 - s_2) \log p)}{p^{\sigma_{t_1} + \sigma_{t_2}}}\right] + \asymptoticO\left(
\left|\frac{f(p)}{p^{1/2}}\right|^3 + \left|\frac{f(p)}{p^{1/2}}\right|^4 \right)
\Bigg\}\\
\end{split}
\end{equation}

\noindent uniformly in $t_j \ge 2$, $s_j \in \RR$.
\end{lem}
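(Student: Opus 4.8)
\textbf{Proof plan for Lemma~\ref{lem:cross-moments-rad}.}
The plan is to prove the two identities by direct expansion of the squared modulus, using only the basic orthogonality relation $\EE[\Rad(p)]=0$, $\EE[\Rad(p)^2]=1$ (so $\EE[\Rad(p)^k]$ equals $1$ if $k$ is even and $0$ if $k$ is odd, and in particular no higher moments of $\Rad$ are needed since $\Rad(p)\in\{\pm1\}$), and then converting the resulting polynomial in $p^{-\sigma_{t_j}}$ into an exponential with the error term absorbed. Recall throughout that $\sigma_{t_j}=\tfrac12(1+\tfrac1{\log t_j})$ so that $p^{2\sigma_{t_j}}\ge p$ and $p^{\sigma_{t_1}+\sigma_{t_2}}\ge p$, which is what makes all the $O(\cdot)$ terms summable and uniform.

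For the one-point estimate \eqref{eq:1-point-rad}: expand
\[
\Bigl|1+\frac{\Rad(p)f(p)}{p^{z_1}}\Bigr|^2
= 1 + 2\Re\frac{\Rad(p)f(p)}{p^{z_1}} + \frac{f(p)^2}{p^{2\sigma_{t_1}}},
\]
using $|\Rad(p)|=1$ and $f$ real. Taking expectations kills the middle term since $\EE[\Rad(p)]=0$, giving $1+f(p)^2/p^{2\sigma_{t_1}}$ exactly. To rewrite this as an exponential, apply $1+u=\exp(u+O(u^2))$ with $u=f(p)^2/p^{2\sigma_{t_1}}$; since $u\le f(p)^2/p$ we have $u^2\le f(p)^4/p^2$, and the error is uniform in $t_1\ge2$, $s_1\in\RR$, $p\ge2$. (If desired one can note $f(p)^4/p^2\le f(p)^4/p^{2\sigma_{t_1}}$ only up to constants, but the stated form keeps $f(p)^4/p^2$, which is fine.)

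For the two-point estimate \eqref{eq:2-point-rad}: write $R=\Rad(p)$, $a_j=f(p)/p^{z_j}$, and expand
\[
\prod_{j=1}^2\bigl|1+Ra_j\bigr|^2
=\prod_{j=1}^2\bigl(1+R(a_j+\overline{a_j})+|a_j|^2\bigr)
=\prod_{j=1}^2\bigl(1+2R\,\Re a_j+|a_j|^2\bigr).
\]
Multiplying out produces a sum of monomials $R^{\#}$ times products of $\Re a_j$'s and $|a_j|^2$'s. Upon taking $\EE$, every monomial with an odd total power of $R$ vanishes; the surviving even-power monomials are: the constant $1$; the terms $|a_1|^2$ and $|a_2|^2$ (power $0$ in $R$); the cross term $(2\Re a_1)(2\Re a_2)$ (power $2$, $R^2\mapsto1$); and higher-order terms $|a_1|^2|a_2|^2$, $|a_j|^2(2\Re a_i)(2\Re a_j)$ after collecting, etc.\ — all of which are $O(|a_1|^3+|a_1|^4+|a_2|^3+|a_2|^4)$ since $|a_j|=|f(p)|/p^{\sigma_{t_j}}\le|f(p)|/\sqrt p$. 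Now $|a_j|^2=f(p)^2/p^{2\sigma_{t_j}}$ and
\[
(2\Re a_1)(2\Re a_2)=4\,\frac{f(p)^2}{p^{\sigma_{t_1}+\sigma_{t_2}}}\cos(s_1\log p)\cos(s_2\log p),
\]
using $a_j=f(p)p^{-\sigma_{t_j}}e^{-is_j\log p}$ so $\Re a_j=f(p)p^{-\sigma_{t_j}}\cos(s_j\log p)$. This yields the first displayed expression inside the exponential, modulo passing from $1+(\text{main})+O(\cdots)$ to $\exp((\text{main})+O(\cdots))$ — again legitimate because the ``main'' quantity is $\ll f(p)^2/p$, hence bounded, so $\log(1+v)=v+O(v^2)$ and $v^2$ is absorbed into the stated error. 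Finally, the second displayed form in \eqref{eq:2-point-rad} follows from the product-to-sum identity $2\cos x\cos y=\cos(x+y)+\cos(x-y)$ applied to $2\cos(s_1\log p)\cos(s_2\log p)$.

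This lemma presents no genuine obstacle: unlike the Steinhaus case (Lemma~\ref{lem:cross-moments}), here $\Rad(p)^2=1$ deterministically, so there are no higher-power contributions $f(p^k)$ for $k\ge2$ to track (consistent with $f\in\Ma^{\Sf}$), and the entire computation is a finite expansion followed by the standard $1+v=e^{v+O(v^2)}$ step. The only point requiring a modicum of care is bookkeeping the error term so that it is genuinely uniform in $s_1,s_2\in\RR$ and $t_1,t_2\ge2$ — but this is automatic since every error monomial is dominated by a fixed power of $|f(p)|/\sqrt p$ independent of $s_j,t_j$.
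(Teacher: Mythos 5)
Your proof is correct and is the expected straightforward expansion; the paper explicitly omits a proof of this lemma, but your argument parallels the paper's own proof of the Steinhaus analogue (Lemma~\ref{lem:cross-moments}), simplified here by $\Rad(p)^2=1$ and $f\in\Ma^{\Sf}$ killing all $f(p^k)$ contributions with $k\ge 2$.

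Two remarks. First, in the two-point calculation your list of surviving monomials contains a phantom: expanding $(1+2R\Re a_1+|a_1|^2)(1+2R\Re a_2+|a_2|^2)$ and averaging over $R=\pm1$, the only term beyond $1$, $|a_1|^2$, $|a_2|^2$, $4\Re a_1\Re a_2$ that survives is $|a_1|^2|a_2|^2$; the monomials $|a_i|^2\cdot(2R\Re a_j)$ are linear in $R$ and vanish, and ``$|a_j|^2(2\Re a_i)(2\Re a_j)$'' does not occur at all (it would require a three-factor product). This is harmless since $|a_1|^2|a_2|^2\le\tfrac{1}{2}(|a_1|^4+|a_2|^4)$ is absorbed by the stated error, but worth correcting. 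Second, the step from $1+v+\asymptoticO(\cdots)$ to $\exp(v+\asymptoticO(\cdots))$ needs more care than your phrase ``the main quantity is $\ll f(p)^2/p$, hence bounded'' gives it: the lemma is stated for arbitrary $f\colon\NN\to\RR$, so $f(p)^2/p$ need not be bounded, and in fact the two-point expectation can vanish exactly (take $f(p)=p^{\sigma_{t_1}}$, $s_1=0$, $s_2=\pi/\log p$, $t_2=t_1$, so $a_1=1$, $a_2=-1$ and the expectation is $\EE[(1-\Rad(p)^2)^2]=0$), in which case no exponential representation is possible. The statement is therefore implicitly conditioned on $|f(p)|/\sqrt{p}$ being small enough -- true under the hypotheses of Theorem~\ref{thm:mainRad} for $p$ large, with the finitely many remaining $p$ absorbed into constants -- and you should say so rather than assert boundedness, or else the expansion $\log(1+w)=w+\asymptoticO(w^2)$ is invoked outside its domain. (The one-point estimate \eqref{eq:1-point-rad} is unconditional, since there $u\ge 0$ and $0\le u-\log(1+u)\le u^2$ for all $u\ge 0$.)
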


\begin{proof}[Proof of \Cref{thm:mc-convergence-rad}]
Most of the arguments in the 4-step approach in \Cref{subsubsec:pf-mc-stein} will go through ad verbatim. As such, we shall only highlight the necessary changes below. Without loss of generality (by the triangle inequality of the $L^2$-norm), it will be convenient for us to assume that $h \in C(I)$ where $I = [n, n+1]$ for some $n \in \ZZ$.

\paragraph{Step 1: martingale convergence.} In view of the new estimates in \Cref{lem:cross-moments-rad}, we have
\begin{align*}
    \EE[m_{y, \infty}^\RadSym(h)^2]
    & \ll \|h\|_\infty^2 \iint_{I \times I} \exp \left(2\sum_{p \le y} \frac{f(p)^2}{p} \left[\cos((s_1 + s_2) \log p) + \cos((s_1 - s_2)\log p)\right]\right) ds_1 ds_2\\
    & \ll \iint_{I \times I} \frac{ds_1 ds_2}{|s_1 + s_2|^{2\theta}|s_1 - s_2|^{2\theta} }
\end{align*}

\noindent where the last line follows from \Cref{lem:martingaleL2est} (with $c = 0$, and $s \in \{s_1 + s_2, s_1 - s_2\}$). The above integral is obviously finite if $|s_1 + s_2|$ is bounded away from $0$, and so it is sufficient to consider the case where $s_1, s_2 \in I := [0, 1]$. But
\begin{align*}
     \iint_{[0,1]^2} \frac{ds_1 ds_2}{|s_1 - s_2|^{2\theta} |s_1 + s_2|^{2 \theta}}
     & \ll \int_0^1 \frac{ds_1}{s_1^{2\theta}}\int_0^1 \frac{ds_2}{|s_1 - s_2|^{2\theta}}
     = \int_0^1 \frac{ds_1}{s_1^{2\theta}} \left[ |1-s_1|^{1-2\theta} + |s_1|^{1-2\theta}\right] \ll 1.
\end{align*}

\noindent Thus we can still apply $L^2$ martingale convergence theorem as before.

\paragraph{Step 2: truncation.}
Using \Cref{lem:cross-moments-rad}, we have
\begin{align*}
K_y^\RadSym(s_1, s_2)
& := \prod_{p > y} \frac{\EE\left[ \left|1 +\frac{\Rad(p)f(p)}{p^{z_1}}\right|^2 \left|1 +\frac{\Rad(p) f(p)}{p^{z_2}}\right|^2\right]}{\EE\left[ \left|1 +\frac{\Rad(p)f(p)}{p^{z_1}}\right|^2 \right]\EE\left[ \left|1 +\frac{\Rad(p)f(p)}{p^{z_2}}\right|^2\right]}\\
& = \exp \left(2\sum_{p > y} \frac{f(p)^2}{p^{1 + \frac{1}{\log y}}} \left[\cos((s_1 + s_2) \log p) + \cos((s_1 - s_2)\log p)\right]
+ \asymptoticO\left(\sum_{p > y} \left|\frac{ f(p)}{p^{1/2}}\right|^3\right)\right).
\end{align*}

\noindent By \Cref{lem:martingaleL2est-2} (with $g = |f|^2$ and $s  \in \{s_1 + s_2, s_1 - s_2\}$), $K_y^\RadSym(s_1, s_2)$ is uniformly bounded for $y \ge 3$ and $s_1, s_2 \in I$, and converges to $1$ for almost every $s_1, s_2 \in I$. Also,
\begin{align*}
\frac{\EE\left[A_y^\RadSym(\sigma_y + is_1)|^2A_y^\RadSym(\sigma_y+ is_2)|^2\right]}{\EE\left[A_y^\RadSym(\sigma_y + is_1)|^2\right]\EE\left[A_y^\RadSym(\sigma_y + is_2)|^2\right]}
& \ll   \exp \left(2\sum_{p \le y} \frac{f(p)^2}{p^{1+\frac{1}{\log y}}} \left[\cos((s_1+s_2) \log p) + \cos((s_1 - s_2)\log p)\right]\right)\\
& \ll |s_1+s_2|^{-2\theta} |s_1 - s_2|^{-2\theta}
\end{align*}

\noindent by \Cref{lem:martingaleL2est} which is integrable as explained in Step 1 earlier. Hence $\lim_{y \to \infty} \EE\left[ \left| m_{\infty, y}(h) - m_{y,y}(h)\right|^2 \right] = 0$ by an application of dominated convergence, similar to that in the proof of \Cref{lem:mcL2truncate}.

\paragraph{Step 3: comparing two truncated Euler products.}
We would like to use the second moment method and show that each expectation on the right-hand side of \eqref{eq:2nd-moment-expand} (in the Rademacher case) converges to the same limit as $y \to \infty$. One can study \eqref{eq:truncated-DCT} using dominated convergence as before, but now with the new estimate
\begin{align*}
&\frac{\EE\left[A_y^\RadSym(\sigma_{t_1} + is_1)|^2A_y^\RadSym(\sigma_{t_2}+ is_2)|^2\right]}{\EE\left[A_y^\RadSym(\sigma_{t_1} + is_1)|^2\right]\EE\left[A_y^\RadSym(\sigma_{t_2} + is_2)|^2\right]}\\
& \quad \ll \exp \left( \sum_{p \le y}  \frac{2f(p)^2}{p^{1+\frac{1}{2}[\log^{-1} t_1 + \log^{-1} t_2]}} \left[\cos((s_1 + s_2) \log p) + \cos((s_1 - s_2)\log p)\right]\right)
\ll  |s_1+s_2|^{-2\theta} |s_1 - s_2|^{-2 \theta}
\end{align*}

\noindent where $t_1, t_2 \in \{y, \infty\}$, using \Cref{lem:martingaleL2est-2} as in Step 2.

\paragraph{Step 4: full support.}
The argument here does not rely on any estimates for particular random multiplicative function, and the proof is complete.
\end{proof}

Following the same arguments in \Cref{sec:var-Tauberian}, we can immediately deduce
\begin{lem}
Under the same setting as \Cref{thm:mc-convergence-rad}, we have
\begin{align}
    U_x^{\RadSym} \xrightarrow[x \to \infty]{p} \frac{1}{2\pi} \int_{\RR} \frac{m_\infty^\RadSym(ds)}{|\tfrac{1}{2} + is|^2}.
\end{align}
\end{lem}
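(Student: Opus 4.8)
The plan is to run the proof of \Cref{lem:limit-cond-var} essentially verbatim, replacing every Steinhaus object ($A_\infty$, $m_{\infty,x}$, $m_\infty$, $s_t$, $U_x$) by its Rademacher counterpart ($A_\infty^\RadSym$, $m_{\infty,x}^\RadSym$, $m_\infty^\RadSym$, $s_t^\RadSym$, $U_x^\RadSym$). Since $f\in\Ma^{\Sf}$ vanishes on non-squarefree integers we have $A_y^\RadSym(s)=\prod_{p\le y}(1+\Rad(p)f(p)p^{-s})=\sum_{P(n)\le y}\Rad(n)f(n)n^{-s}$, with no higher prime-power terms. The first step is to establish, exactly as in \eqref{eq:con-var1}, that
\begin{align*}
\int_\RR \frac{m_{\infty,x}^\RadSym(dt)}{|\sigma_x+it|^2}\xrightarrow[x\to\infty]{L^1}\int_\RR \frac{m_\infty^\RadSym(dt)}{|\tfrac12+it|^2}.
\end{align*}
I would split $\RR$ into the unit intervals $[n,n+1]$, $n\in\ZZ$, bound the $n$-th contribution by $\ll(1+n^2)^{-1}$ uniformly in $x$ (using $\EE[m_{\infty,x}^\RadSym([n,n+1])]=1$), and then treat each interval separately: the $L^2$-convergence $m_{\infty,x}^\RadSym(h)\to m_\infty^\RadSym(h)$ from \Cref{thm:mc-convergence-rad} applied with $h(t)=|\tfrac12+it|^{-2}$ handles the main term, while the elementary bound $\bigl||\sigma_x+it|^{-2}-|\tfrac12+it|^{-2}\bigr|\le(\log x)^{-1}|\tfrac12+it|^{-4}$ kills the discrepancy coming from $\sigma_x\ne\tfrac12$.

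The second step uses Plancherel's identity in the form
\begin{align*}
\int_\RR \frac{|A_\infty^\RadSym(\sigma_x+it)|^2}{|\sigma_x+it|^2}\,dt=2\pi\int_1^\infty \frac{|s_t^\RadSym|^2}{t^{1+1/\log x}}\,dt,
\end{align*}
which is justified because the random Dirichlet series $A_\infty^\RadSym(s)=\sum_n\Rad(n)f(n)n^{-s}$ converges almost surely for $\Re s>\tfrac12$ (Kolmogorov's three-series theorem, using $\sum_p f(p)^2 p^{-2\sigma_x}<\infty$) and $\sum_{n\le t}\Rad(n)f(n)=\sqrt t\,s_t^\RadSym$. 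By \eqref{eq:1-point-rad} the quantity $\EE[|A_\infty^\RadSym(\sigma_x+it)|^2]=\prod_p(1+f(p)^2 p^{-2\sigma_x})$ is independent of $t$, and by \eqref{eq:log-sum-twist} (with $g=f^2$, together with the $s=0$ tail estimate in \Cref{lem:martingaleL2est-2}) it equals $C\log^\theta x\,(1+o(1))$ for some constant $C\in(0,\infty)$. Combining this with the first step gives
\begin{align*}
\frac{2\pi}{C\log^\theta x}\int_1^\infty \frac{|s_t^\RadSym|^2}{t^{1+1/\log x}}\,dt\xrightarrow[x\to\infty]{L^1}\int_\RR \frac{m_\infty^\RadSym(dt)}{|\tfrac12+it|^2}.
\end{align*}
Applying the Tauberian theorem \Cref{theo:tauberian} to the random measure $\nu(dt):=|s_{e^t}^\RadSym|^2\,dt$ on $\RR_{\ge0}$ with $\lambda=1/\log x$, $\rho=\theta$ and the constant (hence slowly varying) function $L\equiv C/(2\pi)$ then yields
\begin{align*}
\frac{2\pi}{C\log^\theta x}\int_1^x \frac{|s_t^\RadSym|^2}{t}\,dt\xrightarrow[x\to\infty]{p}\frac{1}{\Gamma(1+\theta)}\int_\RR \frac{m_\infty^\RadSym(dt)}{|\tfrac12+it|^2},
\end{align*}
and this is upgraded to $L^1$-convergence by the domination $\int_1^x|s_t^\RadSym|^2 t^{-1}\,dt\le e\int_1^\infty|s_t^\RadSym|^2 t^{-1-1/\log x}\,dt$ and uniform integrability of the right-hand side. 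Taking expectations, and using $\int_\RR dt/(\tfrac14+t^2)=2\pi$ together with $\EE[m_\infty^\RadSym(dt)]=dt$, shows that the denominator $\sum_{n\le x}f(n)^2(\tfrac1n-\tfrac1x)$ of $U_x^\RadSym$ satisfies $\tfrac{2\pi}{C\log^\theta x}\sum_{n\le x}f(n)^2(\tfrac1n-\tfrac1x)\to\tfrac{2\pi}{\Gamma(1+\theta)}$; dividing the previous two displays gives the asserted limit for $U_x^\RadSym$. Here I use the normalisation $\EE[U_x^\RadSym]=1$, which follows from $\EE[(s_t^\RadSym)^2]=t^{-1}\sum_{n\le t}f(n)^2$ by \eqref{eq:orthRad}.

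I do not anticipate any genuinely new obstacle: this is the exact Rademacher mirror of \Cref{sec:var-Tauberian}, and every input is provided by \Cref{thm:mc-convergence-rad}, \Cref{lem:cross-moments-rad} and \Cref{lem:martingaleL2est}/\Cref{lem:martingaleL2est-2}. The two points deserving a line of care are (i) the $t$-independence of $\EE[|A_\infty^\RadSym(\sigma_x+it)|^2]$, immediate from \Cref{lem:cross-moments-rad} since the single-factor expectation $\EE\bigl[|1+\Rad(p)f(p)p^{-\sigma-is}|^2\bigr]=1+f(p)^2 p^{-2\sigma}$ does not involve $s$; and (ii) the almost-sure existence of $A_\infty^\RadSym$ on the line $\Re s=\sigma_x>\tfrac12$ needed to justify Plancherel, which is a standard fact about random Euler products.
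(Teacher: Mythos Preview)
Your proposal is correct and follows essentially the same approach as the paper, which simply writes ``Following the same arguments in \Cref{sec:var-Tauberian}, we can immediately deduce'' the lemma. You have faithfully spelled out that mirror argument, including the two points (the $t$-independence of $\EE[|A_\infty^\RadSym(\sigma_x+it)|^2]$ from \eqref{eq:1-point-rad} and the asymptotic $\sim C\log^\theta x$ via \eqref{eq:log-sum-twist} and \Cref{lem:martingaleL2est-2}) that genuinely need a line of justification in the Rademacher setting.
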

We have now collected all the necessary ingredients for  \Cref{thm:mainRad} and \Cref{cor:mom-convergenceRad}, the proof of which will be omitted as one may follow the same arguments as in the Steinhaus setting.

\appendix

\section{Mean values of nonnegative multiplicative functions}\label{app:mean}
The well-known lemma below gives alternative expressions for the main term in the right-hand side of \eqref{eq:flatsum}.
\begin{lem}\label{lem:slow}
Fix $\theta>0$. Let $g$ be a nonnegative multiplicative function satisfying  \eqref{eq:loglim} and \eqref{eq:series}. Let
\[ E(t)= \sum_{p \le t} \frac{(g(p)-\theta)\log p}{p}.\]
Then $E(t)=o(\log t)$ as $t \to \infty$, and
\begin{align*}	
\frac{e^{-\gamma \theta}}{\Gamma(\theta)} \frac{x}{\log x} \prod_{p\le x} \left(\sum_{i=0}^{\infty} \frac{g(p^i)}{p^{i}}\right) &\sim   \frac{x (\log x)^{\theta-1}}{\Gamma(\theta)} \prod_{p\le x} \left(\sum_{i=0}^{\infty} \frac{g(p^i)}{p^{i}}\right)\left(1-\frac{1}{p}\right)^{\theta}\sim   \frac{x (\log x)^{\theta-1}}{\Gamma(\theta)} L_g(\log x)
\end{align*}
holds as $x \to \infty$, 	where $L_g\colon [0,\infty) \to (0,\infty)$ is a continuous slowly varying function given by
	\begin{align*} L_g(\log x) &:= C_g \exp\left(\int_{2}^{x} \frac{E(t)dt}{t\log^2 t}\right),\\
 C_g &:= \prod_{p} \exp\left( \frac{\theta}{p}\right)\left(1-\frac{1}{p}\right)^{\theta}\exp\left( -\frac{g(p)}{p}\right) \left(\sum_{i=0}^{\infty}  \frac{g(p^i)}{p^i} \right) \in (0, \infty).
 \end{align*}
\end{lem}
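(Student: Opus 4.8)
The plan is to reduce everything to a single classical Tauberian-type statement for slowly varying functions together with Mertens-type estimates. First I would verify that $E(t)=o(\log t)$: this is just a restatement of \eqref{eq:loglim}, since $\sum_{p\le t}\theta\log p/p=\theta\log t+O(1)$ by Mertens' theorem, and \eqref{eq:loglim} says $\sum_{p\le t}g(p)\log p/p\sim\theta\log t$. Next I would establish that the three displayed quantities are asymptotically equal. The equivalence of the first two is immediate from Mertens' third theorem $\prod_{p\le x}(1-1/p)\sim e^{-\gamma}/\log x$, raised to the power $\theta$. For the equivalence with $\frac{x(\log x)^{\theta-1}}{\Gamma(\theta)}L_g(\log x)$, I would take logarithms and compare
\[
\log\left[\prod_{p\le x}\left(\sum_{i\ge0}\frac{g(p^i)}{p^i}\right)\left(1-\frac1p\right)^{\theta}\right]
\quad\text{with}\quad \log L_g(\log x).
\]

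The key computation is to write, for each prime $p$,
\[
\log\left[\left(\sum_{i\ge0}\frac{g(p^i)}{p^i}\right)\left(1-\frac1p\right)^{\theta}\right]
=\frac{g(p)}{p}-\frac{\theta}{p}+r_p,
\]
where $r_p:=\log(\sum_{i\ge0}g(p^i)/p^i)-g(p)/p-\theta\log(1-1/p)-\theta/p$ satisfies $\sum_p|r_p|<\infty$ by \eqref{eq:series} (the $g^2(p)/p^2$ term controls the Taylor remainder of $\log(1+g(p)/p+\dots)$ minus $g(p)/p$, the $\sum_{i\ge2}g(p^i)/p^i$ term controls the higher prime powers, and $\theta\log(1-1/p)+\theta/p=O(1/p^2)$). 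Summing over $p\le x$, the $\sum_{p\le x}r_p$ piece converges to $\log C_g-\sum_{p}(\text{the }p\text{-factor of }C_g\text{ in log form})$; in fact by design $\log C_g=\sum_p r_p+\sum_p(\text{convergent correction})$, so that
\[
\sum_{p\le x}\log\left[\left(\sum_{i\ge0}\frac{g(p^i)}{p^i}\right)\left(1-\frac1p\right)^{\theta}\right]
=\log C_g+\sum_{p\le x}\frac{g(p)-\theta}{p}+o(1).
\]
It then remains to show $\sum_{p\le x}(g(p)-\theta)/p=\int_2^x E(t)\,dt/(t\log^2 t)+o(1)$, which is a routine partial summation: writing $\sum_{p\le x}(g(p)-\theta)/p=\int_{2^-}^{x}\frac{1}{\log t}\,dE(t)$ and integrating by parts gives $E(x)/\log x+\int_2^x E(t)\,dt/(t\log^2 t)$, and $E(x)/\log x\to0$ by the first part. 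Exponentiating yields the stated asymptotic, and the convergence of the integral $\int_2^\infty E(t)\,dt/(t\log^2 t)$ is not claimed — rather $L_g(\log x)=C_g\exp(\int_2^x\cdots)$ is by construction a continuous function of $\log x$, and it is slowly varying because $\frac{d}{du}\log L_g(e^u)=E(e^u)/u^2\to0$, which forces $L_g(cu)/L_g(u)\to1$ for every fixed $c>0$.

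The main obstacle is bookkeeping rather than anything deep: one must check carefully that the constant $C_g$ as defined in the statement is exactly the limit of the prime product after the $(g(p)-\theta)/p$ terms are pulled out, i.e. that $C_g=\prod_p \exp(\theta/p)(1-1/p)^\theta\exp(-g(p)/p)(\sum_{i\ge0}g(p^i)/p^i)$ converges (absolutely) and matches the telescoping above — this is precisely where \eqref{eq:series} is used, and one should note $C_g>0$ since each factor is positive and the product converges. I would also remark that positivity of $g$ ensures $\sum_{i\ge0}g(p^i)/p^i\ge1>0$ so all logarithms are well-defined. No genuinely hard analysis is needed; the lemma is a standard consequence of Mertens' theorems plus the summability hypothesis \eqref{eq:series}.
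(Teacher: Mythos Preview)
Your argument is correct and essentially identical to the paper's: Mertens for the first equivalence, splitting the log of the Euler product into $\sum_{p\le x}(g(p)-\theta)/p$ plus an absolutely convergent remainder (the paper packages this as a factorisation $L_{g,0}=L_{g,1}L_{g,2}L_{g,3}$, where your $\sum r_p$ corresponds to $\log(L_{g,2}L_{g,3})$), and then Abel summation against $dE$. One small caution on the slowly-varying step: ``log-derivative $\to 0$'' alone does not force $L_g(cu)/L_g(u)\to 1$ (e.g.\ $e^{\sqrt u}$); what you actually have is $\tfrac{d}{du}\log L_g(u)=E(e^u)/u^2=o(1/u)$, and it is this rate that makes $\int_u^{cu}o(1/v)\,dv\to 0$, which is exactly the computation the paper carries out.
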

\begin{proof}
Since $\sum_{p \le t}(\log p)/p \sim \log t$ as $t \to \infty$ by Mertens' Theorems \cite[Theorem~2.7]{MV}, we have $E(t)=o(\log t)$ by \eqref{eq:loglim}. As
\[ \prod_{p \le x} \left(1-\frac{1}{p}\right)\sim \frac{e^{-\gamma}}{\log x}\]
by Mertens' Theorems \cite[Theorem~2.7]{MV}, it follows that as $x \to \infty$,
\[ 	\frac{e^{-\gamma \theta}}{\Gamma(\theta)} \frac{x}{\log x} \prod_{p\le x} \left(\sum_{i=0}^{\infty} \frac{g(p^i)}{p^{i}}\right) \sim   \frac{x (\log x)^{\theta-1}}{\Gamma(\theta)} L_{g,0}(\log x)\]
where
	\[ L_{g,0}(\log x): =  \prod_{p\le x} \left(\sum_{i=0}^{\infty} \frac{g(p^i)}{p^{i}}\right)\left(1-\frac{1}{p}\right)^{\theta}.\]
We decompose $L_{g,0}$ as $L_{g,0}=  L_{g,1} L_{g,2} L_{g,3}$ where
	\begin{align*}
		L_{g,1}(\log x)& =	 \exp\left( \sum_{p \le x} \frac{g(p)-\theta}{p} \right),\\  L_{g,2}(\log x) &= \prod_{p \le x} \exp\left( \frac{\theta}{p}\right)\left(1-\frac{1}{p}\right)^{\theta}, \quad L_{g,3}(\log x)= \prod_{p \le x} \exp\left( -\frac{g(p)}{p}\right) \left(\sum_{i=0}^{\infty}  \frac{g(p^i)}{p^i} \right).
	\end{align*}
Using $\log(1+t)=t+O(t^2)$ for $|t|\le 1/2$ we see that $L_{g,2}(\log x)$ tends to the positive and finite limit $\prod_{p}\exp(\theta/p)(1-1/p)^{\theta}$ as $x \to \infty$.  Similarly, making also use of \eqref{eq:series}, $L_{g,3}(\log x)$ tends to the positive and finite limit $\prod_{p} \exp( -g(p)/p) (\sum_{i=0}^{\infty} g(p^i)/p^i)$. These two products multiply to $C_g$. To treat $L_{g,1}$ we write
\[ \sum_{p \le x} \frac{g(p)-\theta}{p} =\int_{2^{-}}^{x} \frac{dE(t)}{\log t} = \frac{E(x)}{\log x}  +\int_{2}^{x} \frac{E(t)dt}{t\log^2 t} =o(1)+\int_{2}^{x} \frac{E(t)dt}{t\log^2 t}  .\]
As $E$ is a step function, $\int_{2}^{x} \frac{E(t)dt}{t\log^2 t}$ is continuous. Demonstrating that $L_g$ (or $L_{g,1}$) is slowly varying amounts to showing that
\[ \int_{e^x}^{e^{xu}} \frac{E(t)dt}{t\log^2 t}=\int_{\log x}^{\log x +\log u} \frac{E(e^{e^v})dv}{e^v}\]
tends to $0$ as $x \to \infty$. This follows from $E(t)=o(\log t)$. 
\end{proof}
Given $\theta>0$, Wirsing \cite{Wirsing} considered a family of multiplicative functions, which we denote  $\Ma'_{\theta}$. It consists of all nonnegative multiplicative $g$-s that satisfy $\sum_{p \le t}g(p) \sim \theta t/\log t$ as $t \to \infty$ and such that there exists $A \in (0,2)$ for which $g(p^i)= O(A^i)$ holds for all $p,i \ge 2$.
Wirsing proved the following.
\begin{lem}[{\cite[Satz 1]{Wirsing}}]\label{lem:u}
	Fix $\theta>0$.  Let $g\in \Ma'_{\theta}$. Then \eqref{eq:flatsum} holds as $x \to \infty$ for every $h$ of the form $g(\cdot)\mathbf{1}_{(\cdot,d)=1}$ for some $d \in \NN$.
\end{lem}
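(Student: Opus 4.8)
The function $h(\cdot)=g(\cdot)\mathbf{1}_{(\cdot,d)=1}$ is again nonnegative and multiplicative; its prime sum $\sum_{p\le t}h(p)=\sum_{p\le t,\,p\nmid d}g(p)$ still satisfies $\sim\theta t/\log t$, and $h(p^i)=g(p^i)\mathbf{1}_{p\nmid d}=O(A^i)$, so $h\in\Ma'_{\theta}$. Hence it suffices to prove \eqref{eq:flatsum} for every $g\in\Ma'_{\theta}$ and then apply this to $h$; this is exactly Wirsing's theorem and one may simply invoke \cite[Satz 1]{Wirsing}. For completeness I sketch the structure of a proof.

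The plan is to convert the multiplicativity of $g$ into a renewal-type integral equation for $A(x):=\sum_{n\le x}g(n)$ and run a Tauberian argument. Since $\log$ is additive on prime powers, there is a function $\Lambda_g$ supported on prime powers with $g\log = g*\Lambda_g$, $\Lambda_g(p)=g(p)\log p$, and — using $g(p^k)=O(A^k)$ with $A<2$ — with $\sum_{p,\,k\ge 2}|\Lambda_g(p^k)|/p^k<\infty$. Partial summation then gives
\[ A(x)\log x=\int_1^x\frac{A(t)}{t}\,dt+\sum_{d\le x}\Lambda_g(d)\,A(x/d), \]
and splitting off the prime part $\Lambda_g(p)=g(p)\log p$ (whose partial sums satisfy $\sum_{p\le t}g(p)\log p\sim\theta t$ by partial summation from the hypothesis) while bounding the higher prime powers trivially, one arrives at an equation of the shape
\[ A(x)\log x=(1+o(1))\,\theta x\int_1^x\frac{A(t)}{t^2}\,dt+(\text{lower-order terms}). \]
Writing $N(x):=\int_1^x A(t)t^{-2}\,dt$, this forces $(\log N)'(x)\sim\theta/(x\log x)$ to leading order, whence the crude bound $N(x)=(\log x)^{\theta+o(1)}$; this is then bootstrapped into a sharp asymptotic $A(x)\sim c\,x(\log x)^{\theta-1}L(\log x)$, with $L$ slowly varying, by feeding the approximate form back into the integral equation and controlling the error contributions. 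This last step is the genuinely delicate part.

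It remains to identify the constant and the slowly varying factor. Comparing the Dirichlet series $G(s)=\sum_n g(n)n^{-s}=\prod_p\bigl(\sum_i g(p^i)p^{-is}\bigr)$ with $\zeta(s)^{\theta}$ as $s\to 1^{+}$ — equivalently, running the bookkeeping already carried out in \Cref{lem:slow} — pins down the leading constant to $e^{-\gamma\theta}/\Gamma(\theta)$ and produces $\prod_{p\le x}(\sum_i g(p^i)/p^i)$ as the Euler factor, giving \eqref{eq:flatsum}. Alternatively, one can bypass re-deriving the constant by comparing $g$ directly with the reference function $d_{\theta}$, for which \eqref{eq:flatsum} is classical via Selberg--Delange. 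The main obstacle throughout is that the only quantitative input is the asymptotic $\sum_{p\le t}g(p)\sim\theta t/\log t$ \emph{with no error term}: this rules out the Selberg--Delange method and forces the soft Tauberian/renewal route above, and it is precisely this lack of a rate that makes the slowly varying factor $L_g$ of \Cref{lem:slow} intrinsic rather than a genuine constant.
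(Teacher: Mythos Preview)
Your proposal is correct and matches the paper's approach: the paper gives no proof of this lemma at all, simply stating it as a citation of \cite[Satz 1]{Wirsing}, and your first paragraph (observing that $h=g\cdot\mathbf{1}_{(\cdot,d)=1}$ again lies in $\Ma'_\theta$, so Wirsing's theorem applies directly) is exactly the reduction that justifies this citation. The subsequent sketch of Wirsing's argument is additional detail the paper does not supply.
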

We claim \Cref{lem:u} implies
\begin{equation}\label{eq:inc} \Ma'_{\theta} \subseteq \Ma_{\theta}.
\end{equation}
To see \eqref{eq:inc}, observe that $\sum_{p \le t} g(p) \sim \theta t/\log t$ implies $\sum_{p\le t} g(p)(\log p)/p \sim \theta \log t$ by integration by parts. Moreover, the condition $g(p^i)=O(A^i)$ implies $\sum_{p, i\ge 2} g(p^i)/p^i \ll_A \sum_{p} 1/p^2 < \infty$ and similarly $\sum_{p, i \ge 2:\, p^i \le t} g(p^i) \ll_A t/\log t$.

A special case of the main result of de Bruijn and van Lint \cite{dBvL} says the following.
\begin{lem}[\cite{dBvL}]\label{lem:sum}
 Fix $\theta>0$. Let $g \in \Ma_{\theta}$. Fix $\OurEpsilon>0$. Denote $u=\log x/ \log y$.  Then uniformly for $u\in [\OurEpsilon,1/\OurEpsilon]$,
\[\sum_{n \le x,\, P(n)\le y} g(n) \sim  \frac{\Gamma(\theta)\rho_{\theta}(u)}{u^{\theta-1}} \sum_{n \le x} g(n)\]
holds as $x \to \infty$.
\end{lem}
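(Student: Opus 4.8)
\textbf{Plan for the proof of \Cref{lem:sum}.} This is the result of de Bruijn and van Lint \cite{dBvL} specialised to our class $\Ma_{\theta}$, so the main task is to verify that the hypotheses of their general theorem hold for any $g \in \Ma_{\theta}$, and to match their conclusion with the asymptotic stated here. First I would record the Dickman-type weight: the function $\rho_{\theta}$ defined via $\rho_{\theta}(t)=t^{\theta-1}/\Gamma(\theta)$ for $t \le 1$ and the delay-differential relation $t\rho_{\theta}(t)=\theta\int_{t-1}^{t}\rho_{\theta}(v)\,dv$ for $t>1$ is precisely the kernel appearing in the de Bruijn--van Lint asymptotic for ``smooth-number sums'' of multiplicative functions whose prime sum has density $\theta$. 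The quantity $\widetilde{\rho_{\theta}}(u):=\Gamma(\theta)\rho_{\theta}(u)/u^{\theta-1}$ (used later in \Cref{subsec:l2computation}) is the normalised form, equal to $1$ for $u\le 1$, so the claimed relation reads $\sum_{n\le x,\,P(n)\le y}g(n)\sim \widetilde{\rho_{\theta}}(u)\sum_{n\le x}g(n)$ uniformly for $u\in[\OurEpsilon,1/\OurEpsilon]$.

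The key steps are as follows. (1) Check that $g\in\Ma_{\theta}$ satisfies the axioms required by \cite{dBvL}: nonnegativity (axiom 1 of $\Ma_{\theta}$); the Chebyshev-type bound $\sum_{p\le t}g(p)\ll t/\log t$ (axiom 3); the logarithmic density $\sum_{p\le t}g(p)\log p/p\sim\theta\log t$ (axiom 2), which by partial summation is equivalent to $\sum_{p\le t}g(p)\sim\theta\,\mathrm{Li}(t)$ in the mean-value sense they need; and the convergence of $\sum_p(g(p)^2/p^2+\sum_{i\ge2}g(p^i)/p^i)$ (axiom 4), which controls the contribution of prime powers and of large prime factors so that the sum is governed by its squarefree part up to a bounded Euler factor. (2) Invoke \Cref{lem:slow} (equivalently, Wirsing's theorem together with the slowly varying representation) to get the unrestricted asymptotic $\sum_{n\le x}g(n)\sim \frac{x(\log x)^{\theta-1}}{\Gamma(\theta)}L_g(\log x)$ with $L_g$ slowly varying. (3) Apply the de Bruijn--van Lint theorem to obtain $\sum_{n\le x,\,P(n)\le y}g(n)\sim \widetilde{\rho_{\theta}}(u)\,\frac{x(\log x)^{\theta-1}}{\Gamma(\theta)}L_g(\log x)$, where crucially the slowly varying factor $L_g(\log x)$ is \emph{unchanged} because $y=x^{1/u}$ with $u$ in a fixed compact range forces $L_g(\log y)\sim L_g(\log x)$ by the uniform slow-variation property $L_g(tu)\sim L_g(u)$ for $t$ in a compact subset of $(0,\infty)$ (see \eqref{eq:unif} and \cite[p.~180]{Korevaar}). (4) Divide the two asymptotics to cancel $\frac{x(\log x)^{\theta-1}}{\Gamma(\theta)}L_g(\log x)$, obtaining the stated ratio $\widetilde{\rho_{\theta}}(u)=\Gamma(\theta)\rho_{\theta}(u)/u^{\theta-1}$, with uniformity in $u\in[\OurEpsilon,1/\OurEpsilon]$ inherited from the uniformity in the de Bruijn--van Lint statement and from \eqref{eq:unif}.

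The main obstacle is purely bookkeeping rather than conceptual: one must confirm that the hypotheses under which \cite{dBvL} prove their smooth-sum asymptotic are implied by membership in $\Ma_{\theta}$ — in particular that the prime-power and ``large prime factor'' corrections, controlled by axioms 4 and 5 of $\Ma_{\theta}$, are exactly what is needed to reduce to the completely multiplicative / squarefree model that de Bruijn and van Lint treat, and that passing from the density statement $\sum_{p\le t}g(p)\sim\theta t/\log t$ to the logarithmic-density statement (or vice versa) introduces no loss in the uniformity over $u$. Once the translation between the two formulations of condition \eqref{eq:condition-P} / axiom 2 is in place and the slowly varying factor is seen to be stable under $x\mapsto x^{1/u}$, the lemma follows immediately by division. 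No nontrivial analytic input beyond \cite{dBvL}, \Cref{lem:slow} and elementary properties of slowly varying functions is required.
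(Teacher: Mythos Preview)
Your plan is correct and matches the paper's treatment: the paper does not prove \Cref{lem:sum} at all but simply records it as ``a special case of the main result of de Bruijn and van Lint \cite{dBvL}'', so the only content is exactly the hypothesis-checking you outline. One small caveat: the claim that axiom~2 ($\sum_{p\le t}g(p)\log p/p\sim\theta\log t$) is ``by partial summation equivalent to $\sum_{p\le t}g(p)\sim\theta\,\mathrm{Li}(t)$'' is not right in the direction you need (logarithmic density does not imply natural density without a Tauberian step), but this is harmless since the de Bruijn--van Lint hypotheses are phrased in terms of the logarithmic density together with a Chebyshev bound, i.e., precisely axioms~2 and~3 of $\Ma_\theta$.
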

In view of \eqref{eq:inc}, \Cref{lem:sum} holds for all $g \in \Ma'_{\theta}$.

The following is a logarithmic version of \Cref{lem:u} which is deduced from the Hardy--Littlewood--Karamata Tauberian theorem, see \cite[Theorem~2]{Geluk} or \cite[Theorem~3]{Fainleib} for full details.
\begin{lem}\label{lem:log}
Fix $\theta>0$. Let $g$ be a nonnegative multiplicative function satisfying \eqref{eq:loglim} and \eqref{eq:series}. Then
\begin{equation}\label{eq:logsum}\sum_{n \le x} \frac{g(n)}{n} \sim \frac{e^{- \gamma\theta}}{\Gamma(\theta+1)}  \prod_{p\le x} \left(\sum_{i=0}^{\infty} \frac{g(p^i)}{p^{i}}\right)
\end{equation}
	holds as $x \to \infty$.
\end{lem}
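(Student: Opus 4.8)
The plan is to derive \eqref{eq:logsum} from the Hardy--Littlewood--Karamata Tauberian theorem in the form of \Cref{theo:tauberian}, applied to the deterministic measure $\nu := \sum_{n \ge 1}\frac{g(n)}{n}\delta_{\log n}$. Then $\nu(t) = \sum_{n \le e^t}\frac{g(n)}{n}$, and its Laplace transform is the Dirichlet series $\hat{\nu}(\lambda) = \sum_{n \ge 1}\frac{g(n)}{n^{1+\lambda}} =: D(\lambda)$, which converges for every $\lambda > 0$ because $\sum_{p \le t}g(p)/p \ll \log\log t$ (partial summation from \eqref{eq:loglim}) and because of \eqref{eq:series}. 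It therefore suffices to prove that
\[
D(\lambda) \sim \lambda^{-\theta}\, L_g(1/\lambda) \qquad \text{as } \lambda \to 0^+,
\]
where $L_g$ is the slowly varying function attached to $g$ in \Cref{lem:slow}: indeed \Cref{theo:tauberian} applied with $\rho = \theta$, $C_\nu = 1$ and $L = L_g$ then gives $\sum_{n \le x}\frac{g(n)}{n} = \nu(\log x) \sim \frac{(\log x)^\theta L_g(\log x)}{\Gamma(\theta+1)}$, and by \Cref{lem:slow} the right-hand side is asymptotic to $\frac{e^{-\gamma\theta}}{\Gamma(\theta+1)}\prod_{p \le x}\big(\sum_{i \ge 0}g(p^i)/p^i\big)$, which is \eqref{eq:logsum}.

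To obtain the asymptotic for $D(\lambda)$ I would proceed in two steps. First, from the Euler product $\log D(\lambda) = \sum_p \log\big(\sum_{i \ge 0}g(p^i)/p^{i(1+\lambda)}\big)$ and the expansion of $\log(1+u)$, every contribution other than $g(p)p^{-1-\lambda}$ is, for $\lambda \ge 0$, dominated by a term of the convergent series $\sum_p\big(g(p)^2/p^2 + \sum_{i \ge 2}g(p^i)/p^i\big)$ supplied by \eqref{eq:series}; hence by dominated convergence $\log D(\lambda) = \sum_p \frac{g(p)}{p^{1+\lambda}} + R(\lambda)$ with $R(\lambda) \to R(0)$ as $\lambda \to 0^+$ and $e^{R(0)} = \prod_p e^{-g(p)/p}\big(\sum_i g(p^i)/p^i\big)$. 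This is the computation already carried out for the factors $L_{g,2}$ and $L_{g,3}$ in the proof of \Cref{lem:slow}. Second, writing $V(t) := \sum_{p \le t}g(p)/p$, partial summation from \eqref{eq:loglim} together with Mertens' theorems (exactly as in \Cref{lem:slow}) gives $V(t) = \theta\log\log t + \theta M + \int_2^t\frac{E(u)}{u\log^2 u}\,du + o(1)$, with $M$ the Meissel--Mertens constant and $E$ as in \Cref{lem:slow}; since $\sum_p\frac{g(p)}{p^{1+\lambda}} = \int_{2^-}^\infty t^{-\lambda}\,dV(t)$, evaluating the $\theta\log\log t$ part via the substitution $u = \lambda\log t$ and controlling the remaining slowly varying part by its slow variation in $\log\log t$ yields $\sum_p\frac{g(p)}{p^{1+\lambda}} = \theta\log(1/\lambda) - \theta\gamma + \theta M + \int_2^{e^{1/\lambda}}\frac{E(t)}{t\log^2 t}\,dt + o(1)$. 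Combining the two steps, recalling the definition of $C_g$ in \Cref{lem:slow} and the Mertens identity $e^{\theta M} = e^{\theta\gamma}\prod_p e^{\theta/p}(1-1/p)^\theta$, all the constants cancel and one is left with $D(\lambda) = \lambda^{-\theta}L_g(1/\lambda)(1+o(1))$, as required.

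I expect the second step to be the main obstacle: one must upgrade the weak, purely asymptotic hypothesis \eqref{eq:loglim} --- an estimate for a logarithmically weighted prime sum with an unspecified $o(\cdot)$ error --- to a precise ``main term plus slowly varying correction'' asymptotic for $\sum_p g(p)p^{-1-\lambda}$, and verify that the correction $\int_2^{e^{1/\lambda}}E(t)(t\log^2 t)^{-1}\,dt$ really is a slowly varying function of $1/\lambda$ (the same point as the slow variation of $L_{g,1}$ in \Cref{lem:slow}, now transported through the Laplace transform). Organising these Tauberian estimates cleanly is precisely what is done in Geluk \cite{Geluk} and Fainleib \cite{Fainleib}, to which we refer for the full details.
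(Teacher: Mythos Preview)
Your proposal is correct and follows essentially the same route as the paper: the paper does not give a self-contained proof but states that \eqref{eq:logsum} is ``deduced from the Hardy--Littlewood--Karamata Tauberian theorem'' and refers to Geluk and Fa\u{\i}nle\u{\i}b for the details, which is exactly the argument you sketch (and cite). Your use of the random-measure version \Cref{theo:tauberian} in the deterministic special case, together with the decomposition of $\log D(\lambda)$ into the prime sum $\sum_p g(p)p^{-1-\lambda}$ plus an absolutely convergent remainder, and the identification of the slowly varying factor with $L_g$ from \Cref{lem:slow}, lines up precisely with that approach; the constant bookkeeping via the Meissel--Mertens identity $e^{\theta M}=e^{\theta\gamma}\prod_p e^{\theta/p}(1-1/p)^{\theta}$ is also correct.
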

\begin{rem}
Slightly weaker versions of \Cref{lem:log} appear in de Bruijn and van Lint \cite[Equation~(1.8)]{dBvL1st}.
and Wirsing \cite[Hilfssatz~6]{Wirsing}. 
If $g \in \Ma_{\theta}$ then \eqref{eq:logsum} can be deduced from \eqref{eq:flatsum} (with $h=g$) by integration by parts. This requires showing that
\begin{equation}\label{eq:ibp}
\int_{0}^{\log x} v^{\theta-1} L_g(v) dv \sim \frac{(\log x)^{\theta}L_g(\log x)}{\theta}
\end{equation}
holds as $x \to \infty$, where $L_g$ is the slowly varying function in \Cref{lem:slow}. The asymptotic relation \eqref{eq:ibp} is a special case of Karamata's Theorem \cite[Proposition~IV.5.1]{Korevaar}, which can also be verified directly using the formula for $L_g$ given in \Cref{lem:slow}. \end{rem}
The following upper bound can sometimes act as a substitute for \Cref{lem:u} and \Cref{lem:sum}.
\begin{lem}[{\cite[Theorem~2.14]{MV}}]\label{lem:upper}
Let $g$ be a nonnegative multiplicative function $g$ satisfying $\sum_{p \le t} g(p) \le A t/\log t$ for $t \ge 2$ and $\sum_{p,i \ge 2} g(p^i)\log(p^i)/p^i \le A$. Then 
	\begin{equation*}
		\sum_{n \le x} g(n)\ll_A \frac{x}{\log x}\sum_{n \le x}\frac{g(n)}{n}
	\end{equation*}
 holds for $x \ge 2$, and the implied constant depends on $A$ only.
\end{lem}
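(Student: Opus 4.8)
\emph{Proof proposal.} The plan is to prove this Halberstam--Richert type upper bound directly and elementarily. Write $S(x):=\sum_{n\le x}g(n)$ and $T(x):=\sum_{n\le x}g(n)/n$; the goal is $S(x)\ll_A (x/\log x)\,T(x)$ for $x\ge 2$. The engine is the elementary identity $\log n=\sum_{p^a\mid n,\,a\ge 1}\log p$, where the sum runs over all prime powers dividing $n$.

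First I would write $S(x)\log x=\sum_{n\le x}g(n)\log n+\sum_{n\le x}g(n)\log(x/n)$ and dispose of the second sum on the spot: since $\log t\le t$ for $t>0$, it is at most $\sum_{n\le x}g(n)\cdot(x/n)=x\,T(x)$, which already has the desired shape. Everything therefore reduces to showing $\sum_{n\le x}g(n)\log n\ll_A x\,T(x)$.

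For that I would substitute the identity for $\log n$, interchange the order of summation, and handle the fact that $g$ is only multiplicative (not completely multiplicative) as follows: for a prime power $p^a\mid n$, let $p^b$ denote the full $p$-part of $n$, so that $b\ge a$ and $g(n)=g(p^b)\,g(n/p^b)$ with $p\nmid n/p^b$; bounding the sum over $n/p^b$ by $\sum_{m\le x/p^b}g(m)$ loses nothing since $g\ge 0$. Re-indexing in $b$ then gives $\sum_{n\le x}g(n)\log n\le\sum_{p}\sum_{b\ge 1}(b\log p)\,g(p^b)\sum_{m\le x/p^b}g(m)$. Now I would swap so that $m$ is summed first: the resulting inner sum over prime powers $p^b\le x/m$ splits into its $b=1$ part $\sum_{p\le x/m}g(p)\log p$ and its $b\ge 2$ part $\sum_{p^b\le x/m,\,b\ge 2}(b\log p)\,g(p^b)$. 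The former is $\le\log(x/m)\sum_{p\le x/m}g(p)\le A(x/m)$ by the first hypothesis (trivially so when $x/m<2$); the latter is $\le (x/m)\sum_{p,\,i\ge 2}g(p^i)\log(p^i)/p^i\le A(x/m)$ by the second hypothesis, after inserting $1\le (x/m)/p^b$. Hence $\sum_{n\le x}g(n)\log n\le 2Ax\sum_{m\le x}g(m)/m=2Ax\,T(x)$, so $S(x)\log x\le(1+2A)\,x\,T(x)$, and dividing by $\log x$ (which is at least $\log 2>0$ for $x\ge 2$) yields the lemma with implied constant depending on $A$ only.

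I do not expect a real obstacle: the single delicate point is the one just addressed, namely that because $g$ is merely multiplicative the divisibility condition $p^a\mid n$ has to be upgraded to a statement about the whole $p$-part $p^b$ of $n$, which is exactly what the inequality $\sum_{n\le x,\ p^a\mid n}g(n)\le\sum_{b\ge a}g(p^b)\sum_{m\le x/p^b}g(m)$ accomplishes. Uniformity of the constant in $A$ comes for free, since both hypotheses carry the same $A$ and each inner sum is bounded directly against $A(x/m)$ with no partial summation or induction required.
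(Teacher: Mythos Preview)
Your proof is correct, and is essentially the standard argument for this Halberstam--Richert type inequality. The paper itself does not supply a proof but simply quotes the result from \cite[Theorem~2.14]{MV}; your write-up reproduces the proof found there (split $S(x)\log x$ via $\log x=\log n+\log(x/n)$, bound the second piece trivially by $xT(x)$, expand $\log n$ over prime powers, and use the two hypotheses to control the $b=1$ and $b\ge 2$ contributions separately). The point you flag about upgrading $p^a\mid n$ to the exact power $p^b\| n$ before factoring $g(n)=g(p^b)g(n/p^b)$ is handled correctly and is indeed the only place where mere multiplicativity (as opposed to complete multiplicativity) needs care.
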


\section{Results from probability theory}
\subsection{Stable convergence}
\begin{definition}\label{def:stable}
    Let $(\Omega, \Fa, \PP)$ be a probability space, and $\Ga \subset \Fa$ be a sub-$\sigma$-algebra. We say a sequence of random variables $Z_n$ converges $\Ga$-stably as $n \to \infty$ if $(Y, Z_n)$ converges in distribution as $n \to \infty$ for any bounded $\Ga$-measurable random variable $Y$. When $\Ga = \Fa$, we say $Z_n$ converges stably.
\end{definition}
In the language of weak convergence of probability measures, the stable convergence in \Cref{thm:main} can be rephrased in the following way: if $Y$ is a real-valued random variable that is measurable with respect to $\Fa_\infty := \sigma(\alpha(p), p \ge 2)$, then for any bounded continuous function $h\colon \RR \times \RR \to \RR$, we have
\begin{align}\label{eq:stable2}
    \lim_{x \to \infty} \EE\left[h(Y, S_x)\right]
    = \EE\left[h\left(Y, \sqrt{V_\infty} G\right) \right].
\end{align}

\noindent By a standard density argument (and up to renaming of random variables), \Cref{thm:main} can be equivalently rephrased as
\begin{align} \label{eq:stable3}
    \lim_{x \to \infty} \EE\left[Y \widetilde{h}(S_x)\right]
    = \EE\left[Y\widetilde{h}(\sqrt{V_\infty} G) \right]
\end{align}

\noindent for any bounded continuous function $\widetilde{h}\colon \CC \to \RR$, and any bounded $\Fa_\infty$-measurable random variable $Y$ (for example $Y = \widehat{h}\left(\int_{\RR} \frac{m_{\infty}(ds)}{|1/2 + is|^2}\right)$ where $\widehat{h}\colon \RR \to \RR$ is some bounded measurable function). By \eqref{eq:stable3}, 
\begin{align*}
\lim_{x \to \infty}\EE\left[ Y \exp\left(i (a_1 \Re S_x + a_2 \Im S_x)\right) \right]
= \EE\left[Y \exp \left(- \frac{1}{4}(a_1^2+a_2^2) \int_\RR\frac{m_{\infty}(ds)}{2\pi |1/2 + is|^2} \right) \right] 
\end{align*}
\noindent holds for all $a_1, a_2 \in \RR$. Alternatively, one can state the stable convergence in terms of probability distribution functions: for any $A_1 \subset \RR$, and $A_2 \subset \CC$ which is a continuity set with respect to the distribution of $\sqrt{V_\infty} G$, we have
\begin{align}\label{eq:stable4}
\lim_{x \to \infty} \PP(Y \in A_1, S_x \in A_2)
= \PP(Y \in A_1, \sqrt{V_\infty} G \in A_2).
\end{align}

\noindent In particular, \eqref{eq:stable4} holds if the boundary of $A_2$ has zero Lebesgue measure (because of the existence of probability density function for $G \sim \Na_{\CC}(0,1)$). 

Specialising \eqref{eq:stable4} to $A_1 = \RR$, we obtain $S_x \xrightarrow[x \to \infty]{d} \sqrt{V_\infty} G$. As such, stable convergence is a strengthening of the usual convergence in distribution. The additional information we obtain here is that the size of the fluctuation of $S_x$ (i.e., conditional variance) is still determined by our randomness $(\alpha(p))_p$ in a measurable way in the large-$x$ limit, even though the overall fluctuation is not (similar to what one observes in the vanilla central limit theorem for i.i.d.~random variables). We refer the interested readers to \cite{HL2015} for more discussions and examples of stable limit theorems.

\subsection{Complexifying the martingale central limit theorem}\label{app:cmCLT}
We shall deduce \Cref{lem:mCLT} from the  real-valued martingale central limit theorem, stated in \Cref{thm:mCLT}.
\begin{proof}[Proof of \Cref{lem:mCLT}]
By the Cram\'er--Wold device \cite[Corollary~6.5]{Kal2021}, it suffices to show that the following claim is true: for any $a_1, a_2 \in \RR$, we have
\begin{align}\label{eq:pf-cramerwold}
a_1 \Re M_{n, k_n} + a_2 \Im M_{n, k_n}  \xrightarrow[n \to \infty]{d} \sqrt{V_\infty} W
\end{align}

\noindent where $W \sim \Na\left(0, \dfrac{a_1^2 + a_2^2}{2}\right)$ is independent of $V_\infty$, and that the convergence is stable.

To do so, let us consider $T_{n, j} := a_1 \Re M_{n, j} + a_2 \Im M_{n, j}$. Then $(T_{n, j})_{j \le k_n}$ is a martingale with respect to $(\Fa_{j})_{j \le k_n}$, and we write $\Delta^T_{n, j} := T_{n, j} - T_{n, j-1}$. Using the fact that
\begin{align*}
2 \left(\Re \Delta_{n, j}^T\right)^2 & = \Re \left[ |\Delta_{n, j}^T|^2 + \left(\Delta_{n, j}^T\right)^2\right], \\
2 \left(\Im \Delta_{n, j}^T\right)^2 & = \Re \left[ |\Delta_{n, j}^T|^2 - \left(\Delta_{n, j}^T\right)^2\right], \\
2\left(\Re \Delta_{n, j}^T\right)\left(\Im \Delta_{n, j}^T\right) & = \Im \left[\left(\Delta_{n, j}^T\right)^2\right],
\end{align*}

\noindent and $|\Delta^T_{n, j}|^2 = a_1^2 \left(\Re \Delta_{n, j}^T\right)^2 + 2a_1a_2 \left(\Re \Delta_{n, j}^T\right)\left(\Im \Delta_{n, j}^T\right) + a_2^2 \left(\Im \Delta_{n, j}^T\right)^2$, we see that
\begin{align*}
\sum_{j=1}^{k_n} \EE\left[ |\Delta_{n, j}^T|^2 | \Fa_{j-1}\right]
& = \frac{a_1^2 + a_2^2}{2} \  \Re \sum_{j=1}^{k_n} \EE\left[ |\Delta_{n, j}^T|^2 | \Fa_{j-1}\right]
+ \frac{a_1^2 - a_2^2}{2} \  \Re \sum_{j=1}^{k_n} \EE\left[ \left(\Delta_{n, j}^T\right)^2 | \Fa_{j-1}\right]\\
& \qquad + a_1 a_2 \Im \sum_{j=1}^{k_n} \EE\left[ \left(\Delta_{n, j}^T\right)^2 | \Fa_{j-1}\right]\\
& \xrightarrow[n \to \infty]{p} \frac{a_1^2+a_2^2}{2} V_\infty.
\end{align*}

\noindent Moreover, for any $\delta > 0$ we have
\begin{align*}
 \sum_{j \ge 1} \EE[|\Delta_{n, j}^T|^2 1_{\{|\Delta_{n, j}^T| > \delta\}} | \Fa_{j-1}] 
\le  \sum_{j \ge 1} \EE\left[2(a_1^2 + a_2^2)|\Delta_{n, j}|^2 1_{\{\sqrt{2 (a_1^2 + a_2^2)}|\Delta_{n, j}| > \delta\}} | \Fa_{j-1}\right] 
\xrightarrow[n \to \infty]{p} 0,
\end{align*}

\noindent i.e., the conditional Lindeberg condition is also satisfied. Therefore, we can apply \Cref{thm:mCLT} and obtain \eqref{eq:pf-cramerwold}, which concludes our proof.
\end{proof}

\subsection{Convergence of random measures} \label{sec:convergence_measure}
Let $\mathfrak{M}$ be the space of (nonnegative) Radon measures on $\RR$ equipped with the vague topology -- this is the coarsest topology such that the evaluation map $\pi_f\colon \nu \mapsto \nu(f)$ is continuous for all 
functions $f\colon \RR \to \RR$ that are continuous and compactly supported (we denote this class of test functions by $C_c(\RR)$). Equivalently, this topology is characterised by the following convergence criterion: we say a sequence of Radon measures $\nu_n$ converges to $\nu$ if and only if $\lim_{n \to \infty} \nu_n(f) = \nu(f)$ for all $f \in C_c(\RR)$ (see e.g., \cite[Lemma 4.1]{Kal2017}).

It is well known that $\mathfrak{M}$ is a Polish space \cite[Theorem 4.2]{Kal2017}, i.e., there exists a metric that metrises the topology of vague convergence and turns $\mathfrak{M}$ into a complete separable metric space. A very natural choice would be the so-called Prokhorov metric, see \cite[Lemma 4.6]{Kal2017}. As such, we may view random measures as abstract Polish-space-valued random variables and speak of random measures converging in probability. In particular, using the subsequential characterisation of limits and a diagonal argument, it is not difficult to show that the following three statements are equivalent (see \cite[Lemma 4.8]{Kal2017}):
\begin{itemize}
    \item $\nu_n$ converges in probability to $\nu$ with respect to e.g., the Prokhorov metric.
    \item For any subsequence $(n_m)_m$ there exists a further subsequence $(n_{m_k})_k$ such that $\nu_{n_{m_k}}$ converges to $\nu$ with respect to the vague topology almost surely.
    \item For all $f \in C_c(\RR)$, we have $\nu_n(f) \xrightarrow[n \to \infty]{p} \nu(f).$
\end{itemize}

Going back to \Cref{rem:missing-abstract}, since the proof of \Cref{thm:mc-convergence} in \Cref{subsubsec:pf-mc-stein} shows that \eqref{eq:thm:mc-convergence} holds whenever $h \in C(I)$ for any compact interval $I$, the statement is true for all $h \in C_c(\RR)$ in particular. Thus we may conclude that there exists a random Radon measure $m_\infty(ds)$ such that for each fixed $h \in C_c(\RR)$ the martingale limit $m_\infty(h)$ is realised as the integral $m_\infty(h) = \int_\RR h(s) m_\infty(ds)$.

\bibliographystyle{abbrv}
\bibliography{references}
\end{document}